\documentclass[11pt, a4paper,leqno]{amsart}
\usepackage{amsmath,amsthm,amscd,amssymb,amsfonts, amsbsy}
\usepackage{latexsym}
\usepackage{exscale}

\usepackage{pgf}




\parskip=3pt
\textwidth=1.25\textwidth
\textheight=1\textheight

\calclayout
\allowdisplaybreaks

\newtheorem{theorem}{Theorem}[section]
\newtheorem{proposition}{Proposition}[section]
\newtheorem{lemma}{Lemma}[section]

\theoremstyle{definition}
\newtheorem{definition}{Definition}
\newtheorem{remark}{Remark}[section]

\numberwithin{equation}{section}

\newcommand{\beq}{\begin{equation}}
\newcommand{\bea}[1]{\begin{array}{#1} }
\newcommand{\eeq}{ \end{equation}}
\newcommand{\ea}{ \end{array}}

\newcommand{\ran}{\rangle}
\newcommand{\lan}{\langle}

\newcommand{\om}{\omega}

\newcommand{\sem}{\setminus}

\newcommand{\De}{\Delta}

\newcommand{\lsim}{\lesssim}

\def \rnn {{\mathbb {R}}^{N+1}}

\def \g {{\gamma}}

\def \t {{\tau}}

\def \d {{\delta}}

\def\Xint#1{\mathchoice
{\XXint\displaystyle\textstyle{#1}}%
{\XXint\textstyle\scriptstyle{#1}}%
{\XXint\scriptstyle\scriptscriptstyle{#1}}%
{\XXint\scriptscriptstyle%
\scriptscriptstyle{#1}}%
\!\int}
\def\XXint#1#2#3{{\setbox0=\hbox{$#1{#2#3}{%
\int}$ }
\vcenter{\hbox{$#2#3$ }}\kern-.6\wd0}}
\def\barint{\,\Xint -} 
\def\bariint{\barint_{} \kern-.4em \barint}
\def\bariiint{\bariint_{} \kern-.4em \barint}


\def\mean#1{\mathchoice%
          {\mathop{\kern 0.2em\vrule width 0.6em height 0.69678ex depth -0.58065ex
                  \kern -0.8em \intop}\nolimits_{\kern -0.4em#1}}%
          {\mathop{\kern 0.1em\vrule width 0.5em height 0.69678ex depth -0.60387ex
                  \kern -0.6em \intop}\nolimits_{#1}}%
          {\mathop{\kern 0.1em\vrule width 0.5em height 0.69678ex
              depth -0.60387ex
                  \kern -0.6em \intop}\nolimits_{#1}}%
          {\mathop{\kern 0.1em\vrule width 0.5em height 0.69678ex depth -0.60387ex
                  \kern -0.6em \intop}\nolimits_{#1}}}

\def\vintslides_#1{\mathchoice%
          {\mathop{\kern 0.1em\vrule width 0.5em height 0.697ex depth -0.581ex
                  \kern -0.6em \intop}\nolimits_{\kern -0.4em#1}}%
          {\mathop{\kern 0.1em\vrule width 0.3em height 0.697ex depth -0.604ex
                  \kern -0.4em \intop}\nolimits_{#1}}%
          {\mathop{\kern 0.1em\vrule width 0.3em height 0.697ex depth -0.604ex
                  \kern -0.4em \intop}\nolimits_{#1}}%
          {\mathop{\kern 0.1em\vrule width 0.3em height 0.697ex depth -0.604ex
                  \kern -0.4em \intop}\nolimits_{#1}}}

\newcommand{\aveint}[2]{\mathchoice%
          {\mathop{\kern 0.2em\vrule width 0.6em height 0.69678ex depth -0.58065ex
                  \kern -0.8em \intop}\nolimits_{\kern -0.45em#1}^{#2}}%
          {\mathop{\kern 0.1em\vrule width 0.5em height 0.69678ex depth -0.60387ex
                  \kern -0.6em \intop}\nolimits_{#1}^{#2}}%
          {\mathop{\kern 0.1em\vrule width 0.5em height 0.69678ex depth -0.60387ex
                  \kern -0.6em \intop}\nolimits_{#1}^{#2}}%
          {\mathop{\kern 0.1em\vrule width 0.5em height 0.69678ex depth -0.60387ex
                  \kern -0.6em \intop}\nolimits_{#1}^{#2}}}

\def\eqn#1$$#2$${\begin{equation}\label#1#2\end{equation}}
\def\charfn_#1{{\raise1.2pt\hbox{$\chi
_{\kern-1pt\lower3pt\hbox{{$\scriptstyle#1$}}}$}}}

\def\qq1{q_*}
\def\q2{q_{**}}

\def\osc{\operatorname{osc}}

\newdimen\vintbar
\vintbar12pt
\def\vint{-\kern-\vintbar\int}

\def\A{\mathcal A}

\def\L{\mathcal L}
\def\K{\mathcal K}

\def\L{\mathcal L}

\def\0{\boldsymbol 0}

\newcommand{\R}{\mathbb R}

\newtoks\by
\newtoks\paper
\newtoks\book
\newtoks\jour
\newtoks\yr
\newtoks\pages
\newtoks\vol
\newtoks\publ

\def\name[#1, #2]{#1 #2}
\def\ota{{\hbox{\bf ???}}}
\def\cLear{\by=\ota\paper=\ota\book=\ota\jour=\ota\yr=\ota
\pages=\ota\vol=\ota\publ=\ota}
\def\endpaper{\the\by, \textit{\the\paper},
{\the\jour} \textbf{\the\vol} (\the\yr), \the\pages.\cLear}
\def\endbook{\the\by, \textit{\the\book},
\the\publ, \the\yr.\cLear}
\def\endpap{\the\by, \textit{\the\paper}, \the\jour.\cLear}
\def\endproc{\the\by, \textit{\the\paper}, \the\book, \the\publ,
\the\yr, \the\pages.\cLear}

\renewcommand{\d}{\, \mathrm{d}} 

\begin{document}
\title[Potential theory for operators of Kolmogorov type]{Potential theory for a class of strongly degenerate parabolic operators of Kolmogorov type\\ with rough coefficients}

\address{Malte Litsg{\aa}rd \\Department of Mathematics, Uppsala University\\
S-751 06 Uppsala, Sweden}
\email{malte.litsgard@math.uu.se}

\address{Kaj Nystr\"{o}m\\Department of Mathematics, Uppsala University\\
S-751 06 Uppsala, Sweden}
\email{kaj.nystrom@math.uu.se}

\thanks{K.N was partially supported by grant  2017-03805 from the Swedish research council (VR)}

\author{Malte Litsg{\aa}rd and Kaj Nystr{\"o}m}
\maketitle
\begin{abstract}
\noindent \medskip
 In this paper we develop a potential theory for strongly degenerate parabolic operators of the form
              \begin{eqnarray*}
   \L:=\nabla_X\cdot(A(X,Y,t)\nabla_X)+X\cdot\nabla_{Y}-\partial_t,
    \end{eqnarray*}
    in unbounded domains of the form
         \begin{eqnarray*}
 \Omega=\{(X,Y,t)=(x,x_{m},y,y_{m},t)\in\mathbb R^{m-1}\times\mathbb R\times\mathbb R^{m-1}\times\mathbb R\times\mathbb R\mid x_m>\psi(x,y,y_m,t)\},
    \end{eqnarray*}
     where $\psi$  is assumed to satisfy a
    uniform Lipschitz condition adapted to the dilation structure and the (non-Euclidean) Lie group underlying
    the operator $\L$. Concerning $A=A(X,Y,t)$ we assume that $A$ is bounded, measurable, symmetric and uniformly elliptic (as a matrix in $\mathbb R^{m}$). Beyond the solvability of the Dirichlet problem and other fundamental properties our results include scale and translation invariant boundary comparison principles,
boundary Harnack inequalities and doubling properties of associated parabolic measures.  All of our estimates are translation- and scale-invariant with constants only depending on the constants defining the boundedness and ellipticity of $A$ and the Lipschitz constant of $\psi$. Our results represent a version, for operators of Kolmogorov type with bounded, measurable coefficients, of the
by now classical results of Fabes and Safonov, any several others,  concerning boundary estimates for uniformly parabolic equations in (time-dependent) Lipschitz type domains. \\

\noindent
2000  {\em Mathematics Subject Classification.} 35K65, 35K70, 35H20, 35R03.
\noindent

\medskip

\noindent
{\it Keywords and phrases: Kolmogorov equation, parabolic, ultraparabolic, hypoelliptic, operators in divergence form,  Lipschitz domain, doubling measure, parabolic measure, Kolmogorov measure, Lie group.}
\end{abstract}

    \setcounter{equation}{0} \setcounter{theorem}{0}
\section{Introduction}
The operator
    $$\K:=\nabla_X\cdot \nabla_X+X\cdot\nabla_Y-\partial_t$$
    in $\mathbb R^{N+1}$, $N=2m$, $m\geq 1$, equipped with coordinates $(X,Y,t):=(x_1,...,x_{m},y_1,...,y_{m},t)\in \mathbb R^{m}\times\mathbb R^{m}\times\mathbb R$,  was introduced and studied by
Kolmogorov in a famous note published in 1934 in Annals of Mathematics, see \cite{K}. Kolmogorov noted that $\K$ is an example of a degenerate parabolic
operator having strong regularity properties and he proved that $\K$ has a fundamental solution which is smooth off its diagonal.
As a consequence,
\begin{eqnarray}\label{uu3}
  \K u = f \in C^\infty \quad \Rightarrow \quad u \in C^\infty,
\end{eqnarray}
for every distributional solution of $\K u=f$. Today the property in \eqref{uu3} is stated
\begin{eqnarray}\label{uu2}
\mbox{$\K$ is hypoelliptic}.
\end{eqnarray}
As can be read in the introduction of H{\"o}rmander's monumental paper on the hypoellipticity of operators published in Acta Mathematica in 1967, see \cite{Hormander}, the operator studied by Kolmogorov served as an important model case for H{\"o}rmander when he developed  his theory. Today the Kolmogorov operator, and more general operators of Kolmogorov-Fokker-Planck type with variable coefficients, play central roles in many applications in analysis, physics and finance.

Kolmogorov was originally motivated by statistical physics and he studied $\K$ in the context of stochastic processes. Indeed, the fundamental
solution associated to $\K$ describes the density of the stochastic process
$(X_t,Y_t)$ which solves the Langevin system
\begin{equation}\label{e-langevin}
\begin{cases}
    \d X_t &= \sqrt{2}\d W_t, \\
    \d Y_t &= X_t \d t,
\end{cases}
\end{equation}
where $W_t$ is a $m$-dimensional Wiener process. The system in \eqref{e-langevin} describes the density of a system
with $2m$ degrees of freedom. Given $Z:=(X,Y)\in\mathbb R^{2m}$, $X=(x_1,...,x_m)$ and $Y=(y_{1},...,y_{m})$ are,
respectively, the velocity and the position of the system.

Kinetic theory is concerned with the evolution of a particle
distribution
$$f(X,Y,t):D\times\tilde D\times \mathbb R_+\to\mathbb R,\ D,\ \tilde D\subset \mathbb R^m,$$
subject to geometric restrictions and models for the interactions and collisions between particles.  Generally, assuming no external forces, the evolution of the particle density is described
by the Boltzmann equation
\begin{eqnarray}\label{e-kolm-ndcol}
 \partial_tf+X\cdot\nabla_Yf=Q(f, f).
    \end{eqnarray}
    The left-hand side in \eqref{e-kolm-ndcol} describes the evolution of $f$ under the action of transport, with the free
streaming operator. The right-hand side describes elastic collisions through the nonlinear Boltzmann collision operator.  The Boltzmann equation is an integro-
(partial)-differential equation with non-local operator in the kinetic variable $X$.  The
Boltzmann equation is a fundamental equation in kinetic theory in the sense that it has been derived rigorously, at least in some settings, from microscopic first
principles.

In the case of so called Coulomb interactions the Boltzmann collision operator is
ill-defined and Landau proposed an alternative operator for these  interactions: this operator is now called the Landau or the Landau-Coulomb operator. The operator can be stated as
\begin{eqnarray}\label{e-kolm-ndc1}
 \partial_tf+X\cdot\nabla_Yf=\nabla_X\cdot(A(f)\nabla_Xf+B(f)f),
    \end{eqnarray}
    where
    \begin{equation}\label{e-kolm-ndc2}
     \begin{split}
A(f)(X,Y,t)&:=a_{m,\gamma}\int\limits_{\mathbb R^m}\biggl (I-\frac {X'}{|X'|}\otimes\frac {X'}{|X'|}\biggr )|X'|^{\gamma+2}f(X-X',Y,t)\, \d X',\\
B(f)(X,Y,t)&:=b_{m,\gamma}\int\limits_{\mathbb R^m}|X'|^{\gamma}X'f(X-X',Y,t)\, \d X',
    \end{split}
\end{equation}
    and $\gamma\in[-m,0]$, $a_{m,\gamma}>0$, and obviously the collision term in \eqref{e-kolm-ndc1} has a divergence structure.  The operator in \eqref{e-kolm-ndc1} is a nonlinear drift-diffusion operator with coefficients given by convolution like averages of the unknown.  As mentioned above  the Landau equation is  considered fundamental because of its close link to
the Boltzmann equation for Coulomb interactions.

In the case of long-range interactions, the Boltzmann and Landau-Coulomb operators show local ellipticity provided
the solution enjoys some pointwise bounds on the associated hydrodynamic fields and the local entropy. Indeed, assuming, for all $(Y, t)\in \tilde D\times I$, that
     \begin{eqnarray*}\label{e-kolm-nd21}
     M_1\leq \int_{\mathbb R^m}f(X,Y,t)\, \d X\leq M_0&&\quad (\mbox{Local mass}),\\
    \frac 1 2\int_{\mathbb R^m}f(X,Y,t)|X|^2\, \d X\leq E_0&&\quad (\mbox{Local energy}),\\
       \int_{\mathbb R^m}f(X,Y,t)\ln f(X,Y,t)\, \d X\leq H_0&&\quad (\mbox{Local entropy}),
       \end{eqnarray*}
       one can prove that
           \begin{eqnarray*}\label{e-kolm-nd22}
           0<\lambda I\leq A(f)(X,Y,t)\leq \Lambda I,\ |B(f)(X,Y,t)|\leq \Lambda,
       \end{eqnarray*}
       for $(X,Y,t)\in D\times\tilde D\times I$, i.e., under these assumptions the Landau equation becomes locally uniformly elliptic. As a consequence, and as global well posedness for the Boltzmann
equation and the construction of solutions in the large is an outstanding open problem, the study of conditional regularity for the Boltzmann and Landau equations has become a way to make progress on the regularity issues for these equations. We refer to \cite{ MR554086, MR715658,d15, MR2765747,dvI, PLLCam, review} for more on the connections between Kolmogorov-Fokker-Planck equations, the Boltzmann and Landau equation, statistical physics and conditional regularity. Furthermore, we have learned a lot from the interesting survey of C. Mouhot \cite{Mou}.

       As outlined above, kinetic theory and the idea of conditional regularity is one way to motivate the study of the local regularity of weak solutions to the equation
              \begin{eqnarray}\label{e-kolm-ndfl}
      \nabla_X\cdot(A\nabla_Xu)+B\nabla_Xu+X\cdot\nabla_Yu-\partial_tu=0
    \end{eqnarray}
    assuming  $A$ is measurable, bounded and uniformly elliptic and the starting point for our analysis is the recent results concerning  the local regularity of weak solutions to the equation in \eqref{e-kolm-ndfl} established in \cite{GIMV}. In \cite{GIMV} the authors extended the  De Giorgi-Nash-Moser (DGNM) theory, which in its original form only considers elliptic
or parabolic equations in divergence form, to hypoelliptic equations with rough coefficients including the ones in \eqref{e-kolm-ndfl}. Their result is the correct scale- and translation-invariant estimates for local H{\"o}lder continuity and the Harnack inequality for weak solutions.

The results in \cite{GIMV} represent  an important achievement which paves the way for developments concerning
operators as in \eqref{e-kolm-nd} in several fields of analysis and in the theory of PDEs. In this paper we contribute to the understanding of the fine properties of the Dirichlet problems for a subclass of operators of the form stated in \eqref{e-kolm-ndfl}, we will for simplicity here only consider the case $B\equiv 0$,
    in appropriate domains $\Omega\subset\mathbb R^{N+1}$, and we note that in general there is a rich interplay between the operators considered, applications and geometry. Indeed, as discussed, the Kolmogorov operator, and the more general operators of Kolmogorov-Fokker-Planck type with variable coefficients considered in this paper, play central roles in many applications in analysis, physics and finance and depending on the application different model cases for the local geometry of $\Omega$ may be relevant:
    \begin{eqnarray}\label{dom-mod}
 (i)&&\{(X,Y,t)=(x,x_{m},y,y_{m},t)\in\mathbb R^{N+1} \mid \ x_m>\psi_1(x,Y,t)\},\notag\\
 (ii)&&\{(X,Y,t)=(x,x_{m},y,y_{m},t)\in\mathbb R^{N+1} \mid \ y_m>\psi_2(X,y,t)\},\\
 (iii)&&\{(X,Y,t)=(x,x_{m},y,y_{m},t)\in\mathbb R^{N+1} \mid \ t>\psi_3(X,Y)\}.\notag
\end{eqnarray}
In particular, in finance and in the context of option pricing and associated free boundary problems, case $(i)$ can be relevant. In kinetic theory it is relevant to restrict the particles to a container making case $(ii)$ relevant. Case $(iii)$ captures, as a special case, the initial value or Cauchy problem.

     To be precise, in this paper we  consider solutions to the equation $\L u=0$ in $\Omega$ where $\L$ is the operator
              \begin{eqnarray}\label{e-kolm-nd}
   \L:=\nabla_X\cdot(A(X,Y,t)\nabla_X)+X\cdot\nabla_Y-\partial_t,
    \end{eqnarray}
    in $\mathbb R^{N+1}$, $N=2m$, $m\geq 1$, $(X,Y,t):=(x_1,...,x_{m},y_1,...,y_{m},t)\in \mathbb R^{m}\times\mathbb R^{m}\times\mathbb R$. We assume that $$A=A(X,Y,t)=\{a_{i,j}(X,Y,t)\}_{i,j=1}^{m}$$ is a real-valued, $m\times m$-dimensional, symmetric matrix valued function satisfying
    \begin{eqnarray}\label{eq2}
      \kappa^{-1}|\xi|^2\leq \sum_{i,j=1}^{m}a_{i,j}(X,Y,t)\xi_i\xi_j,\quad \ \ |A(X,Y,t)\xi\cdot\zeta|\leq \kappa|\xi||\zeta|,
    \end{eqnarray}
    for some $\kappa\in [1,\infty)$, and for all $\xi,\zeta\in \mathbb R^{m}$, $(X,Y,t)\in\mathbb R^{N+1}$. We will refer to $\kappa$ as the  constant of $A$. Throughout the paper we will also assume that
    \begin{eqnarray}\label{comp}
    \quad A=A(X,Y,t)\equiv I_m\mbox{ outside some arbitrary but fixed compact subset of $\mathbb R^{N+1}$},
    \end{eqnarray}
    where $I_m$ denotes the $m\times m$ identity matrix, and that
        \begin{eqnarray}\label{eq2+}
    a_{i,j}\in C^\infty(\mathbb R^{N+1})
    \end{eqnarray}
    for all $i,j\in\{1,...,m\}$. The assumptions in \eqref{comp} and \eqref{eq2+} will only be used in a qualitative fashion.  The constants of our quantitative estimates will depend on $m$ and $\kappa$. The assumption \eqref{comp} is only imposed to simplify matters as we will work in unbounded domains. In particular, as our results are local by nature this assumption is a modest constraint. The assumption \eqref{eq2+} simplifies matters concerning the continuous Dirichlet problem. We note that the results in  \cite{GIMV} were derived  for operators including the ones in \eqref{e-kolm-nd} assuming \eqref{eq2} and also assuming, implicitly, \eqref{eq2+}. Naturally, this is not an issue in situations when uniqueness of weak solutions can be ensured. Concerning $\Omega$ we restrict ourselves to case \eqref{dom-mod} $(i)$ and unbounded domains $\Omega\subset\mathbb R^{N+1}$ of the form
\begin{eqnarray}\label{dom-}
 \Omega=\{(X,Y,t)=(x,x_{m},y,y_{m},t)\in\mathbb R^{N+1} \mid \ x_m>\psi(x,y,y_m,t)\},
    \end{eqnarray}
    imposing restrictions on $\psi$ of Lipschitz character. To generalize the program of this paper to the geometrical contexts \eqref{dom-mod} $(ii)$, $(iii)$, are interesting and relevant projects.

     Our main result is a potential theory for operators $\L$ as in \eqref{e-kolm-nd}, assuming only \eqref{eq2}, \eqref{comp} and \eqref{eq2+}, in unbounded  Lipschitz type domains as in \eqref{dom-}. Beyond the solvability of the Dirichlet problem and other fundamental properties our results include scale and translation invariant boundary comparison principles,
boundary Harnack inequalities and doubling properties of associated parabolic measures. All of our estimates are translation- and scale-invariant with constants depending only
$m$, $\kappa$, and the Lipschitz constant of $\psi$. These results are, up to a point, established allowing $A$ and $\psi$ to depend on all variables with $y_m$ included. However, the more refined results established are derived assuming in addition that $A$ as well as $\psi$ are independent of  the variable $y_m$. The reason for this is discussed in detail in the paper but this is a way to handle the rigidity in the underlying Harnack inequality, a rigidity that stems from the subelliptic nature of the operators considered. In the prototype case $A\equiv I_m$, i.e., in the case of the operator $\K$, the corresponding results were established in \cite{NP} but we  also refer to \cite{CNP1}, \cite{CNP2} and \cite{CNP3}, where a  number of important preliminary estimates concerning the boundary behavior of non-negative solutions
to equations of Kolmogorov-Fokker-Planck type in non-divergence form  in Lipschitz type domains were developed. Together, these papers  seem to represent the only previous result of their kind for operators of Kolmogorov type.

To put the importance of our results into perspective it is relevant to outline the progress on the corresponding problems in the case of uniformly parabolic equations in $\mathbb R^{m+1}$, i.e., in the case when all dependence on the variable $Y$ is removed in \eqref{e-kolm-nd}, leaving us with the operator
              \begin{eqnarray}
\nabla_X\cdot(A(X,t)\nabla_X)-\partial_t.
    \end{eqnarray}
   In this case and in the case of (time-dependent) Lipschitz type domains,  scale and translation invariant boundary comparison principles,
boundary Harnack inequalities and doubling properties of associated parabolic measures  were settled in a number of fundamental papers including \cite{FS}, \cite{FSY}, \cite{SY}, \cite{FGS} and \cite{N1}. Subsequently, this type of results have found their applications in several important fields of analysis including the analysis of free boundary problems, see \cite{C1}, \cite{C2} and \cite{ACS} for instance.


     \setcounter{equation}{0} \setcounter{theorem}{0}
    \section{Preliminaries}
\subsection{Group law and metric}  Throughout the paper we will use the notation $(Z,t)=(X,Y,t)=(x,x_m,y,y_m,t)$ and $(z,t)=(x,y,t)$.  The natural family of dilations for $\L$, $(\delta_r)_{r>0}$, on $\R^{N+1}$,
is defined by
\begin{equation}\label{dil.alpha.i}
 \delta_r (X,Y,t) =(r X, r^3 Y,r^2 t),
\end{equation}
for $(X,Y,t) \in \R^{N +1}$,  $r>0$.  Our class of operators  is closed under the  group law
\begin{equation}\label{e70}
 (\tilde Z,\tilde t)\circ (Z,t)=(\tilde X,\tilde Y,\tilde t)\circ (X, Y,t)=(\tilde X+X,\tilde Y+Y-t\tilde X,\tilde t+t),
\end{equation}
where $(Z,t),\ (\tilde Z,\tilde t)\in \R^{N+1}$. Note that
\begin{equation}\label{e70+}
(Z,t)^{-1}=(X,Y,t)^{-1}=(-X,-Y-tX,-t),
\end{equation}
and hence
\begin{equation}\label{e70++}
 (\tilde Z,\tilde t)^{-1}\circ (Z,t)=(\tilde  X,\tilde  Y,\tilde  t)^{-1}\circ (X,Y,t)=(X-\tilde  X,Y-\tilde  Y+(
t-\tilde  t)\tilde  X,t-\tilde  t),
\end{equation}
whenever $(Z,t),\ (\tilde Z,\tilde t)\in \R^{N+1}$.  Given $(Z,t)=(X,Y,t)\in \R^{N+1}$ we let \begin{equation}\label{kolnormint}
  \|(Z, t)\|= \|(X,Y, t)\|:=|(X,Y)|\!+|t|^{\frac{1}{2}},\ |(X,Y)|=\big|X\big|+\big|Y\big|^{1/3}.
\end{equation}
We recall that there exists a positive constant ${c}=c(m)$ such that
\begin{eqnarray}\label{e-ps.tr.in}
 \|(Z,t)^{-1}\|\le {c}  \| (Z,t) \|,\ \|(Z,t)\circ (\tilde Z,\tilde t)\| \le  {c}  (\| (Z,t) \| + \| (\tilde Z,\tilde t)
\|),
\end{eqnarray}
whenever $(Z,t),(\tilde Z,\tilde t)\in \R^{N+1}$. Using \eqref{e-ps.tr.in} it  follows immediately that
\begin{equation} \label{e-triangularap}
    \|(\tilde Z,\tilde t)^{-1}\circ (Z,t)\|\le c \, \|(Z,t)^{-1}\circ (\tilde Z,\tilde t)\|,
\end{equation}
whenever $(Z,t),(\tilde Z,\tilde t)\in \R^{N+1}$. We define
\begin{equation}\label{e-ps.distint}
    d((Z,t),(\tilde Z,\tilde t)):=\frac 1 2\bigl( \|(\tilde Z,\tilde t)^{-1}\circ (Z,t)\|+\|(Z,t)^{-1}\circ (\tilde Z,\tilde t)\|\bigr).
\end{equation}
Using \eqref{e-triangularap} it follows that
\begin{equation}\label{e-ps.dist}
\begin{split}
       c^{-1}\|(\tilde Z,\tilde t)^{-1}\circ (Z,t)\|\leq\ &d((Z,t),(\tilde Z,\tilde t))\leq c \|(\tilde Z,\tilde t)^{-1}\circ (Z,t)\|, \\
   c^{-1}\|(Z,t)^{-1}\circ (\tilde Z,\tilde t)\| \leq\ &d((Z,t),(\tilde Z,\tilde t)) \leq c\|(Z,t)^{-1}\circ (\tilde Z,\tilde t)\|
\end{split}
\end{equation}
with constants of comparison independent of $(Z,t),(\tilde Z,\tilde t)\in \R^{N+1}$. Again using \eqref{e-ps.tr.in} we also see that
\begin{equation} \label{e-triangular}
    d((Z,t),(\tilde Z,\tilde t))\le {c} \bigl(d((Z,t),(\hat Z,\hat t))+d((\hat Z,\hat
t),(\tilde Z,\tilde t))\bigr ),
\end{equation}
whenever $(Z,t),(\hat Z,\hat t),(\tilde Z,\tilde t)\in \R^{N+1}$, and hence $d$ is a symmetric quasi-distance. Based on $d$ we introduce the balls
\begin{equation}\label{e-BKint}
    \mathcal{B}_r(Z,t):= \{ (\tilde Z,\tilde t) \in\mathbb R^{N+1} \mid d((\tilde Z,\tilde t),(Z,t)) <
r\},
\end{equation}
for $(Z,t)\in \R^{N+1}$ and $r>0$.  The measure of the ball $\mathcal{B}_r(Z,t)$, $|\mathcal{B}_r(Z,t)|$, satisfies
\[
c^{-1}r^{\bf q}\leq |\mathcal{B}_r(Z,t)| \leq cr^{\bf q},\ {\bf q}:=4m+2,
\]
independent of $(Z,t)$.

\subsection{Geometry}
A function $\psi(x,y,y_m,t):\mathbb R^{m-1}\times \mathbb R^{m-1}\times\mathbb R\times\mathbb R\to\mathbb R$ satisfying
\begin{equation}\label{def:LipK}
    |\psi(z,y_m,t)-\psi(\tilde z, \tilde y_m,\tilde t)| \leq M(\|(\tilde z,\tilde t)^{-1}\circ (z,t)\| + |y_m-\tilde y_m + (t-\tilde t)\psi(\tilde z,\tilde y_m,\tilde t)|^{1/3}),
\end{equation}
for some constant $M\in (0,\infty)$, will be referred to as a Lipschitz function. Given a Lipschitz  function $\psi$ we say that
\begin{equation}\label{def:Omega}
    \Omega := \lbrace (Z,t)=(x,x_m,y,y_m,t)\in\R^{N+1}\mid x_m >\psi(x,y,y_m,t)\rbrace
\end{equation}
is an unbounded Lipschitz  domain with constant $M$, or  simply a Lipschitz  domain with constant $M$.

Several of our scale- and translation-invariant estimates will be formulated using certain reference points which we now introduce. Given $\rho>0$ and $\Lambda>0$ we let
\begin{equation}\label{pointsref2}
\begin{split}
A_{\rho,\Lambda}^+&:= \left(0,\Lambda\rho,0,-\tfrac 2 3\Lambda\rho^3,\rho^2\right)\in\mathbb R^{m-1}\times\mathbb
R\times\mathbb R^{m-1}\times\mathbb R
\times\mathbb R,\\
A_{\rho,\Lambda}&:=\left(0,\Lambda\rho,0,0,0\right)\in\mathbb R^{m-1}\times\mathbb R\times\mathbb R^{m-1}\times\mathbb
R\times\mathbb R,\\
A_{\rho,\Lambda}^-&:=\left(0,\Lambda\rho,0,\tfrac 2 3\Lambda\rho^3,-\rho^2\right)\in\mathbb R^{m-1}\times\mathbb
R \times\mathbb R^{m-1}\times\mathbb R
\times\mathbb R.
\end{split}
\end{equation}
Given $(Z_0,t_0)\in\mathbb R^{N+1}$ we let $$A_{\rho,\Lambda}^\pm(Z_0,t_0):=(Z_0,t_0)\circ A_{\rho,\Lambda}^\pm,\quad A_{\rho,\Lambda}(Z_0,t_0):=(Z_0,t_0)\circ A_{\rho,\Lambda}.$$

\subsection{Weak solutions} Consider $U_X\times U_Y\times J\subset\mathbb R^{N+1}$ with $U_X\subset\mathbb R^{m}$, $U_Y\subset\mathbb R^{m}$ being bounded domains, i.e, open, connected and bounded sets, and $J=(a,b)$ with $-\infty<a<b<\infty$. Then
    $u$ is said to be a weak solution to the equation
                  \begin{eqnarray}\label{e-kolm-nd-}
   \L u=\nabla_X\cdot(A(X,Y,t)\nabla_Xu)+X\cdot\nabla_Yu-\partial_tu=0,
    \end{eqnarray}
    in $U_X\times U_Y\times J\subset\mathbb R^{N+1}$ if
                      \begin{eqnarray}\label{weak1}
u\in L_{Y,t}^2(U_Y\times J,H_X^1(U_X)),
    \end{eqnarray}
    and
                          \begin{eqnarray}\label{weak2}
                          -X\cdot\nabla_Yu+\partial_tu\in  L_{Y,t}^2(U_Y\times J,H_X^{-1}(U_X)),
    \end{eqnarray}
    and if $\L u=0$ in the sense of distributions, i.e.,
\begin{eqnarray}\label{weak3}
 \iiint_{}\ \bigl(A(X,Y,t)\nabla_Xu\cdot \nabla_X\phi+(X\cdot \nabla_Y\phi)u-u\partial_t\phi\bigr )\, \d X \d Y \d t=0,
\end{eqnarray}
 whenever $\phi\in C_0^\infty(U_X\times U_Y\times J)$. Similarly we say that $u$ is a weak supersolution, $\L u\leq 0$ for short, if for all $\phi\in C_0^\infty(U_X\times U_Y\times J)$ such that $\phi\geq 0$, we have
    \begin{eqnarray}\label{superweak}
 \iiint_{}\ \bigl(-A(X,Y,t)\nabla_Xu\cdot \nabla_X\phi-(X\cdot \nabla_Y\phi)u+u\partial_t\phi\bigr )\, \d X \d Y \d t \leq 0.
\end{eqnarray}
Further, $u$ is a weak subsolution if
  $-u$ is a weak supersolution.

We say that $u$ is a weak solution
to the equation $\L u=0$ in $\Omega$ if $u$ is a weak solution to $\L u=0$ in $U_X\times U_Y\times J\subset\mathbb R^{N+1}$, where $U_X\subset\mathbb R^{m}$, $U_Y\subset\mathbb R^{m}$ are  bounded domains, and $J=(a,b)$ with $-\infty<a<b<\infty$, whenever $U_X\times U_Y\times J$ is compactly contained in $\Omega$. Weak super- and subsolutions are defined analogously.

\section{Statement of main results}
We first prove the following three theorems giving the solvability of the continuous Dirichlet problem, a  H{\"o}lder continuity estimate up to the boundary and an estimate usually referred to as the Carleson estimate. Our standing assumptions concerning $A$ is that $A$ satisfies \eqref{eq2} with constant $\kappa$ and that \eqref{comp} and \eqref{eq2+} hold.

\begin{theorem}\label{thm:dp} Let $\Omega\subset\mathbb R^{N+1}$ be a Lipschitz domain with constant $M$. Given $\varphi\in
C_0(\partial\Omega)$, there exists
a  unique  weak solution  $u=u_\varphi$, $u\in C(\bar \Omega)$, to the Dirichlet problem
\begin{equation} \label{e-bvpuu}
\begin{cases}
	\L u = 0  &\text{in} \ \Omega, \\
      u = \varphi  & \text{on} \ \partial \Omega.
\end{cases}
\end{equation}
Furthermore, there exists, for every $(Z, t)=(X,Y,t)\in \Omega$, a unique probability
measure  $\omega(Z,t,\cdot)$ on $\partial\Omega$ such that
\begin{eqnarray}  \label{1.1xxuu}
u(Z,t)=\iint_{\partial\Omega}\varphi(\tilde Z,\tilde t)\d \omega(Z,t,\tilde Z,\tilde t).
\end{eqnarray}
The measure $\omega(Z,t,E)$ is referred to as the parabolic measure associated to $\L$ in $\Omega$ and at $(Z, t)\in \Omega$ and of $E\subset\partial\Omega$.
\end{theorem}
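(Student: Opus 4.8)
The plan is to run a Perron--Wiener--Brelot argument adapted to $\L$: use the interior regularity theory of \cite{GIMV} to produce a solution in the interior of $\Omega$, use barriers built from the group law and the dilation structure $\delta_r$ to force the prescribed boundary values, and then extract the parabolic measure from the Riesz representation theorem. As elementary ingredients I would first record a weak comparison principle for bounded weak sub/supersolutions (obtained by testing \eqref{superweak} with a regularization of the positive part of the difference, using $\L 1=0$), the interior Harnack inequality and interior H\"older estimate of \cite{GIMV}, and solvability of the continuous Dirichlet problem on a family of bounded $\L$-regular subdomains forming a neighborhood basis of every interior point; with the qualitative hypothesis \eqref{eq2+} the latter is available from classical energy/Galerkin methods together with hypoellipticity (and, for the regularity of the small comparison domains, from the model-operator analysis underlying \cite{NP}).

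With these in hand, for $\varphi\in C_0(\partial\Omega)$ define the upper class $\mathcal U_\varphi$ of bounded supersolutions $v$ of $\L v=0$ in $\Omega$ with $\liminf_{(Z,t)\to(\tilde Z,\tilde t)}v(Z,t)\ge\varphi(\tilde Z,\tilde t)$ for every $(\tilde Z,\tilde t)\in\partial\Omega$, set $H_\varphi:=\inf_{v\in\mathcal U_\varphi}v$, and show, via the standard Perron lemmas (replacement of $v$ by solutions on small regular subdomains, Harnack, comparison), that $H_\varphi$ is a bounded weak solution of $\L u=0$ in $\Omega$ and that $H_\varphi=-H_{-\varphi}$. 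The core step is boundary regularity: for each $(\tilde Z,\tilde t)\in\partial\Omega$ one must show $H_\varphi(Z,t)\to\varphi(\tilde Z,\tilde t)$ as $\Omega\ni(Z,t)\to(\tilde Z,\tilde t)$. After left-translating $(\tilde Z,\tilde t)$ to the origin --- which preserves both the class of operators and the class of domains, by \eqref{e70}--\eqref{e70++} and \eqref{def:LipK} --- the Lipschitz condition \eqref{def:LipK} exhibits in every ball $\mathcal B_r(0,0)$ a subset of the complement of $\Omega$ of definite $\delta_r$-size, with the reference points $A_{\rho,\Lambda}^\pm$ as quantitative exterior corkscrew points. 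I would use this to construct a barrier at the origin: a nonnegative supersolution $w$ in $\Omega\cap\mathcal B_r(0,0)$ vanishing at $(0,0)$ and bounded below away from it, which then controls the oscillation of $H_\varphi$ near $(0,0)$ in the usual way.

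Since $A$ is merely measurable one cannot perturb off the model operator $\K$; the barrier must instead come either from the De Giorgi--Nash--Moser oscillation decay of \cite{GIMV}, exploiting the uniform measure-density of the complement just described, or from explicit comparison functions built out of the quasi-norm $\|\cdot\|$ and the Lipschitz graph and tested directly in \eqref{superweak}. I expect this barrier/boundary-regularity step to be the main obstacle, precisely because of the strong degeneracy: the absence of ellipticity in the $Y$-directions makes the naive distance-to-the-boundary useless, and the construction must respect the anisotropic scaling $\delta_r(X,Y,t)=(rX,r^3Y,r^2t)$ and the twist $Y\mapsto Y-t\tilde X$ in the group law \eqref{e70}.

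Granting boundary regularity, $u:=H_\varphi$ solves \eqref{e-bvpuu} and lies in $C(\bar\Omega)$. For uniqueness, the difference of two such solutions is a bounded weak solution of $\L u=0$ in $\Omega$ vanishing continuously on $\partial\Omega$; since $\Omega$ is unbounded one also needs control at infinity, supplied by a global barrier imported from \cite{NP} on the region where $A\equiv I_m$ by \eqref{comp}, where the equation is exactly the model Kolmogorov equation in a Lipschitz domain. Finally, fix $(Z,t)\in\Omega$. The map $\varphi\mapsto u_\varphi(Z,t)$ is linear (by uniqueness and $H_\varphi=-H_{-\varphi}$), positive (by comparison) and bounded by $\|\varphi\|_{L^\infty(\partial\Omega)}$, so the Riesz representation theorem on the locally compact Hausdorff space $\partial\Omega$ produces a unique Radon measure $\omega(Z,t,\cdot)$ with \eqref{1.1xxuu} and $\omega(Z,t,\partial\Omega)\le 1$. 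Choosing $\varphi_j\in C_0(\partial\Omega)$ with $0\le\varphi_j\uparrow 1$, monotone convergence gives $u_{\varphi_j}(Z,t)\uparrow\omega(Z,t,\partial\Omega)$, and comparing $1-u_{\varphi_j}$ with the same at-infinity barrier (again using $\L 1=0$) shows $u_{\varphi_j}(Z,t)\to 1$; hence $\omega(Z,t,\cdot)$ is a probability measure.
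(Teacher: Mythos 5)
Your overall architecture (interior solvability, a barrier-based boundary regularity step, maximum principle at infinity via \eqref{comp}, Riesz representation) is sound, and your uniqueness and probability-measure arguments essentially coincide with the paper's (Lemma \ref{maxprinciple_unbounded} is exactly the ``import the model-operator maximum principle where $A\equiv I_m$'' device you describe). The route to existence is genuinely different: the paper does not run a Perron construction on the unbounded domain but instead solves the problem on the bounded truncations $\Omega_k=\Omega\cap Q_k(0,0)$ (Lemma \ref{compkol+}), using the notion of the Kolmogorov boundary, and then passes to the limit by monotonicity, with the uniform convergence on compacta coming from the estimate $\sup_{\Omega_l}|u_k-u_j|\lesssim (u_k-u_j)(A^+_{cl,\Lambda})$ supplied by the maximum principle and the Harnack inequality.

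The genuine gap is in your boundary regularity step, which you correctly identify as the crux but do not actually close. Your premise that ``since $A$ is merely measurable one cannot perturb off the model operator'' forgoes precisely the tool the paper uses: \eqref{eq2+} is a standing qualitative hypothesis, so $\L$ may be rewritten in non-divergence form \eqref{non-div} and the barrier theory of \cite{Manfredini} applies; the regularity of points of $\Delta_r$ then follows from the \emph{exterior cone} condition $\tilde C^-_{\rho,\eta,\Lambda}(\hat Z,\hat t)\subset\R^{N+1}\setminus\Omega_r$ of Lemma \ref{coneconditions}. Neither of your two substitutes is viable as stated. A De Giorgi--Nash--Moser oscillation decay driven by ``uniform measure-density of the complement'' is not available for $\L$: regularity of a boundary point is a causal/directional notion here, and the exterior mass must sit in the propagation set reachable from the interior (equivalently, in a backward-in-time cone along admissible paths such as $\tilde A^-_{\rho,\Lambda}$), so Lebesgue density of $\R^{N+1}\setminus\Omega$ alone cannot produce a growth lemma; indeed the paper's own quantitative boundary estimate (Lemma \ref{boundaryholderkey}) is proved not by a density argument but by comparison with two caloric-type functions $\Phi_1,\Phi_2$ built from the fundamental solution and the exterior reference ball around $\tilde A^-_{2,\Lambda}$. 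Likewise, ``explicit comparison functions built out of the quasi-norm, tested in \eqref{superweak}'' cannot be exhibited for bounded measurable $A$ without further work, since one has no pointwise differential inequality to verify. Until a barrier (or an equivalent boundary oscillation estimate) is actually produced, the Perron function $H_\varphi$ is not shown to attain $\varphi$, and the existence half of the theorem remains open in your argument.
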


\begin{theorem}\label{lem4.5-Kyoto1}
Let $\Omega\subset\mathbb R^{N+1}$ be a Lipschitz domain with constant $M$.
Let $(Z_0,t_0)\in\partial\Omega$ and $r>0$. Let $u$ be a weak  solution of $\L u=0$ in $\Omega\cap \mathcal{B}_{2r}(Z_0,t_0) $, vanishing continuously on  $\partial\Omega\cap \mathcal{B}_{2r}(Z_0,t_0) $. Then,
there exists a constant $c=c(m,\kappa,M)$, $1\leq c<\infty$, and $\alpha
=\alpha (m,\kappa,M)\in (0,1)$, such that
\begin{equation}
|u(Z,t)|\leq c\biggl (\frac{d((Z,t),(Z_{0},t_{0}))}{r}\biggr )^{\alpha
}\sup_{\Omega\cap \mathcal{B}_{2r}(Z_0,t_0)}|u|
\end{equation}%
whenever $(Z,t)\in \Omega\cap \mathcal{B}_{r/c}(Z_0,t_0)$.
\end{theorem}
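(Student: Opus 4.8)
The plan is to reduce the boundary Hölder estimate to an oscillation-decay mechanism built from two ingredients: the interior De Giorgi–Nash–Moser theory for $\L$ from \cite{GIMV} (which gives interior Harnack and interior Hölder continuity with scale- and translation-invariant constants), and a quantitative estimate saying that a nonnegative weak supersolution in $\Omega\cap\mathcal B_{2r}(Z_0,t_0)$ which is bounded below near the boundary part of a ball is in fact bounded below by a fixed fraction of that constant on a smaller ball. Concretely, I would first fix $(Z_0,t_0)\in\partial\Omega$ and, after a left-translation by $(Z_0,t_0)^{-1}$ and a dilation $\delta_{1/r}$ (both of which preserve the class of operators, the Lipschitz constant $M$ of $\psi$, and the ellipticity constant $\kappa$ by the group and dilation structure described in Section 2), reduce to the case $(Z_0,t_0)=(0,0)$ and $r=1$. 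Set $M_j=\sup_{\Omega\cap\mathcal B_{\rho^j}}u$ and $m_j=\inf_{\Omega\cap\mathcal B_{\rho^j}}u$ for a small dilation factor $\rho\in(0,1)$ to be chosen; since $u$ vanishes continuously on $\partial\Omega\cap\mathcal B_2$, we have $m_j\le 0\le M_j$, so it suffices to show $\operatorname{osc}_{\Omega\cap\mathcal B_{\rho^{j+1}}}u\le\theta\,\operatorname{osc}_{\Omega\cap\mathcal B_{\rho^{j}}}u$ for some fixed $\theta\in(0,1)$, which iterates to the claimed power $\alpha=\log\theta/\log\rho$.

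The heart of the matter is the single-step decay. Consider the two nonnegative supersolutions $v_1=M_j-u$ and $v_2=u-m_j$ on $\Omega\cap\mathcal B_{\rho^j}$ (both nonnegative because $u$ is a solution, hence both a sub- and supersolution, and $m_j\le u\le M_j$ there). At least one of them is $\ge\tfrac12\operatorname{osc}u$ at the "corkscrew" reference point $A^{-}_{\rho^{j},\Lambda}(0,0)$ — this is where the reference points from \eqref{pointsref2} enter, chosen with $\Lambda$ depending only on $M$ so that the point sits at \CC-distance comparable to $\rho^j$ from the boundary inside $\Omega$, which one checks directly from the Lipschitz condition \eqref{def:LipK}. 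I would then run an interior Harnack chain (using the Harnack inequality of \cite{GIMV}, iterated along a chain of balls staying at controlled distance from $\partial\Omega$) from that reference point to the reference point at scale $\rho^{j+1}$; because $v_i$ vanishes on the boundary portion, one also needs to propagate the lower bound down to the boundary, which is handled by combining the Harnack chain with the boundary Hölder continuity of $v_i$ itself at an intermediate scale, or alternatively by a barrier argument. The output is: whichever $v_i$ was large at the reference point satisfies $v_i\ge c^{-1}\operatorname{osc}u$ on all of $\Omega\cap\mathcal B_{\rho^{j+1}}$, which translates into either $M_{j+1}\le M_j-c^{-1}\operatorname{osc}u$ or $m_{j+1}\ge m_j+c^{-1}\operatorname{osc}u$; either way $\operatorname{osc}_{\mathcal B_{\rho^{j+1}}}u\le(1-c^{-1})\operatorname{osc}_{\mathcal B_{\rho^{j}}}u$, giving $\theta=1-c^{-1}$.

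The main obstacle I anticipate is the geometric/Harnack-chain step: unlike the uniformly parabolic case, the operator $\L$ is only hypoelliptic, so the Harnack inequality of \cite{GIMV} has a rigid geometry — the "waiting time" and the admissible shifts in the $Y$-variable are constrained by the $X\cdot\nabla_Y$ drift and the parabolic scaling $\delta_r(X,Y,t)=(rX,r^3Y,r^2t)$. One must therefore build the chain of balls connecting $A^{-}_{\rho^j,\Lambda}(0,0)$ to $A^{-}_{\rho^{j+1},\Lambda}(0,0)$ so that consecutive balls are properly nested in the time and $Y$ directions (moving strictly backward in time, with $Y$-increments matched to the drift), and so that every ball stays inside $\Omega$ at distance $\gtrsim\rho^j$ from $\partial\Omega$ — this last point is exactly what the specific form of the reference points in \eqref{pointsref2}, with their $\mp\tfrac23\Lambda\rho^3$ shift in $y_m$ and $\pm\rho^2$ shift in $t$, is engineered to make possible, and verifying it uses the adapted Lipschitz bound \eqref{def:LipK} together with the group law \eqref{e70}. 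Once the chain is in place the number of balls needed depends only on $m$, $\kappa$, $M$ (not on $j$), so the constants in the Harnack iteration are uniform, and the argument closes. The bound on $\mathcal B_{r/c}(Z_0,t_0)$ in the statement is just the requirement that $(Z,t)$ lie in $\mathcal B_{\rho^{j_0}}$ for the first index $j_0$ with $\rho^{j_0}\le 1/c$, and then one interpolates between the dyadic scales in the usual way to get the stated continuous dependence on $d((Z,t),(Z_0,t_0))/r$.
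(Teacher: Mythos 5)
Your overall skeleton (reduce by translation/dilation to $(Z_0,t_0)=(0,0)$, $r=1$, prove a one-step decay of the supremum at a fixed scale ratio, and iterate to get the H\"older exponent) is the same as the paper's, which proves exactly such a decay in Lemma \ref{boundaryholderkey} and then iterates via Lemmas \ref{boundaryholder_giventheta}--\ref{boundaryholder+} (treating $u^+$ and $u^-$ separately to handle sign-changing $u$). However, the mechanism you propose for the one-step decay has a genuine gap. You claim that whichever of $v_1=M_j-u$, $v_2=u-m_j$ is large at the interior reference point $A^-_{\rho^j,\Lambda}(0,0)$ can be bounded below by $c^{-1}\operatorname{osc}u$ on all of $\Omega\cap\mathcal B_{\rho^{j+1}}$ by running a Harnack chain from that point. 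This is false: take $u\le 0$ with its infimum attained near $A^-_{\rho^j,\Lambda}$, so that $M_j=0$ and the dichotomy selects $v_1=-u$; then $v_1$ vanishes continuously on $\partial\Omega\cap\mathcal B_{\rho^{j+1}}$, so $\inf_{\Omega\cap\mathcal B_{\rho^{j+1}}}v_1=0$ and no uniform lower bound can hold. More structurally, a Harnack chain from an interior point to a point at distance $\epsilon$ from $\partial\Omega$ inside $\Omega$ has length blowing up as $\epsilon\to0$, so the chain argument cannot deliver a lower bound uniformly up to the boundary; and your fallback ("the boundary H\"older continuity of $v_i$ itself") is circular, since that is precisely the statement being proved.

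The ingredient your argument is missing is a quantitative lower bound on the value, at points of $\Omega\cap Q^-_{1/K}$, of the solution of the Cauchy problem with data $1$ on a fixed \emph{exterior} ball -- i.e., a lower bound on the "parabolic measure" that interior points assign to the complement of $\Omega$. The correct one-step decay uses $\Psi\pm u\ge 0$ with $\Psi=\sup|u|$, notes that $\Psi\pm u=\Psi$ on $\partial\Omega\cap\mathcal B_{\rho^j}$, and then bounds $\Psi\pm u$ from below on the smaller set by $\Psi$ times that parabolic-measure lower bound via the maximum principle. In the paper this is the function $\hat\Phi_2$ built from data supported on $\mathcal B_{1/M}(\tilde A^-_{2,\Lambda})\cap\{t=-4\}$, where $\tilde A^-_{2,\Lambda}$ lies in $\mathbb R^{N+1}\setminus\Omega$ by the exterior cone property of Lipschitz domains (Lemma \ref{coneconditions}); the lower bound $\hat\Phi_2\gtrsim 1$ on $\Omega\cap Q^-_{1/K}$ is obtained from the two-sided Gaussian bounds on the fundamental solution (Lemma \ref{lem_fsolbounds}) together with Harnack chains along admissible paths to the vicinity of that exterior ball (Claims 3 and 4). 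Because the comparison region is a finite cylinder, one also needs the companion barrier $\Phi_1$ with data on the far lateral annulus $\hat Q_{K+2}\setminus\hat Q_{K-2}$ and the Gaussian estimate $\Phi_1\le ce^{-c^{-1}K^2}K^\eta$ (Claim 2) to absorb the lateral boundary contribution by choosing $K$ large. None of this appears in your proposal, and without it the decay step does not close.
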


\begin{theorem}\label{thm:carleson} Let $\Omega\subset\mathbb R^{N+1}$ be a Lipschitz domain with constant $M$. There exist positive $\Lambda=\Lambda(m,M)$ and $c=c(m,\kappa,M)$, $1\leq c<\infty$, such that the following holds. Let $(Z_0,t_0)\in\partial\Omega$ and $r>0$.  Assume that  $u$ is a non-negative weak solution to $\L u=0$ in $\Omega\cap \mathcal{B}_{2r}(Z_0,t_0)$,
vanishing continuously on $\partial\Omega\cap \mathcal{B}_{2r}(Z_0,t_0)$. Then
\begin{equation*}
u(Z,t)\leq cu(A^+_{\rho,\Lambda}(Z_0,t_0))
\end{equation*}
whenever $(Z,t)\in \Omega\cap \mathcal{B}_{2\rho/c}(Z_0,t_0)$, $0<\rho<r/c$.
\end{theorem}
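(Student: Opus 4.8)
The Carleson estimate is a classical consequence of three ingredients: (a) the interior Harnack inequality for non-negative solutions of $\L u = 0$ (available from \cite{GIMV} and already used in the paper), (b) the boundary Hölder decay from Theorem~\ref{lem4.5-Kyoto1}, and (c) a Harnack chain argument connecting an arbitrary interior point to the reference point $A^+_{\rho,\Lambda}(Z_0,t_0)$ along admissible paths compatible with the drift term $X\cdot\nabla_Y$. Since $\L$ is invariant under the dilations $\delta_r$ and left translations by the group law \eqref{e70}, I would first reduce, by replacing $u$ with $u\circ(Z_0,t_0)\circ\delta_r$, to the case $(Z_0,t_0)=(0,0)$ and $r=1$; then it suffices to prove $u(Z,t)\le c\,u(A^+_{\rho,\Lambda})$ for $(Z,t)\in\Omega\cap\mathcal B_{2\rho/c}$ with $0<\rho<1/c$. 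By a further dilation it is in fact enough to treat $\rho$ of a fixed size, say $\rho=1$, so the task becomes: $u(Z,t)\le c\,u(A^+_{1,\Lambda})$ for $(Z,t)\in\Omega\cap\mathcal B_{2/c}(0,0)$.

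\textbf{Main steps.} First, fix $\Lambda=\Lambda(m,M)$ large enough (depending on the Lipschitz constant) that $A^+_{1,\Lambda}=(0,\Lambda,0,-\tfrac23\Lambda,1)$ and all the intermediate Harnack-chain points lie in $\Omega$ with comparable distance to $\partial\Omega$; this is where the precise shape of the time-shifted reference points in \eqref{pointsref2} matters, because the chain must respect the ``cone of influence'' of the Kolmogorov Harnack inequality (one can only propagate positivity forward in a way consistent with the degenerate/subelliptic geometry — this is the rigidity alluded to in the introduction). Second, I argue by contradiction / by a stopping-time (``good-$\lambda$'') scheme: suppose $u(Z,t)$ is very large compared to $u(A^+_{1,\Lambda})$ for some $(Z,t)$ near the boundary. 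Using interior Harnack on a chain of balls from $(Z,t)$ toward the interior I produce a point $(Z_1,t_1)$ at definite distance $\sim\delta$ from $\partial\Omega$ where $u$ is still comparably large; but then the boundary Hölder estimate of Theorem~\ref{lem4.5-Kyoto1} applied on $\mathcal B_{2r'}(Z',t')$ for a suitable boundary point $(Z',t')$ forces $u(Z_1,t_1)\lesssim \delta^\alpha\sup_{\Omega\cap\mathcal B_{2r'}}|u|$, and iterating this dichotomy (each time we either reach $A^+_{1,\Lambda}$ via a controlled Harnack chain, or we move to a strictly smaller scale with the same largeness) yields, after finitely many steps bounded in terms of $m,\kappa,M$, a contradiction with the Hölder decay. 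Concretely I would set $M_j=\sup_{\Omega\cap\mathcal B_{2^{-j}}(0,0)}u$ and show $M_j\le c\,u(A^+_{1,\Lambda})+\tfrac12 M_{j+1}\cdot 2^{-j\alpha}\cdots$; the geometric gain from the Hölder exponent absorbs the Harnack constant.

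\textbf{The hard part.} The genuinely delicate point is the construction of the Harnack chains in step one: unlike the uniformly parabolic case, the drift $X\cdot\nabla_Y$ means that connecting two points by a path along which Harnack propagates requires the path to be an $\L$-admissible (horizontal-plus-drift) curve, and the number of balls in the chain, as well as their radii, must be controlled purely by $d((Z,t),(Z_0,t_0))/\rho$ and $M$ — not by how close $(Z,t)$ is to $\partial\Omega$. This forces the slanted, time-staggered choice of $A^+_{\rho,\Lambda}$ (positive time shift $+\rho^2$, $y_m$-shift $-\tfrac23\Lambda\rho^3$) so that $A^+_{\rho,\Lambda}(Z_0,t_0)$ sits strictly in the ``past light cone'' reachable from every point of $\Omega\cap\mathcal B_{2\rho/c}(Z_0,t_0)$. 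I expect to need: a quantitative interior estimate that a non-tangential approach point $A_{d,\Lambda}(Z,t)$ has $d(\,\cdot\,,\partial\Omega)\gtrsim d$ (a geometric lemma about Lipschitz domains in this group), the scale-invariant Harnack inequality to pass between $A_{d,\Lambda}(Z,t)$ and $A^+_{\rho,\Lambda}(Z_0,t_0)$, and Theorem~\ref{lem4.5-Kyoto1} to pass between $(Z,t)$ and $A_{d,\Lambda}(Z,t)$. Assembling these with the standard covering/iteration argument of Fabes–Safonov–Yerchenko type then gives the claimed bound with $c=c(m,\kappa,M)$.
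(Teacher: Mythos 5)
Your proposal is correct and follows essentially the same route as the paper's proof (Proposition \ref{gensalsa-0}): normalize by translation/dilation, control $u$ near $A^+_{\rho,\Lambda}$ via propagation sets and Harnack chains along admissible paths (the paper's Lemmas \ref{t-1}, \ref{l-coord}, \ref{lem4.7}), and run the Salsa-type contradiction/iteration in which a large value of $u$ forces the point close to the boundary and the oscillation decay of Lemma \ref{boundaryholder_giventheta} then produces a sequence of points converging to $\partial\Omega$ on which $u$ blows up, contradicting the vanishing boundary values. The ingredients and the roles you assign them match the paper's argument; only your dyadic recursion display is imprecise, but the underlying scheme is the one actually used.
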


We emphasize that Theorem \ref{thm:dp}-Theorem \ref{thm:carleson}  are proven assuming only that $\Omega\subset\mathbb R^{N+1}$ is a Lipschitz domain with constant $M$, that
$A$ satisfies \eqref{eq2} with constant $\kappa$, and that \eqref{comp} and \eqref{eq2+} hold. The latter assumptions are only used qualitatively.  However, our next set of results are proven under the following additional structural assumptions:
\begin{equation}\label{struct}
\begin{split}
(i)& \quad A(X,Y,t)=A(x,x_m,y,y_m,t)=A(x,x_m,y,t),\\
(ii)& \quad \psi(x,y,y_m,t)=\psi(x,y,t),
\end{split}
\end{equation}
whenever $(x,x_{m},y,y_{m},t)\in\mathbb R^{N+1}$. That is, in the following both $A$ and $\psi$ are assumed to be independent of the variable $y_m$. Using this additional assumption we prove the following theorems.
\begin{theorem}\label{thm:back} Let $\Omega\subset\mathbb R^{N+1}$ be a Lipschitz domain with constant $M$ and assume in addition \eqref{struct}.  There exist positive $\Lambda=\Lambda(m,M)$ and $c=c(m,\kappa,M)$, $1\leq c<\infty$, such that the following holds. Let $(Z_0,t_0)\in\partial\Omega$ and $r>0$.  Assume that  $u$ is a non-negative solution to $\L u=0$ in $\Omega\cap \mathcal{B}_{2r}(Z_0,t_0)$,
vanishing continuously on $\partial\Omega\cap \mathcal{B}_{2r}(Z_0,t_0)$. Let $\rho_0=r/c$,
 \begin{eqnarray}
 m^+=u(A_{\rho_0,\Lambda}^+(Z_0,t_0)),\ m^-=u(A_{\rho_0,\Lambda}^-(Z_0,t_0)),
 \end{eqnarray}
and assume that $m^->0$. Then there exist constants
$c_1=c_1(m,\kappa,M)$,
$1\leq c_1<\infty$, $c_2=c_2(m,\kappa,M, m^+/m^-)$,
$1\leq c_2<\infty$,  such that if we let $\rho_1=\rho_0/c_1$, then
\begin{equation*}
u(Z,t)\leq c_2u(A_{\rho,\Lambda}(\tilde Z_0,\tilde t_0)),
\end{equation*}
whenever $(Z,t)\in \Omega\cap \mathcal{B}_{\rho/c_1}(\tilde Z_0,\tilde t_0)$, for some  $(\tilde Z_0,\tilde t_0)\in
\partial\Omega\cap \mathcal{B}_{\rho_1}(Z_0,t_0)$, and  $0<\rho<\rho_1$.
\end{theorem}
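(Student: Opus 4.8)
The Carleson estimate of Theorem~\ref{thm:carleson} reduces the statement to a comparison between the reference points only: applying it with base point $(\tilde Z_0,\tilde t_0)\in\partial\Omega$ and radius comparable to $\rho$ gives $u(Z,t)\le c\,u(A^+_{\rho,\Lambda}(\tilde Z_0,\tilde t_0))$ for all $(Z,t)\in\Omega\cap\mathcal B_{\rho/c_1}(\tilde Z_0,\tilde t_0)$, once $c_1$ is chosen large enough in terms of the Carleson constant and $\Lambda$. Thus the theorem follows once one proves the backward Harnack inequality at the reference points,
\begin{equation}\label{plan:star}
 u\bigl(A^+_{\rho,\Lambda}(\tilde Z_0,\tilde t_0)\bigr)\ \le\ c_2\,u\bigl(A_{\rho,\Lambda}(\tilde Z_0,\tilde t_0)\bigr),
\end{equation}
uniformly for $(\tilde Z_0,\tilde t_0)\in\partial\Omega\cap\mathcal B_{\rho_1}(Z_0,t_0)$ and $0<\rho<\rho_1$, with $c_2=c_2(m,\kappa,M,m^+/m^-)$.

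The forward companion of \eqref{plan:star} is essentially free: chaining the scale- and translation-invariant interior Harnack inequality for $\L$ (in the form of \cite{GIMV}) along a Harnack chain of balls $\mathcal B_r$ contained in $\Omega$ and ordered forward in time yields
\[
 u\bigl(A^-_{\rho,\Lambda}(P)\bigr)\ \le\ c\,u\bigl(A_{\rho,\Lambda}(P)\bigr)\ \le\ c\,u\bigl(A^+_{\rho,\Lambda}(P)\bigr),\qquad P\in\partial\Omega .
\]
That such chains exist despite the rigidity of the hypoelliptic Harnack inequality is precisely why the reference points carry the ballistic $y_m$-offsets $\mp\tfrac23\Lambda\rho^3$; and it is in building the chains needed for the reverse (backward-in-time) direction that the structural normalization \eqref{struct}, i.e.\ independence of $y_m$, is used, supplying the extra room in the $y_m$-direction which the drift $X\cdot\nabla_Y$ otherwise ties to the motion in $X$.

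The inequality \eqref{plan:star} itself I would establish in two stages. First, at the top scale $\rho\sim\rho_1$ one connects the reference points based at $(\tilde Z_0,\tilde t_0)$ to the fixed points defining $m^{\pm}$: since $\rho_1=\rho_0/c_1$ with $c_1$ large, a forward Harnack chain gives $u(A^+_{\rho_1,\Lambda}(\tilde Z_0,\tilde t_0))\le c\,m^+$, and, as $A^-_{\rho_0,\Lambda}(Z_0,t_0)$ lies at an earlier time than $A_{\rho_1,\Lambda}(\tilde Z_0,\tilde t_0)$, also $m^-\le c\,u(A_{\rho_1,\Lambda}(\tilde Z_0,\tilde t_0))$; the hypothesis $m^->0$ is exactly what makes the quotient meaningful, and \eqref{plan:star} follows at scale $\rho_1$ with $c_2=c(m,\kappa,M)\,(m^+/m^-)$. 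Second, one propagates \eqref{plan:star} from scale $\rho_1$ down to every $0<\rho<\rho_1$ by a dyadic induction in the scale, each step combining the Carleson estimate (to move $u$ from the smaller-scale forward reference point to the current one), a short forward Harnack chain, and --- crucially --- the boundary H\"older estimate of Theorem~\ref{lem4.5-Kyoto1}, which renders the error produced at the $k$-th step summable in $k$, so that the infinite product of step factors converges and the constant $c_2=c(m,\kappa,M,m^+/m^-)$ survives to all scales. The solvability and basic properties from Theorem~\ref{thm:dp}, and the prototype case $A\equiv I_m$ of \cite{NP}, are used freely to justify the limiting arguments.

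The main obstacle is the descent in \eqref{plan:star}: keeping the constant bounded through the infinitely many scales. This is exactly where the rigidity of the Kolmogorov Harnack inequality bites --- the chaining that would close the estimate at once for a uniformly parabolic operator connects the wrong points --- and it is the reason that at this stage one only obtains a backward Harnack inequality with a constant depending on the a priori ratio $m^+/m^-$, and the reason the normalization \eqref{struct} must be assumed.
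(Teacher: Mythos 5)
Your overall frame is the paper's: reduce via the Carleson estimate to a comparison at the reference points, prove a backward-type inequality there, and use the $y_m$-independence \eqref{struct} to repair the $y_m$-mismatch forced by the drift (in the paper this is Section \ref{sec10}: Lemma \ref{thm:backagain}, with Lemma \ref{lem4.6+} and Lemma \ref{T:backprel} doing the conversion between $A^-_{\rho,\Lambda}$, $A_{\rho,\Lambda}$ and $A^+_{\rho,\Lambda}$). But the heart of the theorem --- the inequality $u(A^+_{\rho,\Lambda})\le c_2\,u(A^-_{\rho,\Lambda})$ with a constant uniform over all scales $0<\rho<\rho_1$ --- is not actually proved by your plan. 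A dyadic descent in which each step costs a multiplicative constant $C>1$ yields $C^k$ at scale $2^{-k}\rho_1$; the boundary H\"older estimate of Theorem \ref{lem4.5-Kyoto1} controls the decay of $u$ toward $\Delta_{2r}$, not the ratio $u(A^+_{\rho,\Lambda})/u(A^-_{\rho,\Lambda})$ across scales, and you give no mechanism by which it renders the per-step factors ``summable''. Worse, at each smaller scale the quantity playing the role of $m^+/m^-$ is precisely what one is trying to bound, so the induction as described is circular. A second, smaller inaccuracy: the ``essentially free'' forward comparisons $u(A^-_{\rho,\Lambda})\lesssim u(A_{\rho,\Lambda})\lesssim u(A^+_{\rho,\Lambda})$ are not free in this setting; in the paper they are Lemma \ref{lem4.6+}, whose proof needs \eqref{struct} through Lemma \ref{T:backprel} and whose constant already depends on $m^+/m^-$ --- a symptom of exactly the rigidity your descent ignores.

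The paper's proof avoids any iteration over scales by a stopping-scale device. With $\gamma$ from Lemma \ref{lem4.7rho}, set $\phi(\tilde\rho)=\tilde\rho^{-\gamma}u(A^+_{\tilde\rho,\Lambda})$ and let $\rho^\ast$ be the largest $\tilde\rho\in[\rho,\rho_0]$ with $\phi(\tilde\rho)\ge\phi(\rho)$. If $\rho^\ast$ is comparable to $\rho_0$, the backward bound at $\rho^\ast$ follows from Lemma \ref{lem4.7rho} and the given ratio $m^+/m^-$. Otherwise maximality gives $u(A^+_{\rho^\ast,\Lambda})>(2K)^{-\gamma}u(A^+_{2K\rho^\ast,\Lambda})$, so by Theorem \ref{thm:carleson} the single value $u(A^+_{\rho^\ast,\Lambda})$ dominates, up to $(2K)^{\gamma}$, the supremum of $u$ over $\Omega_{2K\rho^\ast/c}$. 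After rescaling to $\rho^\ast=1$, one represents $u(A^+_{1,\Lambda})$ by the Kolmogorov measure of a thin-in-time cylinder, shows via Gaussian bounds on the fundamental solution that the lateral part contributes at most $\tfrac12 u(A^+_{1,\Lambda})$ (Lemma \ref{important1}), and controls the bottom part by $u(A^-_{1,\Lambda})$ through admissible Harnack chains whose $y_m$-defect is absorbed by Lemma \ref{T:backprel} (Lemma \ref{important2}). This gives $u(A^+_{\rho^\ast,\Lambda})\le \mathbf{c}\,u(A^-_{\rho^\ast,\Lambda})$ in one stroke, and then the two estimates $u(A^+_{\rho,\Lambda})\le(\rho/\rho^\ast)^{\gamma}u(A^+_{\rho^\ast,\Lambda})$ (from the definition of $\rho^\ast$) and $u(A^-_{\rho^\ast,\Lambda})\lesssim(\rho^\ast/\rho)^{\gamma}u(A^-_{\rho,\Lambda})$ (Lemma \ref{lem4.7rho}) cancel the scale factors exactly, so no constants accumulate. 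Some idea of this kind --- stopping at the extremal scale rather than descending through all of them --- is the missing ingredient in your proposal.
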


\begin{theorem}\label{thm:quotients}
Let $\Omega\subset\mathbb R^{N+1}$ be a Lipschitz domain with constant $M$ and assume in addition \eqref{struct}. There exist positive $\Lambda=\Lambda(m,M)$ and $c=c(m,\kappa,M)$, $1\leq c<\infty$, such that the following holds. Let $(Z_0,t_0)\in\partial\Omega$ and
 $r>0$.  Assume that $u$ and $v$ are non-negative solutions to
 $\L u=0$ in $\Omega$,  vanishing continuously on $\partial\Omega\cap \mathcal{B}_{2r}(Z_0,t_0)$. Let $\rho_0=r/c$,
 \begin{equation}\label{singa1u1}
 \begin{split}
 m_1^+=& \, v(A_{\rho_0,\Lambda}^+(Z_0,t_0)),\ m_1^-= \,v(A_{\rho_0,\Lambda}^-(Z_0,t_0)),\\
 m_2^+=&\, u(A_{\rho_0,\Lambda}^+(Z_0,t_0)),\ m_2^-=\, u(A_{\rho_0,\Lambda}^-(Z_0,t_0)),
  \end{split}
 \end{equation}
and assume $m_1^->0$, $m_2^->0$. Then there exist constants
$c_1=c_1(m,\kappa,M)$, $$c_2=c_2(m,\kappa,M, m_1^+/m_1^-,m_2^+/m_2^-),$$
$1\leq
c_1,c_2<\infty$, $\sigma=\sigma(m,\kappa,M, m_1^+/m_1^-,m_2^+/m_2^-)$, $\sigma\in (0,1)$,   such that if we let $\rho_1=\rho_0/c_1$, then
$$\biggl |\frac { v ( Z,t ) }{ u ( Z,t ) }-\frac { v ( \tilde Z,\tilde t ) }{ u ( \tilde Z,\tilde t ) }\biggr |\leq c_2\biggl (\frac {d((Z,t),(\tilde Z,\tilde t))}{\rho}\biggr
)^\sigma\frac{v(A_{\rho,\Lambda}(\tilde Z_0,\tilde t_0))}{u(A_{\rho,\Lambda}(\tilde Z_0,\tilde t_0))},$$
whenever $(Z,t), (\tilde Z,\tilde t)\in \Omega\cap \mathcal{B}_{\rho/c_1}(\tilde Z_0,\tilde t_0)$, for some  $(\tilde Z_0,\tilde t_0)\in
\partial\Omega\cap \mathcal{B}_{\rho_1}(Z_0,t_0)$, and $0<\rho<\rho_1$.
\end{theorem}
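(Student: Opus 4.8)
The plan is to reduce Theorem~\ref{thm:quotients} to a scale- and translation-invariant \emph{boundary Harnack principle} for the quotient $v/u$ and then to run a geometric oscillation-decay iteration on dyadic scales; the structural assumption \eqref{struct} will be used only through the backward Harnack inequality (Theorem~\ref{thm:back}). The main intermediate step is a pair of one-sided comparisons with $\L$-parabolic measure of truncated domains. Given $Q\in\partial\Omega\cap\mathcal B_{\rho_1}(Z_0,t_0)$ and $0<\rho<\rho_1$, let $\omega_{Q,\rho}(Z,t,\cdot)$ be the $\L$-parabolic measure of $\Omega\cap\mathcal B_{2\rho}(Q)$ and set $\mu_{Q,\rho}(Z,t):=\omega_{Q,\rho}(Z,t,\Omega\cap\partial\mathcal B_{2\rho}(Q))$, the measure of the lateral boundary part. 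I would first prove that, for every non-negative solution $w$ of $\L w=0$ vanishing continuously on $\partial\Omega\cap\mathcal B_{2r}(Z_0,t_0)$ and every $(Z,t)\in\Omega\cap\mathcal B_{\rho/c}(Q)$,
\[
w(Z,t)\ \le\ c\,w\bigl(A_{\rho,\Lambda}^+(Q)\bigr)\,\mu_{Q,\rho}(Z,t)\qquad\text{and}\qquad w(Z,t)\ \ge\ c^{-1}\,w\bigl(A_{\rho,\Lambda}^-(Q)\bigr)\,\mu_{Q,\rho}(Z,t),
\]
both with a \emph{universal} constant $c=c(m,\kappa,M)$, i.e. one not depending on $w$.

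The upper bound is the maximum principle for $\L$ on $\Omega\cap\mathcal B_{2\rho}(Q)$: $w$ vanishes on the part of the boundary contained in $\partial\Omega$, and on the lateral part the Carleson estimate (Theorem~\ref{thm:carleson}) combined with interior Harnack chains gives $w\le c\,w(A_{\rho,\Lambda}^+(Q))$. For the lower bound one selects a fixed non-tangential piece $S\subset\Omega\cap\partial\mathcal B_{2\rho}(Q)$ on which the interior Harnack inequality of \cite{GIMV} gives $\inf_S w\ge c^{-1}w(A_{\rho,\Lambda}^-(Q))$, writes $w(Z,t)\ge(\inf_S w)\,\omega_{Q,\rho}(Z,t,S)$, and uses a change-of-pole/doubling estimate $\omega_{Q,\rho}(Z,t,S)\ge c^{-1}\mu_{Q,\rho}(Z,t)$ for $\L$-parabolic measure. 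Applying the two comparisons with $w=v$ and with $w=u$ and dividing, the common factor $\mu_{Q,\rho}$ cancels; then the backward Harnack inequality (Theorem~\ref{thm:back}), applied to $u$ and to $v$ separately, lets one replace the values at $A_{\rho,\Lambda}^\pm(Q)$ by the value at the equal-time reference point $A_{\rho,\Lambda}(Q)$, yielding the boundary Harnack principle
\[
c^{-1}\,\frac{v(A_{\rho,\Lambda}(Q))}{u(A_{\rho,\Lambda}(Q))}\ \le\ \frac{v(Z,t)}{u(Z,t)}\ \le\ c\,\frac{v(A_{\rho,\Lambda}(Q))}{u(A_{\rho,\Lambda}(Q))},\qquad(Z,t)\in\Omega\cap\mathcal B_{\rho/c}(Q),\ \ 0<\rho<\rho_1,
\]
now with $c=c(m,\kappa,M,m_1^+/m_1^-,m_2^+/m_2^-)$; that this single constant works at every scale $\rho<\rho_1$ is exactly the content of Theorem~\ref{thm:back}.

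For the iteration, fix $(\tilde Z_0,\tilde t_0)\in\partial\Omega\cap\mathcal B_{\rho_1}(Z_0,t_0)$, choose $K=K(m,\kappa,M)$ large, put $\rho_j=\rho_1 K^{-j}$, and set $M_j=\sup_{\Omega\cap\mathcal B_{\rho_j/c}(\tilde Z_0,\tilde t_0)}v/u$, $m_j=\inf_{\Omega\cap\mathcal B_{\rho_j/c}(\tilde Z_0,\tilde t_0)}v/u$, both finite by the boundary Harnack principle. Then $M_j u-v$ and $v-m_j u$ are non-negative solutions of $\L w=0$ on $\Omega\cap\mathcal B_{\rho_j/c}(\tilde Z_0,\tilde t_0)$ vanishing continuously on $\partial\Omega\cap\mathcal B_{\rho_j/c}(\tilde Z_0,\tilde t_0)$, so the two universal-constant comparisons apply to them at scale $\rho_{j+1}$, while Theorem~\ref{thm:back} bounds $u(A_{\rho_{j+1},\Lambda}^+(\tilde Z_0,\tilde t_0))/u(A_{\rho_{j+1},\Lambda}^-(\tilde Z_0,\tilde t_0))$. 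Feeding these into the identity $(M_j u-v)+(v-m_j u)=(M_j-m_j)u$ evaluated at the forward and backward reference points and then passing to suprema over $(Z,t)$ gives a geometric decay
\[
M_{j+1}-m_{j+1}\ \le\ \theta\,(M_j-m_j),\qquad\theta=\theta(m,\kappa,M,m_1^+/m_1^-,m_2^+/m_2^-)\in(0,1),
\]
once $K$ is fixed large enough. Iterating yields $M_j-m_j\le\theta^{\,j}(M_0-m_0)$, and combining this boundary oscillation decay with the interior H\"older continuity of quotients of solutions from \cite{GIMV} gives, for $(Z,t),(\tilde Z,\tilde t)\in\Omega\cap\mathcal B_{\rho/c_1}(\tilde Z_0,\tilde t_0)$ and $0<\rho<\rho_1$,
\[
\Bigl|\frac{v(Z,t)}{u(Z,t)}-\frac{v(\tilde Z,\tilde t)}{u(\tilde Z,\tilde t)}\Bigr|\ \le\ c\,\Bigl(\frac{d((Z,t),(\tilde Z,\tilde t))}{\rho}\Bigr)^{\sigma}(M_0-m_0),\qquad\sigma=\frac{\log(1/\theta)}{\log K};
\]
a final use of the one-sided upper comparison bounds $M_0-m_0\le c\,v(A_{\rho,\Lambda}(\tilde Z_0,\tilde t_0))/u(A_{\rho,\Lambda}(\tilde Z_0,\tilde t_0))$, which is the assertion (with $\rho_1=\rho_0/c_1$ as in the statement).

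The hard part is the lower one-sided comparison $w(Z,t)\ge c^{-1}w(A_{\rho,\Lambda}^-(Q))\mu_{Q,\rho}(Z,t)$ and the structure surrounding it. Because $\L$ is strongly degenerate and the Harnack inequality of \cite{GIMV} can be chained only forward in time along curves adapted to the dilations $\delta_r$ and the group law \eqref{e70}, one must build admissible Harnack chains and the non-tangential piece $S$ inside the Lipschitz domain $\Omega$, prove the change-of-pole/doubling property of $\L$-parabolic measure on the truncated domains $\Omega\cap\mathcal B_{2\rho}(Q)$, and perform the symmetrization of $A_{\rho,\Lambda}^\pm(Q)$ through the backward Harnack inequality (Theorem~\ref{thm:back}); this is precisely where the assumption \eqref{struct} is indispensable and where the dependence of the constants on $m_1^+/m_1^-$ and $m_2^+/m_2^-$ originates. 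A secondary, bookkeeping point is to verify that the iteration contracts ($\theta<1$) for $K$ large while only $u$ and $v$ — not the auxiliary combinations $M_j u-v$, $v-m_j u$ — ever require the ratio-dependent backward Harnack, which is possible exactly because the two one-sided comparisons carry universal constants.
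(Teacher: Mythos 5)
Your overall architecture matches the paper's: the ``boundary Harnack principle'' you derive from the two one-sided comparisons is exactly the paper's weak comparison principle (Lemma \ref{lem:compprinciple}), whose proof likewise rests on a barrier built from parabolic measure and a Green function term together with the maximum principle and the Carleson estimate; the symmetrization of $A^{\pm}_{\rho,\Lambda}$ via Theorem \ref{thm:back} and the oscillation-decay iteration on the normalized functions $v-m_j u$, $M_j u-v$ is also how the paper proceeds. Your identification of the lower one-sided comparison (the change-of-pole estimate $\omega_{Q,\rho}(Z,t,S)\gtrsim\mu_{Q,\rho}(Z,t)$) as the hard part is accurate: in the paper this is Claims 1--3 of Lemma \ref{lem:compprinciple}, where the lateral boundary is split into a piece near $\partial\Omega$ (handled by a covering of surface balls and the boundary H\"older estimate) and a piece away from it (handled by a Green function lower bound), so that step is genuinely nontrivial but your sketch of it is consistent with what the paper does.

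There is, however, one genuine gap: the final gluing step, where you pass from oscillation decay on boundary-centered balls to the H\"older estimate for arbitrary interior points by invoking ``the interior H\"older continuity of quotients of solutions from \cite{GIMV}.'' No such result exists in \cite{GIMV}, which gives H\"older continuity of solutions (Lemma \ref{holder}), not of quotients, and the distinction is not cosmetic here. To get oscillation decay of $v/u$ on an interior cylinder $Q_{\rho}(Z,t)$ with $\rho\le d((Z,t),\Delta_{2r})$ one normalizes $\tilde v=(\operatorname{osc})^{-1}(v-\inf(v/u)\,u)$ and applies the Harnack inequality of Lemma \ref{harnack} to $\tilde v$ and to $u$; but because the Harnack inequality for $\L$ propagates only forward in time, this produces $\tilde v((Z,t)\circ(0,-\delta\rho^2))\lesssim\tilde v(\tilde Z,\tilde t)$ together with $u(\tilde Z,\tilde t)\lesssim u((Z,t)\circ(0,+\delta\rho^2))$, and closing the estimate requires $u((Z,t)\circ(0,\delta\rho^2))\lesssim u((Z,t)\circ(0,-\delta\rho^2))$ --- i.e.\ the backward Harnack inequality of Theorem \ref{thm:back}, which is a boundary phenomenon available only because $u$ vanishes on $\Delta_{2r}$ and under \eqref{struct}. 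This is exactly Case 1 of the paper's Section \ref{sec11} (see the step \eqref{thm34pfbackward}), and it is also where a second factor of $c(m^+_2/m^-_2)$ enters the constant. Your toolkit contains Theorem \ref{thm:back}, so the gap is repairable, but as written the interior case is attributed to a result that does not exist and would not hold for quotients of solutions with no boundary vanishing.
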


\begin{theorem}\label{thm:doub} Let $\Omega\subset\mathbb R^{N+1}$ be a Lipschitz domain with constant $M$ and assume in addition \eqref{struct}.  Then there exist
 positive $\Lambda=\Lambda(m,M)$,  $c=c(m,\kappa,M)$,  $1\leq c<\infty$, such that the following is true. Let $(Z_0,t_0)\in\partial\Omega$, $0<\rho_0<\infty$. Then
\begin{eqnarray*}
\omega\bigl (A_{c\rho_0,\Lambda}^+(Z_0,t_0), \partial\Omega\cap\mathcal{B}_{2\rho}(\tilde Z_0,\tilde t_0)\bigr )\leq
c\omega\bigl (A_{c\rho_0,\Lambda}^+(Z_0,t_0), \partial\Omega\cap\mathcal{B}_{\rho}(\tilde Z_0,\tilde t_0)\bigr )
\end{eqnarray*}
for all balls $\mathcal{B}_{\rho}(\tilde Z_0,\tilde t_0)$, $(\tilde Z_0,\tilde t_0)\in\partial\Omega$, such that $\mathcal{B}_{\rho}(\tilde Z_0,\tilde t_0)\subset \mathcal{B}_{4\rho_0}(Z_0,t_0)$.
\end{theorem}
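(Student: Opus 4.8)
The strategy is the standard reduction of a doubling estimate to a combination of the Carleson estimate, the backward (in time) boundary Harnack principle, and a change-of-pole (comparison) argument, all of which are available here: Theorem \ref{thm:carleson} and, under the structural hypothesis \eqref{struct}, Theorems \ref{thm:back} and \ref{thm:quotients}. Fix $(Z_0,t_0)\in\partial\Omega$, $0<\rho_0<\infty$, and put $\omega^* := \omega(A_{c\rho_0,\Lambda}^+(Z_0,t_0),\cdot)$ for the constant $c$ to be chosen. The plan is to estimate, for a surface ball $\Delta_\rho := \partial\Omega\cap\mathcal{B}_\rho(\tilde Z_0,\tilde t_0)$ with $\mathcal{B}_\rho(\tilde Z_0,\tilde t_0)\subset\mathcal{B}_{4\rho_0}(Z_0,t_0)$, both $\omega^*(\Delta_{2\rho})$ and $\omega^*(\Delta_\rho)$ in terms of the value of a suitable solution at an interior reference (corkscrew) point attached to $\mathcal{B}_\rho(\tilde Z_0,\tilde t_0)$, namely the point $A^+_{\rho,\Lambda}(\tilde Z_0,\tilde t_0)$ (or its $A_{\rho,\Lambda}(\tilde Z_0,\tilde t_0)$ variant).

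First I would prove the upper bound $\omega^*(\Delta_{2\rho})\lesssim \omega(A^+_{2\rho,\Lambda}(\tilde Z_0,\tilde t_0),\Delta_{4\rho})\cdot(\text{something})$, but more efficiently: the function $u(Z,t):=\omega(Z,t,\Delta_{2\rho})$ is a non-negative solution of $\L u=0$ in $\Omega\cap\mathcal{B}_{cr}(\tilde Z_0,\tilde t_0)$ for $r$ of order $\rho$, vanishing continuously on $\partial\Omega\cap\mathcal{B}_{cr}$ away from $\overline{\Delta_{2\rho}}$; applying the Carleson estimate (Theorem \ref{thm:carleson}) at scale $\asymp\rho$ centered at $(\tilde Z_0,\tilde t_0)$ gives $u(A^+_{c\rho_0,\Lambda}(Z_0,t_0))\lesssim u(A^+_{c'\rho,\Lambda}(\tilde Z_0,\tilde t_0))$, provided $A^+_{c\rho_0,\Lambda}(Z_0,t_0)$ lies in the relevant ball $\mathcal{B}_{2\rho/c}(\tilde Z_0,\tilde t_0)$ — which is exactly why one must first move the pole to a fixed distant point; this is handled by a preliminary application of Harnack chaining along a curve inside $\Omega$ together with the maximum principle, using $\mathcal{B}_\rho(\tilde Z_0,\tilde t_0)\subset\mathcal{B}_{4\rho_0}(Z_0,t_0)$. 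The reverse inequality, bounding $u(A^+_{c'\rho,\Lambda}(\tilde Z_0,\tilde t_0))=\omega(A^+_{c'\rho,\Lambda}(\tilde Z_0,\tilde t_0),\Delta_{2\rho})$ from below by a dimensional constant, follows from the lower bound on parabolic measure of a surface ball as seen from a corkscrew point at comparable scale; this lower bound is itself a consequence of the Carleson estimate applied to the harmonic-measure-type barrier (or directly from Harnack plus the support properties of $\omega$), and gives $u(A^+_{c'\rho,\Lambda}(\tilde Z_0,\tilde t_0))\gtrsim 1$.

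Then, to compare $\omega^*(\Delta_{2\rho})$ with $\omega^*(\Delta_\rho)$, I would use the change-of-pole principle packaged by Theorems \ref{thm:back} and \ref{thm:quotients}: these say that on the ball $\mathcal{B}_{\rho/c_1}(\tilde Z_0,\tilde t_0)$ the ratio of two non-negative solutions vanishing on the boundary is controlled by its value at the reference point $A_{\rho,\Lambda}(\tilde Z_0,\tilde t_0)$, with constants depending only on $m,\kappa,M$ and on the ratios $m^+/m^-$ at the top/bottom corkscrew points — and a separate input (the Harnack inequality together with the Carleson estimate and the barrier construction) shows those ratios are themselves bounded by dimensional constants for harmonic-measure solutions. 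Applying this with $u(Z,t)=\omega(Z,t,\Delta_{2\rho})$ and $v(Z,t)=\omega(Z,t,\Delta_\rho)$ at the pole $A^+_{c\rho_0,\Lambda}(Z_0,t_0)$ — after noting $A^+_{c\rho_0,\Lambda}(Z_0,t_0)$ is not itself in $\mathcal{B}_{\rho/c_1}(\tilde Z_0,\tilde t_0)$ in general, so one more Harnack-chain/maximum-principle step is needed to transfer the comparison from the interior corkscrew point of $\mathcal{B}_\rho(\tilde Z_0,\tilde t_0)$ out to the pole — yields
$$\omega^*(\Delta_{2\rho}) \le C\,\frac{\omega(A_{\rho,\Lambda}(\tilde Z_0,\tilde t_0),\Delta_{2\rho})}{\omega(A_{\rho,\Lambda}(\tilde Z_0,\tilde t_0),\Delta_\rho)}\,\omega^*(\Delta_\rho) \le C'\,\omega^*(\Delta_\rho),$$
where in the last step $\omega(A_{\rho,\Lambda}(\tilde Z_0,\tilde t_0),\Delta_{2\rho})\le 1$ trivially and $\omega(A_{\rho,\Lambda}(\tilde Z_0,\tilde t_0),\Delta_\rho)\gtrsim 1$ by the corkscrew lower bound mentioned above. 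This is precisely the claimed doubling inequality with $c=C'$.

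The main obstacle is bookkeeping the pole: Theorems \ref{thm:back} and \ref{thm:quotients} are local to $\mathcal{B}_{\rho/c_1}(\tilde Z_0,\tilde t_0)$, whereas the doubling statement is about the fixed far-away pole $A^+_{c\rho_0,\Lambda}(Z_0,t_0)$, so every comparison must be transported from a corkscrew point of the small ball to that pole by a Harnack chain whose length is controlled by $\log(\rho_0/\rho)$ — and one must check that the resulting multiplicative constant does \emph{not} degenerate, i.e. that it telescopes correctly because each step compares \emph{ratios} of solutions rather than the solutions themselves. Concretely, this means one should run the argument on the function $(Z,t)\mapsto \omega(Z,t,\Delta_{2\rho})/\omega(Z,t,\Delta_\rho)$ and use that it is (up to the boundary Harnack principle \ref{thm:quotients}) essentially constant on each Harnack ball, so the product over the chain stays bounded. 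The other delicate point — genuinely using the structural assumption \eqref{struct} — is that the backward Harnack inequality, hence Theorem \ref{thm:back}, is only available when $A$ and $\psi$ are independent of $y_m$; this is why the theorem is stated with \eqref{struct}, and nothing in the proof attempts to circumvent it.
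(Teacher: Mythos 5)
Your plan does not close, and the gap is structural rather than a matter of bookkeeping. The change-of-pole step is applied to $u=\omega(\cdot,\Delta_{2\rho})$ and $v=\omega(\cdot,\Delta_\rho)$ near $(\tilde Z_0,\tilde t_0)$, but Theorems \ref{thm:back} and \ref{thm:quotients} require the solutions to vanish continuously on $\partial\Omega\cap\mathcal{B}_{2r}(\tilde Z_0,\tilde t_0)$, and these two functions do exactly the opposite: they are equal to $1$ (in the limiting sense) on the surface balls where the comparison is needed, and vanish only on the complementary part of the boundary. So the quotient/backward Harnack machinery is not applicable to them where you invoke it. Moreover, your displayed inequality $\omega^*(\Delta_{2\rho})\le C\,\omega(A_{\rho,\Lambda}(\tilde Z_0,\tilde t_0),\Delta_{2\rho})\,\omega(A_{\rho,\Lambda}(\tilde Z_0,\tilde t_0),\Delta_\rho)^{-1}\,\omega^*(\Delta_\rho)$ is in substance the change-of-pole formula of Lemma \ref{lemmacruc-}, which in this paper is \emph{derived from} the doubling property via the kernel-function estimates (Lemma \ref{lem4.5-Kyoto1ha}), so as structured your argument is circular. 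The auxiliary transfer step also fails: moving the Carleson bound from the corkscrew point at scale $\rho$ out to the far pole $A^+_{c\rho_0,\Lambda}(Z_0,t_0)$ ``by Harnack chaining and the maximum principle'' collides with the time-direction rigidity of Lemma \ref{harnack} (control only propagates from earlier to later times, and here the far pole sits at a later time than the data at scale $\rho$), and a chain of length $\log(\rho_0/\rho)$ would in any case produce a constant degenerating like a power of $\rho_0/\rho$; the ``telescoping ratio'' fix you propose is again a quotient boundary Harnack statement for functions that do not vanish on the relevant boundary portion.

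The paper's proof (Lemma \ref{lem.doub}) resolves precisely this obstruction by switching to the adjoint Green function $G(A^+_{r,\Lambda},\cdot)$, which \emph{is} an adjoint solution vanishing on $\partial\Omega$ near $(\tilde Z_0,\tilde t_0)$. The two-sided comparisons between parabolic measure and Green function, Lemma \ref{gensalsa-a} and Lemma \ref{gensalsa-asecondpart}, convert $\omega(A^+_{r,\Lambda},\Delta_{2\tilde r}(\tilde Z_0,\tilde t_0))$ into $\tilde r^{{\bf q}-2}G(A^+_{r,\Lambda},A^-_{2\tilde c\tilde r,\Lambda}(\tilde Z_0,\tilde t_0))$ from above and $\omega(A^+_{r,\Lambda},\Delta_{\tilde r}(\tilde Z_0,\tilde t_0))$ into $\tilde r^{{\bf q}-2}G(A^+_{r,\Lambda},A^+_{\tilde r,\Lambda}(\tilde Z_0,\tilde t_0))$ from below; the adjoint version of Theorem \ref{thm:back} (this is where \eqref{struct} enters, as you correctly note) then gives $G(A^+_{r,\Lambda},A^-_{2\tilde c\tilde r,\Lambda}(\tilde Z_0,\tilde t_0))\lesssim G(A^+_{r,\Lambda},A^+_{2\tilde c\tilde r,\Lambda}(\tilde Z_0,\tilde t_0))$, and the auxiliary claim $G(A^+_{r,\Lambda},A^+_{r/1000,\Lambda})\approx G(A^+_{r,\Lambda},A^-_{r/1000,\Lambda})$ ensures the $m^+/m^-$ dependence of the backward Harnack constant reduces to a constant depending only on $m,\kappa,M$; the adjoint Carleson estimate and Lemma \ref{lem4.7rho} finish the job. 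None of these ingredients (Green function, Lemmas \ref{gensalsa-a} and \ref{gensalsa-asecondpart}, the adjoint equation) appear in your plan; the corkscrew-type lower bound $\omega(A^+_{c\rho,\Lambda}(\tilde Z_0,\tilde t_0),\Delta_\rho)\gtrsim 1$ you do use is correct but peripheral. To repair the plan, replace the parabolic measures by $G(A^+_{c\rho_0,\Lambda}(Z_0,t_0),\cdot)$ via the CFMS-type lemmas and run the backward Harnack argument on that adjoint solution.
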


As mentioned before,  in the prototype case $A\equiv I_m$, i.e., in the case of the operator $\K$, Theorems \ref{thm:back}-\ref{thm:doub} are proved in \cite{NP}, and our
Theorems \ref{thm:back}-\ref{thm:doub} represent far reaching generalizations of the results in \cite{NP}. Here it is also fair to refer to \cite{CNP1}, \cite{CNP2} and \cite{CNP3} for some relevant estimates in the context of operators in non-divergence form. Compared to these previous results, the proofs presented here consistently have to take into account that in this paper the coefficients $\{a_{i,j}\}$ are, from a quantitative perspective, only assumed to be bounded, measurable, symmetric and uniformly elliptic.

The results in \cite{NP} are established assuming that $\Omega\subset\mathbb R^{N+1}$ is a Lipschitz domain with constant $M$ and that \eqref{struct} $(ii)$ holds (in \cite{NP} obviously \eqref{struct} $(i)$ is satisfied). In particular, for reasons that are explained in detail in \cite{NP} the results, including the translation
invariant doubling property of parabolic measure, were derived using the assumption that the defining function for $\Omega$ in \eqref{dom-}, $\psi$, was assumed to be independent of the variable $y_m$. This assumption gave the authors  a crucial additional
degree of freedom at their disposal when building Harnack chains to connect points: they could freely connect points in the $x_m$ variable,
taking  geometric restrictions into account, accepting that the path in the $y_m$ variable will most likely not end up
in `the right spot'. This possibility to conduct translations in the $y_m$ variable is also reason why we in Theorems \ref{thm:back}-\ref{thm:doub} assume \eqref{struct}.

\subsection{Structure of the paper} The rest of the paper is organized as follows. In Section \ref{sec2+} we establish crucial local estimates such as a maximum principle, energy estimates, interior regularity estimates, Harnack inequalities and related estimates. We also introduce, and state estimates of, the fundamental solution. Section \ref{sec5} is devoted to the proof of Theorem \ref{lem4.5-Kyoto1} which is based on estimates introduced in Section \ref{sec2+}. In Section \ref{sec6} we discuss the Dirichlet problem and prove Theorem \ref{thm:dp}. We also prove a maximum principle on unbounded Lipschitz domains. In Section \ref{sec7} we prove Theorem \ref{thm:carleson}.
In Section \ref{sec8} we introduce the Green function and discuss its relation to the parabolic measure associated to $\L$ and $\Omega$.
In Section \ref{sec9} we prove a weak comparison principle close to the boundary and discuss some consequences of it and the structural assumption \eqref{struct}. Note that up until and including Lemma \ref{compprinciple}, assumption \eqref{struct} is not used. Furthermore, assumption \eqref{struct} is only used explicitly in the proofs of Lemma \ref{T:backprel} and Lemma \ref{lem4.6+}, and implicitly in subsequent statements based on Lemma \ref{T:backprel}.
In Section \ref{sec10} we prove Theorem \ref{thm:back}. It is worth noting that Lemma \ref{T:backprel} (and therefore assumption \eqref{struct}) appears to be crucial to the proof. In Section \ref{sec11} we prove Theorem \ref{thm:quotients}. In Section \ref{sec12} we discuss properties of the parabolic measure associated to $\L$ and $\Omega$. In particular we prove Theorem \ref{thm:doub} and prove estimates of the related kernel function.\\

\noindent
{\bf Convention concerning constants.} Throughout the paper we will use following conventions. By $c$ we will denote a constant satisfying $1\leq c<\infty$, and $c$ will at most
depend on  $m$, $\kappa$ and $M$ unless otherwise stated.  We write that $c_1\lesssim c_2$ if $c_1/c_2$ is bounded from above by a positive constant depending at most on $m$, $\kappa$ and $M$. We write $c_1\approx c_2$ if $c_1\lesssim c_2$ and $c_2\lesssim c_1$.

\section{Basic principles and estimates}\label{sec2+}
Throughout the paper we will use the notation
\begin{align*}
    Q_r&:=\{(X,Y,t) \mid |x_i|<r,\ |y_i|<r^3,\ |t|<r^2\},\\
    Q_{M,r}&:=\{(X,Y,t) \mid |x_i|<r,\ i=1,...,m-1,\ |x_m|<4Mr,\ |y_i|<r^3,\ |t|<r^2\},
\end{align*}
where $r>0$, and given $(Z_0,t_0)=(X_0,Y_0,t_0)\in\mathbb R^{N+1}$ we let $Q_r(Z_0,t_0)=(Z_0,t_0)\circ Q_r$, $Q_{M,r}(Z_0,t_0)=(Z_0,t_0)\circ Q_{M,r}$. Note that given $M\geq 0$, there exists $c=c(m,M)\geq 1$  such that
\begin{equation}\label{intextball}
\mathcal{B}_{r/c}(Z_0,t_0)\subseteq Q_{M,r}\subseteq\mathcal{B}_{cr}(Z_0,t_0),
\end{equation}
for every $(Z_0,t_0)\in\R^{N+1}$ and $r>0$. Given $\psi$ as in \eqref{def:LipK}, $\Omega$ as in \eqref{def:Omega},  and $(Z_0,t_0)\in \partial\Omega$ we let
\begin{equation}\label{def.Omega.Delta.fr}
\begin{split}
    &\Omega_{r}(Z_0,t_0):=Q_{M,r}(Z_0,t_0)\cap\{(X,Y,t) \mid  \psi(x,Y,t)<x_m<4Mr+\psi(x_0,Y_0,t_0)\},\\
    &\Delta_{r}(Z_0,t_0):=Q_{r}(Z_0,t_0)\cap\{(X,Y,t) \mid  x_m=\psi(x,Y,t)\}.
\end{split}
\end{equation}
Note that  there exists $c=c(m,M)$, $1\leq c<\infty$, such that
\begin{eqnarray}\label{def.Omega.Delta.frinc}\Omega\cap\mathcal{B}_{r/c}(Z_0,t_0)\subset \Omega_{r}(Z_0,t_0)\subset \Omega\cap\mathcal{B}_{cr}(Z_0,t_0)
\end{eqnarray}
for all $(Z_0,t_0)\in\partial\Omega$ and $r>0$.

Throughout the paper we will, given $(Z_0,t_0)\in\partial\Omega$ and $r>0$, rather consistently use the notation $Q_r(Z_0,t_0)$, $\Omega_{r}(Z_0,t_0)$, and $\Delta_{r}(Z_0,t_0)$ instead of the notation used in statement of our main results:  $\mathcal{B}_r(Z_0,t_0)$, $\Omega\cap \mathcal{B}_r(Z_0,t_0)$, and $\partial\Omega\cap \mathcal{B}_r(Z_0,t_0)$.
Using \eqref{intextball} and \eqref{def.Omega.Delta.frinc} we note that we can readily move between the two different sets of notation.

\subsection{Comparison/maximum principle}
We first prove the weak maximum principle in $\Omega_{r}(Z_0,t_0)$ and we recall that we are qualitatively assuming  \eqref{eq2+}.
\begin{lemma}\label{maxprinciple}
Let $(Z_0,t_0)\in\R^{N+1}$, $r>0$, $\Omega_r:= \Omega_{r}(Z_0,t_0)$.  Let $u\in C^2(\Omega_r)\cap C(\overline{\Omega_r})$ be such that
\begin{equation}
    \begin{cases}
    \L u \geq 0 \quad \text{in } \Omega_{r},\\
    u \leq 0 \quad \text{on } \partial \Omega_{r}.
    \end{cases}
\end{equation}
Then $u\leq 0$ in $\Omega_{r}$.
\end{lemma}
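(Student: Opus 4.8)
The plan is to prove a weak maximum principle for the degenerate Kolmogorov operator $\L$ on the bounded-in-space slab $\Omega_r=\Omega_r(Z_0,t_0)$. The natural approach is to perturb $u$ by a strictly decreasing function of $t$ to create a genuine strict subsolution, argue at an interior maximum, and then let the perturbation vanish. Concretely, for $\e>0$ set $u_\e(Z,t):=u(Z,t)-\e\big(C-t\big)$ where $C$ is chosen so that $C-t>0$ on $\overline{\Omega_r}$ (possible since $\Omega_r$ is bounded in $t$ by construction, being contained in $Q_{M,r}(Z_0,t_0)$). Then $\L u_\e=\L u+\e\,\partial_t(C-t)\cdot(-1)$; more carefully, since $\L(C-t)=\nabla_X\cdot(A\nabla_X(C-t))+X\cdot\nabla_Y(C-t)-\partial_t(C-t)=0+0-(-1)=1$, we get $\L u_\e=\L u+\e\geq\e>0$ in $\Omega_r$, and $u_\e\leq u\leq 0$ on $\partial\Omega_r$.

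Next I would argue by contradiction: suppose $\sup_{\overline{\Omega_r}}u_\e>0$. Since $u_\e\in C(\overline{\Omega_r})$ and $u_\e\leq0$ on $\partial\Omega_r$, the supremum is attained at an interior point $(Z^*,t^*)\in\Omega_r$, and we may choose one with $t^*$ minimal among all maximizers (or argue on the parabolic-type boundary — the key point is that at the first time the maximum is achieved, $\partial_t u_\e(Z^*,t^*)\leq 0$). At such a point the first-order conditions give $\nabla_X u_\e=0$ and $\nabla_Y u_\e=0$, the Hessian $\nabla_X^2 u_\e$ is negative semidefinite, and $\partial_t u_\e\leq 0$. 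Using symmetry and ellipticity \eqref{eq2} of $A$, together with the smoothness assumption \eqref{eq2+}, one has $\nabla_X\cdot(A\nabla_X u_\e)=\mathrm{tr}(A\,\nabla_X^2 u_\e)+(\nabla_X\cdot A)\cdot\nabla_X u_\e=\mathrm{tr}(A\,\nabla_X^2 u_\e)\leq 0$ at $(Z^*,t^*)$, since a symmetric positive definite matrix paired against a negative semidefinite matrix has nonpositive trace. Also $X\cdot\nabla_Y u_\e=0$ there. Hence $\L u_\e(Z^*,t^*)=\mathrm{tr}(A\nabla_X^2u_\e)+X\cdot\nabla_Y u_\e-\partial_t u_\e\leq 0$, contradicting $\L u_\e>0$. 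Therefore $u_\e\leq 0$ throughout $\overline{\Omega_r}$, and letting $\e\to0$ yields $u\leq0$ in $\Omega_r$.

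The main obstacle is the degeneracy: $\L$ has no diffusion in the $Y$ or $t$ directions, so the naive interior-maximum argument must be handled with care regarding \emph{which} boundary pieces of $\Omega_r$ count as ``parabolic boundary'' and regarding the drift term $X\cdot\nabla_Y$. The cleanest way to sidestep this is precisely the $t$-perturbation above combined with the observation that at an interior maximizer with minimal time-coordinate one still has $\partial_t u_\e\leq 0$ — this is the standard device for parabolic maximum principles and does not require any nondegeneracy in $t$. The drift term causes no trouble because it is first-order and vanishes identically at the maximizer. One should also note the hypothesis $u\le 0$ on \emph{all} of $\partial\Omega_r$ (not just the parabolic boundary) is what makes the contradiction step clean, so no subtlety about the top time-slice arises. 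Thus the argument reduces to the elementary linear-algebra fact about traces plus the time-monotonicity trick, and the smoothness \eqref{eq2+} is used only to make $\L u$ pointwise meaningful for $u\in C^2$.
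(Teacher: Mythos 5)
Your strategy is sound and is in fact a genuinely different barrier from the one the paper uses: the paper perturbs by $\epsilon e^{Kx_m}$ and exploits the ellipticity of $a_{m,m}$ (choosing $K$ large so that $Ka_{m,m}$ dominates the first-order coefficient $b_m$ arising when $\L$ is rewritten in non-divergence form), whereas you exploit the transport term $-\partial_t$, which gives the barrier a strictly positive image under $\L$ for free, with no largeness condition on any constant. Your version also dispenses with the paper's compactness/subsequence step, since for each fixed $\epsilon$ you argue directly with $u_\epsilon$. The treatment of the interior maximum (vanishing of the full gradient, $\mathrm{tr}(A\,\mathrm{H}_X(u_\epsilon))\leq 0$ for $A$ positive definite and $\mathrm{H}_X(u_\epsilon)$ negative semidefinite, smoothness of $A$ to pass to non-divergence form) matches the paper's and is correct; you are also right that, because the hypothesis is $u\leq 0$ on the full topological boundary, no discussion of a parabolic or Kolmogorov boundary is needed.

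There is, however, a sign error that as written breaks the central inequality. With $u_\epsilon:=u-\epsilon(C-t)$ one has $\L u_\epsilon=\L u-\epsilon\,\L(C-t)=\L u-\epsilon\geq-\epsilon$, not $\L u+\epsilon$; so $u_\epsilon$ is not a strict subsolution and the contradiction at the interior maximum evaporates. The correct choice is $u_\epsilon:=u+\epsilon(C-t)$ (equivalently $u-\epsilon t$ up to an additive constant), which gives $\L u_\epsilon=\L u+\epsilon\geq\epsilon>0$ in $\Omega_r$. The price is that then $u_\epsilon\leq\epsilon(C-t)$ on $\partial\Omega_r$ rather than $u_\epsilon\leq 0$; this is harmless: the interior-maximum argument shows that $u_\epsilon$ attains its maximum over $\overline{\Omega_r}$ only on $\partial\Omega_r$, hence $u\leq u_\epsilon\leq \epsilon\sup_{\overline{\Omega_r}}(C-t)$ throughout $\Omega_r$, and letting $\epsilon\to 0$ gives $u\leq 0$. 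With this one-line correction your proof is complete.
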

\begin{proof} We can without loss of generality assume that  $(Z_0,t_0)=(0,0)$.  Using (\ref{eq2}) we see that $a_{m,m}(Z,t)\geq \kappa^{-1} > 0$ for all $(Z,t)\in \R^{N+1}$. Using
 \eqref{eq2+} we can rewrite the operator in non-divergence form,
$$\L = \sum_{i,j=1}^m{a_{i,j}(X,Y,t)\partial_{x_i x_j}} + \sum_{i=1}^mb_i(X,Y,t)\partial_{x_j}+X\cdot\nabla_Y-\partial_t,$$
where $$b_i(X,Y,t) := \sum_{j=1}^m{\partial_{x_i}a_{i,j}(X,Y,t)}\in L^\infty(Q_{M,2r}).$$
Assume  that
\begin{equation}\label{contra}\max_{\Omega_r} u > \max_{\partial\Omega_r} u.
\end{equation}
For $\epsilon>0$, put $u_\epsilon = u + \epsilon e^{Kx_m}$  for some constant $K>1$ to be chosen. Then
$u_\epsilon \rightarrow u$ uniformly on $\overline{\Omega_r}$ as $\epsilon\rightarrow 0$. Let $(\hat X_\epsilon, \hat Y_\epsilon, \hat t_\epsilon)$ be such that $u_\epsilon(\hat X_\epsilon, \hat Y_\epsilon, \hat t_\epsilon) = \max_{\overline{\Omega_r}}u_\epsilon$. By compactness we may pick a subsequence such that $(\hat X_\epsilon, \hat Y_\epsilon, \hat t_\epsilon)\rightarrow (\hat X,\hat Y,\hat t)$  and by using the uniform convergence we have
$$u(\hat X,\hat Y,\hat t)=\max_{\overline{\Omega_r}}u.$$
Using \eqref{contra} we have $(\hat X,\hat Y,\hat t)\in\Omega_r$ and, for $\epsilon$ small, $(\hat X_\epsilon,\hat Y_\epsilon,\hat t_\epsilon)\in\Omega_r$.
Note that $$\nabla_{X,Y,t} u_\epsilon(\hat X_\epsilon,\hat Y_\epsilon,\hat t_\epsilon)=0,$$ and that
$$\sum_{i,j=1}^m{a_{i,j}(\hat X_\epsilon,\hat Y_\epsilon,\hat t_\epsilon)\partial_{x_i x_j}u_\epsilon(\hat X_\epsilon,\hat Y_\epsilon,\hat t_\epsilon)} = \mathrm{trace}(A(\hat X_\epsilon,\hat Y_\epsilon,\hat t_\epsilon)\mathrm{H}_X(u_\epsilon(\hat X_\epsilon,\hat Y_\epsilon,\hat t_\epsilon)))\leq 0,$$
where $\mathrm{H}_X(u(X,Y,t))$ denotes the hessian matrix in the $X$-variable of $u$ at $(X,Y,t)$. Thus,
\begin{equation}\label{maxprinproof:eq1}
    \begin{split}
        \L u_\epsilon(\hat X_\epsilon,\hat Y_\epsilon,\hat t_\epsilon) &=
    \mathrm{trace}(A(\hat X_\epsilon,\hat Y_\epsilon,\hat t_\epsilon)\mathrm{H}_X(u_\epsilon(\hat X_\epsilon,\hat Y_\epsilon,\hat t_\epsilon)))\\
    &+ \sum_{i,j=1}^m{b_i}(\hat X_\epsilon,\hat Y_\epsilon,\hat t_\epsilon)\partial_{x_j}u_\epsilon(\hat X_\epsilon,\hat Y_\epsilon,\hat t_\epsilon)\\
    & \quad +\hat{X}_\epsilon\cdot\nabla_Yu_\epsilon(\hat X_\epsilon,\hat Y_\epsilon,\hat t_\epsilon) - \partial_tu_\epsilon(\hat X_\epsilon,\hat Y_\epsilon,\hat t_\epsilon)
    \leq 0.
    \end{split}
\end{equation}
On the other hand,
\begin{equation*}
    \begin{split}
        \L u_\epsilon(\hat X_\epsilon,\hat Y_\epsilon,\hat t_\epsilon) &= \L u(\hat X_\epsilon,\hat Y_\epsilon,\hat t_\epsilon) + \L\left(\epsilon e^{Kx_m}\right)\\
        &= \L u(\hat X_\epsilon,\hat Y_\epsilon,\hat t_\epsilon) + \epsilon K e^{Kx_m}(K a_{m,m}(\hat X_\epsilon,\hat Y_\epsilon,\hat t_\epsilon) + b_m(\hat X_\epsilon,\hat Y_\epsilon,\hat t_\epsilon))\\
        &\geq \epsilon K e^{Kx_m} (K/\kappa - \|b_m\|_{L^\infty(Q_{M,r})}) > 0,
    \end{split}
\end{equation*}
for $K$ large enough. This contradicts (\ref{maxprinproof:eq1}) and hence \eqref{contra}. Hence \eqref{contra} is false and the proof is complete.\end{proof}

\subsection{Energy estimates and local H{\"o}lder continuity}
We state and prove the following energy estimate.
\begin{lemma}\label{lem:energy}
Let $(Z_0,t_0)\in\R^{N+1}$, $r>0$. Let $u$ be a weak solution to the equation $\L u=0$ in $Q_{2r}(Z_0,t_0)$. Then
\begin{equation}
    \iiint_{Q_r(Z_0,t_0)}{|\nabla_X u|^2 \, \d Z\d t} \lesssim{r^{-2}}\iiint_{Q_{2r}(Z_0,t_0)}{|u|^2 \, \d Z\d t}.
\end{equation}
\end{lemma}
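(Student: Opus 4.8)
The plan is to prove the Caccioppoli (energy) inequality by the standard test-function argument adapted to the Kolmogorov structure, paying attention to the transport term $X\cdot\nabla_Y u$ which is the only genuinely new feature compared to the uniformly parabolic case. First I would fix a cutoff function $\phi\in C_0^\infty(Q_{2r}(Z_0,t_0))$ with $0\le\phi\le 1$, $\phi\equiv 1$ on $Q_r(Z_0,t_0)$, and with $|\nabla_X\phi|\lesssim r^{-1}$, $|\partial_t\phi|\lesssim r^{-2}$, and $|\nabla_Y\phi|\lesssim r^{-3}$ — these bounds being exactly those dictated by the dilation structure \eqref{dil.alpha.i}, so that each term in $\L$ scales like $r^{-2}$ relative to $u^2$ after integration. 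By a routine approximation/density argument (or Steklov averaging in the $(Y,t)$ directions, which is needed because weak solutions only satisfy \eqref{weak1}--\eqref{weak2}) one may use $\varphi=\phi^2 u$ as a test function in the weak formulation \eqref{weak3}.

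The key computation is then to expand
\begin{equation*}
\iiint \bigl(A\nabla_X u\cdot\nabla_X(\phi^2 u)+(X\cdot\nabla_Y(\phi^2 u))\,u-u\,\partial_t(\phi^2 u)\bigr)\d Z\d t=0.
\end{equation*}
For the elliptic term, $\nabla_X(\phi^2 u)=\phi^2\nabla_X u+2\phi u\nabla_X\phi$, and using ellipticity \eqref{eq2} together with the Cauchy--Schwarz / Young inequality one absorbs the cross term to obtain a lower bound $\gtrsim \iiint\phi^2|\nabla_X u|^2 - \iiint|\nabla_X\phi|^2 u^2$. For the time term, $-u\,\partial_t(\phi^2 u)=-\tfrac12\phi^2\partial_t(u^2)-\phi u^2\partial_t\phi$; the first piece integrates by parts in $t$ to $+\tfrac12 u^2\partial_t(\phi^2)=\phi u^2\partial_t\phi$, and these combine to leave only a term controlled by $\iiint|\partial_t\phi|\,u^2\lesssim r^{-2}\iiint u^2$. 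The transport term is the delicate one: writing $X\cdot\nabla_Y(\phi^2 u)\cdot u=\tfrac12 X\cdot\nabla_Y(\phi^2 u^2)\cdot\phi^{-1}\cdot\dots$ — more cleanly, $(X\cdot\nabla_Y(\phi^2 u))u=\tfrac12\phi^2 X\cdot\nabla_Y(u^2)+ u^2\phi\, X\cdot\nabla_Y\phi$; one integrates the first term by parts in $Y$, and since $\divo_Y X=0$ (the vector field $X\cdot\nabla_Y$ is divergence-free in $Y$) this produces $-\tfrac12 u^2 X\cdot\nabla_Y(\phi^2)=-u^2\phi\, X\cdot\nabla_Y\phi$, which exactly cancels the remaining piece. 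Thus the transport term contributes nothing, which is the structural miracle that makes the estimate go through with no loss.

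Putting the three estimates together yields $\iiint\phi^2|\nabla_X u|^2\lesssim \iiint(|\nabla_X\phi|^2+|\partial_t\phi|)u^2\lesssim r^{-2}\iiint_{Q_{2r}(Z_0,t_0)}u^2$, and restricting the left side to $Q_r(Z_0,t_0)$ where $\phi\equiv 1$ gives the claim. I expect the main obstacle to be purely technical rather than conceptual: justifying that $\phi^2 u$ is an admissible test function given only the regularity \eqref{weak1}--\eqref{weak2}, and in particular making rigorous the integration by parts in $t$ (where $\partial_t u$ lives only in $L^2_{Y,t}H^{-1}_X$) — this is handled by a Steklov-type mollification in $(Y,t)$, establishing the identity for the mollified solution and passing to the limit, together with the observation that the cancellation of the transport term is exact and survives the limit because it relies only on $\divo_Y(X)=0$ and integration by parts in the smooth variables $Y$.
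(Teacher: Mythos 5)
Your overall strategy (test with $\phi^2u$, Young's inequality on the elliptic term, symmetrize the first--order terms, Steklov averaging to justify the test function) is the same as the paper's, which simply takes $\phi^2u$ as a test function and invokes the qualitative smoothness assumption \eqref{eq2+}. However, your treatment of the transport term contains a genuine error: the claimed exact cancellation rests on a dropped factor of $2$. Correctly, $(X\cdot\nabla_Y(\phi^2u))\,u=\tfrac12\phi^2\,X\cdot\nabla_Y(u^2)+2\phi u^2\,X\cdot\nabla_Y\phi$, and after integrating the first piece by parts in $Y$ (using that $X\cdot\nabla_Y$ is divergence--free in $Y$) one is left with
\begin{equation*}
\iiint (X\cdot\nabla_Y(\phi^2u))\,u\,\d Z\d t=\tfrac12\iiint u^2\,X\cdot\nabla_Y(\phi^2)\,\d Z\d t,
\end{equation*}
which does not vanish. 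The transport term survives and must be combined with the analogous leftover from the time term, producing $\tfrac12\iiint u^2\,(X\cdot\nabla_Y-\partial_t)(\phi^2)\,\d Z\d t$.

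This matters because you prescribe the cutoff through the separate bounds $|\nabla_Y\phi|\lesssim r^{-3}$ and $|\partial_t\phi|\lesssim r^{-2}$. With those bounds alone the surviving term is only controlled by $\sup_{Q_{2r}(Z_0,t_0)}|X|\cdot r^{-3}\iiint u^2\approx(|X_0|+r)\,r^{-3}\iiint u^2$, which is not $\lesssim r^{-2}\iiint u^2$ once $|X_0|\gg r$; the implicit constant in the lemma may depend only on $m$ and $\kappa$, not on the center $(Z_0,t_0)$. The correct requirement is the one the paper imposes: $r|\nabla_X\phi|+r^2|(X\cdot\nabla_Y-\partial_t)\phi|\lesssim 1$, i.e.\ a bound on the left--invariant drift derivative of $\phi$ rather than on $\nabla_Y\phi$ and $\partial_t\phi$ separately. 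Such a $\phi$ is obtained by reducing to $(Z_0,t_0)=(0,0)$ via the group law \eqref{e70} (as the paper does) and building the cutoff on the centered cylinder, where $|x_k|\lesssim r$ gives $|x_k\partial_{y_k}\phi|\lesssim r\cdot r^{-3}=r^{-2}$. With that cutoff the combined first--order contribution is $\lesssim r^{-2}\iiint_{Q_{2r}}u^2$ and the rest of your argument goes through unchanged.
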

\begin{proof} The proof is standard and we note that  can without loss of generality assume that  $(Z_0,t_0)=(0,0)$ and we let $Q_r:=Q_r(0,0)$.  Let $\phi$ be a test function such that
$\phi\in C^\infty_0(Q_{2r})$, $\phi = 1$ in $Q_r$ and $r|\nabla_X\phi|+r^2|(X\cdot\nabla_Y-\partial_t) \phi|\lesssim 1$ in $Q_{2r}$. Using that $\L u=0$ in $Q_{2r}$ and using $\phi^2u$ as a test function, which is allowed due to that we are qualitatively assuming  \eqref{eq2+},  we obtain
\begin{equation}\label{eq:3}
    \iiint_{Q_{2r}}\ \bigl(A(X,Y,t)\nabla_Xu\cdot \nabla_X(\phi^2u)+(X\cdot \nabla_Y(\phi^2u))u-u\partial_t(\phi^2u)\bigr )\, \d Z\d t = 0.
\end{equation}
Manipulating this equality, using \eqref{eq2}, Cauchy-Schwarz and the properties of $\phi$, the lemma readily follows.
\end{proof}

The following two lemmas are proven in \cite{GIMV}.

\begin{lemma}\label{holder} Let $(Z_0,t_0)\in \mathbb R^{N+1}$, $r>0$.
Let $u$ be a weak solution to the equation $\L u=0$ in $Q_{2r}(Z_0,t_0)$. Then there exists  $\alpha\in (0,1)$,  depending only on  $m$ and $\kappa$, so that
$$|u(Z,t)-u(\tilde Z,\tilde t)|\lesssim \left (\frac {d((Z,t),(\tilde Z,\tilde t))}{r}\right )^\alpha\sup_{Q_{2r}(Z_0,t_0)}|u|$$
whenever $(Z,t), (\tilde Z,\tilde t)\in Q_{r}(Z_0,t_0)$.
\end{lemma}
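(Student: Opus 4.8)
The statement is quoted from \cite{GIMV}; here is the argument one reconstructs. By the dilations $\delta_r$ of \eqref{dil.alpha.i} and left translation by $(Z_0,t_0)$ — both of which send solutions of $\L u=0$ to solutions of an equation of the same class, with the same $\kappa$ in \eqref{eq2}, thanks to the group law \eqref{e70} — it suffices to take $r=1$ and $(Z_0,t_0)=(0,0)$. The H\"older bound then follows by iterating a single \emph{oscillation-decay} step: there exist $\theta\in(0,1)$ and a fixed dilation factor $c_0>1$, depending only on $m$ and $\kappa$, such that $\operatorname{osc}_{\widetilde Q}u\le\theta\,\operatorname{osc}_{Q_\rho}u$ for every small $\rho$, where $\widetilde Q\subset Q_\rho$ is a sub-cylinder comparable to $Q_{\rho/c_0}$ (possibly translated in the degenerate $(Y,t)$ directions, as forced by the drift). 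Chaining this along the geometric sequence of cylinders and invoking the quasi-triangle inequality \eqref{e-triangular} for $d$ gives the estimate with $\alpha=\log(1/\theta)/\log c_0$; passing from the cylinders $Q_\rho$ to the balls $\mathcal B_\rho$ via \eqref{intextball} puts it in the stated form.

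The oscillation decay rests on two De Giorgi–type ingredients for nonnegative sub/supersolutions of $\L w=0$. \textbf{Local boundedness.} Applying the energy inequality of Lemma \ref{lem:energy} to the truncations $(w-k)_+$ one wants an $L^2\to L^\infty$ bound, but the obstruction peculiar to Kolmogorov-type operators is that the energy inequality controls only $\nabla_X(w-k)_+$, i.e.\ the gradient in the $m$ elliptic directions, so the classical parabolic Sobolev embedding is unavailable. This is overcome by a \emph{velocity-averaging} / kinetic-regularization argument: reading $\L w=0$ as the transport equation $(\partial_t+X\cdot\nabla_Y)w=\dive_X(A\nabla_X w)$ with right-hand side bounded in $H^{-1}_X$, the explicit smoothing of the Kolmogorov kernel upgrades simultaneous $L^2$-control of $w$ and $\nabla_X w$ into $L^{2+\varepsilon}$-control of $w$ on a slightly smaller cylinder. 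Feeding this gain of integrability into the De Giorgi level-set iteration on $(w-k)_+$ yields $\sup_{Q_{1/2}}w\lesssim\|w_+\|_{L^2(Q_1)}$; in particular $\sup_{Q_{1/2}}|u|\lesssim\sup_{Q_1}|u|$.

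\textbf{Measure-to-pointwise estimate and expansion of positivity.} If $w\ge0$ is a supersolution on $Q_1$ with $|\{w\ge1\}\cap Q_1|\ge\mu|Q_1|$, one shows $w\ge\lambda$ on a fixed-size sub-cylinder, with $\lambda=\lambda(m,\kappa,\mu)>0$. The core is a De Giorgi \emph{intermediate-value} inequality — testing the equation against truncated or logarithmic functions to bound the measure of the ``in-between'' set $\{\tfrac12<w<1\}$ — which forces $\{w<\tfrac12\}$ to shrink when one iterates the local-boundedness step applied to $(\tfrac12-w)_+$; again it is the transport regularization that makes this iteration close in the degenerate setting. Stacking finitely many such sub-cylinders along the drift then propagates the lower bound across $Q_1$, and this is exactly where the non-Euclidean geometry of the group law \eqref{e70} and the role of the reference translations of \eqref{pointsref2} enter. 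The usual De Giorgi dichotomy — at least one of $(\sup_{Q_1}u-u)/\operatorname{osc}_{Q_1}u$ or $(u-\inf_{Q_1}u)/\operatorname{osc}_{Q_1}u$ exceeds $\tfrac12$ on half of $Q_1$, hence is bounded below on a sub-cylinder — then yields the oscillation-decay step.

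I expect the main obstacle to be precisely the two places where the classical parabolic proof breaks down: the absence of a Sobolev embedding, which forces the velocity-averaging gain of integrability, and the need to stack cylinders shifted in the degenerate $(Y,t)$ directions when expanding positivity, so that the sub-cylinders of the De Giorgi iteration are genuinely not concentric with those in the conclusion. Both are carried out in \cite{GIMV}, so here the lemma is simply invoked.
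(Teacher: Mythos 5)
Your proposal matches the paper exactly: Lemma \ref{holder} is not proved here but simply invoked from \cite{GIMV}, which is precisely what you do. Your reconstruction of the De Giorgi-type argument in \cite{GIMV} (scaling reduction, energy estimate plus velocity-averaging gain of integrability to substitute for the missing Sobolev embedding, intermediate-value lemma and expansion of positivity along the drift, then oscillation decay) is a faithful account of that reference, but it is supplementary to what the paper itself provides.
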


 \begin{lemma}\label{lem:moserlocal} Assume that $\L u=0$ in $Q_{2r}=Q_{2r}(Z_0,t_0)\subset \mathbb R^{N+1}$.  Given $p\in (1,\infty)$ there exists a constant $c=c(m,\kappa,p)$,
$1\leq c<\infty$, such that
\begin{eqnarray}
\sup_{Q_r}\ |u|\leq c\biggl ( \bariiint_{Q_{2r}}\ |u|^p\, \d Z \d t\biggr )^{1/p}.
\end{eqnarray}
\end{lemma}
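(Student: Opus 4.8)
This lemma is a local boundedness estimate of De Giorgi--Nash--Moser type and is proved, in the generality in which we use it, in \cite{GIMV}; the plan is to indicate the structure of that argument. First one normalises: since the transport part $X\cdot\nabla_Y-\partial_t$ of $\L$ is left-invariant with respect to the group law \eqref{e70} and homogeneous of degree $-2$ with respect to the dilations \eqref{dil.alpha.i}, the substitution $(Z,t)\mapsto\delta_{1/r}\bigl((Z_0,t_0)^{-1}\circ(Z,t)\bigr)$ maps $Q_{2r}(Z_0,t_0)$ onto $Q_2=Q_2(0,0)$ and turns $\L$ into an operator of the same form whose coefficient matrix is again bounded, symmetric and uniformly elliptic with the same constant $\kappa$; we may thus assume $(Z_0,t_0)=(0,0)$ and $r=1$, and on $Q_2$ the drift coefficient $X$ satisfies $|X|\le 2$, which is what makes the energy estimates below quantitative. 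It then suffices to prove the case $p=2$ in the scaled form
\begin{equation*}
\sup_{Q_\sigma}|u|\le \frac{c}{(\sigma'-\sigma)^{\gamma}}\Bigl(\bariiint_{Q_{\sigma'}}|u|^2\,\d Z\d t\Bigr)^{1/2},\qquad 1\le\sigma<\sigma'\le 2,
\end{equation*}
with $c,\gamma$ depending only on $m$ and $\kappa$: for $p\ge 2$ the original statement follows by H\"older's inequality, and for $p\in(1,2)$ it follows by writing $\|u\|_{L^2(Q_{\sigma'})}\le\|u\|_{L^\infty(Q_{\sigma'})}^{1-p/2}\|u\|_{L^p(Q_{\sigma'})}^{p/2}$, applying Young's inequality to absorb a fraction of the $L^\infty$ term, and invoking a standard iteration lemma.

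For the scaled $p=2$ estimate the three ingredients are a Caccioppoli inequality for powers of $|u|$, a gain-of-integrability inequality adapted to the Kolmogorov geometry, and the iteration. Since $\L u=0$ and $\L(-u)=0$, the function $w:=|u|$ is a weak subsolution, and for $\beta\ge 1$ so is $w^\beta$ by convexity; after the usual truncation $w\mapsto\min(w,k)$ (to legitimise the test functions, and a final passage $k\to\infty$) one tests the subsolution inequality against $\phi^2 w^{2\beta-1}$ with a cut-off $\phi$ as in the proof of Lemma \ref{lem:energy}, and using \eqref{eq2}, Cauchy--Schwarz and $|X|\le 2$ obtains
\begin{equation*}
\iiint\phi^2|\nabla_X w^\beta|^2\,\d Z\d t\;\lesssim\;\frac{\beta^2}{(\sigma'-\sigma)^2}\iiint_{Q_{\sigma'}}w^{2\beta}\,\d Z\d t .
\end{equation*}
The crucial point is the gain of integrability: by the regularising (hypoelliptic) effect of $\partial_t-X\cdot\nabla_Y$ --- equivalently, by the pointwise bounds on the Kolmogorov fundamental solution introduced later, or by a velocity-averaging argument --- a nonnegative $g$ with $\nabla_X g\in L^2_{\mathrm{loc}}$ and $(\partial_t-X\cdot\nabla_Y)g=\dive_X G+f$, $G,f\in L^2_{\mathrm{loc}}$, belongs to $L^{2\kappa_0}_{\mathrm{loc}}$ for some $\kappa_0=\kappa_0(m)>1$, quantitatively; applied to (a regularisation of) $\phi^2w^\beta$ and combined with the Caccioppoli inequality this yields the reverse-H\"older step
\begin{equation*}
\Bigl(\bariiint_{Q_\sigma}w^{2\kappa_0\beta}\Bigr)^{\frac{1}{2\kappa_0\beta}}\le\Bigl(\frac{c\beta}{\sigma'-\sigma}\Bigr)^{1/\beta}\Bigl(\bariiint_{Q_{\sigma'}}w^{2\beta}\Bigr)^{\frac{1}{2\beta}} .
\end{equation*}
Iterating this with $\beta=\kappa_0^{\,j}$ and radii $\sigma_j=\sigma+(\sigma'-\sigma)2^{-j}$, $j\ge 0$, and noting that the resulting product of constants converges because $\sum_j j\kappa_0^{-j}<\infty$, one lets $j\to\infty$ and recovers the scaled $p=2$ estimate with $c,\gamma$ depending only on $m$ and $\kappa$.

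The main obstacle is the gain-of-integrability step. In contrast with the uniformly parabolic case, the Caccioppoli inequality controls only the velocity gradient $\nabla_X w$, not the full gradient, so the improvement of integrability cannot come from a classical Sobolev embedding but must be extracted from the hypoellipticity of the Kolmogorov part of $\L$; this is precisely what is established in \cite{GIMV} as part of the extension of the De Giorgi--Nash--Moser theory to hypoelliptic equations with rough coefficients, and we simply quote it. Everything else above --- the normalisation, the Caccioppoli inequality, the bookkeeping in the Moser iteration, and the $L^2\to L^p$ interpolation --- is routine and accounts only for the dependence of $c$ on $m$, $\kappa$ and $p$.
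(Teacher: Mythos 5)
Your proposal is correct and in substance matches the paper, which proves nothing here but simply cites \cite{GIMV} for this local boundedness estimate; you likewise rest the hard step (the gain of integrability coming from the hypoelliptic/velocity-averaging structure) on \cite{GIMV}, and the surrounding normalisation, Caccioppoli, iteration and $L^2\to L^p$ reductions you sketch are standard and sound. The only cosmetic difference is that \cite{GIMV} run a De Giorgi-type level-set iteration rather than the Moser iteration you outline, but since the key integrability-gain lemma is quoted in either case this does not affect correctness.
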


\subsection{Harnack's inequality and Harnack chains}\label{Harnack chains}
To state the Harnack inequality we introduce some further notation. We let
\begin{eqnarray}\label{pastcyl}
   \quad\quad Q_r^-:=\{(X,Y,t) \mid |x_i|<r,\ |y_i|<r^3,\ -r^2<t\leq 0\},\quad Q_r^-(Z_0,t_0)=(Z_0,t_0)\circ Q_r^-,
\end{eqnarray}
for $(Z_0,t_0)\in\R^{N+1}$. The following Harnack inequality is proved in \cite{GIMV}.
\begin{lemma}\label{harnack} {There exist constants $c=c(m,\kappa)>1$ and $\alpha, \beta, \gamma, \theta \in (0,1)$, with
$0 < \alpha < \beta < \gamma < \theta^2$, such that the following is true. Assume $u$ is a non-negative solution to
$\L u=0$ in $Q_r^-(Z_0,t_0)$ for some $r>0$, $(Z_{0},t_{0})\in\rnn$. Then,
$$
    \sup_{\widetilde Q^-_{r}(Z_0, t_0)} u \leq c\inf_{\widetilde Q^+_{r}(Z_0, t_0)} u,
$$
}
where
\begin{equation*}
\begin{split}
       \widetilde Q^+_{r}(Z_0, t_0) & = \big\{ (x,t) \in Q_{\theta r}^-(Z_0,t_0) \mid t_0 - \alpha r^2 \le t \le t_0
\big\},\\
   \widetilde Q^-_{r}(Z_0, t_0) & = \big\{ (x,t) \in Q_{\theta r}^-(Z_0,t_0) \mid t_0 - \gamma r^2 \le t \le t_0 -\beta
r^2  \big\}.
\end{split}
\end{equation*}
\end{lemma}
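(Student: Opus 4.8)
The plan is to obtain Lemma~\ref{harnack} — which is the Harnack inequality of \cite{GIMV} — by Moser's scheme adapted to the Kolmogorov dilations \eqref{dil.alpha.i} and group law \eqref{e70}. By the translation invariance of \eqref{e70} and the scaling invariance of the class of operators under \eqref{dil.alpha.i}, both of which preserve \eqref{eq2} with the same $\kappa$, I would first reduce to $(Z_0,t_0)=(0,0)$, $r=1$. The estimate then splits into two one-sided halves: a local boundedness estimate for non-negative subsolutions and a weak Harnack (measure-to-infimum) estimate for non-negative supersolutions. The nested cylinders $\widetilde Q^{\pm}_1\subset Q_\theta^-$ and the ordering $0<\alpha<\beta<\gamma<\theta^2$ only record the geometry these steps require: the time slabs $[t_0-\gamma,t_0-\beta]$ and $[t_0-\alpha,t_0]$ must lie inside the past cylinder $Q_\theta^-$ (whence $\gamma<\theta^2$), and the forward slab must come strictly after the backward one.

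First I would prove local boundedness: for every $p>0$ there is $c=c(m,\kappa,p)$ so that a non-negative subsolution $w$ of $\L w=0$ in $Q_1^-$ satisfies $\sup_{Q_\theta^-}w\le c\,(\bariiint_{Q_1^-}w^{p}\,\d Z\d t)^{1/p}$. Testing the equation against $\varphi^{2}w^{2\beta-1}$, $\beta\ge 1$, and arguing as in the energy identity behind Lemma~\ref{lem:energy} — now also keeping the term produced by $(X\cdot\nabla_Y-\partial_t)$ hitting $\varphi^{2}$ — yields a Caccioppoli inequality for $v:=w^{\beta}$ controlling $\iiint\varphi^{2}|\nabla_X v|^{2}$ together with the dual norm of $(X\cdot\nabla_Y-\partial_t)v$. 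The essential difference from the uniformly parabolic case is that $\nabla_Y v$ is \emph{not} controlled, so the Sobolev gain of integrability is unavailable; it must instead come from the hypoelliptic structure via a velocity-averaging (kinetic Sobolev) estimate: a function with $\nabla_X v\in L^{2}$ and $(X\cdot\nabla_Y-\partial_t)v\in L^{2}_{Y,t}H^{-1}_X$ lies in a fractional Sobolev space in all of $(X,Y,t)$, hence in $L^{q}_{\loc}$ for some $q>2$ depending only on $m$. Iterating this gain along radii shrinking to $\theta$ produces the $L^{p}\to L^{\infty}$ bound, the cutoff errors being summable because of the geometric decay of the radii measured in \eqref{dil.alpha.i}; a standard covering argument then lets $p$ be arbitrarily small.

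Next I would bridge positive and negative powers. Applying the previous estimate to the subsolution $u^{-q}$ of a positive supersolution $u$ (here $\L u^{-q}\ge 0$ by \eqref{eq2}, made rigorous by Moser's truncation) gives $\inf_{\widetilde Q^{+}_1}u\gtrsim(\bariiint_{\widetilde Q^{+}_1}u^{-p_0}\,\d Z\d t)^{-1/p_0}$ for small $p_0>0$. Testing $\L u=0$ against $\varphi^{2}/u$ controls $\iiint\varphi^{2}|\nabla_X\log u|^{2}$ plus a transport contribution; together with a Poincar\'e inequality in $X$ and integration of the $\partial_t$ and $X\cdot\nabla_Y$ terms this yields a space--time oscillation bound for $\log u$ of John--Nirenberg type, hence $(\bariiint_{Q^-}u^{p_0})(\bariiint_{\widetilde Q^{+}_1}u^{-p_0})\lesssim 1$ after possibly shrinking $p_0$. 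It is exactly here that the forward cylinder must sit at a later time than the backward one: since $X\cdot\nabla_Y$ transports mass along the characteristics $(X,Y,t)\mapsto (X,Y-(t'-t)X,t')$, the logarithmic/averaging estimate links sub-level sets in the backward slab only to super-level sets in a slab at a definite later time, which is what freezes the ordering $\alpha<\beta<\gamma$ (equivalently, a De Giorgi intermediate-value and expansion-of-positivity argument whose geometry is dictated by the anisotropy, weight $3$ on $Y$ and $2$ on $t$). Chaining $\sup_{\widetilde Q^{-}_1}u\lesssim(\bariiint_{Q^-}u^{p_0})^{1/p_0}$, the John--Nirenberg bound, and the lower bound on $\inf_{\widetilde Q^{+}_1}u$ gives $\sup_{\widetilde Q^{-}_1}u\le c\inf_{\widetilde Q^{+}_1}u$ with $c=c(m,\kappa)$.

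I expect the main obstacle to be the gain-of-integrability step: in the non-degenerate parabolic theory it is free from Sobolev embedding, whereas here it must be extracted from the transport--diffusion coupling through a localized velocity-averaging lemma, with all constants monitored so that the final estimate stays invariant under the non-Euclidean group law \eqref{e70} and the anisotropic dilations \eqref{dil.alpha.i}. A secondary difficulty is the bookkeeping of the time lag: the Harnack estimate cannot be made instantaneous, and keeping the cutting times in the order $0<\alpha<\beta<\gamma<\theta^{2}$ throughout the iteration is precisely what makes the expansion-of-positivity / John--Nirenberg step close.
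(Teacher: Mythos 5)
The paper does not prove Lemma \ref{harnack} at all: it is imported verbatim from \cite{GIMV}, and the only ``proof'' offered is that citation (see the sentence immediately preceding the lemma). Your proposal is therefore, by necessity, a different route --- a self-contained reconstruction --- and as a roadmap it is faithful to how the result is actually established in \cite{GIMV}: reduction to $(Z_0,t_0)=(0,0)$, $r=1$ via the group law \eqref{e70} and the dilations \eqref{dil.alpha.i}; an $L^p\to L^\infty$ bound for non-negative subsolutions in which the gain of integrability comes not from Sobolev embedding but from a localized velocity-averaging estimate (fractional regularity in $(Y,t)$ extracted from $\nabla_X u\in L^2$ and $(X\cdot\nabla_Y-\partial_t)u\in L^2_{Y,t}(H^{-1}_X)$); and a crossover from the backward to the forward slab whose transport geometry forces the rigid ordering $0<\alpha<\beta<\gamma<\theta^2$ that the paper's remark after the lemma emphasizes. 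Two caveats. First, the crossover as you primarily describe it (testing with $\varphi^2/u$, Poincar\'e in $X$, John--Nirenberg for $\log u$) is exactly the step that does \emph{not} transfer from Moser's parabolic scheme, since only $\nabla_X\log u$ is controlled and the oscillation in $(Y,t)$ must be recovered from the transport; \cite{GIMV} instead close via a compactness-based quantitative intermediate-value lemma, expansion of positivity, and a stacked-cylinders covering argument --- so your parenthetical ``equivalently, a De Giorgi intermediate-value and expansion-of-positivity argument'' is the route that actually works, while the log-transform route required a separate, later, dedicated argument in the literature. Second, the two steps you flag as expected obstacles (the localized averaging lemma with quantitative constants, and the intermediate-value/positivity-expansion step) are not incidental difficulties but the entire content of \cite{GIMV}; as written your text is an accurate outline rather than a proof, which is consistent with the paper's own decision to cite rather than reprove.
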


We remark that the constants $\alpha, \beta, \gamma, \theta$ appearing in the above lemma can not be chosen arbitrarily and this is in contrast to, for example, the case of uniformly parabolic equations.

\begin{definition}\label{def:admissible}
A path $\gamma: [0,T] \to \rnn$ is called
\emph{admissible} if it is absolutely continuous and satisfies
\begin{equation}\label{e-control.pb}
   \frac{d}{d\tau}\gamma(\tau)=\sum_{j=1}^m \omega_j(\tau)\partial_{x_j}(\gamma(\tau))+\lambda(\tau)\bigl(\sum_{k=1}^m{x_k\partial_{y_k}(\gamma(\tau))-\partial_t(\gamma(\tau))}\bigr),
   \,\text{for a.e.}\,\tau\in [0,T],
\end{equation}
where $\omega_j \in L^2([0,T])$, for $j=1,\dots, m$, and $\lambda$ is a non-negative measurable function. We say that
$\gamma$ connects $(Z,t)=(X,Y,t)\in\mathbb R^{N+1}$ to $(\tilde Z,\tilde t)=(\tilde X,\tilde Y,\tilde t)\in\mathbb
R^{N+1}$, $\tilde t<t$, if $\g(0)= (Z,t)$ and $\g(T)= (\tilde Z,\tilde t)$.
\end{definition}

\begin{definition}\label{propag}
Given a domain $\Omega\subset\mathbb R^{N+1}$, and a point $(Z,t)\in \Omega$, we let
$\mathcal{A}_{(Z,t)}=\mathcal{A}_{(Z,t)}(\Omega)$ denote the closure of the set
\begin{equation*}
\big\{(\tilde Z,\tilde t)\in\Omega \mid \text{there exists an admissible} \ \g: [0,T] \to \Omega, \
 \text{connecting} \  (Z,t) \ \text{to} \ (\tilde Z,\tilde t) \big\}.
\end{equation*}
We will refer to
$\A_{(Z,t)}(\Omega)$ as the \emph{propagation set of the point $(Z,t)$ with respect to $\Omega$}.
\end{definition}

\begin{definition}\label{harnackchain} Let $\Omega\subset\mathbb R^{N+1}$ be a domain. Let $(Z,t)$, $(\tilde Z,\tilde
t)\in \Omega $, $\tilde t<t$, be
given. Let
$\{r_j\}_{j= 1}^k$ be a finite sequence of real positive numbers
and let $\{(Z_j, t_j)\}_{j= 1}^k$ be a sequence of points such that $(Z_1, t_1) = (Z,t)$. Then
$\{\{(Z_j, t_j)\}_{j= 1}^k, \{r_j\}_{j= 1}^k\}$ is said to be a Harnack chain in $\Omega$, connecting $(Z,t)$ to
$(\tilde  Z,\tilde  t)$, if
\begin{equation}\label{e-incl}
\begin{split}
     (i)\quad& Q^-_{r_j}(Z_j, t_j) \subset \Omega,\mbox{ for every $j=1, \dots,k$},\\
(ii)\quad&(Z_{j+1}, t_{j+1}) \in \widetilde Q^-_{r_j}(Z_j, t_j),\mbox{ for every $j=1, \dots,k-1$},\\
(iii)\quad&(\tilde Z, \tilde  t)\in \widetilde Q^-_{r_k}(Z_k, t_k).
\end{split}
\end{equation}
\end{definition}

Let $u$ be a non-negative weak solution to $\L u = 0$ in $\Omega$. Assume that $\{\{(Z_j, t_j)\}_{j= 1}^k, \{r_j\}_{j=
1}^k\}$ is  a Harnack chain in $\Omega$, connecting
 $(\tilde Z,\tilde t)$ to
$(Z,t)$, and let $c$ be the constant appearing in Lemma \ref{harnack}. Then, using Lemma \ref{harnack}, we see
that
\begin{eqnarray}\label{e-incl+}
u(Z_{j+1}, t_{j+1}) \le c u (Z_{j}, t_{j}),\mbox{ for every $j=1, \dots,
k-1,$}
\end{eqnarray}
and hence,
\begin{eqnarray}\label{e-incl++}
u(\tilde Z,\tilde t) \le c u(Z_{k},t_{k})\le c^k u(Z,t).
\end{eqnarray}

 To use Lemma \ref{harnack} efficiently we will build Harnack chains using admissible paths.

\begin{lemma}\label{adda}
Let  $\g(\tau):[0, t-\tilde t]\to\rnn$, $\tilde t<t$, be an admissible path, starting at a point with time coordinate equal to $t$, with $\lambda=\lambda(\tau)\equiv 1$, and let $a,b$ be constants such that
$0\le a < b \le T$, $T:=t-\tilde t$. Then there exist positive constants $h$ and $\eta$, depending only on $m$, such that
\begin{equation}\label{e-lemma}
 \int_a^b\frac {||\omega(\tau)||^2}h \d \tau \le 1 \quad \Rightarrow \quad \gamma(b)\in  Q^-_{\eta r}(\g(a)),\mbox{ where }
 r=\sqrt{b-a}.
\end{equation}
Furthermore, the set $Q^-_{\eta r}(\g(a))\cap\lbrace t=T-b\rbrace$ is contained in $\mathrm{Int}\left(\mathcal{A}_{(\gamma(a)}(Q^-_{r})\right)$.
\end{lemma}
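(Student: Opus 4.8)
The plan is to analyze the ODE \eqref{e-control.pb} defining the admissible path directly, tracking how the controls $\omega_j$ and the normalization $\lambda\equiv 1$ propagate into the spatial and temporal coordinates. Writing $\g(\tau)=(X(\tau),Y(\tau),t(\tau))$, the equation \eqref{e-control.pb} with $\lambda\equiv 1$ gives $t'(\tau)=-1$, so $t(\tau)=t-\tau$; this is why the time coordinate of $\g(a)$ is $t-a$ and that of $\g(b)$ is $t-b$, and the relative time displacement over $[a,b]$ has size exactly $b-a=r^2$. Next, $X'(\tau)=\omega(\tau)$, so $X(b)-X(a)=\int_a^b\omega(\tau)\d\tau$, and by Cauchy--Schwarz this is bounded by $\bigl(\int_a^b\|\omega\|^2\,\d\tau\bigr)^{1/2}(b-a)^{1/2}\le \sqrt h\, r$ under the hypothesis in \eqref{e-lemma}. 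Finally $Y'(\tau)=X(\tau)$ componentwise, so $Y(b)-Y(a)=\int_a^b X(\tau)\,\d\tau$; here one first bounds $\sup_{[a,b]}|X(\tau)-X(a)|\le \sqrt h\,r$ the same way, hence $|X(\tau)|\le |X(a)|+\sqrt h\,r$, but since the cylinder $Q_{\eta r}^-$ is defined after the left translation by $\g(a)^{-1}$ (which in these coordinates replaces $X$ by $X-X(a)$ and, crucially, $Y$ by $Y-Y(a)+(t(\cdot)-t(a))X(a)$ — see \eqref{e70++}), one works with the translated path $\g(a)^{-1}\circ\g(\tau)$ whose $X$-component is $X(\tau)-X(a)$ and is therefore $O(\sqrt h\,r)$ uniformly. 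Integrating, the translated $Y$-component over $[a,b]$ is bounded by $(b-a)\cdot\sqrt h\,r=\sqrt h\,r^3$.

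So, after the left translation by $\g(a)^{-1}$, the path lands at a point whose $X$-coordinates are $\le C\sqrt h\,r$, whose $Y$-coordinates are $\le C\sqrt h\,r^3$, and whose time coordinate is exactly $-(b-a)=-r^2$. Recalling the definition \eqref{pastcyl} of $Q_r^-$, this shows $\g(a)^{-1}\circ\g(b)\in Q_{\eta r}^-$ provided $h$ is chosen small enough (depending only on $m$, through the dimensional constant $C$) that $C\sqrt h\le\eta$ in every coordinate, and provided $\eta$ is chosen so that the strict inequalities defining $Q_{\eta r}^-$ are met and $-r^2\ge -(\eta r)^2$, i.e.\ $\eta\ge 1$; one simply picks $\eta=1$ and then $h$ small accordingly. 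That gives $\g(b)\in Q_{\eta r}^-(\g(a))$, which is \eqref{e-lemma}.

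For the last sentence, I would argue that \emph{every} point of $Q_{\eta r}^-(\g(a))\cap\{t=T-b\}$ is reachable by an admissible path inside $Q_r^-(\g(a))$ starting from $\g(a)$. Fix a target $(\tilde Z,\tilde t)$ in that slice. Since its time coordinate is $t(a)-b$ with $b=r^2$ and it lies in $Q_{\eta r}^-(\g(a))$, its translated spatial coordinates $(\xi,\eta_Y)$ satisfy $|\xi_i|<\eta r$, $|\eta_{Y,i}|<(\eta r)^3$. One constructs an explicit control: run $\lambda\equiv 1$ on all of $[0,b]$ (forcing the time coordinate down at unit speed, so after time $b$ we are on the right slice and the trajectory stays in the time-interval of $Q_r^-$), and choose $\omega(\tau)$ so that $X$ is steered from $X(a)$ to $X(a)+\xi$ while the accumulated integral $\int_0^\tau X$ matches the prescribed $\eta_Y$ at $\tau=b$ — this is the standard controllability of the Kolmogorov chain $X'=\omega$, $Y'=X$, which has $m$ controls and $2m$ states but is controllable because the drift $X\cdot\nabla_Y$ supplies the missing directions (Hörmander's bracket condition, here verified by hand). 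Keeping $\|\xi\|,\|\eta_Y\|$ small relative to $r$ keeps the whole trajectory inside $Q_r^-(\g(a))$; openness of the slice-set and continuous dependence of the endpoint on the controls then shows the reachable set contains an open neighbourhood, so $Q_{\eta r}^-(\g(a))\cap\{t=T-b\}\subset\mathrm{Int}\bigl(\A_{(\g(a))}(Q_r^-)\bigr)$, possibly after shrinking $\eta$ once more (still depending only on $m$).

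The main obstacle is the explicit controllability construction in the last step: one must produce admissible controls steering the sub-Riemannian Kolmogorov system between two prescribed endpoints in prescribed time while guaranteeing the trajectory never exits $Q_r^-(\g(a))$, with the size bound on the controls quantified purely in terms of $m$. This is where the rigidity of the Kolmogorov geometry (as opposed to the Euclidean parabolic case) enters — one cannot move freely in $X$ without simultaneously perturbing $Y$ — so the quantitative estimate must be set up carefully; everything else is a routine Cauchy--Schwarz/Gronwall-type computation together with bookkeeping of the group law \eqref{e70++}.
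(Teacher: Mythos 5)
Your handling of the first implication is correct and, unlike the paper, self-contained: the paper disposes of it by citing Lemma 2.2 of \cite{BP}, whereas you rederive it from the control ODE, Cauchy--Schwarz and the left-translation formula \eqref{e70++}, which is exactly the right computation (the translated path has $X$-component $X(\tau)-X(a)$, bounded by $\sqrt h\,r$, and $Y$-component with derivative $\lambda\,(X(\tau)-X(a))$, hence bounded by $\sqrt h\,r^3$ at $\tau=b$). One slip: since $\lambda\equiv 1$ forces the translated endpoint to have time coordinate exactly $-(b-a)=-r^2$, and $Q^-_{\eta r}$ requires $-(\eta r)^2<t\le 0$ strictly, the choice $\eta=1$ fails at the boundary; you must take $\eta>1$ (say $\eta=2$) and then $h$ small depending on $m$. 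This is harmless for the first part, but it matters below.

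The genuine gap is in the ``furthermore'' statement, which the paper delegates to Proposition 3.2 of \cite{CNP3} and which you attempt by a qualitative controllability argument. Two quantitative facts are not confronted. First, the slice through $\gamma(b)$ sits at depth exactly $b-a=r^2$ below $\gamma(a)$, i.e.\ on the bottom of $Q^-_r(\gamma(a))$: admissible paths constrained to $Q^-_r(\gamma(a))$, as Definition \ref{propag} requires, never reach that time level, and no point of that slice can lie in the interior of a set contained in the closure of $Q^-_r(\gamma(a))$. Second, and more fundamentally, the coupling $Y'=\lambda X$, $t'=-\lambda$ gives, in the translated coordinates, $|y_{i}(\mathrm{end})|\le (\sup_i\sup_\tau|x_i|)\cdot(\text{elapsed time})$ along any admissible path; so if the path is confined to a cylinder of radius $R$ about $\gamma(a)$, the reachable $y$-coordinates at depth $r^2$ satisfy $|y_i|<R\,r^2$. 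The target slice of $Q^-_{\eta r}(\gamma(a))$ contains points with $|y_i|$ up to $(\eta r)^3$, and since the first implication forces $\eta>1$, those points are unreachable not only inside $Q^-_r(\gamma(a))$ but even inside $Q^-_{\eta r}(\gamma(a))$. Your step ``keeping $\|\xi\|,\|\eta_Y\|$ small relative to $r$ keeps the whole trajectory inside $Q^-_r(\gamma(a))$'' is therefore not available for the full slice, and your fallback of ``shrinking $\eta$ once more'' is incompatible with the first implication, which needs $\eta>1$.

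What is actually true, and what is proved in \cite{CNP3} and then used in Lemma \ref{akkaa-}, is a two-radius statement: a spatial box around $\gamma(b)$ of radius a suitable small multiple of $r$ is contained in the interior of the propagation set of $\gamma(a)$ relative to a suitably larger cylinder, the ratio of the radii being dictated precisely by the constraint $|y_i|\lesssim R\,r^2$ above. The Kalman-rank/H\"ormander controllability you invoke is the correct qualitative mechanism for that statement, but the lemma lives or dies on this bookkeeping of radii and of the time level, which your sketch does not supply; this is exactly the ``rigidity'' you flag in your last paragraph, left unresolved.
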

\begin{proof}
The first part of the lemma is a consequence of Lemma 2.2 in \cite{BP}. The second conclusion follows by arguing as in the proof of Proposition 3.2 in \cite{CNP3}.
\end{proof}

\begin{lemma}\label{akkaa-} Let $\Omega\subset\mathbb R^{N+1}$ be a domain. Let $\gamma$, $t$, $\tilde t$,  $\lambda$, $T$, $h$, $\eta$, be as in Lemma \ref{adda} and define  $\{\tau_j\}$ as follows. Let $\tau_0= 0$, and define
$\tau_{j+1}$, for $j\geq 0$, recursively as follows:
\begin{eqnarray*}
(i)&&\mbox{ if }\int_{\tau_j}^{t-\tilde t}
 \frac{||\omega(\tau)||^2}{h} \, \d \tau  > 1\mbox{ then } \tau_{j+1} =\inf  \Big\{ \sigma \in (\tau_j, t-\tilde t ] :
\int_{\tau_j}^{\sigma}
 \frac{||\omega(\tau)||^2}{h} \, \d \tau  > 1\Big\},\notag\\
 (ii)&&\mbox{ if }\int_{\tau_j}^{t-\tilde t}
 \frac{||\omega(\tau)||^2}{h} \, \d \tau \leq 1\mbox{ then } \tau_{j+1}:=t-\tilde t.
\end{eqnarray*}
Let $k$ be smallest index such that $\tau_{k+1}=t-\tilde t$. Define, based on $\{\tau_j\}_{j=0}^{k+1}$,
\begin{equation}\label{a1}
    r_j = \sqrt{\frac{\tau_{j+1} - \tau_j}{\eta^2}}, \qquad j=1, \ldots, k,
\end{equation}
and let $(Z_j, t_j) =\g(\t_j)$ for $j=1, \ldots, k$. Assume that
\begin{eqnarray}\label{e-incl--}
 \g(\tau):[0, t-\tilde t ]\to \Omega,\mbox{ and } Q^-_{r_j}(Z_j, t_j) \subset \Omega,
\end{eqnarray}
for every $j=1, \dots,k$. Then there exists a constant $c=c(m,\kappa)$, $1\leq c<\infty$, such that if
$u$ is a non-negative weak solution to $\L u=0$ in $\Omega$, then
\begin{eqnarray}
u(\tilde Z,\tilde t)\leq c^{\bigl(1+\frac 1 h\int_0^{t-\tilde t} {||\omega(\tau)||^2} \d \tau\bigr )}u(Z,t).
\end{eqnarray}
\end{lemma}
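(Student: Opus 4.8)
The plan is to iterate the local Harnack inequality (Lemma~\ref{harnack}) along the chain of points $(Z_j,t_j)=\gamma(\tau_j)$, and the main point is to verify that $\{\{(Z_j,t_j)\}_{j=1}^k,\{r_j\}_{j=1}^k\}$ actually constitutes a Harnack chain in $\Omega$ in the sense of Definition~\ref{harnackchain}, after which \eqref{e-incl++} gives the bound with the exponent $k$, and it remains only to estimate $k$ in terms of $\tfrac1h\int_0^{t-\tilde t}\|\omega(\tau)\|^2\,\d\tau$. First I would record that, by construction of the stopping times, for each $j=1,\dots,k-1$ we have $\int_{\tau_j}^{\tau_{j+1}}\tfrac{\|\omega(\tau)\|^2}{h}\,\d\tau\le 1$ (equality, in fact, in case (i)); restricting the admissible path $\gamma$ to $[\tau_j,\tau_{j+1}]$ and applying Lemma~\ref{adda} with $a=\tau_j$, $b=\tau_{j+1}$ — noting $\lambda\equiv1$ — yields $\gamma(\tau_{j+1})\in Q^-_{\eta\sqrt{\tau_{j+1}-\tau_j}}(\gamma(\tau_j))$. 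With the choice $r_j=\sqrt{(\tau_{j+1}-\tau_j)/\eta^2}$ in \eqref{a1} this says exactly $(Z_{j+1},t_{j+1})\in Q^-_{\eta r_j}(Z_j,t_j)$.

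Next I would match this against the definition of a Harnack chain. Condition (i) of \eqref{e-incl} is precisely the hypothesis \eqref{e-incl--}: $Q^-_{r_j}(Z_j,t_j)\subset\Omega$ for all $j$. For conditions (ii) and (iii) I need $(Z_{j+1},t_{j+1})\in\widetilde Q^-_{r_j}(Z_j,t_j)$ and $(\tilde Z,\tilde t)\in\widetilde Q^-_{r_k}(Z_k,t_k)$, where $\widetilde Q^-_r$ is the specific sub-box from Lemma~\ref{harnack}. Here the (universal) constant $\eta$ from Lemma~\ref{adda} and the (universal) geometric constants $\alpha,\beta,\gamma,\theta$ governing $\widetilde Q^-_r$ both depend only on $m$ (resp. $m,\kappa$), so after possibly shrinking $\eta$ — equivalently enlarging each $r_j$ by a fixed factor, which is harmless since \eqref{e-incl--} is about the smaller boxes and one can rescale the stopping-time threshold $h$ accordingly — one arranges $Q^-_{\eta r_j}(Z_j,t_j)\subset\widetilde Q^-_{r_j}(Z_j,t_j)$; I'd note this is the standard bookkeeping of reconciling the two normalizations and fold the adjustment into the constants $h,\eta$ promised by Lemma~\ref{adda}. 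The last point $(\tilde Z,\tilde t)=\gamma(\tau_{k+1})=\gamma(t-\tilde t)$ is handled the same way: in case (ii) the final interval $[\tau_k,\tau_{k+1}]$ has total $\omega$-energy $\le h$, so Lemma~\ref{adda} places $(\tilde Z,\tilde t)$ in $Q^-_{\eta r_k}(Z_k,t_k)\subset\widetilde Q^-_{r_k}(Z_k,t_k)$.

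Granting this, $\{\{(Z_j,t_j)\},\{r_j\}\}$ is a Harnack chain connecting $(Z,t)=(Z_1,t_1)$ to $(\tilde Z,\tilde t)$, so by \eqref{e-incl++},
\begin{equation*}
u(\tilde Z,\tilde t)\le c^k\,u(Z,t),
\end{equation*}
with $c=c(m,\kappa)$ the constant of Lemma~\ref{harnack}. Finally I would estimate $k$. For each $j=0,1,\dots,k-1$ we are in case (i), so $\int_{\tau_j}^{\tau_{j+1}}\tfrac{\|\omega(\tau)\|^2}{h}\,\d\tau=1$; summing over $j=0,\dots,k-1$ and using that the intervals $(\tau_j,\tau_{j+1})$ are disjoint and contained in $(0,t-\tilde t)$ gives
\begin{equation*}
k=\sum_{j=0}^{k-1}\int_{\tau_j}^{\tau_{j+1}}\frac{\|\omega(\tau)\|^2}{h}\,\d\tau\le\frac1h\int_0^{t-\tilde t}\|\omega(\tau)\|^2\,\d\tau,
\end{equation*}
whence $u(\tilde Z,\tilde t)\le c^{k}u(Z,t)\le c^{1+\frac1h\int_0^{t-\tilde t}\|\omega(\tau)\|^2\,\d\tau}u(Z,t)$ (the extra $+1$ absorbing the case $k=0$ and rounding). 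The main obstacle I anticipate is not any single estimate but the careful reconciliation in the second paragraph: ensuring the boxes $Q^-_{\eta r_j}$ produced by the control-theoretic Lemma~\ref{adda} sit inside the rigidly-shaped comparison boxes $\widetilde Q^-_{r_j}$ of the Harnack inequality, since — as the remark after Lemma~\ref{harnack} stresses — the constants $\alpha,\beta,\gamma,\theta$ cannot be chosen freely, so the alignment has to be done once and for all by tuning $h$ and $\eta$, not by the usual parabolic rescaling.
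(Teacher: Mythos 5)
Your proposal is correct and follows essentially the same route as the paper: apply Lemma \ref{adda} on each stopping-time interval $[\tau_j,\tau_{j+1}]$ to verify that $\{\{(Z_j,t_j)\},\{r_j\}\}$ is a Harnack chain, bound $k$ by $1+\tfrac1h\int_0^{t-\tilde t}\|\omega(\tau)\|^2\,\d\tau$ using that each case-(i) interval carries unit normalized energy, and conclude via \eqref{e-incl++}. The paper's proof is in fact only a two-line sketch deferring to Proposition 1.1 of \cite{BP}, so your more detailed account (including the alignment of $Q^-_{\eta r_j}$ with $\widetilde Q^-_{r_j}$, which the paper leaves implicit in the constants of Lemma \ref{adda}) fills in exactly the steps the paper invokes.
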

\begin{proof} This can be proved by proceeding as in the proof of Proposition 1.1 in \cite{BP} using  Lemma \ref{harnack} and Lemma \ref{adda}. Indeed, let $\{\{(Z_j, t_j)\}_{j= 1}^k, \{r_j\}_{j= 1}^k\}$ be as in the statement of the lemma. Using the assumption \eqref{e-incl--} and applying Lemma \ref{adda} then yields that $\{\{(Z_j, t_j)\}_{j= 1}^k, \{r_j\}_{j= 1}^k\}$ is a Harnack chain connecting $(\tilde Z,\tilde t)$ to $(Z,t)$ with
$$k\leq 1+\frac 1 h\int_0^{t-\tilde t} {||\omega(\tau)||^2} \d \tau.$$
This gives the inequality of the lemma. \end{proof}

Let
      \begin{equation}\label{def:matrixE}
      B:=\begin{pmatrix}
   0 & I_{m} \\
   0 & 0 \
 \end{pmatrix},\quad E(s)=\exp(-sB^\ast),
\end{equation}
for $s\in\mathbb R$, where $I_m$ and $0$ represents the identity matrix and the zero matrix in $\mathbb R^m$,
respectively. Furthermore, let
\begin{eqnarray}\label{def:matrixC} \mathcal{C}(t)&:=& \int_0^tE(s)\begin{pmatrix}
   I_m & 0 \\
   0 & 0 \
 \end{pmatrix}E^\ast(s) \d s=\begin{pmatrix}
   tI_m & -\frac{t^2} 2I_m \\
   -\frac{t^2} 2I_m & \frac{t^3} 3I_m \
 \end{pmatrix},
\end{eqnarray}
whenever $t\in\mathbb R$. Note that $\det\mathcal{C}(t)=t^{4m}/12$ and that
\begin{eqnarray}\label{def:matrixCinv} (\mathcal{C}(t))^{-1}= 12\begin{pmatrix}
   \frac {t^{-1}}{3}I_m & \frac {t^{-2}}{2}I_m \\
   \frac {t^{-2}}{2}I_m & t^{-3}I_m\
 \end{pmatrix}.
\end{eqnarray}

\begin{lemma}\label{akkaa} Let $\Omega\subset\mathbb R^{N+1}$ be a  domain.
Let $(Z,t)$, $(\tilde Z,\tilde t)\in \Omega $, $\tilde t<t$, be given. Consider the path $\g(\tau)=(\tilde\g(\tau), t-\t):[0, t-\tilde
t]\to\rnn$
where
\begin{equation}\label{e-gamma-t}
    \tilde\g(\t) = E(-\t) \left( Z + \mathcal{C}(\t) \mathcal{C}^{-1}(t-\tilde t ) (E(t-\tilde t  ) \tilde  Z - Z) \right).
\end{equation}
Then $\g(0)=(Z,\tau)$, $\g(t-\tilde t ) =(\tilde Z,\tilde\tau)$ and $(\tilde\g(\tau), t-\t)$ is an {admissible} path. Moreover, the path satisfies \eqref{e-control.pb} with
\begin{equation}\label{e-cost-3+}
    \omega(\tau)=( \omega_1(\tau),.., \omega_m(\tau))= E(\tau)^\ast
    \mathcal{C}^{-1}(t-\tilde t ) (E(t-\tilde t  )\tilde  Z - Z).
\end{equation}
Let  $h$, $\beta$, $\{\tau_j\}$, $\{r_j\}$ and $\{(Z_j,\tau_j)\}$ be as in Lemma \ref{akkaa-}.  Assume that
\begin{eqnarray}\label{e-incl---}
 \g(\tau):[0, t-\tilde t ]\to \Omega,\mbox{ and } Q^-_{r_j}(z_j, t_j) \subset \Omega,
\end{eqnarray}
for every $j=1, \dots,k$. Then there exists a constant $c=c(m,\kappa)$, $1\leq c<\infty$, such that if
$u$ is a non-negative weak solution to $\L u=0$ in $\Omega$, then
\begin{eqnarray}
u(\tilde Z,\tilde t)\leq c^{\bigl(1+\frac 1 h\langle \mathcal{C}^{-1}(t-\tilde t  ) (Z-E(t-\tilde t  )\tilde Z),
Z-E(t-\tilde t  )\tilde Z\rangle\bigr )}u(Z,t).
\end{eqnarray}
\end{lemma}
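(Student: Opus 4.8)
The plan is to deduce Lemma~\ref{akkaa} from Lemma~\ref{akkaa-} by feeding the latter the explicit curve \eqref{e-gamma-t} and computing its energy in closed form. Two things have to be checked: that $\gamma$ really is an admissible path of the type considered in Lemma~\ref{akkaa-}, with control $\omega$ given by \eqref{e-cost-3+} and $\lambda\equiv 1$; and that $\int_0^{t-\tilde t}\|\omega(\tau)\|^2\,\d\tau$ equals the quadratic form appearing in the exponent of the asserted inequality. Once both are in hand, Lemma~\ref{akkaa-} applies verbatim.

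First I would treat the admissibility. Since $t-\tilde t>0$, the matrix $\mathcal C(t-\tilde t)$ is invertible by \eqref{def:matrixCinv}, so \eqref{e-gamma-t} is well posed; writing $w:=\mathcal C^{-1}(t-\tilde t)(E(t-\tilde t)\tilde Z-Z)$ (a constant vector) we have $\tilde\gamma(\tau)=E(-\tau)\bigl(Z+\mathcal C(\tau)w\bigr)$. Evaluating at the endpoints, using $E(0)=I$, $\mathcal C(0)=0$ and $\mathcal C(t-\tilde t)\mathcal C^{-1}(t-\tilde t)=I$, gives $\gamma(0)=(Z,t)$ and $\gamma(t-\tilde t)=(\tilde Z,\tilde t)$. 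For \eqref{e-control.pb} I would differentiate, using $\tfrac{d}{d\tau}E(-\tau)=B^\ast E(-\tau)$ and, from the fundamental theorem of calculus applied to \eqref{def:matrixC}, $\mathcal C'(\tau)=E(\tau)\,J\,E^\ast(\tau)$ with $J$ the block matrix in the integrand of \eqref{def:matrixC}; since $E(-\tau)E(\tau)=I$ this collapses to $\dot{\tilde\gamma}(\tau)=B^\ast\tilde\gamma(\tau)+J\,E^\ast(\tau)w$. Because $B^\ast$ maps the $X$-block into the $Y$-block (see \eqref{def:matrixE}) and $J$ projects onto the $X$-block, the $Y$-velocity of $\tilde\gamma$ equals the $X$-component of $\gamma(\tau)$ while its $X$-velocity equals the $X$-block of $E^\ast(\tau)w$, i.e.\ $\omega(\tau)$ as in \eqref{e-cost-3+}; together with $\tfrac{d}{d\tau}(t-\tau)=-1$ this is precisely \eqref{e-control.pb} with $\lambda\equiv 1$. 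Hence $\gamma$ fits the setting of Lemma~\ref{adda}, and the objects $h$, $\{\tau_j\}$, $\{r_j\}$, $\{(Z_j,t_j)\}$ of Lemma~\ref{akkaa-} are defined.

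Second I would compute the energy. Writing $P$ for the projection onto the $X$-block, so that $P^\ast P=J$ and $\omega(\tau)=P\,E^\ast(\tau)w$, we get $\|\omega(\tau)\|^2=\langle E(\tau)\,J\,E^\ast(\tau)w,\,w\rangle$; integrating over $\tau\in[0,t-\tilde t]$ and using the very definition \eqref{def:matrixC} of $\mathcal C$ yields $\int_0^{t-\tilde t}\|\omega(\tau)\|^2\,\d\tau=\langle\mathcal C(t-\tilde t)w,\,w\rangle$. Substituting the definition of $w$ and using that $\mathcal C(t-\tilde t)$ and $\mathcal C^{-1}(t-\tilde t)$ are symmetric (clear from \eqref{def:matrixC}, \eqref{def:matrixCinv}), together with the evenness of the quadratic form, this equals $\langle\mathcal C^{-1}(t-\tilde t)(Z-E(t-\tilde t)\tilde Z),\,Z-E(t-\tilde t)\tilde Z\rangle$.

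Finally, under hypothesis \eqref{e-incl---}, Lemma~\ref{akkaa-} applied to this $\gamma$ gives $u(\tilde Z,\tilde t)\le c^{\,(1+\frac1h\int_0^{t-\tilde t}\|\omega(\tau)\|^2\,\d\tau)}u(Z,t)$ for non-negative weak solutions of $\L u=0$ in $\Omega$, and substituting the energy identity just obtained produces the claimed estimate. The only point requiring a little care is the block bookkeeping in the admissibility step — keeping track that the control enters only the $X$-directions while $B^\ast$ moves it into the $Y$-block — but this is the classical minimal-energy / controllability-Gramian construction for Kolmogorov-type operators (cf.\ \cite{BP}), and it uses nothing beyond the elementary identities \eqref{def:matrixE}--\eqref{def:matrixCinv}.
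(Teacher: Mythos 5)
Your proposal is correct and follows essentially the same route as the paper: verify by direct computation that $\gamma$ is admissible with $\lambda\equiv1$ and control \eqref{e-cost-3+}, apply Lemma \ref{akkaa-}, and identify $\int_0^{t-\tilde t}\|\omega(\tau)\|^2\,\d\tau$ with the Gramian quadratic form $\langle\mathcal{C}^{-1}(t-\tilde t)(Z-E(t-\tilde t)\tilde Z),Z-E(t-\tilde t)\tilde Z\rangle$. The paper merely states these computations without detail, so your write-up fills in exactly the intended steps.
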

\begin{proof}
That $\gamma(\tau)$ is an admissible curve satisfying \eqref{e-gamma-t} is shown by direct computation. Using the assumption of the lemma we see that we can apply Lemma \ref{akkaa-}. However, in this case, by a direct computation, we see that
\[
\int_0^{t-\tilde t}{||\omega(\tau)||^2 \d\tau}=
\langle \mathcal{C}^{-1}(t-\tilde t  ) (Z-E(t-\tilde t  )\tilde Z),
Z-E(t-\tilde t  )\tilde Z\rangle,
\]
which proves the result. \end{proof}

\begin{remark}
Just to be clear, to apply Lemma \ref{akkaa}, the critical  assumption to be verified in a specific application is \eqref{e-incl---}.
\end{remark}

Based on the notion of propagation sets the following (general) geometric version of the Harnack inequality can also be proved using Lemma \ref{harnack} and Lemma \ref{akkaa-}.
\begin{lemma} \label{t-1} Let $\Omega\subset\mathbb R^{N+1}$ be a domain and let $(Z_0,t_0) \in \Omega$. Let $K$ be a
compact set contained in the interior of $\mathcal{A}_{(Z_0,t_0)}(\Omega)$. Then there exists a positive constant $c_K=c(m,\kappa,M,K)$, such that
\begin{equation*}
    \sup_K u \le c_K \, u(Z_0,t_0),
\end{equation*}
for every non-negative weak solution $u$ of $\L u = 0$ in $\Omega$.
\end{lemma}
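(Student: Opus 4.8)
The plan is to deduce Lemma \ref{t-1} from Lemma \ref{harnack} via a covering/chaining argument built on Lemma \ref{akkaa-}. First I would recall that, by Definition \ref{propag}, every point $(\tilde Z,\tilde t)$ in the interior of $\mathcal{A}_{(Z_0,t_0)}(\Omega)$ is connected to $(Z_0,t_0)$ by an admissible path $\gamma$ taking values in $\Omega$; after a harmless reparametrization (rescaling the non-negative multiplier $\lambda$) we may assume $\lambda \equiv 1$, so $\gamma$ has the form to which Lemma \ref{akkaa-} applies. For such a single path, the partition $\{\tau_j\}$ and radii $\{r_j\}$ of Lemma \ref{akkaa-} produce a Harnack chain \emph{provided} the cylinders $Q^-_{r_j}(Z_j,t_j)$ sit inside $\Omega$; since the image of $\gamma$ is a compact subset of the open set $\Omega$, it has positive distance to $\partial\Omega$, and one can check (using the second assertion of Lemma \ref{adda}, which places $Q^-_{\eta r}(\gamma(a))$ inside the interior of the propagation set, together with the local geometry of the cylinders) that the radii $r_j$ can be taken small enough that \eqref{e-incl--} holds. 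Lemma \ref{akkaa-} then yields $u(\tilde Z,\tilde t)\le c^{k}u(Z_0,t_0)$ with $k$ controlled by $1+\frac1h\int_0^{t-\tilde t}\|\omega(\tau)\|^2\,\d\tau$, i.e. a bound depending on the path but not on $u$.

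Next I would pass from a single point to the compact set $K$. For each $(\tilde Z,\tilde t)\in K$ choose such an admissible path; by the second part of Lemma \ref{adda} (the interior-of-propagation-set statement) and the openness of $\mathrm{Int}(\mathcal{A}_{(Z_0,t_0)}(\Omega))$, there is a full neighborhood $U_{(\tilde Z,\tilde t)}$ of $(\tilde Z,\tilde t)$ contained in the interior of the propagation set such that every point of $U_{(\tilde Z,\tilde t)}$ can be reached from $(Z_0,t_0)$ by an admissible path whose ``cost'' $\int\|\omega\|^2$ is bounded by a constant depending only on $(\tilde Z,\tilde t)$ (and $m$, $\kappa$, $\Omega$). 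Indeed, one reaches a point near $(\tilde Z,\tilde t)$ along $\gamma$ and then uses one extra short admissible step, as in Lemma \ref{adda}, to hit any nearby point; the added cost is uniformly bounded on the neighborhood. The family $\{U_{(\tilde Z,\tilde t)}\}_{(\tilde Z,\tilde t)\in K}$ is an open cover of the compact set $K$, so finitely many $U_{1},\dots,U_{M_0}$ cover $K$, each with an associated cost bound $L_i$. Setting $L:=\max_i L_i$ and letting $k_0:=1+L/h$, we obtain from Lemma \ref{akkaa-} that $u(\tilde Z,\tilde t)\le c^{k_0}u(Z_0,t_0)$ for every $(\tilde Z,\tilde t)\in K$, with $c$ the constant of Lemma \ref{harnack}; since $k_0=k_0(m,\kappa,\Omega,K)$, taking the supremum over $K$ gives the claim with $c_K:=c^{k_0}$.

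One subtlety to handle carefully is the time-ordering: Lemma \ref{harnack} and hence Lemma \ref{akkaa-} require $\tilde t<t$, i.e. they propagate information strictly \emph{backward} in time, which is exactly the direction encoded in the definition of admissible paths (the multiplier $\lambda$ is non-negative, forcing $t$ to be non-increasing along $\gamma$). Points of $K$ with $\tilde t=t_0$ but $(\tilde Z,\tilde t)\neq (Z_0,t_0)$, or more generally points reached by a path that is ``instantaneous'' in time, must be treated by first moving slightly backward in time within the interior of the propagation set and invoking interior H\"older continuity (Lemma \ref{holder}) or a limiting argument; since $K$ lies in the \emph{interior} of $\mathcal{A}_{(Z_0,t_0)}(\Omega)$ this is always possible. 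I would also make sure the constant $c_K$ genuinely depends only on $m,\kappa,M,K$ and not on the particular paths chosen: this follows because the finite subcover and the cost bounds $L_i$ depend only on $K$ and the geometry of $\Omega$ (through $M$ and the compactness of $K$ inside the open propagation set), not on any individual solution $u$.

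The main obstacle I expect is the verification of \eqref{e-incl--}, i.e. that the Harnack-chain cylinders $Q^-_{r_j}(Z_j,t_j)$ constructed from the partition $\{\tau_j\}$ actually fit inside $\Omega$ with radii that are simultaneously (a) small enough to stay in $\Omega$ and in the interior of the propagation set, and (b) controlled from below so that the number of chain links $k$ does not blow up. The formula \eqref{a1} couples $r_j$ to the increments $\tau_{j+1}-\tau_j$, which are in turn governed by the $L^2$-mass of $\omega$ on subintervals; on an interval where $\omega$ is small the increment — and hence $r_j$ — is large, which could violate the containment $Q^-_{r_j}(Z_j,t_j)\subset\Omega$. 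The resolution is to further subdivide such long intervals into pieces of a fixed maximal length determined by the distance from the compact image of $\gamma$ to $\partial\Omega$; this only increases $k$ by an additive amount depending on $\gamma$ and that distance, both of which are controlled on the finite subcover, so the final bound is unaffected. Assembling these pieces is routine once the bookkeeping is set up, and the heart of the argument remains the interplay between Lemma \ref{harnack}, Lemma \ref{adda}, and Lemma \ref{akkaa-}.
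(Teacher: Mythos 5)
Your argument is correct and follows essentially the same route as the paper, which simply observes that the lemma ``can be proved using the same reasoning as in the proof of Lemma \ref{akkaa-}'' and defers the details to \cite{AEP}: your combination of admissible paths, the Harnack chains of Lemma \ref{adda} and Lemma \ref{akkaa-}, and a finite-subcover argument over $K$ is precisely that reasoning. One small simplification: points of $K$ at time level $t_0$ need no special treatment, since admissible paths are non-increasing in $t$, so $\mathcal{A}_{(Z_0,t_0)}(\Omega)\subset\{t\le t_0\}$ and its interior contains no point with $t=t_0$.
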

\begin{proof} The lemma can be proved using the same reasoning as in the proof of Lemma \ref{akkaa-}; we refer to \cite{AEP} for details.
\end{proof}

In light of Lemma \ref{t-1} we could pose the following alternative definition of a Harnack chain: $\lbrace \lbrace(Z_j,t_j)\rbrace_{j=1}^k,\lbrace r_j \rbrace_{j=1}^k \rbrace$ is a Harnack chain in $\Omega$, connecting $(Z,t)$ to $(\tilde Z,\tilde t)$, if

\begin{equation}\label{e-incl2}
\begin{split}
     (i)\quad& Q^-_{r_j}(Z_j, t_j) \subset \Omega,\mbox{ for every $j=1, \dots,k$},\\
(ii)\quad&(Z_{j+1}, t_{j+1}) \in K_j \subset \mathrm{Int}(\A_{(Z_j, t_j)}(\Omega)),\mbox{ for every $j=1, \dots,k-1$},\\
(iii)\quad&(\tilde Z, \tilde  t)\in K_k \subset \mathrm{Int}(\A_{(Z_k, t_k)}(\Omega)),
\end{split}
\end{equation}
where $K_j$ is a compact set, for each $j=1,\cdots,k$, and $\mathrm{Int}(\A_{(Z_j, t_j)}(\Omega))$ denotes the interior of the set $\A_{(Z_j, t_j)}(\Omega)$. Note that similarly as before, if $u$ is a non-negative weak solution to $\L u=0$ in $\Omega$ and if $\lbrace \lbrace(Z_j,t_j)\rbrace_{j=1}^k,\lbrace r_j \rbrace_{j=1}^k \rbrace$ is a Harnack chain in the sense above, then by Lemma \ref{t-1}
\begin{eqnarray*}
u(Z_{j+1}, t_{j+1}) \le c_{K_j} u (Z_{j}, t_{j}),\mbox{ for every $j=1, \dots,
k-1,$}
\end{eqnarray*}
and hence
\begin{eqnarray}\label{e-incl+++pd}
u(\tilde Z,\tilde t) \le c u(Z_{k},t_{k})\le c^k u(Z,t),
\end{eqnarray}
where $c:=\max\lbrace c_{K_j} \rbrace_{j=1}^k$.

\subsection{Admissible paths and cones}

\begin{lemma}\label{lem.conecond.0} Let $\Lambda$ be a positive constant. Define 
\begin{equation} \label{e-xtilde}
z_\Lambda=\left(0,\Lambda,0,-\tfrac 2 3\Lambda\right)\in \R^{m-1}\times \R\times\R^{m-1}\times \R.
\end{equation}
 Then, the path $$[0,1] \to \R^{N+1},\: \tau \mapsto \gamma(\tau)=\delta_{1-\tau}(z_\Lambda, 1)$$ is admissible.
\end{lemma}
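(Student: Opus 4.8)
The plan is to verify directly that the stated path $\gamma(\tau)=\delta_{1-\tau}(z_\Lambda,1)$ satisfies the admissibility condition \eqref{e-control.pb} from Definition \ref{def:admissible}. First I would write out $\gamma(\tau)$ explicitly in coordinates using the definition of the dilations \eqref{dil.alpha.i}: since $z_\Lambda=(0,\Lambda,0,-\tfrac23\Lambda)\in\R^{m-1}\times\R\times\R^{m-1}\times\R$ and the ``time slot'' is $1$, we get
\begin{equation*}
\gamma(\tau)=\bigl(0,\,(1-\tau)\Lambda,\,0,\,-\tfrac23\Lambda(1-\tau)^3,\,(1-\tau)^2\bigr),
\end{equation*}
so that the $x$-block is $(0,(1-\tau)\Lambda)$, the $y$-block is $(0,-\tfrac23\Lambda(1-\tau)^3)$, and the time coordinate is $t(\tau)=(1-\tau)^2$. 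Then I would compute the derivative componentwise:
$\frac{d}{d\tau}x_m(\tau)=-\Lambda$, all other $x$-components are $0$; $\frac{d}{d\tau}y_m(\tau)=2\Lambda(1-\tau)^2$, all other $y$-components $0$; and $\frac{d}{d\tau}t(\tau)=-2(1-\tau)$.

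Next I would exhibit the controls $\omega_j$ and $\lambda$ realizing this derivative in the form prescribed by \eqref{e-control.pb}, namely $\dot\gamma=\sum_j\omega_j\partial_{x_j}+\lambda\bigl(\sum_k x_k\partial_{y_k}-\partial_t\bigr)$. The natural choice is $\lambda(\tau)=2(1-\tau)$, which is non-negative on $[0,1]$ and measurable (indeed continuous), and this immediately produces the correct $t$-component, since $-\lambda=-2(1-\tau)$. With this $\lambda$, the $y$-component contributed by the drift term is $\lambda\sum_k x_k\partial_{y_k}$; since along the path only $x_m=(1-\tau)\Lambda$ is nonzero, this contributes $\lambda(\tau)x_m(\tau)=2(1-\tau)\cdot(1-\tau)\Lambda=2\Lambda(1-\tau)^2$ in the $y_m$-slot and $0$ elsewhere, matching $\dot y_m$ exactly; the other $y$-slots match because they are identically zero on both sides. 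Finally the $x$-component forces $\omega_m(\tau)\equiv-\Lambda$ and $\omega_j(\tau)\equiv0$ for $j<m$; these are constants, hence in $L^2([0,1])$, so all requirements of Definition \ref{def:admissible} are met.

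The only remaining point is to confirm that $\gamma$ is absolutely continuous, which is clear since each coordinate is a polynomial in $\tau$, and to double-check the bookkeeping of the group-law conventions: the point $(z_\Lambda,1)$ must be read with the coordinate $1$ occupying the final ($t$) slot of $\R^{N+1}$ so that the dilation $\delta_{1-\tau}$ sends it to the displayed expression. I expect no genuine obstacle here; the lemma is a direct computation, and the only mild care needed is getting the cube $(1-\tau)^3$ in the $y_m$ slot to differentiate to $3(1-\tau)^2\cdot(-1)\cdot(-\tfrac23\Lambda)=2\Lambda(1-\tau)^2$ correctly, and observing that the chosen $\lambda$ is exactly what cancels the factor $(1-\tau)$ coming from $x_m$ so that the drift term reproduces this value. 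I would close by noting that this also explains the definition of the reference points $A^\pm_{\rho,\Lambda}$ in \eqref{pointsref2}, which are of precisely this form.
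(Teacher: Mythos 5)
Your proof is correct and is essentially identical to the paper's: both write out $\gamma(\tau)$ in coordinates, differentiate, and exhibit the controls $\omega_j\equiv 0$ for $j<m$, $\omega_m\equiv-\Lambda$, and $\lambda(\tau)=2(1-\tau)$. The extra verifications you include (non-negativity of $\lambda$, $L^2$ membership of $\omega$, absolute continuity) are all accurate and merely make explicit what the paper leaves as a direct computation.
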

\begin{proof} Note that by definition
\begin{equation*}
 \gamma(\tau)= \left(0, (1-\tau)\Lambda,0,-\tfrac 2 3(1-\tau)^3\Lambda,(1-\tau)^2\right), \tau\in [0,1].
\end{equation*}
By a direct computation we see that
\begin{equation*}
 \frac{d}{d\tau} \gamma(\tau)=(0,-\Lambda,0,2(1-\tau)^2\Lambda,-2(1-\tau)),\ \tau\in [0,1].
\end{equation*}
In particular,
\begin{equation}\label{e-control.pb+++}
   \frac{d}{d\tau}\gamma(\tau)=
   \sum_{j=1}^m \omega_j(\tau)\partial_{x_j}(\gamma(\tau))+\lambda(\tau)\left(\sum_{k=1}^m{x_k\partial_{y_k}(\gamma(\tau))-\partial_t(\gamma(\tau))}\right),\: \tau\in
[0,1],
\end{equation}
where  $\omega_j\equiv 0$ for $j\in\{1,..,m-1\}$, $\omega_m=-\Lambda$ and $\lambda(\tau)=2(1-\tau)$.
\end{proof}

To continue, we recall the following cones, or cone like objects, introduced in \cite{CNP3}. Given $(Z_0,t_0) \in \rnn$, $\bar Z \in \R^N$,
$\bar t \in \R_+$, consider an open
neighborhood $U \subset \R^N$ of $\bar Z$, and let
\begin{equation} \label{e-tusk}
\begin{split}
 	Z^+_{\bar Z, \bar t, U} (Z_0,t_0) & = \big\{ (Z_0,t_0) \circ \delta_s (Z,\bar t) \mid Z \in U, \, 0 < s \le 1 \big\},\\
 	Z^-_{\bar Z, \bar t, U} (Z_0,t_0) & = \big\{ (Z_0,t_0) \circ \delta_s (Z,-\bar t) \mid Z \in U, \, 0 < s \le 1 \big\}.
\end{split}
\end{equation}
Given $\rho>0$ and
$\Lambda>0$, recall the points $A_{\rho,\Lambda}^+$, $A_{\rho,\Lambda}$, $A_{\rho,\Lambda}^-$, introduced in
\eqref{pointsref2}. In addition we  introduce
\begin{equation}\label{pointsref2a}
\begin{split}
    \tilde A_{\rho,\Lambda}^+&=(0,-\Lambda\rho,0,\frac 2 3\Lambda\rho^3,\rho^2),\\
    \tilde A_{\rho,\Lambda}^-&=(0,-\Lambda\rho,0,-\frac 2 3\Lambda\rho^3,-\rho^2),
\end{split}
\end{equation}
and
\begin{eqnarray}\label{pointsref2apa}
\tilde A_{\rho,\Lambda}^\pm(Z_0,t_0)=(Z_0,t_0)\circ \tilde A_{\rho,\Lambda}^\pm,
\end{eqnarray}
whenever $(Z_0,t_0)\in \mathbb R^{N+1}$. Let the points $z_{\rho,\Lambda}^\pm$, $\tilde z_{\rho,\Lambda}^\pm$  be defined
through the relations
\begin{equation}
        A_{\rho,\Lambda}^\pm = (z_{\rho,\Lambda}^\pm,\rho^2),\ \tilde A_{\rho,\Lambda}^\pm = (\tilde z_{\rho,\Lambda}^\pm,\rho^2).
\end{equation}
Given $\eta$, $0<\eta\ll 1$, $\Lambda$, and $\rho>0$ we let
$$U_{\rho,\eta,\Lambda}^\pm:=\mathcal{B}_{\eta \rho}((z_{\rho,\Lambda}^\pm,0))\cap\{(Z,t)\mid t=0\}.$$
Then, based on \eqref{e-tusk}, we define
\begin{equation}\label{conesha}
\begin{split}
    C_{\rho,\eta,\Lambda}^\pm(Z_0,t_0)&=Z^\pm_{A^\pm_{\rho,\Lambda}, U_{\rho,\eta,\Lambda}^\pm}(Z_0,t_0),\\
\tilde C_{\rho,\eta,\Lambda}^\pm(Z_0,t_0)&=Z^\pm_{\tilde A^\pm_{\rho,\Lambda}, U_{\rho,\eta,\Lambda}^\pm}(Z_0,t_0).
\end{split}
\end{equation}
The sets \begin{eqnarray}\label{coneshapa}
C_{\rho,\eta,\Lambda}^\pm(Z_0,t_0),\ \tilde C_{\rho,\eta,\Lambda}^\pm(Z_0,t_0),
\end{eqnarray}
will be referred to as cones with vertex at $(Z_0,t_0)$ as, for $\eta$ small, they represent cones around admissible paths passing through
$(Z_0,t_0)$ as well as the points $A_{\rho,\Lambda}^\pm(Z_0,t_0)$, and $\tilde A_{\rho,\Lambda}^\pm(Z_0,t_0)$.

\subsection{Cones in Lipschitz domains} The standing assumption in all lemmas stated in this subsection is that $\Omega\subset\mathbb R^{N+1}$ is a Lipschitz domain with
Lipschitz constant $M$, and that $\Omega_{r}=\Omega_{r}(\hat Z_0,\hat t_0)$, $\Delta_{r}=\Delta_{r}(\hat Z_0,\hat t_0)$ for some $(\hat Z_0,\hat t_0)\in\partial\Omega$ fixed, and for $r>0$.

\begin{lemma}\label{coneconditions} Consider $\Omega_{2r}$. There exist
 $\Lambda=\Lambda(m,M)$, $1\leq \Lambda<\infty$, and  $c_0=c_0(m,M)$,
$1\leq c_0<\infty$, such that the following is true. Let  $\rho_0=r/c_0$, consider $(Z_0,t_0)\in
\Delta_{\rho_0}$ and $0<\rho<\rho_0$. Then
there exists $\eta=\eta(m,M)$,
$0<\eta\ll 1$, such that if we introduce
$C_{\rho,2\eta,\Lambda}^\pm(Z_0,t_0)$,  $\tilde C_{\rho,2\eta,\Lambda}^\pm(Z_0,t_0)$, as in \eqref{conesha}, then
\begin{equation}\label{conc}
\begin{split}
(i)&\quad C_{\rho,2\eta,\Lambda}^\pm(Z_0,t_0)\subset \Omega_{r}, \\
(ii)&\quad \tilde C_{\rho,2\eta,\Lambda}^\pm(Z_0,t_0)\subset \mathbb R^{N+1}\setminus \Omega_{r}.
\end{split}
\end{equation}
\end{lemma}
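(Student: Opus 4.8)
The plan is to reduce statement to an explicit verification of the cone conditions using the Lipschitz character of $\psi$. First I would observe that, up to the group translation $(Z_0,t_0)^{-1}\circ$, we may assume $(Z_0,t_0)$ is the origin and $\psi(0,0,0)=0$; since $(Z_0,t_0)\in\Delta_{\rho_0}$ and the Lipschitz condition \eqref{def:LipK} holds, the value $\psi(x_0,Y_0,t_0)$ at the center $(\hat Z_0,\hat t_0)$ is comparable to $\rho_0\le r/c_0$, so after absorbing this into $c_0$ the reference frame is essentially the one centered on the boundary point. The key geometric input is Lemma \ref{lem.conecond.0}: the path $\tau\mapsto\delta_{1-\tau}(z_\Lambda,1)$ (and, after reflecting $x_m\mapsto -x_m$, the corresponding downward path) is admissible and passes through the points $A_{\rho,\Lambda}^\pm$ when rescaled by $\delta_\rho$ and translated. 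Thus $C_{\rho,2\eta,\Lambda}^\pm(Z_0,t_0)$ is, for $\eta$ small, a thin neighborhood of the image of this admissible path.

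Next I would carry out the pointwise containment estimate. A generic point of $C_{\rho,2\eta,\Lambda}^+(Z_0,t_0)$ has the form $(Z_0,t_0)\circ\delta_s(Z,\bar t)$ with $Z$ within $2\eta\rho$ (in the quasi-norm) of $z^+_{\rho,\Lambda}$ and $0<s\le 1$. Writing out the group law \eqref{e70} and the dilation \eqref{dil.alpha.i} explicitly, one sees that the $x_m$-coordinate of such a point is of the form $x_{m,0}+s(\Lambda\rho+O(\eta\rho))$, while all the remaining horizontal coordinates $x$, together with $(y,y_m,t)$ after the group multiplication, stay within a ball of radius $\lesssim (s\rho+\eta\rho)$ around those of $(Z_0,t_0)$. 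Plugging these into the Lipschitz bound \eqref{def:LipK} for $\psi$ gives $\psi$(horizontal coordinates of the point)$\le x_{m,0}+C(m)M(s\rho+\eta\rho)$. Choosing first $\Lambda=\Lambda(m,M)$ large enough that $\Lambda>2C(m)M$, and then $\eta=\eta(m,M)$ small enough, we get $x_m>\psi(\cdot)$, i.e. the point lies in $\Omega$; a parallel (and easier) computation keeps it inside the box $Q_{M,r}$ and below $4Mr+\psi(x_0,Y_0,t_0)$, so it lies in $\Omega_r$ as required for (i). For (ii) the argument is the mirror image: points of $\tilde C_{\rho,2\eta,\Lambda}^\pm(Z_0,t_0)$ have $x_m$-coordinate $x_{m,0}-s(\Lambda\rho+O(\eta\rho))$, so the same choice of $\Lambda$ and $\eta$ forces $x_m<\psi(\cdot)$, placing them in the complement of $\Omega$, hence of $\Omega_r$.

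The only subtlety — and the step I expect to be the main obstacle — is controlling the "twist" that the non-abelian group law introduces into the $Y$-coordinate (the $-t\tilde X$ term in \eqref{e70}) and the matching $(t-\tilde t)\psi(\tilde z,\tilde y_m,\tilde t)$ correction term inside the quasi-norm appearing in \eqref{def:LipK}. One must check that these cross-terms, which are of order (time increment)$\times$($x_m$-increment) $\sim s^2\rho^2\cdot\Lambda\rho$ in absolute terms but enter $\psi$ only through the $1/3$-homogeneous piece $|\,\cdot\,|^{1/3}$, contribute only $O(\rho)$ after the Lipschitz estimate, so that they can still be dominated by the linear-in-$s$ gain $\Lambda s\rho$ in the $x_m$-direction once $\Lambda$ is large; this is exactly the place where the precise scaling in \eqref{dil.alpha.i} and the anisotropic form of the quasi-norm \eqref{kolnormint} must be used carefully, and where the constants $\Lambda,\eta,c_0$ get pinned down in terms of $m$ and $M$. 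Once this bookkeeping is done the lemma follows, with $\eta$ here playing the role of "$2\eta$" in the statement (so one fixes the $\eta$ of the statement to be half of the $\eta$ produced by the estimate, to leave room for the later lemmas that shrink the cones).
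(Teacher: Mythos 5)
Your proposal is correct, but it takes a different (and more self-contained) route than the paper: the paper's entire proof of this lemma is the single line ``This is a consequence of Lemma 4.4 in \cite{CNP3}'', i.e.\ it defers the geometric verification to the corresponding statement proved there for the model operator, which is legitimate because the cone conditions depend only on the Lie group structure and the Lipschitz character of $\psi$, not on the coefficients $A$. What you do instead is carry out that verification directly: normalize $(Z_0,t_0)$ to the origin by left translation (under which the class of Lipschitz graphs \eqref{def:LipK} is invariant), expand $(Z_0,t_0)\circ\delta_s(Z,\pm\rho^2)$ via \eqref{e70} and \eqref{dil.alpha.i}, and compare the linear gain $\pm s\Lambda\rho$ in the $x_m$-coordinate against the Lipschitz bound on $\psi$ evaluated at the remaining coordinates. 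You correctly isolate the one delicate point, namely that the $y_m$-coordinate of the cone point is itself of size $s^3\Lambda\rho^3$, so after the cube root in \eqref{def:LipK} it feeds back a term of order $M\Lambda^{1/3}s\rho$; the condition on $\Lambda$ is therefore of the form $\Lambda-2\eta\gtrsim M\Lambda^{1/3}$ (so $\Lambda\sim M^{3/2}$ rather than the $\Lambda>2C(m)M$ you wrote), but since the dependence on $\Lambda$ is sublinear a large enough $\Lambda=\Lambda(m,M)$ always wins, exactly as you argue in your final paragraph. With that one exponent corrected, your argument is a complete proof and arguably more useful to the reader than the paper's citation, at the cost of redoing a computation already available in \cite{CNP3}.
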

\begin{proof} This is a consequence of  Lemma 4.4 in \cite{CNP3}.\end{proof}

\begin{lemma}\label{coneconditions-} Consider $\Omega_{2r}$. There exist
 $\Lambda=\Lambda(m,M)$, $1\leq \Lambda<\infty$, and  $c_0=c_0(m,M)$,
$1\leq c_0<\infty$, such that the following is true. Let  $\rho_0=r/c_0$, consider $(Z_0,t_0)\in
\Delta_{\rho_0}$, $0<\rho<\rho_0$, and
let $A_{\rho,\Lambda}^\pm(Z_0,t_0)$, $\tilde A_{\rho,\Lambda}^\pm(Z_0,t_0)$,  be the reference points introduced. Then
\begin{equation}\label{akka-}
\begin{split}
    A_{\rho,\Lambda}^\pm(Z_0,t_0) &\in\Omega_{r},\\
    \tilde A_{\rho,\Lambda}^\pm(Z_0,t_0) &\in \mathbb R^{N+1}\setminus \Omega_{r},
\end{split}
\end{equation}
and
\begin{eqnarray}\label{akka}
(i)&&1\approx d(P_{\rho,\Lambda}(Z_0,t_0),(Z_0,t_0))/\rho,\notag\\
(ii)&&1\lesssim d(P_{\rho,\Lambda}(Z_0,t_0),\Delta_{2r})/\rho,
\end{eqnarray}
whenever $P_{\rho,\Lambda}(Z_0,t_0)\in\{A_{\rho,\Lambda}^\pm(Z_0,t_0), \tilde A_{\rho,\Lambda}^\pm(Z_0,t_0)\}$.
Furthermore, the paths
\begin{eqnarray}\label{admi}
\gamma^+(\tau)=A^+_{(1-\tau)\rho,\Lambda}(Z_0,t_0),\ \gamma^-(\tau)=A^-_{(1-\tau)\rho,\Lambda}(Z_0,t_0),
\tau\in [0,1],
\end{eqnarray}
are admissible paths.
\end{lemma}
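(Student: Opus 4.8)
The plan is to extract all of the conclusions except the very last one directly from Lemma \ref{coneconditions}, choosing the constants $\Lambda$ and $c_0$ to be (at most) those provided there, and then to verify the admissibility of the two paths in \eqref{admi} by hand using Lemma \ref{lem.conecond.0}. First I would observe that the reference points $A_{\rho,\Lambda}^\pm(Z_0,t_0)$ and $\tilde A_{\rho,\Lambda}^\pm(Z_0,t_0)$ are, by construction, the ``endpoints'' (the $s=1$ members) of the cones $C_{\rho,2\eta,\Lambda}^\pm(Z_0,t_0)$ and $\tilde C_{\rho,2\eta,\Lambda}^\pm(Z_0,t_0)$ respectively — indeed $A_{\rho,\Lambda}^\pm(Z_0,t_0)=(Z_0,t_0)\circ\delta_1(A_{\rho,\Lambda}^\pm)=(Z_0,t_0)\circ\delta_1(z_{\rho,\Lambda}^\pm,\rho^2)$, which lies in $Z^\pm_{A_{\rho,\Lambda}^\pm,U_{\rho,2\eta,\Lambda}^\pm}(Z_0,t_0)$ since $0\in U_{\rho,2\eta,\Lambda}^\pm$ (the centre of the ball $\mathcal B_{2\eta\rho}((z_{\rho,\Lambda}^\pm,0))$). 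Hence the membership statements \eqref{akka-} are immediate from the inclusions \eqref{conc}(i)--(ii) of Lemma \ref{coneconditions}.

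Next I would establish \eqref{akka}. For (i), a direct computation with the group law \eqref{e70} and the dilation \eqref{dil.alpha.i} shows that $(Z_0,t_0)^{-1}\circ A_{\rho,\Lambda}^\pm(Z_0,t_0)=A_{\rho,\Lambda}^\pm=\delta_\rho(0,\Lambda,0,\mp\tfrac23\Lambda,\pm1)$, so that by \eqref{kolnormint} its homogeneous norm is comparable to $\rho$ with constants depending only on $\Lambda=\Lambda(m,M)$, i.e. on $m$ and $M$; combined with \eqref{e-ps.dist} this gives $d(A_{\rho,\Lambda}^\pm(Z_0,t_0),(Z_0,t_0))\approx\rho$, and the same computation works for $\tilde A_{\rho,\Lambda}^\pm(Z_0,t_0)$. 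For (ii), I would note that since $(Z_0,t_0)\in\Delta_{\rho_0}\subset\partial\Omega\cap \mathcal B_{c\rho_0}(\hat Z_0,\hat t_0)$ and $\rho<\rho_0=r/c_0$, any boundary point of $\Delta_{2r}$ other than those ``close'' to $(Z_0,t_0)$ is at $d$-distance $\gtrsim r\gtrsim\rho$ away, while for points of $\Delta_{2r}$ near $(Z_0,t_0)$ one uses that $P_{\rho,\Lambda}(Z_0,t_0)$ sits inside (respectively outside, for the tilde points — but then one argues via the complementary cone) the cone $C_{\rho,2\eta,\Lambda}^\pm(Z_0,t_0)\subset\Omega_r$ and the cone opening $\eta=\eta(m,M)$ forces a lower bound $d(P_{\rho,\Lambda}(Z_0,t_0),\Delta_{2r})\gtrsim\eta\rho\gtrsim\rho$; this is exactly the quantitative content already packaged in Lemma 4.4 of \cite{CNP3} underlying Lemma \ref{coneconditions}, so I would cite it rather than redo the estimate.

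Finally, for the admissibility of $\gamma^\pm(\tau)=A^\pm_{(1-\tau)\rho,\Lambda}(Z_0,t_0)$, $\tau\in[0,1]$, I would reduce to Lemma \ref{lem.conecond.0}. Writing out $A^+_{(1-\tau)\rho,\Lambda}=\delta_{(1-\tau)\rho}(0,\Lambda,0,-\tfrac23\Lambda,1)=\delta_{1-\tau}\bigl(\delta_\rho(z_\Lambda,1)\bigr)$ with $z_\Lambda$ as in \eqref{e-xtilde} (and the analogous identity with sign flips for $\gamma^-$), one sees that up to the fixed rescaling $\delta_\rho$ these are precisely the curves $\tau\mapsto\delta_{1-\tau}(z_\Lambda,1)$ shown to be admissible in Lemma \ref{lem.conecond.0}; and left-translation by $(Z_0,t_0)$ together with the rescaling $\delta_\rho$ both preserve admissibility (the class of admissible paths in Definition \ref{def:admissible} is invariant under the group action and the dilations — this follows since the vector fields $\partial_{x_j}$ and $\sum_k x_k\partial_{y_k}-\partial_t$ are, respectively, left-invariant and $\delta_r$-homogeneous of the matching degrees). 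Composing these invariances yields that $\gamma^\pm$ satisfy \eqref{e-control.pb} with $\omega_j\equiv 0$ for $j<m$, $\omega_m=\mp\rho\Lambda$, and $\lambda(\tau)=2\rho(1-\tau)$ (up to a harmless reparametrisation of the time interval), hence they are admissible. The main obstacle I anticipate is purely bookkeeping: making the dilation/translation invariance of admissibility precise enough to transfer Lemma \ref{lem.conecond.0} cleanly, and keeping track of the $(1-\tau)^3$ versus $(1-\tau)^2$ scaling in the $Y$- and $t$-coordinates so that the computed $\omega$ and $\lambda$ genuinely satisfy \eqref{e-control.pb}; none of this is deep, but it is where an error would most easily creep in.
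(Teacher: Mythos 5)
Your proposal is correct and follows essentially the same route as the paper, which likewise obtains \eqref{akka-} from Lemma \ref{coneconditions}, \eqref{akka} from Lemma 4.4 in \cite{CNP3}, and the admissibility of the paths in \eqref{admi} from Lemma \ref{lem.conecond.0}; your added details (the explicit norm computation for \eqref{akka} (i) and the translation/dilation-invariance of admissible paths) are consistent with this. One cosmetic slip: under $\delta_\rho$ the drift control scales by $\rho^2$, so the transferred path satisfies \eqref{e-control.pb} with $\lambda(\tau)=2\rho^{2}(1-\tau)$ rather than $2\rho(1-\tau)$ — harmless, since admissibility only requires $\lambda\ge 0$ measurable and $\omega\in L^2$.
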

\begin{proof} Note that \eqref{akka-} follows immediately from Lemma \ref{coneconditions}. Furthermore, \eqref{akka} is a consequence of Lemma 4.4 in \cite{CNP3}. That the paths in \eqref{admi}
are admissible follows from Lemma \ref{lem.conecond.0}.
\end{proof}


 \begin{lemma}\label{l-coord}  Consider $\Omega_{2r}$. There exist
 $\Lambda=\Lambda(m,M)$, $1\leq \Lambda<\infty$, and  $c_0=c_0(m,M)$,
$1\leq c_0<\infty$, such that the following is true. Let $\rho_0=r/c_0$, $\rho_1=\rho_0/c_0$, assume
$(Z,t)\in \Omega_{\rho}$, $0<\rho<\rho_1$, and let $d=d((Z,t),\Delta_{2r})$. Then there exist
$(Z_0^\pm,t_0^\pm)\in \Delta_{c_0\rho}$ and
$\rho^\pm$ such that
 \begin{equation*}
    (Z, t) = A^\pm_{\rho^\pm,\Lambda}(Z_0^\pm,t_0^\pm)\mbox{ and } \rho^\pm/d\approx 1.
 \end{equation*}
\end{lemma}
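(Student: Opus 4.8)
The plan is to construct the points directly by inverting the group operation, fixing $\Lambda=\Lambda(m,M)$ large (and at least as large as the $\Lambda$ produced by Lemmas \ref{coneconditions}--\ref{coneconditions-}, which we may do since those only assert existence of such a $\Lambda$). For $s>0$ put $P^{\pm}(s):=(Z,t)\circ(A^{\pm}_{s,\Lambda})^{-1}$. Using \eqref{e70}--\eqref{e70+} one computes $(A^{\pm}_{s,\Lambda})^{-1}$ and hence, for the ``$+$'' case,
\[
P^{+}(s)=\bigl(x,\;x_m-\Lambda s,\;y+s^2x,\;y_m-\tfrac13\Lambda s^3+s^2x_m,\;t-s^2\bigr),
\]
the ``$-$'' case being obtained by the analogous computation (it differs only by signs in the $s$-dependent terms). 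Writing $g^{\pm}(s)$ for $\psi$ evaluated at the $(x,y,y_m,t)$-coordinates of $P^{\pm}(s)$, and $h^{\pm}(s):=x_m-\Lambda s-g^{\pm}(s)$, associativity of $\circ$ shows that $(Z,t)=A^{\pm}_{\rho^{\pm},\Lambda}(Z_0^{\pm},t_0^{\pm})$ with $(Z_0^{\pm},t_0^{\pm}):=P^{\pm}(\rho^{\pm})\in\partial\Omega$ is equivalent to $h^{\pm}(\rho^{\pm})=0$. Since \eqref{def:LipK} makes $\psi$ continuous (let the first argument tend to the second), $h^{\pm}$ is continuous, and $h^{\pm}(0)=x_m-\psi(x,Y,t)=:\delta>0$ because $(Z,t)\in\Omega_\rho\subset\Omega$. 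I define $\rho^{\pm}$ to be the smallest positive zero of $h^{\pm}$, once one is shown to exist.

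The decisive observation is that \eqref{def:LipK} only feels the \emph{reduced} displacement along $P^{\pm}$: since $A^{\pm}_{s,\Lambda}$ has vanishing horizontal $(x,y)$-components and its $y_m$- and $t$-entries are calibrated to the drift, the $(x,y,t)$-group increment between $(Z,t)$ and $P^{\pm}(s)$ has trivial $x$- and $y$-parts, so its quasi-norm equals $s$; moreover the $y_m$-control term in \eqref{def:LipK} equals, after using the defining relation $g^{\pm}(s)-x_m=-\Lambda s-h^{\pm}(s)$, the quantity $\bigl|\tfrac23\Lambda s^3+s^2h^{\pm}(s)\bigr|^{1/3}$. Hence, for every $s\ge 0$ with $h^{\pm}(s)\ge 0$,
\[
\bigl|g^{\pm}(s)-\psi(x,Y,t)\bigr|\le M\bigl(s+(\tfrac23\Lambda)^{1/3}s+s^{2/3}h^{\pm}(s)^{1/3}\bigr).
\]
Inserting this into $h^{\pm}(s)=\delta-(g^{\pm}(s)-\psi(x,Y,t))-\Lambda s$ and absorbing $Ms^{2/3}h^{\pm}(s)^{1/3}$ by Young's inequality ($\le\tfrac13 h^{\pm}(s)+\tfrac23 M^{3/2}s$) yields, for $0\le s\le$ (first zero of $h^{\pm}$),
\[
\tfrac34\bigl(\delta-A_1 s\bigr)\le h^{\pm}(s)\le \tfrac32\bigl(\delta-A_2 s\bigr),
\]
with $A_1=A_1(m,M)>0$ and, provided $\Lambda=\Lambda(m,M)$ is chosen large enough that $\Lambda$ dominates $M\Lambda^{1/3}$, with $A_2=A_2(m,M)\ge 1$. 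The upper bound forces $h^{\pm}$ to vanish somewhere (otherwise it would hold for all $s$, contradicting positivity of $h^{\pm}$ at $s=2\delta/A_2$), so $\rho^{\pm}$ exists, and evaluating the two bounds at $s=\rho^{\pm}$ gives $\delta/A_1\le\rho^{\pm}\le\delta/A_2$; thus $\rho^{\pm}\approx\delta$.

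It remains to see $\delta\approx d$ and to locate $(Z_0^{\pm},t_0^{\pm})$. The foot of the $x_m$-vertical through $(Z,t)$, namely $(x,\psi(x,Y,t),y,y_m,t)$, lies on $\partial\Omega$, and since $(Z,t)\in\Omega_\rho\subset\Omega\cap\mathcal{B}_{c\rho}(\hat Z_0,\hat t_0)$ by \eqref{def.Omega.Delta.frinc} with $\rho<\rho_1$ small it lies in $\Delta_{2r}$; by \eqref{e70+} and \eqref{e-ps.dist} its quasi-distance to $(Z,t)$ is $\approx\delta$, so $d\le c\delta$. Conversely, picking $(Z',t')\in\Delta_{2r}$ with $d((Z,t),(Z',t'))\le 2d$, write $\delta=(x_m-x_m')+(\psi(x',Y',t')-\psi(x,Y,t))$; the first term is $\le|X-X'|\lesssim d$, and the second is controlled by \eqref{def:LipK}, whose reduced-norm and $y_m$-control terms are both bounded by the full quasi-distance $\lesssim d$, giving $\delta\le c(1+M)d$. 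Hence $\rho^{\pm}\approx d$. Finally $(Z_0^{\pm},t_0^{\pm})=P^{\pm}(\rho^{\pm})\in\partial\Omega$ lies at quasi-distance $\approx\rho^{\pm}\lesssim\delta\lesssim\rho$ from $(Z,t)$, which is $\lesssim\rho$ from $(\hat Z_0,\hat t_0)$; so $(Z_0^{\pm},t_0^{\pm})\in\partial\Omega\cap\mathcal{B}_{c\rho}(\hat Z_0,\hat t_0)\subset\Delta_{c_0\rho}$ once $c_0=c_0(m,M)$ is taken large enough, the hypothesis $\rho<\rho_1=\rho_0/c_0$ guaranteeing that $c\rho$ is small compared to $r$ so that this ball sits inside $Q_{2r}(\hat Z_0,\hat t_0)$. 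Together with $(Z,t)=A^{\pm}_{\rho^{\pm},\Lambda}(Z_0^{\pm},t_0^{\pm})$, this proves the lemma. The main obstacle is the middle paragraph: one must recognize that \eqref{def:LipK} sees only the $O(s)$ reduced displacement along the corkscrew curve rather than its $O(\Lambda s)$ full length — which is exactly what lets the linear gain $\Lambda s$ in the $x_m$-coordinate overrun the Lipschitz oscillation of $\psi$ for $\Lambda$ large — and then carry out the Young-inequality bootstrap, valid only up to the first zero of $h^{\pm}$ where all signs are favourable.
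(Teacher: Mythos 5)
Your construction is correct, and it is worth noting that the paper itself states Lemma \ref{l-coord} without any proof (its two neighbours are deferred to Lemma 4.4 of \cite{CNP3}, and the prototype version appears in \cite{NP}); your argument therefore supplies a genuine, self-contained verification rather than reproducing one from the text. The computation of $P^{\pm}(s)=(Z,t)\circ(A^{\pm}_{s,\Lambda})^{-1}$ via \eqref{e70}--\eqref{e70+} is right, and the two pivotal points both check out: the reduced $(x,y,t)$-increment between $(Z,t)$ and $P^{\pm}(s)$ really does have vanishing $x$- and $y$-parts so that the first term in \eqref{def:LipK} is exactly $s$, and the $y_m$-control term does reduce to $|\tfrac23\Lambda s^3+s^2h^{\pm}(s)|^{1/3}$ once you substitute $g^{\pm}(s)-x_m=-\Lambda s-h^{\pm}(s)$. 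The Young absorption $Ms^{2/3}h^{1/3}\le\tfrac13h+\tfrac23M^{3/2}s$, valid precisely on the interval where $h^{\pm}\ge0$, then gives the two-sided bound $\tfrac34(\delta-A_1s)\le h^{\pm}(s)\le\tfrac32(\delta-A_2s)$ with $A_2\approx\Lambda$ for $\Lambda$ large, from which existence of the first zero and $\rho^{\pm}\approx\delta$ follow; the comparison $\delta\approx d$ via the vertical foot and via \eqref{def:LipK} applied to a nearly nearest boundary point is also sound. Two small points you should make explicit if this were written up: first, the $\Lambda$ of this lemma must agree with the $\Lambda$ of Remark \ref{remnot}, so one should note (or verify from \cite{CNP3}) that the cone inclusions of Lemmas \ref{coneconditions}--\ref{coneconditions-} are stable under enlarging $\Lambda$ (at the cost of shrinking $\eta$); second, the bound $\delta\lesssim_{m,M}\rho$ for $(Z,t)\in\Omega_\rho$, which you use to place $(Z_0^{\pm},t_0^{\pm})$ in $\Delta_{c_0\rho}$, deserves the one-line justification via \eqref{def.Omega.Delta.fr} and \eqref{def:LipK}. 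Neither is a gap in substance.
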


\begin{remark}\label{remnot}
 Consider $\Omega_{2r}$. From now on we will let $\Lambda=\Lambda(m,M)$, $1\leq \Lambda<\infty$, $c_0=c_0(m,M)$, $1\leq c_0<\infty$, and $\eta=\eta(m,M)$, $0<\eta\ll 1$,
be such that Lemma \ref{coneconditions} and  Lemma \ref{coneconditions-}  hold whenever $(Z_0,t_0)\in \Delta_{\rho_0}$
and $0<\rho<\rho_0$, $\rho_0=r/c_0$, and such that Lemma \ref{l-coord} holds whenever $(Z,t)\in \Omega_{\rho}$,
$0<\rho<\rho_1$, $\rho_1=\rho_0/c_0$.
\end{remark}

\subsection{The Harnack inequality in cones} The standing assumption in all lemmas stated in this subsection is that $\Omega\subset\mathbb R^{N+1}$ is a Lipschitz domain with
Lipschitz constant $M$, and that $\Omega_{r}=\Omega_{r}(\hat Z_0,\hat t_0)$, $\Delta_{r}=\Delta_{r}(\hat Z_0,\hat t_0)$ for some $(\hat Z_0,\hat t_0)\in\partial\Omega$ fixed, and for $r>0$. Furthermore, $\Lambda$, $c_0$, $\eta$, $\rho_0=r/c_0$, $\rho_1=\rho_0/c_0$, are chosen in accordance with Remark \ref{remnot}.

\begin{lemma}\label{lem4.7bol+}  Consider $\Omega_{2r}$. Let
$\delta$, $0<\delta<1$, be a degree of freedom. There exists  $c=c(m,\kappa,M,\delta)$, $1\leq c<\infty$, such that
following holds. Assume that $u$ is a non-negative weak solution to $\L u=0$ in $\Omega_{2\rho_0}$, let $(Z_0,t_0)\in
\Delta_{\rho_1}$, and consider $\rho$ such that  $0<\rho<\rho_1$. Then
\begin{equation}\label{coneset}
\begin{split}
    (i)\qquad\sup_{\mathcal{B}_{\rho/c}(A_{\delta\rho,\Lambda}^+(Z_0,t_0))}u&\leq
c \inf_{\mathcal{B}_{\rho/c}(A_{\rho,\Lambda}^+(Z_0,t_0))}u,\\
(ii)\qquad\inf_{\mathcal{B}_{\rho/c}(A_{\delta\rho,\Lambda}^-(Z_0,t_0))}u&\geq
c^{-1} \sup_{\mathcal{B}_{\rho/c}(A_{\rho,\Lambda}^-(Z_0,t_0))} u,
\end{split}
\end{equation}
and
\begin{equation}\label{coneset+}
\begin{split}
    (i')\qquad A_{\delta\rho,\Lambda}^+(\tilde Z_0,\tilde t_0)&\in
\mathcal{B}_{\rho/c}(A_{\delta\rho,\Lambda}^+(Z_0,t_0)),\\
(ii')\qquad A_{\delta\rho,\Lambda}^-(\tilde Z_0,\tilde t_0)&\in \mathcal{B}_{\rho/c}(A_{\delta\rho,\Lambda}^-(Z_0,t_0)),
\end{split}
\end{equation}
whenever $(\tilde Z_0,\tilde t_0)\in  \Delta_{\rho/c}(Z_0,t_0)$.
\end{lemma}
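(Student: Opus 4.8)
The plan is to deduce Lemma \ref{lem4.7bol+} from the Harnack chain machinery of Section \ref{sec2+}, specifically from Lemma \ref{akkaa} together with the cone and admissibility facts collected in Lemma \ref{lem.conecond.0} and Lemma \ref{coneconditions-}. The key geometric observation is that, by Lemma \ref{lem.conecond.0} (and Lemma \ref{coneconditions-}), the curve $\tau\mapsto A^+_{(1-\tau)\rho,\Lambda}(Z_0,t_0)$, $\tau\in[0,1]$, is an admissible path connecting $A^+_{\rho,\Lambda}(Z_0,t_0)$ to the vertex $(Z_0,t_0)$, and it passes through $A^+_{\delta\rho,\Lambda}(Z_0,t_0)$ at time $\tau=1-\delta$. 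Restricting this path to $\tau\in[0,1-\delta]$ gives an admissible path from $A^+_{\rho,\Lambda}(Z_0,t_0)$ to $A^+_{\delta\rho,\Lambda}(Z_0,t_0)$ whose ``cost'' $\int\|\omega(\tau)\|^2\,\d\tau$ is, by the explicit form \eqref{e-cost-3+} of $\omega$ and the scaling of the points in \eqref{pointsref2}, a finite quantity depending only on $m$, $\Lambda$ and $\delta$ — hence only on $m$, $M$ and $\delta$. One must also verify the containment hypothesis \eqref{e-incl---}: that the path, together with the small cylinders $Q^-_{r_j}$ attached along it by Lemma \ref{akkaa-}, stays inside a region where $u$ solves the equation, i.e. inside $\Omega_{2\rho_0}$. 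This is exactly what the cone inclusion \eqref{conc}(i) of Lemma \ref{coneconditions} provides, since for $\eta$ small the admissible path and its tubular neighborhood lie inside $C^+_{\rho,2\eta,\Lambda}(Z_0,t_0)\subset\Omega_r\subset\Omega_{2\rho_0}$. Applying Lemma \ref{akkaa} then yields $u(A^+_{\delta\rho,\Lambda}(Z_0,t_0))\le c\,u(A^+_{\rho,\Lambda}(Z_0,t_0))$ with $c=c(m,\kappa,M,\delta)$.

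To upgrade this pointwise Harnack inequality to the sup/inf form in \eqref{coneset}(i), I would combine it with the interior estimates: by Lemma \ref{coneconditions-}\eqref{akka}, both reference points lie at distance $\approx\rho$ from $\partial\Omega$, so the balls $\mathcal{B}_{\rho/c}(A^\pm_{\rho,\Lambda}(Z_0,t_0))$ and $\mathcal{B}_{\rho/c}(A^\pm_{\delta\rho,\Lambda}(Z_0,t_0))$ are, for $c$ large enough, compactly contained in $\Omega_{2\rho_0}$ (using \eqref{intextball}). Then Lemma \ref{lem:moserlocal} (or simply Lemma \ref{harnack} applied on a fixed chain of overlapping cylinders of comparable size inside the cone) controls $\sup_{\mathcal{B}_{\rho/c}(A^+_{\delta\rho,\Lambda})}u$ by $u$ at a nearby interior point, and similarly $\inf_{\mathcal{B}_{\rho/c}(A^+_{\rho,\Lambda})}u$ is comparable to $u(A^+_{\rho,\Lambda}(Z_0,t_0))$; chaining these with the pointwise bound above gives \eqref{coneset}(i). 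The estimate \eqref{coneset}(ii) for the ``negative'' points is proved the same way after reversing the time direction: the path $\tau\mapsto A^-_{(1-\tau)\rho,\Lambda}(Z_0,t_0)$ runs forward in time from $A^-_{\rho,\Lambda}$ up to the vertex, so one applies the Harnack inequality in the opposite order, obtaining a lower bound for $u$ at $A^-_{\delta\rho,\Lambda}$ in terms of $u$ at $A^-_{\rho,\Lambda}$.

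For the perturbation statements \eqref{coneset+}, the point is purely a quasi-metric computation: by definition $A^\pm_{\delta\rho,\Lambda}(\tilde Z_0,\tilde t_0)=(\tilde Z_0,\tilde t_0)\circ A^\pm_{\delta\rho,\Lambda}$ and $A^\pm_{\delta\rho,\Lambda}(Z_0,t_0)=(Z_0,t_0)\circ A^\pm_{\delta\rho,\Lambda}$, so by left-translation invariance and \eqref{e-ps.dist},
\begin{equation*}
d\bigl(A^\pm_{\delta\rho,\Lambda}(\tilde Z_0,\tilde t_0),A^\pm_{\delta\rho,\Lambda}(Z_0,t_0)\bigr)\le c\,\bigl\|(A^\pm_{\delta\rho,\Lambda})^{-1}\circ(Z_0,t_0)^{-1}\circ(\tilde Z_0,\tilde t_0)\circ A^\pm_{\delta\rho,\Lambda}\bigr\|,
\end{equation*}
and since $(Z_0,t_0)^{-1}\circ(\tilde Z_0,\tilde t_0)$ has $d$-size $\lesssim\rho/c$ (as $(\tilde Z_0,\tilde t_0)\in\Delta_{\rho/c}(Z_0,t_0)$), conjugation by $A^\pm_{\delta\rho,\Lambda}$ — whose homogeneous norm is $\approx\delta\rho\le\rho$ — changes this only by a bounded factor and a controlled additive term, using the group law \eqref{e70} and the quasi-triangle inequality \eqref{e-ps.tr.in}. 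Choosing $c$ large then forces $d(\cdots)<\rho/c$, which is \eqref{coneset+}. I expect the main obstacle to be the bookkeeping in verifying \eqref{e-incl---} cleanly — namely checking that the Harnack chain produced by Lemma \ref{akkaa-} along the truncated path stays inside the cone $C^+_{\rho,2\eta,\Lambda}(Z_0,t_0)$ for the chosen $\eta$, uniformly in $\rho<\rho_1$ — but this is handled by the scale invariance of the construction and Lemma \ref{coneconditions}, reducing everything to a single fixed configuration at scale $\rho=1$.
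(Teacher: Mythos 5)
Your overall strategy — an explicit admissible path $\tau\mapsto A^+_{(1-\tau)\rho,\Lambda}(Z_0,t_0)$ through the reference points, the cone inclusion of Lemma \ref{coneconditions} to verify that the chain stays where $u$ solves the equation, scale invariance to reduce to $\rho=1$, and a group/quasi-metric computation for \eqref{coneset+} — is sound and close to the paper's (the paper phrases the same geometry through propagation sets and Lemma \ref{t-1} rather than through Lemma \ref{akkaa-}, and gets \eqref{coneset+} from continuity of $(\hat Z_0,\hat t_0)\mapsto A^\pm_{\delta,\Lambda}(\hat Z_0,\hat t_0)$ after rescaling, which is equivalent to your computation). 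The pointwise estimate $u(A^+_{\delta\rho,\Lambda})\le c\,u(A^+_{\rho,\Lambda})$ obtained this way is correct, with the caveat that Lemma \ref{adda}/\ref{akkaa-} is stated for $\lambda\equiv 1$ while the path of Lemma \ref{lem.conecond.0} has $\lambda(\tau)=2(1-\tau)$, so a reparametrization by the actual time variable is needed; the resulting cost is $\approx\Lambda^2\log(1/\delta)$, finite for fixed $\delta$.

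The genuine gap is in your upgrade from the pointwise inequality to the ball-to-ball inequality \eqref{coneset}. Lemma \ref{lem:moserlocal} bounds $\sup_{Q_r}|u|$ by an $L^p$ \emph{average} over $Q_{2r}$, not by the value of $u$ at a point, so it does not by itself control $\sup_{\mathcal{B}_{\rho/c}(A^+_{\delta\rho,\Lambda})}u$ by $u$ at a nearby point. More seriously, the assertion that $\inf_{\mathcal{B}_{\rho/c}(A^+_{\rho,\Lambda})}u$ is comparable to $u(A^+_{\rho,\Lambda})$ is false for these operators (and already for the heat equation): the ball $\mathcal{B}_{\rho/c}(A^+_{\rho,\Lambda})$ contains points earlier in time than its center, and no Harnack inequality bounds $u$ at such points from below by $u$ at the center — the paper explicitly warns after Lemma \ref{harnack} that the Harnack geometry here is rigid. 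The correct mechanism, and the one the paper uses, is to prove the ball-to-ball estimate directly: one shows that the closed ball $\overline{\mathcal{B}_{\rho/c}(A^+_{\delta\rho,\Lambda})}$ is a compact subset of $\mathrm{Int}\bigl(\mathcal{A}_{(\tilde Z,\tilde t)}(C^+_{\rho,2\eta,\Lambda}(Z_0,t_0))\bigr)$ for \emph{every} $(\tilde Z,\tilde t)\in\mathcal{B}_{\rho/c}(A^+_{\rho,\Lambda})$, uniformly in $(\tilde Z,\tilde t)$ — this is possible for $c=c(m,\kappa,M,\delta)$ large because the time separation $(1-\delta^2)\rho^2$ between the two reference points dominates $(\rho/c)^2$ and the configuration is scale invariant — and then applies Lemma \ref{t-1} with apex $(\tilde Z,\tilde t)$ and this fixed compact set, taking the supremum over the first ball and the infimum over the second. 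The same remark applies to your treatment of $(ii)$ after reversing the time direction. So the missing ingredient is precisely the uniform, moving-apex version of the Harnack chain (equivalently Lemma \ref{t-1}), not a local boundedness estimate.
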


\begin{proof}
Let $\delta$ as in the statement of the lemma be given. By construction and Lemma \ref{coneconditions} there exists a constant $\tilde c=\tilde c(m,M,\delta)$ such that
\begin{equation}\label{ballinconeindom}
    \mathcal{B}_{\rho/\tilde c}(A^\pm_{\delta\rho,\Lambda})\subset C^\pm_{\rho,2\eta,\Lambda}(Z_0,t_0)\subset \Omega_{r}.
\end{equation}
Using translation and dilation invariance, we may assume that $(Z_0,t_0)=(0,0)$ and that $\rho=1$. We need to show then, that there exist $c_1$, $c_2$ and $c_3$, each only depending on $m$, $\kappa$, $M$ and $\delta$, such that
\begin{equation}\label{conesetsimple}
    \sup_{\mathcal{B}_{1/c_1}(A_{\delta,\Lambda}^+)}u\leq
c_2 u(A^+_{1,\Lambda}),\ \inf_{\mathcal{B}_{1/c_1}(A_{\delta,\Lambda}^-)}u\geq
c_2^{-1} u(A^-_{1,\Lambda}),
\end{equation}
and
\begin{equation}\label{conesetsimple2}
    A_{\delta,\Lambda}^+(\tilde Z_0,\tilde t_0)\in
\mathcal{B}_{1/c_1}(A_{\delta,\Lambda}^+),\ A_{\delta,\Lambda}^-(\tilde Z_0,\tilde t_0)\in \mathcal{B}_{1/c_1}(A_{\delta,\Lambda}^-),
\end{equation}
whenever $(\tilde Z_0,\tilde t_0)\in\Delta_{1/c_3}$.
Note that using \eqref{ballinconeindom} and Lemma \ref{coneconditions-} we see that
\[
A^+_{\delta,\Lambda}\in \text{Int}\left(\mathcal{A}_{A^+_{1,\Lambda}}\left( C^+_{1,2\eta,\Lambda}(0,0) \right) \right),
\]
and hence there exists a constant $\hat c=\hat c(m,M,\delta)$ such that
\[
\mathcal{B}_{1/\hat c}(A^+_{\delta,\Lambda}) \subset \text{Int}\left(\mathcal{A}_{A^+_{1,\Lambda}}\left( C^+_{1,2\eta,\Lambda}(0,0) \right) \right).
\]
Similarly,
\[
A^-_{\delta,\Lambda}\in \text{Int}\left(\mathcal{A}_{(\tilde Z,\tilde t)}\left( C^-_{1,2\eta,\Lambda}(0,0) \right) \right),
\]
when $(\tilde Z,\tilde t)\in\mathcal{B}_{1/\hat c'}(A^-_{\delta,\Lambda})$. Now \eqref{conesetsimple} follows by putting $c_1=\max\lbrace \hat c,\hat c' \rbrace$ and applying Lemma \ref{t-1}.
Finally, as $A^\pm_{\delta,\Lambda}\in \mathcal{B}_{1/c_1}(A^\pm_{\delta,\Lambda})$,  and using from continuity of the map
\[
(\hat Z_0,\hat t_0) \mapsto A^\pm_{\delta,\Lambda}(\hat Z_0,\hat t_0),
\]
we see that \eqref{conesetsimple2} holds.
\end{proof}

\begin{lemma}\label{lem4.7rho}  Consider $\Omega_{2r}$. There
exists $\gamma=\gamma(m,\kappa,M)$, $0<\gamma<\infty$, such that the following holds. Assume that $u$ is a non-negative weak solution to $\L u=0$ in $\Omega_{2\rho_0}$, let $(Z_0,t_0)\in
\Delta_{\rho_1}$ and consider $\rho$, $\tilde \rho$, $0<\tilde \rho \leq \rho < \rho_1$. Then
\begin{equation}
\begin{split}
    u(A_{\tilde \rho,\Lambda}^+(Z_0,t_0))&\lesssim (\rho/\tilde \rho)^\gamma u(A_{\rho,\Lambda}^+(Z_0,t_0)),\\
    u(A_{\tilde \rho,\Lambda}^-(Z_0,t_0))&\gtrsim(\tilde \rho/\rho)^\gamma u(A_{\rho,\Lambda}^-(Z_0,t_0)).
\end{split}
\end{equation}
\end{lemma}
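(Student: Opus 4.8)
The content of this estimate is essentially already in Lemma \ref{lem4.7bol+}: a single application of that lemma trades a bounded-ratio change of scale for multiplication of $u$ by a bounded constant, so the plan is to iterate it over dyadically spaced scales. First I would dispose of the trivial case $\tilde\rho=\rho$ and assume $\tilde\rho<\rho$; fix $(Z_0,t_0)\in\Delta_{\rho_1}$, set $n:=\lceil\log_2(\rho/\tilde\rho)\rceil\ge1$ and $\delta:=(\tilde\rho/\rho)^{1/n}$, so that $\delta\in[1/2,1)$ and $\tilde\rho=\delta^n\rho$. For each $j=1,\dots,n$ the scale $\delta^{j-1}\rho$ lies in $(0,\rho_1)$, so Lemma \ref{lem4.7bol+}(i), applied with base scale $\delta^{j-1}\rho$ and with this fixed $\delta$ playing the role of its degree of freedom, together with the fact that each reference point is the center of the corresponding ball, gives
\begin{align*}
u\bigl(A^+_{\delta^{j}\rho,\Lambda}(Z_0,t_0)\bigr)&\le\sup_{\mathcal{B}_{\delta^{j-1}\rho/c(\delta)}(A^+_{\delta^{j}\rho,\Lambda}(Z_0,t_0))}u\le c(\delta)\inf_{\mathcal{B}_{\delta^{j-1}\rho/c(\delta)}(A^+_{\delta^{j-1}\rho,\Lambda}(Z_0,t_0))}u\\
&\le c(\delta)\,u\bigl(A^+_{\delta^{j-1}\rho,\Lambda}(Z_0,t_0)\bigr).
\end{align*}
Iterating over $j=1,\dots,n$ yields $u\bigl(A^+_{\tilde\rho,\Lambda}(Z_0,t_0)\bigr)\le c(\delta)^{n}\,u\bigl(A^+_{\rho,\Lambda}(Z_0,t_0)\bigr)$.

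Two things then remain: a uniform bound on $c(\delta)$ and a count of the iterations. For the first, I would check (from the proof of Lemma \ref{lem4.7bol+}) that $c(\delta)=c(m,\kappa,M,\delta)$ is governed by the length of a Harnack chain joining $A^+_{\rho,\Lambda}(Z_0,t_0)$ to a ball around $A^+_{\delta\rho,\Lambda}(Z_0,t_0)$ inside the cone $C^+_{\rho,2\eta,\Lambda}(Z_0,t_0)$, together with the fixed aperture $\eta$ of that cone; hence, as long as $\delta$ is bounded away from $0$ — in particular for $\delta\in[1/2,1)$ — it is bounded by a constant $c_*=c_*(m,\kappa,M)$. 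Since $n\le1+\log_2(\rho/\tilde\rho)$, this gives $c(\delta)^{n}\le c_*^{\,n}\le c_*\,(\rho/\tilde\rho)^{\log_2 c_*}$, and therefore, with $\gamma:=\log_2 c_*$,
\[
u\bigl(A^+_{\tilde\rho,\Lambda}(Z_0,t_0)\bigr)\le c_*\Bigl(\tfrac{\rho}{\tilde\rho}\Bigr)^{\gamma}u\bigl(A^+_{\rho,\Lambda}(Z_0,t_0)\bigr)\lesssim\Bigl(\tfrac{\rho}{\tilde\rho}\Bigr)^{\gamma}u\bigl(A^+_{\rho,\Lambda}(Z_0,t_0)\bigr),
\]
which is the first assertion.

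The second assertion follows by the same iteration with Lemma \ref{lem4.7bol+}(ii) in place of (i): since the reference points are again centers of the balls, that part gives $u\bigl(A^-_{\delta^{j}\rho,\Lambda}(Z_0,t_0)\bigr)\ge c(\delta)^{-1}u\bigl(A^-_{\delta^{j-1}\rho,\Lambda}(Z_0,t_0)\bigr)$, and iterating yields
\[
u\bigl(A^-_{\tilde\rho,\Lambda}(Z_0,t_0)\bigr)\ge c_*^{-n}\,u\bigl(A^-_{\rho,\Lambda}(Z_0,t_0)\bigr)\ge c_*^{-1}\Bigl(\tfrac{\tilde\rho}{\rho}\Bigr)^{\gamma}u\bigl(A^-_{\rho,\Lambda}(Z_0,t_0)\bigr)\gtrsim\Bigl(\tfrac{\tilde\rho}{\rho}\Bigr)^{\gamma}u\bigl(A^-_{\rho,\Lambda}(Z_0,t_0)\bigr).
\]
The points where I expect to have to be careful are exactly that all hypotheses of Lemma \ref{lem4.7bol+} persist at each scale $\delta^{j-1}\rho<\rho_1$ occurring in the iteration (they do, since $(Z_0,t_0)$ and the domain stay fixed) and the uniform bound on $c(\delta)$ for $\delta\ge1/2$; neither is deep, but both must be said. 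It is also worth noting why one cannot do this in a single step: a direct application of the Harnack-chain Lemma \ref{akkaa} along the dilation path from $A^+_{\rho,\Lambda}(Z_0,t_0)$ to $A^+_{\tilde\rho,\Lambda}(Z_0,t_0)$ is obstructed, because the cylinders $Q^-_{r_j}$ demanded by its hypothesis cannot be kept inside $\Omega_{2\rho_0}$ as the path approaches the boundary point $(Z_0,t_0)$; iterating over dyadically spaced scales is precisely what circumvents this, and is the reason the final bound is a power of $\rho/\tilde\rho$ rather than something worse.
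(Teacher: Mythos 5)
Your proposal is correct and is essentially the paper's own argument: the paper proves this lemma by building Harnack chains along the admissible dilation paths \eqref{admi} and applying Lemma \ref{lem4.7bol+}, deferring details to Lemma 4.3 of \cite{CNP3}, and your dyadic iteration of Lemma \ref{lem4.7bol+} is exactly that chain, with the logarithmic count of links converting the per-step constant into the power $(\rho/\tilde\rho)^\gamma$. The two points you flag — uniformity of $c(\delta)$ for $\delta$ bounded away from $0$, and persistence of the hypotheses at each scale — are indeed the only things to verify, and both hold for the reasons you give.
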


\begin{proof}
By constructing Harnack chains along the paths given in \eqref{admi} and applying Lemma \ref{lem4.7bol+}, the lemma follows. See Lemma 4.3 in \cite{CNP3} for further details.
\end{proof}

\begin{lemma}\label{lem4.7}  Consider $\Omega_{2r}$. There
exist  $c=c(m,\kappa,M)$, $1\leq c<\infty$, and  $\gamma=\gamma(m,\kappa,M)$, $0<\gamma<\infty$, such that the following holds. Assume that $u$ is a non-negative weak solution to $\L u=0$ in $\Omega_{2\rho_0}$ and let $(Z_0,t_0)\in
\Delta_{\rho_1}$. Then
\begin{equation}
\begin{split}
    u(Z,t)&\lesssim(\rho/d)^\gamma u(A_{\rho,\Lambda}^+(Z_0,t_0)),\\
    u(Z,t)&\gtrsim (d/\rho)^\gamma u(A_{\rho,\Lambda}^-(Z_0,t_0)),
\end{split}
\end{equation}
whenever $(Z,t)\in \Omega_{2\rho/c}(Z_0,t_0)$, $0<\rho<\rho_1$, with $d=d((Z,t),\Delta_{2r})$.
\end{lemma}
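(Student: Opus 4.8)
The plan is to deduce the lemma from the three preceding lemmas by first realizing the arbitrary point $(Z,t)$ as one of the standard reference points at the scale $d=d((Z,t),\Delta_{2r})$, and then sliding it up to scale $\rho$ and over to the base point $(Z_0,t_0)$ by Harnack chains. Throughout I fix the degree of freedom $\delta=\tfrac12$ in Lemma~\ref{lem4.7bol+}, and I take the constant $c=c(m,\kappa,M)$ in the statement larger than every structural constant produced by Lemmas~\ref{l-coord}, \ref{lem4.7rho} and \ref{lem4.7bol+}; this is harmless and guarantees that all the auxiliary points and scales occurring below lie in the ranges those lemmas require. Fix $(Z,t)\in\Omega_{2\rho/c}(Z_0,t_0)$ with $(Z_0,t_0)\in\Delta_{\rho_1}$ and $0<\rho<\rho_1$. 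Since $(Z,t)$ lies within distance $\lesssim\rho/c$ of the boundary point $(Z_0,t_0)$, and $\rho<\rho_1\ll\rho_0$, we have $d\approx d((Z,t),\partial\Omega)\lesssim\rho/c$, so after enlarging $c$ we may assume $d\le\delta\rho$; moreover every ball $\mathcal{B}_{C\rho}$ about a point within $\lesssim\rho$ of $(Z_0,t_0)$ is contained in $\Omega_{2\rho_0}$, so $u$ is available as a non-negative solution on all the regions that will be used.

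For the upper bound I would first apply Lemma~\ref{l-coord} — which is purely geometric and, since $\psi$ is globally Lipschitz with constant $M$, may be used relative to $(Z_0,t_0)$ — to obtain a boundary point $(Z_0^+,t_0^+)$ with $d((Z_0^+,t_0^+),(Z_0,t_0))\lesssim\rho$, hence $(Z_0^+,t_0^+)\in\Delta_{\rho/c}(Z_0,t_0)$, and a scale $\rho^+$ with $\rho^+/d\approx1$ such that $(Z,t)=A^+_{\rho^+,\Lambda}(Z_0^+,t_0^+)$. Since $\rho^+\le\delta\rho$, Lemma~\ref{lem4.7rho} with base point $(Z_0^+,t_0^+)$ gives
\[
u(Z,t)=u(A^+_{\rho^+,\Lambda}(Z_0^+,t_0^+))\lesssim(\delta\rho/\rho^+)^\gamma\,u(A^+_{\delta\rho,\Lambda}(Z_0^+,t_0^+)).
\]
By Lemma~\ref{lem4.7bol+}$(i')$ (with $(\tilde Z_0,\tilde t_0)=(Z_0^+,t_0^+)$) the point $A^+_{\delta\rho,\Lambda}(Z_0^+,t_0^+)$ lies in $\mathcal{B}_{\rho/c}(A^+_{\delta\rho,\Lambda}(Z_0,t_0))$, so Lemma~\ref{lem4.7bol+}$(i)$ yields
\[
u(A^+_{\delta\rho,\Lambda}(Z_0^+,t_0^+))\le\sup_{\mathcal{B}_{\rho/c}(A^+_{\delta\rho,\Lambda}(Z_0,t_0))}u\le c\inf_{\mathcal{B}_{\rho/c}(A^+_{\rho,\Lambda}(Z_0,t_0))}u\le c\,u(A^+_{\rho,\Lambda}(Z_0,t_0)).
\]
Chaining these and using $\rho^+\approx d$ and $\delta=\tfrac12$ gives $u(Z,t)\lesssim(\rho/d)^\gamma u(A^+_{\rho,\Lambda}(Z_0,t_0))$, the first inequality.

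The lower bound is the time-reversed mirror of this. Using the $A^-$ part of Lemma~\ref{l-coord} I would write $(Z,t)=A^-_{\rho^-,\Lambda}(Z_0^-,t_0^-)$ with $\rho^-/d\approx1$ and $(Z_0^-,t_0^-)\in\Delta_{\rho/c}(Z_0,t_0)$; the second inequality of Lemma~\ref{lem4.7rho} then gives $u(Z,t)\gtrsim(\rho^-/(\delta\rho))^\gamma u(A^-_{\delta\rho,\Lambda}(Z_0^-,t_0^-))$, while Lemma~\ref{lem4.7bol+}, parts $(ii')$ and $(ii)$, gives $u(A^-_{\delta\rho,\Lambda}(Z_0^-,t_0^-))\ge c^{-1}u(A^-_{\rho,\Lambda}(Z_0,t_0))$; multiplying and using $\rho^-\approx d$ produces $u(Z,t)\gtrsim(d/\rho)^\gamma u(A^-_{\rho,\Lambda}(Z_0,t_0))$. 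Since $\delta$ is fixed, all constants depend only on $m,\kappa,M$, as required.

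The analytic content is entirely supplied by Lemmas~\ref{lem4.7rho} and \ref{lem4.7bol+}; no new estimate is needed. The one point that needs care — and where I expect to spend most of the writing — is the verification alluded to in the first paragraph: after applying Lemma~\ref{l-coord} one must check that the auxiliary base points $(Z_0^\pm,t_0^\pm)$ and the scales $\rho^\pm,\delta\rho$ genuinely satisfy the hypotheses ($(Z_0^\pm,t_0^\pm)\in\Delta_{\rho_1}$ and the relevant containments in $\Omega_{2\rho_0}$, together with $0<\rho^\pm\le\delta\rho<\rho_1$) of the lemmas being invoked, and that $d((Z,t),\Delta_{2r})$, $d((Z,t),\partial\Omega)$ and the scale $\rho^\pm$ produced by Lemma~\ref{l-coord} are mutually comparable in this regime — all of which is forced once $c$ is taken large relative to $c_0$, $\Lambda$ and $M$ and once one uses that the relevant scales are $\lesssim\rho\ll\rho_0$.
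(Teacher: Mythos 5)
Your argument is correct and follows essentially the same route as the paper's proof, which is exactly the combination of Lemma \ref{l-coord} (to write $(Z,t)$ as a reference point at scale $\approx d$), Lemma \ref{lem4.7rho} (to slide from scale $\approx d$ up to scale $\delta\rho$, yielding the factor $(\rho/d)^{\pm\gamma}$), and Lemma \ref{lem4.7bol+} $(i),(i'),(ii),(ii')$ (to transfer between nearby boundary base points), with the remaining work being the bookkeeping of scales and base points that you flag and that the paper delegates to Lemma 3.10 of \cite{NP}. No gap.
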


\begin{proof}
This lemma is proved by combining Lemma \ref{lem4.7rho} with Lemma \ref{l-coord} and applying Lemma \ref{lem4.7bol+}. We refer to the proof of Lemma 3.10 in \cite{NP} for further details.
\end{proof}

\subsection{Fundamental solutions and estimates thereof}  In this subsection we introduce a fundamental solution to $\L$. The adjoint operator to $\L$ is defined as
\begin{equation}\label{PDEagg}
\L^\ast:=\nabla_X\cdot(A(X,Y,t)\nabla_X)-X\cdot\nabla_Y+\partial_t.
\end{equation}
\begin{definition}\label{fund}
A fundamental solution for $\L$ is a continuous and positive function $\Gamma=\Gamma(Z,t,\tilde Z,\tilde t)$, defined for
$\tilde t<t$ and $Z,\tilde Z\in\R^{N}$, such that
\begin{itemize}
  \item[(i)] $\Gamma( \cdot,\cdot, \tilde Z,\tilde t)$ is a weak solution of $\L u=0$ in $\mathbb R^N\times (\tilde t,\infty)$ and
  $\Gamma(Z,t,\cdot,\cdot)$ is a weak solution of $\L^{*} u=0$ in $\mathbb R^N\times (-\infty,t)$,
  \item[(ii)] for any bounded function $\phi\in C(\R^{N})$ and $Z,\tilde Z\in\R^{N}$, we have
\begin{align}
  \lim_{(Z,t)\to(\tilde Z,\tilde t)\atop t>\tilde t}u(Z,t)=\phi(\tilde Z), 
  \qquad \lim_{(\tilde Z,\tilde t)\to(Z,t)\atop t>\tilde t}v(\tilde Z,\tilde t)=\phi(Z), 
\end{align}
where
  \begin{align}\label{ae11}
 u(Z,t):=\iint_{\R^{N}}\Gamma(Z,t,\tilde Z,\tilde t)\phi(\tilde Z)\, \d\tilde Z,\qquad 
 v(\tilde Z,\tilde t):=\iint_{\R^{N}}\Gamma(Z,t,\tilde Z,\tilde t)\phi(Z)\, \d Z.
\end{align}
\end{itemize}
\end{definition}

\begin{remark}
Note that for any $\phi\in C^\infty_0(\R^{N+1})$ the following identities hold
\begin{equation*}
\begin{split}
     \iiint_{}\ \bigl(A(X,Y,t)\nabla_X \Gamma( \cdot,\cdot, \tilde Z,\tilde t) \cdot \nabla_X\phi+  \Gamma( \cdot,\cdot, \tilde Z,\tilde t)(X\cdot \nabla_Y\phi-\partial_t\phi)\bigr )\, \d X \d Y \d t=\phi(\tilde Z,\tilde t),\\
     \iiint_{}\ \bigl(A(\tilde X,\tilde Y,\tilde t)\nabla_{\tilde X} \Gamma(Z,t,\cdot,\cdot) \cdot \nabla_{\tilde X}\phi-  \Gamma(Z,t,\cdot,\cdot)(\tilde X\cdot \nabla_{\tilde Y}\phi-\partial_{\tilde t}\phi)\bigr )\, \d \tilde X \d \tilde Y \d \tilde t=\phi(Z,t).
\end{split}
\end{equation*}
\end{remark}

\begin{remark}
The functions in \eqref{ae11} are weak solutions of the following backward and forward Cauchy problems,
  $$
  \begin{cases}
    \L u(Z,t)= 0,\ &(Z,t)\in \mathbb R^N\times (\tilde t,\infty), \\
    u(Z,\tilde t)= \phi(Z), & Z \in\R^{N},
  \end{cases}\qquad
  \begin{cases}
    \L^{*}v(\tilde Z,\tilde t)= 0,\ & (\tilde Z,\tilde t)\in \mathbb R^N\times (-\infty,t), \\
    v(\tilde Z,t)= \phi(\tilde Z) & \tilde Z \in\R^{N}.
  \end{cases}
  $$
\end{remark}

Let $B$ and $E$ be as defined in \eqref{def:matrixE} and let $C$ be defined as in
\eqref{def:matrixC}. Recall \eqref{def:matrixCinv}. Using this notation, an explicit fundamental solution to the constant coefficient operator
\begin{equation}\label{constcoeff}
    \L^\lambda := \frac{\lambda}{2}\nabla_X\cdot\nabla_X + X\cdot\nabla_Y -\partial_t,
\end{equation}
with pole at $(\tilde Z,\tilde t)$, $\Gamma^\lambda(\cdot,\cdot,\tilde Z,\tilde t)$, can be defined by
\begin{equation}\label{ff}
 \Gamma^\lambda(Z,t,\tilde Z,\tilde t):= \Gamma^\lambda(Z-E(t-\tilde t)\tilde Z,t-\tilde t, 0,0) 
\end{equation}
where
\begin{align}\label{ffff}
     \Gamma^\lambda(Z,t,0,0)&=\frac {1}{(2\pi\lambda)^{m}\sqrt{\det \mathcal{C}(t)}}\ e^{\left(-\frac{1}{2\lambda}
\lan\mathcal{C}(t)^{-1}Z,Z\ran\right)},\quad \text{if }t>0,\\
    \Gamma^\lambda(Z,t,0,0)&=0,\quad \text{if } t\leq 0.
\end{align}
Here $ \lan\cdot,\cdot\ran$ denotes the standard inner product on $ \mathbb R^{N}$.

\begin{lemma}\label{lem_fsolbounds}
  Assume that $A$ satisfies \eqref{eq2} and \eqref{eq2+}. Then there exists a fundamental solution $\Gamma(Z,t,\tilde{Z},\tilde{t})$ to $\L$ in the sense of Definition \ref{fund}. Furthermore, there exist positive constants $\lambda^+$, $\lambda^-$,  depending only on $m$ and  $\kappa$, such that
 \begin{equation}\label{fsolbounds}
    \Gamma^{\lambda^-}(Z,t,\tilde{Z},\tilde{t})\lesssim\Gamma(Z,t,\tilde{Z},\tilde{t})\lesssim\Gamma^{\lambda^+}(Z,t,\tilde{Z},\tilde{t})
 \end{equation}
 for all $(Z,t)$, $(\tilde{Z},\tilde{t})$ with $t>\tilde t$.
\end{lemma}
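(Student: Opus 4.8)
The plan is to separate the \emph{qualitative} construction of $\Gamma$, which uses the smoothness of $A$ (assumptions \eqref{comp}, \eqref{eq2+}), from the \emph{quantitative} bounds \eqref{fsolbounds}, which are to be proven using only \eqref{eq2}, so that the constants depend only on $m$ and $\kappa$. Since $A\in C^\infty$, $\L$ is hypoelliptic of Hörmander type, and a fundamental solution $\Gamma(Z,t,\tilde Z,\tilde t)$ satisfying all requirements of Definition \ref{fund} can be constructed by the classical Levi parametrix method for operators of Kolmogorov type, starting from the explicit Gaussian fundamental solution \eqref{ff}--\eqref{ffff} of the frozen-coefficient operator and solving the associated Volterra equation. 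From this construction, and from \eqref{comp}, we shall use — only qualitatively — that $\Gamma$ is continuous and strictly positive, obeys the Chapman--Kolmogorov identity $\Gamma(Z,t,\tilde Z,\tilde t)=\iint_{\R^N}\Gamma(Z,t,Z',s)\,\Gamma(Z',s,\tilde Z,\tilde t)\,\d Z'$ for $\tilde t<s<t$, and conserves mass: $\iint_{\R^N}\Gamma(Z,t,\tilde Z,\tilde t)\,\d\tilde Z=\iint_{\R^N}\Gamma(Z,t,\tilde Z,\tilde t)\,\d Z=1$, the last identity following from the maximum principle (cf.\ Lemma \ref{maxprinciple}) and uniqueness of bounded solutions of the Cauchy problems for $\L$ and $\L^\ast$, which hold because of \eqref{comp}. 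From now on all constants depend only on $m$ and $\kappa$.

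\emph{Upper bounds.} Write $P_{t,\tilde t}\phi(Z)=\iint_{\R^N}\Gamma(Z,t,\tilde Z,\tilde t)\phi(\tilde Z)\,\d\tilde Z$. Testing $\L u=0$ with $u$ and integrating over $\R^N$, the drift term $\iint (X\cdot\nabla_Y u)\,u\,\d Z$ vanishes since $\nabla_Y\cdot X\equiv0$, so $s\mapsto\|P_{s,\tilde t}\phi\|_{L^2(\R^N)}$ is non-increasing; the same is true for the adjoint semigroup of $\L^\ast$, which is an operator of the same type after the change of variables $(Y,t)\mapsto(-Y,-t)$, so Lemmas \ref{lem:moserlocal} and \ref{harnack} apply to it as well. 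For $u=P_{t,\tilde t}\phi$ and $r\approx\sqrt{t-\tilde t}$, chosen small enough that $Q_{2r}(Z,t)\subset\R^N\times(\tilde t,\infty)$, Lemma \ref{lem:moserlocal} with $p=2$, the bound $|Q_{2r}|\approx r^{\bf q}$ with ${\bf q}=4m+2$, and $L^2$-contractivity give
$$
u(Z,t)\lesssim\Bigl(\bariiint_{Q_{2r}(Z,t)}|u|^2\Bigr)^{1/2}\lesssim r^{-{\bf q}/2}\Bigl(r^2\sup_{s>\tilde t}\|P_{s,\tilde t}\phi\|_{L^2}^2\Bigr)^{1/2}\lesssim r^{1-{\bf q}/2}\|\phi\|_{L^2}\lesssim(t-\tilde t)^{-m}\|\phi\|_{L^2},
$$
hence $\|P_{t,\tilde t}\|_{L^2\to L^\infty}\lesssim(t-\tilde t)^{-m}$; by duality $\|P_{t,\tilde t}\|_{L^1\to L^2}\lesssim(t-\tilde t)^{-m}$, and composing at the midpoint of $[\tilde t,t]$ gives $\Gamma(Z,t,\tilde Z,\tilde t)\lesssim(t-\tilde t)^{-2m}\approx(\det\mathcal{C}(t-\tilde t))^{-1/2}$, the on-diagonal size of $\Gamma^{\lambda^\pm}$. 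To upgrade this to $\Gamma\lesssim\Gamma^{\lambda^+}$ for some $\lambda^+=\lambda^+(m,\kappa)$ one runs a Davies/Aronson weighted-energy argument: for profiles $\psi$ that are Lipschitz with respect to the intrinsic quasi-distance $d$ one differentiates $s\mapsto\iint e^{2\alpha\psi}(P_{s,\tilde t}\phi)^2\,\d Z$, absorbs the $\alpha$-terms via \eqref{eq2} and Cauchy--Schwarz, deduces the off-diagonal $L^2\to L^2$ decay, and combines it with the on-diagonal bound and optimization in $\alpha$. I expect this step to be the main obstacle: since the diffusion acts only in the $X$-variables while $d$ also records displacement in $Y$ and $t$, the admissible weights $\psi$ and the bookkeeping of the drift term must be adapted to the dilation structure and the group law, and this is precisely where the non-Euclidean geometry of $\L$ has to be confronted.

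\emph{Lower bounds.} Granted the Gaussian upper bound and mass conservation, at elapsed time $s$ a fixed fraction of the mass of $\Gamma(\cdot,\tilde t+s,\tilde Z,\tilde t)$ sits in a $\mathcal{C}(s)$-ellipsoid about $E(s)\tilde Z$ of Lebesgue measure $\approx s^{2m}$, so its supremum there is $\gtrsim s^{-2m}$; chaining the Harnack inequality of Lemma \ref{harnack} along the admissible free-streaming path through $(\tilde Z,\tilde t)$ (via Lemma \ref{adda} and Lemma \ref{akkaa-}), which costs only $c(m,\kappa)$, one transfers this into the on-diagonal lower bound $\Gamma(E(s)\tilde Z,\tilde t+s,\tilde Z,\tilde t)\gtrsim s^{-2m}$. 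Finally, for arbitrary $(Z,t)$ apply Lemma \ref{akkaa} with pole $(\tilde Z,\tilde t)$ to connect the diagonal point $(\zeta_1,\theta_1)$, where $\zeta_1=E(\theta_1-\tilde t)\tilde Z$ and $\theta_1=\tilde t+(1-\varepsilon)(t-\tilde t)$ with $\varepsilon=\varepsilon(m)$ small enough that all Harnack cylinders of the chain stay inside $\R^N\times(\tilde t,\infty)$, to $(Z,t)$; since $E(t-\theta_1)\zeta_1=E(t-\tilde t)\tilde Z$ this gives, with $\xi:=Z-E(t-\tilde t)\tilde Z$,
$$
\Gamma(\zeta_1,\theta_1,\tilde Z,\tilde t)\le c^{\,1+\frac1h\langle\mathcal{C}^{-1}(\varepsilon(t-\tilde t))\xi,\,\xi\rangle}\,\Gamma(Z,t,\tilde Z,\tilde t).
$$
The dilation identity $\mathcal{C}(\lambda\tau)=D_\lambda\,\mathcal{C}(\tau)\,D_\lambda$ with $D_\lambda=\mathrm{diag}(\lambda^{1/2}I_m,\lambda^{3/2}I_m)$ yields $\langle\mathcal{C}^{-1}(\varepsilon\tau)\xi,\xi\rangle\le C(m)\langle\mathcal{C}^{-1}(\tau)\xi,\xi\rangle$, so combining with $\Gamma(\zeta_1,\theta_1,\tilde Z,\tilde t)\gtrsim(\theta_1-\tilde t)^{-2m}\gtrsim(t-\tilde t)^{-2m}$ and choosing $\lambda^-=\lambda^-(m,\kappa)$ with $\tfrac1{2\lambda^-}=C(m)\log c$ gives
$$
\Gamma(Z,t,\tilde Z,\tilde t)\gtrsim(t-\tilde t)^{-2m}\,e^{-\frac1{2\lambda^-}\langle\mathcal{C}^{-1}(t-\tilde t)\xi,\xi\rangle}\approx\Gamma^{\lambda^-}(Z,t,\tilde Z,\tilde t),
$$
which is the remaining half of \eqref{fsolbounds}.
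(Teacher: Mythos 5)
The paper does not actually prove this lemma: its ``proof'' is a citation to \cite{P1,DiFP,DeM,LPPLPP} --- more precisely to \cite{P1}, \cite{DiFP}, \cite{DeM} and \cite{LPP} --- where existence (via the parametrix, using the H\"older continuity supplied by \eqref{eq2+}) and the two-sided Gaussian bounds with constants depending only on $m,\kappa$ are established. Your proposal is therefore a genuinely different, self-contained route, and its architecture is the correct one and the one used in those references: Nash-type on-diagonal upper bound from $L^2$-contractivity plus the local boundedness estimate (your $\|P_{t,\tilde t}\|_{L^2\to L^\infty}\lesssim (t-\tilde t)^{-m}$ computation, with ${\bf q}=4m+2$, is right, as is the duality/Chapman--Kolmogorov step); and, for the lower bound, conservation of mass plus Harnack chains along the optimal-control paths of Lemma \ref{akkaa}, whose cost $\langle \mathcal{C}^{-1}(t-\tilde t)(Z-E(t-\tilde t)\tilde Z),Z-E(t-\tilde t)\tilde Z\rangle$ is exactly the exponent of $\Gamma^{\lambda}$. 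The scaling identity $\mathcal{C}(\lambda\tau)=D_\lambda\mathcal{C}(\tau)D_\lambda$ and the choice of $\varepsilon=\varepsilon(m)$ keeping the chain in $\{s>\tilde t\}$ are also handled correctly.

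The genuine gap is the off-diagonal Gaussian \emph{upper} bound, and you have identified it yourself without closing it. A Davies weighted-energy argument with weights $e^{\alpha\psi}$, $\psi$ Lipschitz with respect to $d$, does not go through as stated: differentiating $s\mapsto\iint e^{2\alpha\psi}u^2$ produces, besides the absorbable term $\alpha^2\iint |\nabla_X\psi|^2e^{2\alpha\psi}u^2$, the commutator term $\alpha\iint (X\cdot\nabla_Y\psi)\,e^{2\alpha\psi}u^2$ coming from the transport operator; since the diffusion acts only in $X$, there is nothing to absorb this into, and a $\psi$ recording displacement in $Y$ at the rate $|Y|^{1/3}$ forced by $d$ makes $X\cdot\nabla_Y\psi$ uncontrollable. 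The known fixes (Di Francesco--Pascucci, Delarue--Menozzi, and the Moser-iteration-with-exponential-weights approach of Pascucci--Polidoro) use time-dependent quadratic weights of the form $\exp\bigl(\alpha\langle\mathcal{C}^{-1}(T-s)(Z'-E(s-\tilde t)\tilde Z),\,\cdot\,\rangle\bigr)$, which are transported along the characteristics of $X\cdot\nabla_Y-\partial_t$ so that the commutator term vanishes identically; without this (or an equivalent device) the upper half of \eqref{fsolbounds} is not proved. The gap also propagates into your lower bound: the claim that a fixed fraction of the mass of $\Gamma(\cdot,\tilde t+s,\tilde Z,\tilde t)$ lies in a $\mathcal{C}(s)$-ellipsoid requires off-diagonal decay (at least an $L^2$ Davies--Gaffney estimate), not merely the on-diagonal bound. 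Given the paper's stated policy of using \eqref{eq2+} only qualitatively, the honest options are either to carry out the weighted argument with the correct transported weight, or to do what the paper does and quote the references.
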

\begin{proof} We refer to \cite{P1, DiFP, DeM} for the existence of the  fundamental solution for $\L$ under the additional condition that the coefficients are
H\"older continuous. See also \cite{LPP}.
\end{proof}

Using Lemma \ref{lem_fsolbounds}, and the arguments in Subsection 2.2 in \cite{NP}, we have the upper bound
\begin{equation}\label{fundsolbd}
    \Gamma(Z,t,\tilde Z,\tilde t) \lesssim \frac{1}{d((Z,t),(\tilde Z,\tilde t))^{\mathbf{q}-2}},
\end{equation}
for all $(Z,t)$, $(\tilde{Z},\tilde{t})$ with $t>\tilde t$.

\section{Boundary H{\"o}lder continuity: Proof of Theorem \ref{lem4.5-Kyoto1}}\label{sec5}
Theorem \ref{lem4.5-Kyoto1} is an immediate consequence of Lemma \ref{boundaryholder+} stated and proved below. To  start the argument towards
the proof of Lemma \ref{boundaryholder+} we first need to introduce some additional notation.

Recall the reference points defined in \eqref{pointsref2a} and, in particular,
\begin{eqnarray*}
\tilde A_{\rho,\Lambda}^-&=&(0,-\Lambda\rho,0,-\frac 2 3\Lambda\rho^3,-\rho^2)\in\mathbb R^{m-1}\times\mathbb R\times\mathbb R^{m-1}\times\mathbb R \times\mathbb R.
\end{eqnarray*}
 Recall that if $(Z_0,t_0)\in\partial\Omega$, then $\tilde A_{\rho,\Lambda}^-(Z_0,t_0)=(Z_0,t_0)\circ \tilde A_{\rho,\Lambda}^-$ is a reference point in $\mathbb R^{N+1}\setminus\Omega$ and into past relative to $t_0$.  We furthermore let
\begin{eqnarray}\label{flatcubes}
    \hat Q_r:=\{(X,Y) \mid |x_i|<r,\ |y_i|<r^3\},
\end{eqnarray}
for $r>0$. We also let
\[
    \hat Q_r(Z,t) := ((Z,t)\circ Q_r) \cap \lbrace (\tilde Z, \tilde t) \mid \tilde t=t \rbrace,
\]
whenever $(Z,t)\in\R^{N+1}$.
We recall the cylinders $Q^-_r$ introduced in \eqref{pastcyl}, and we, in addition, introduce
\begin{eqnarray*}\label{pastcyl2}
    Q_{r_1,r_2}^-:=\hat Q_{r_1}\times\{t \mid  -r_2^2<t\leq 0\},\quad Q_{r_1,r_2}^-(Z,t)=(Z,t)\circ Q_{r_1,r_2}^-.
\end{eqnarray*}
The  following is the key lemma proved in this section.
\begin{lemma}\label{boundaryholderkey}
Let $(Z_0,t_0)\in\partial\Omega$ and $r>0$. Then there exists a constant $K=K(m,\kappa,M)\gg 1$, such that the following is true.
Let $u$ be a non-negative weak solution of $\L u=0$ in $\Omega\cap Q_{Kr,2r}^-(Z_0,t_0) $, vanishing continuously on  $\partial\Omega\cap Q_{Kr,2r}^-(Z_0,t_0) $. Then there exists $\theta=\theta(m,\kappa,M,K)$, $0<\theta<1$, such that
\begin{align*}
\sup_{\Omega\cap Q_{r/K}^-(Z_0,t_0)}u \le \theta \sup_{\Omega\cap Q_{Kr,2r}^-(Z_0,t_0)}u.
\end{align*}
\end{lemma}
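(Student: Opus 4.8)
The plan is to prove the oscillation-decay estimate in Lemma \ref{boundaryholderkey} by the classical "growth lemma" strategy adapted to the Kolmogorov geometry: one shows that a non-negative solution vanishing on a piece of the boundary must, when measured on the smaller cylinder $\Omega\cap Q_{r/K}^-(Z_0,t_0)$, lose a definite fraction of its supremum over the larger cylinder, the loss coming from the region near $\partial\Omega$ where $u$ is forced to be small. By translation and dilation invariance (using \eqref{dil.alpha.i} and \eqref{e70}) we may assume $(Z_0,t_0)=(0,0)$ and $r=1$, and after normalizing we may assume $\sup_{\Omega\cap Q_{K,2}^-(0,0)}u=1$. Set $M_0:=\sup_{\Omega\cap Q_{1/K}^-(0,0)}u\le 1$; the goal is $M_0\le\theta<1$ for some $\theta=\theta(m,\kappa,M,K)$.

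The key steps, in order. First, barrier construction: using the cone condition from Lemma \ref{coneconditions}, in particular the exterior cone $\tilde C_{\rho,2\eta,\Lambda}^-(Z_0,t_0)\subset\mathbb R^{N+1}\setminus\Omega_r$ and the exterior reference point $\tilde A_{\rho,\Lambda}^-(Z_0,t_0)$, I would build a supersolution (or equivalently use the fundamental-solution bounds of Lemma \ref{lem_fsolbounds}, comparing with $\Gamma^{\lambda^+}$ centered at an exterior past point) which dominates $u$ near $(0,0)$ and which, on $\Omega\cap Q_{1/K}^-$, is bounded by some fixed $\theta_0<1$; the point is that the pole lies in the complement and into the past, so $\Gamma^{\lambda^+}$ is small on the forward cylinder near the vertex while $u\le 1$ there provides the needed boundary domination via the maximum principle of Lemma \ref{maxprinciple} (applied on $\Omega_\rho(Z_0,t_0)$ after passing to $Q_{M,\rho}$-notation via \eqref{intextball}, \eqref{def.Omega.Delta.frinc}). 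Second, Harnack propagation: if instead $u$ is comparable to $1$ at some interior point of $\Omega\cap Q_{1/K}^-$, then using the interior Harnack inequality Lemma \ref{harnack}, the Harnack-chain machinery of Lemma \ref{akkaa-}–Lemma \ref{akkaa}, and the cone Harnack estimates Lemma \ref{lem4.7bol+}, Lemma \ref{lem4.7rho}, Lemma \ref{lem4.7}, I would propagate this lower bound to the reference point $A_{1,\Lambda}^-(0,0)$ and then forward, which forces $u$ to be bounded below by a fixed constant on a fixed-size interior ball sitting inside $\Omega\cap Q_{K,2}^-$ — but $u$ must also vanish continuously at $(0,0)\in\partial\Omega$, and the boundary Hölder/oscillation cannot be violated, so this alternative is incompatible with $M_0$ being close to $1$. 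Combining the two cases yields $M_0\le\theta$ with $\theta=\theta(m,\kappa,M,K)\in(0,1)$. Third, choice of $K$: one must take $K=K(m,\kappa,M)$ large enough that all the cone inclusions, the admissible-path connections, and the exterior-pole estimates fit inside $Q_{K,2}^-$ with the right scale separation (this is where $K\gg 1$ enters), and then $\theta$ is allowed to depend on $K$ as stated.

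The main obstacle, as the introduction and the remarks after Lemma \ref{harnack} emphasize, is the rigidity of the Harnack inequality for Kolmogorov-type operators: unlike the uniformly parabolic case one cannot connect arbitrary pairs of points by Harnack chains, and the cone of admissible directions through a boundary point is constrained by the subelliptic structure (the $y_m$-drift). Concretely, the difficulty is to ensure that the exterior barrier point $\tilde A_{\rho,\Lambda}^-$ and the interior propagation point $A_{\rho,\Lambda}^\pm$ are genuinely reachable — that the admissible paths of Lemma \ref{lem.conecond.0} and Lemma \ref{coneconditions-} stay inside $\Omega$ and respect the Lipschitz geometry \eqref{def:LipK} — so that Lemma \ref{coneconditions} and the cone-Harnack lemmas may be invoked uniformly. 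This is precisely the technical heart; once the geometric bookkeeping is arranged, the barrier comparison plus Harnack propagation argument is standard. Finally I would note that Theorem \ref{lem4.5-Kyoto1} follows from Lemma \ref{boundaryholderkey} by the usual iteration: applying the oscillation decay at dyadic scales $2^{-j}r$ gives $\sup_{\Omega\cap Q_{r/K^{j}}^-}u\le\theta^{j}\sup_{\Omega\cap Q_{Kr,2r}^-}u$, and translating back through \eqref{intextball} and \eqref{def.Omega.Delta.frinc} to the $d$-balls $\mathcal B_\rho(Z_0,t_0)$ converts this geometric decay into the stated power $(d((Z,t),(Z_0,t_0))/r)^\alpha$ with $\alpha=\alpha(m,\kappa,M)\in(0,1)$.
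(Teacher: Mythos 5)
Your overall frame (normalize, barrier via the maximum principle, exterior cone as the source of the gain, $K\gg1$ to control far-field effects) matches the paper's strategy, but two of your steps have genuine gaps. First, the barrier you describe does not produce the quantitative loss. The paper's gain does not come from a single fundamental solution with exterior pole being ``small near the vertex''; it comes from solving the Cauchy problem from time $t=-4$ with datum $\equiv 1$ except on a ball of definite size centered at $\tilde A^-_{2,\Lambda}$ and contained in $\mathbb R^{N+1}\setminus\bar\Omega$ (via Lemma \ref{coneconditions}), and then showing that the resulting deficit $\hat\Phi_2=1-\Phi_2$ satisfies $\hat\Phi_2\gtrsim 1$ at the origin. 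That lower bound is transmitted from the exterior ball to $(0,0)$ by the lower Gaussian bound of Lemma \ref{lem_fsolbounds} together with a Harnack chain along the admissible path $\tau\mapsto\tilde A^-_{2\tau,\Lambda}$ (Claims 3 and 4 of the paper's proof). Separately, the contribution of the lateral boundary of the wide cylinder $Q^-_{K,2}$ must be shown to be $\lesssim e^{-c^{-1}K^2}K^\eta$ near the origin (Claims 1 and 2 of the paper's proof); this Gaussian far-field estimate is the actual reason $K$ must be large, and your proposal does not address it. Without both pieces --- a definite deficit coming from the exterior ball and an exponentially small lateral contribution --- one cannot extract $\theta<1$.

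Second, your alternative ``Harnack propagation'' branch is both circular and non-quantitative. You invoke ``the boundary H\"older/oscillation cannot be violated'', but boundary H\"older continuity (Theorem \ref{lem4.5-Kyoto1}) is exactly what this lemma is meant to establish; and ``vanishes continuously'' carries no rate, so it cannot by itself force $\sup_{\Omega\cap Q^-_{1/K}}u\le\theta<1$ with $\theta$ depending only on $(m,\kappa,M,K)$. Moreover, the propagation you describe runs in the forbidden direction: Lemma \ref{harnack} only transports lower bounds forward in time, so a lower bound at a point of $Q^-_{1/K}$ (time near $0$) cannot be carried back to $A^-_{1,\Lambda}$ (time $-1$) without a backward Harnack inequality --- which is Theorem \ref{thm:back}, proved much later and only under the additional structural assumption \eqref{struct}. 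The dichotomy branch should be dropped; the barrier argument, correctly set up as $u\le c\Psi\Phi_1+\Psi\Phi_2$ on the Kolmogorov boundary of $\Omega\cap Q^-_{K,2}$ and hence inside by Lemma \ref{maxprinciple}, already yields the conclusion once the two estimates above are in place.
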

\begin{proof} We first note that we can without loss of generality assume that $(Z_0,t_0)=(0,0)$ and $r=1$. Let $K\gg 1$ be a constant to be
fixed below. We let $\phi _{1}\in C_{0}^{\infty }(\mathbb R^N)$ be such
that $0\leq \phi _{1}\leq 1$, and $\phi _{1}\equiv 1$ on $$\hat Q_{(K+1)}\setminus \hat Q_{(K-1 )},$$ and such that  $\phi _{1}\equiv 0$ on
$$ \hat Q_{(K-2 )}\cup (\mathbb R^N\setminus \hat Q_{(K+2 )}).$$
Similarly, we let $\phi _{2}\in C_{0}^{\infty }(\mathbb R^N)$ be such
that $0\leq \phi _{2}\leq 1$, $\phi _{2}\equiv 1$ on $$\hat Q_{(K+1 )}\setminus (\mathcal{B}_{2/M}(\tilde A_{2,\Lambda}^-)\cap\{t \mid t=-4\}),$$
and such that  $\phi _{2}\equiv 0$ on $$ (\mathcal{B}_{1/M}(\tilde A_{2,\Lambda}^-)\cap\{t \mid t=-4\})\cup (\mathbb R^N\setminus \hat Q_{(K+2 )}).$$ Note that by
construction we have $(\mathcal{B}_{2/M}(\tilde A_{2,\Lambda}^-)\cap\{t \mid t=-4\})\subset \mathbb R^{N+1}\setminus\bar{\Omega}$. Using $%
\phi _{1}$ and $\phi _{2}$ we let $\Phi _{1}(Z,t)$ and $\Phi _{2}(Z,t)$ be the solutions to the Cauchy  problem
for $\mathcal {L}$ with data
$\phi _{1}$ and $\phi _{2}$, respectively, on $\{t=-4\}$. Hence $\mathcal {L}\Phi _{1}(Z,t)=0=\mathcal {L}\Phi _{2}(Z,t)$ whenever $(Z,t)\in \R^{N+1}$, $t> -4$ and we can represent $\Phi _{1}(Z,t)$ and $\Phi _{2}(Z,t)$ using the fundamental solution. The remainder of the proof will consist of showing the validity of four claims from which the lemma will follow.

\noindent {\textit{Claim 1: There exists a constant $c=c(m,\kappa,M)\geq 1$ such that
\begin{equation}
\label{jul1}
1\leq c\Phi _{1}(Z,t),
\end{equation}
whenever $(Z,t)\in
\partial Q_{{K},2}^{-}\cap
\{(Z,t)\mid -4<t<0\}.$}}

\noindent \textit{Proof of the claim.} Using Lemma \ref{lem_fsolbounds} we see that solution to the Cauchy problem with initial data $\phi_1$ can be represented as
\[
\Phi_1(Z,t) = \iint_{\mathbb{R}^N}\Gamma(Z,t,\tilde Z, -4)\phi_1(\tilde Z)\d\tilde Z,
\]
where $\Gamma(\cdot,\cdot,\tilde Z,\tilde t)$ is the fundamental solution for $\mathcal{L}$ with pole at $(\tilde Z,\tilde t)$. Note that by Lemma \ref{lem_fsolbounds} we have the lower bound
\begin{equation}
    c\Gamma^\lambda(Z,t,\tilde Z, -4) \leq \Gamma(Z,t,\tilde Z, -4)
\end{equation}
for some $c=c(m,\kappa)>0$, where $\Gamma^\lambda(\cdot,\cdot,\tilde Z,\tilde t)$, $\lambda:=\lambda^-$, is the fundamental solution for the constant coefficient operator defined in \eqref{constcoeff}.
Hence,
\[
\Phi_1(Z,t) \geq c\iint_{\mathbb{R}^N}\Gamma^\lambda(Z,t,\tilde Z, -4)\phi_1(\tilde Z)\d\tilde Z.
\]
Let $\delta>0$ some small fixed constant and consider the shifted cube $$\hat Q_\delta (Z,-4) = ((Z,-4)\circ Q_\delta) \cap \lbrace (Z,t) \mid t=-4 \rbrace.$$ Note that $\hat Q_\delta (Z,-4)\subset (\hat Q_{K+1}\backslash \hat Q_{K-1})\times \lbrace (Z,t) \mid t=-4 \rbrace$.
Using this together with the fact that $\Gamma^\lambda$ is non-negative, smooth and not identically zero in $\hat Q_\delta (Z,-4)$, we have
\[
\iint_{\mathbb{R}^N}\Gamma^\lambda(Z,t,\tilde Z, -4)\phi_1(\tilde Z)\d\tilde Z \geq \iint_{\hat Q_\delta (Z,-4)}\Gamma^\lambda(Z,t,\tilde Z, -4)\d\tilde Z \gtrsim 1.
\]
The completes the proof of the claim.\qed

To simplify notation a little, we will in the sequel write
\begin{equation}
\Psi:=\sup_{\Omega\cap Q_{K,2}^-}u.  \label{supM}
\end{equation}%
With this notation, we see that by using (\ref{jul1}) and the maximum principle on $\Omega\cap Q_{K,2}^-$ we obtain
\begin{equation}
u(Z,t)\leq c\Psi\Phi _{1}(Z,t)+\Psi\Phi _{2}(Z,t),  \label{compky4}
\end{equation}%
when $(Z,t)\in\Omega\cap Q_{K,2}^-$, and thus, in particular, when $(Z,t)\in \Omega\cap Q_{1}^-$.

\noindent {\textit{Claim 2:  If $(Z,t)\in \Omega\cap Q_{1}^-$, then there exist $c\geq 1$, and an integer $\eta \gg 1$,  both independent of ${K}$, such that
\begin{equation}\label{estimateclaim2}
\Phi _{1}(Z,t)\leq c e^{-c^{-1}{K}^{2}}{K}^{\eta }.
\end{equation}}}

\noindent
\textit{Proof of the claim.} Again using Lemma \ref{lem_fsolbounds}, with $\lambda:=\lambda^+$, we have
\begin{align*}
    \Phi_1(Z,t) &= \iint_{\mathbb{R}^N}{\Gamma(Z,t,\tilde Z,-4)\phi_1(\tilde Z) \d\tilde Z}\\
    &\leq
    \iint_{\hat{Q}_{K+2}\backslash \hat{Q}_{K-2}}{\Gamma(Z,t,\tilde Z,-4) \d\tilde Z}\\
    &\lesssim  \iint_{\hat{Q}_{K+2}\backslash \hat{Q}_{K-2}}{\Gamma^\lambda (Z,t,\tilde Z,-4) \d\tilde Z}\\
    &\lesssim \frac{1}{\lambda^m(t+4)^{2m}}
    \iint_{\hat{Q}_{K+2}\backslash \hat{Q}_{K-2}}{e^{-\frac{1}{2\lambda}\left( \frac{1}{t+4}|X-\tilde X|^2 + \frac{3}{(t+4)^3}\left| 2(Y-\tilde Y) + (t+4)(X+\tilde X) \right|^2 \right)} \d\tilde Z}.
\end{align*}
We now consider now the two cases,
\begin{align*}
        |X-\tilde X|^2\geq m|K-3|^2, \,\text{and}\, |X-\tilde X|^2< m|K-3|^2.
\end{align*}
\begin{figure}
    \centering
    \includegraphics{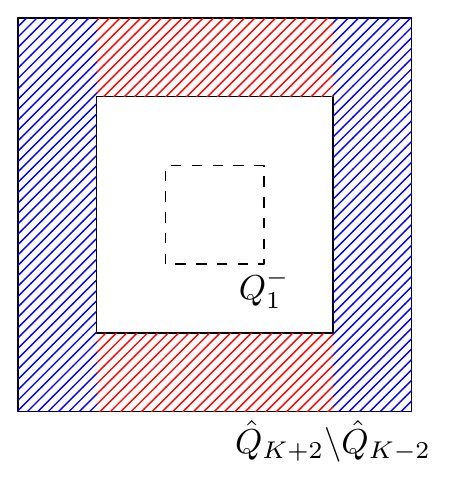}
    \caption{The shaded regions represent the two cases in the proof of Claim 2.}
    \label{fig:boxcases}
\end{figure}
In the first case we immediately see that
\begin{align}\label{est}
e^{-\frac{1}{2\lambda}\left( \frac{1}{t+4}|X-\tilde X|^2 + \frac{3}{(t+4)^3}\left| 2(Y-\tilde Y) + (t+4)(X+\tilde X) \right|^2 \right)} \leq e^{-cK^2}.
\end{align}
In the second case we note that $|Y-\tilde Y|^2\gtrsim mK^6$, for $K$ large enough, and due to the geometry of $\hat{Q}_{K+2}\backslash \hat{Q}_{K-2}$. Also, $|X+\tilde X|^2\lesssim K^2$ and using that  \begin{equation*}
    \begin{split}
        \left| 2(Y-\tilde Y) + (t+4)(X+\tilde X) \right|^2 &= 3|Y-\tilde Y|^2-(t+4)^2|X+\tilde X|^2+|Y-\tilde Y +2(t+4)(X+\tilde X)|^2\\
        &\lesssim 3|Y-\tilde Y|^2-(t+4)^2|X+\tilde X|^2 + \left(|Y-\tilde Y| + 2(t+4)|X+\tilde X| \right)^2,
    \end{split}
\end{equation*}
we can also in the second case conclude the validity of \eqref{est} for some $c>0$ independent of $K$. This together with the elementary fact that there exists $\eta>1$ (independent of $K$) such that $|\hat{Q}_{K+2}\backslash \hat{Q}_{K-2}|\leq K^\eta$, yields that
\[
\Phi_1(Z,t) \leq c \iint_{\hat{Q}_{K+2}\backslash \hat{Q}_{K-2}}{e^{-\frac{1}{c} K^2} \d\tilde Z}\leq c e^{-c^{-1}K^2}K^\eta,
\]
which is the statement of the claim.
\qed

To be able to estimate $\Phi _{2}(Z,t)$ we  write
\[
\Phi _{2}=1-\hat{\Phi}_{2}
\]
where now, in particular, $\hat{\Phi}_{2}$ is a non-negative function
such that
\[
\hat{\Phi}_{2}(Z,-4)=1,
\]
whenever $(Z,-4)\in(\mathcal{B}_{1/M}(\tilde A_{2,\Lambda}^-)\cap\{t \mid t=-4\})$.

\noindent {\textit{Claim 3: There exist $\epsilon>0 $ small, and $c\geq 1$, both depending only on $m$, $\kappa$ and $M$, such that
\begin{equation}\label{estimate}
\hat{\Phi}_{2}(Z,t)\geq c^{-1},
\end{equation}
whenever $(Z,t)\in (\mathcal{B}_{(1-\epsilon)/M}(\tilde A_{2(1-\epsilon),\Lambda}^-)\cap\{t \mid t=-4(1-\epsilon)^2\})$.}}

\noindent
\textit{Proof of the claim.} As in the proof of Claim 1 above we fix a small $\delta>0$. Let $\epsilon>0$ be such that $$\hat{Q}_\delta (Z,-4) \subset (\mathcal{B}_{1/M}(\tilde A_{2,\Lambda}^-)\cap\{t \mid t=-4\}),$$ when $(Z,t)\in (\mathcal{B}_{(1-\epsilon)/M}(\tilde A_{2(1-\epsilon),\Lambda}^-)\cap\{t \mid t=-4(1-\epsilon)^2\})$. Now, since $\hat{\phi}_2=1-\phi_2$ is bounded on $\mathbb{R}^N$, we can represent $\hat{\Phi}_2$ as
\[
\hat{\Phi}_2(Z,t) = \iint_{\mathbb{R}^N}{\Gamma(Z,t,\tilde Z,-4)\hat{\phi}_2(\tilde Z) \d\tilde Z}
\]
and the claim follows by essentially the same argument as in the proof of Claim 1.
\qed

\noindent {\textit{Claim 4: There exists $K_0=K_0(m,\kappa,M)\gg 1$ such that if $K\geq K_0$ then
\begin{equation}\label{estimate+}
\hat{\Phi}_{2}(Z,t)\geq \tilde c^{-1},
\end{equation}
whenever $(Z,t)\in\Omega\cap Q_{1/K}^-$, for a constant $\tilde c=\tilde c(m,\kappa,M)\geq 1$.}}

\noindent
\textit{Proof of the claim.} The idea is to choose an admissible curve $\gamma(\tau)$ connecting the origin to $\tilde A_{2,\Lambda}$ and construct a Harnack chain along it. Consider the curve $$\gamma(\tau) = \tilde A^-_{2\tau,\Lambda}.$$
A direct calculation shows that
$$\frac{d}{d\tau}{\gamma}(\tau) = \sum_{j=1}^m{\omega_j(\tau)\partial_{x_j}\gamma(\tau)} + \lambda(\tau)\left(\sum_{k=1}^m{x_k\partial _{y_k}\gamma(\tau)}-\partial_t\gamma(\tau)\right),$$
with $\omega_m=-2\Lambda$, $\omega_j=0$ for $j=1,\cdots,m-1$, and $\lambda(\tau)=8\tau$. Hence $\gamma(\tau)$ is admissible. Furthermore $\gamma(0)=(0,0)$ and $\gamma(1)=\tilde A_{2,\Lambda}$. Hence, using Lemma \ref{akkaa-} we deduce that
\begin{equation}\label{eq:supepsilon}
    \sup_{\mathcal{B}_{\tilde\epsilon} (\tilde A^-_{2(1-{\tilde\epsilon}),\Lambda})} \hat{\Phi}_2 \leq C_{\tilde\epsilon} \hat{\Phi}_2(0,0),
\end{equation}
for some $C_{\tilde\epsilon}>0$ and ${\tilde\epsilon}>0$ small. It follows from (\ref{estimate}) that
$$\hat{\Phi}_2(0,0)\geq c^{-1}.$$
Using that $\gamma$ is admissible it can be shown that for $\delta>0$ small, $\tilde{\gamma}(\tau) = \tilde A^-_{2\tau,\Lambda}(Z_0,t_0)$ is admissible whenever $(Z_0,t_0)\in Q^-_{\delta}$. Pick then $\delta>0$ small enough so that $\tilde{\gamma}$ is admissible and
$$\tilde{\gamma}(1)\in (\mathcal{B}_{(1-\epsilon)/M}(\tilde A_{2(1-\epsilon),\Lambda}^-)\cap \mathcal{B}_{\tilde\epsilon} (\tilde A^-_{2(1-{\tilde\epsilon}),\Lambda})),$$
whenever $(Z_0,t_0)\in Q^-_{\delta}$. The claim now follows by picking $K_0>1/\delta$.
\qed

As a consequence of the last claim we have
\begin{equation}\label{estimate++}
\Phi _{2}(Z,t)=1-\hat{\Phi}_{2}(Z,t)\leq \left( 1-\tilde c^{-1}\right)
\end{equation}
whenever $(Z,t)\in\Omega\cap Q_{1/K}^-$.
We now put the estimates together and we can conclude that we have proved that if $K\geq K_0$, then
\begin{equation*}
u(Z,t)\leq c\Psi\Phi _{1}(Z,t)+\Psi\Phi _{2}(Z,{t})\leq \Psi(ce^{-c^{-1}{K}^{2}}{K}^{\eta }+\left( 1-\tilde{c}^{-1}\right)
),
\end{equation*}
whenever $(Z,t)\in\Omega\cap Q_{1/K}^-$ as we see from \eqref{compky4}, \eqref{estimateclaim2} and \eqref{estimate++}. Given $\tilde{c}$, we choose ${K}\geq K_0$ so that
\[
c e^{-c^{-1}{K}^{2}}{K}^{\eta
}\leq \frac{1}{2\tilde c},
\]
and we let $\theta = 1-(2\tilde{c})^{-1} <1$. Put together we see that
\begin{equation}
u(Z,t)\leq \theta \Psi,\label{compky10b}
\end{equation}%
whenever  $(Z,t)\in \Omega\cap Q_{1/K}^-$.
This completes the proof of the lemma.
\end{proof}

\begin{lemma}\label{boundaryholder_giventheta}
Let $(Z_0,t_0)\in\partial\Omega$ and $r>0$. Let $u$ be a non-negative weak solution of $\L u=0$ in $\Omega\cap Q_{2r}(Z_0,t_0) $, vanishing continuously on  $\partial \Omega\cap Q_{2r}(Z_0,t_0)$. Let $0<\theta<1$ be given. Then there exists a constant $c=c(m,\kappa,M,\theta)$, $1\leq c <\infty$, such that
\begin{equation*}
    \sup_{\Omega\cap Q_{r/c}(Z_0,t_0)}u \leq \theta \sup_{\Omega\cap Q_{2r}(Z_0,t_0)} u.
\end{equation*}
\end{lemma}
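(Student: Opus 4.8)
The plan is to upgrade Lemma \ref{boundaryholderkey}, which gives a fixed oscillation decay $\theta_0$ over one scale in the past-tense cylinders $Q^-$, to an arbitrary prescribed decay $\theta$ over two full (two-sided) cylinders $Q_{2r}$. The key point is that iterating the one-scale decay produces geometric decay with an arbitrarily small factor, and one then has to (a) convert from the thin past cylinders $Q_{Kr,2r}^-$ and $Q_{r/K}^-$ of Lemma \ref{boundaryholderkey} to the two-sided cubes $Q_{2r}$ appearing here, and (b) move the supremum down in time using the interior Harnack/local boundedness estimates so that the "past" information controls the solution on a full neighbourhood of the boundary point.

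\medskip

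\textbf{Step 1 (normalise and set up the iteration).} As usual we may take $(Z_0,t_0)=(0,0)$ and $r=1$ by translation and dilation invariance. Let $K=K(m,\kappa,M)$ and $\theta_0=\theta_0(m,\kappa,M)\in(0,1)$ be the constants furnished by Lemma \ref{boundaryholderkey}, so that for every non-negative weak solution $w$ of $\L w=0$ in $\Omega\cap Q_{K\rho,2\rho}^-(\tilde Z_0,\tilde t_0)$ vanishing continuously on the corresponding boundary piece, one has
\begin{equation}\label{onescale}
\sup_{\Omega\cap Q_{\rho/K}^-(\tilde Z_0,\tilde t_0)}w\le \theta_0\sup_{\Omega\cap Q_{K\rho,2\rho}^-(\tilde Z_0,\tilde t_0)}w.
\end{equation}
Applying \eqref{onescale} at the boundary point $(0,0)$ with $\rho=1,\ 1/K^2,\ 1/K^4,\dots$ (each application being legitimate because $Q_{K\rho,2\rho}^-(0,0)\subset Q^-_{K,2}(0,0)$ for $\rho\le 1/K$, and $u\ge0$ vanishes on the relevant boundary portion) yields, after $k$ iterations,
\begin{equation}\label{kscale}
\sup_{\Omega\cap Q_{K^{-2k-1}}^-(0,0)}u\le \theta_0^{\,k}\sup_{\Omega\cap Q^-_{K,2}(0,0)}u.
\end{equation}
Since the outer cylinder $Q^-_{K,2}(0,0)$ is contained in $Q_2(0,0)$ (up to enlarging $K$ we may absorb any fixed comparison constant), the right-hand side of \eqref{kscale} is at most $\theta_0^{\,k}\sup_{\Omega\cap Q_2(0,0)}u$. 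Given the target $\theta\in(0,1)$, choose $k=k(\theta,\theta_0)$ with $\theta_0^{\,k}\le\theta/c_\ast$, where $c_\ast=c_\ast(m,\kappa,M)$ is the constant produced in Step 2 below; this fixes $k$ and hence a scale $\rho_\ast:=K^{-2k-1}$ depending only on $m,\kappa,M,\theta$.

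\medskip

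\textbf{Step 2 (from past cylinders to two-sided cubes).} It remains to control $\sup_{\Omega\cap Q_{\rho}(0,0)}u$ for a suitably small two-sided $\rho$ by $\sup_{\Omega\cap Q^-_{\rho_\ast}(0,0)}u$. The set $Q_{\rho}(0,0)$ contains points with $t>0$, which the past cylinder $Q^-_{\rho_\ast}$ does not see directly; these are handled by local boundedness (Lemma \ref{lem:moserlocal}) and the interior/boundary Harnack-chain machinery of Subsection \ref{Harnack chains}. Concretely: for $(Z,t)\in\Omega\cap Q_{\rho}(0,0)$ with $t\ge0$, either $(Z,t)$ is comparably far from $\partial\Omega$, in which case $u(Z,t)\lesssim \sup_{\text{a ball in the past}}u$ by Lemma \ref{lem:moserlocal} applied on a cylinder $Q^-$ sitting inside $\Omega\cap Q^-_{\rho_\ast}$ together with a Harnack chain (Lemma \ref{akkaa}/Lemma \ref{akkaa-}); or $(Z,t)$ is close to $\partial\Omega$, in which case one first invokes the Carleson-type comparison already available, or more elementarily one applies Lemma \ref{boundaryholderkey} once more at a nearby boundary point whose past cylinder is engulfed by $\Omega\cap Q^-_{\rho_\ast}(0,0)$, to bound $u(Z,t)$ by $c_\ast\sup_{\Omega\cap Q^-_{\rho_\ast}(0,0)}u$. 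In either case we obtain a constant $c_\ast=c_\ast(m,\kappa,M)$, independent of $\theta$, with
\begin{equation}\label{downtime}
\sup_{\Omega\cap Q_{\rho_\ast/c_\ast}(0,0)}u\le c_\ast\sup_{\Omega\cap Q^-_{\rho_\ast}(0,0)}u.
\end{equation}

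\medskip

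\textbf{Step 3 (conclusion).} Combining \eqref{kscale} (with $k$ as chosen), the inclusion $Q^-_{K,2}\subset Q_2$, and \eqref{downtime}, and setting $c:=c_\ast/\rho_\ast$ (which depends only on $m,\kappa,M,\theta$), we get
\begin{equation*}
\sup_{\Omega\cap Q_{1/c}(0,0)}u\le c_\ast\,\theta_0^{\,k}\sup_{\Omega\cap Q_2(0,0)}u\le \theta\sup_{\Omega\cap Q_2(0,0)}u,
\end{equation*}
which is the assertion after undoing the normalisation. \textbf{The main obstacle} I anticipate is Step 2: the rigidity of the Harnack inequality for $\L$ (the constants $\alpha,\beta,\gamma,\theta$ in Lemma \ref{harnack} are not free) means one cannot connect an arbitrary interior point forward in time to a point deep in the past cylinder by a short Harnack chain without carefully tracking the geometry dictated by the group law and the dilations; this is exactly the kind of "connect in $x_m$, accept drift in $y_m$" argument that the paper flags, and it has to be executed here with explicit admissible paths (Lemma \ref{lem.conecond.0}, Lemma \ref{coneconditions-}) so that all cylinders built along the chain genuinely lie inside $\Omega\cap Q^-_{\rho_\ast}(0,0)$. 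The time-direction conversion in \eqref{downtime} and the verification that the relevant past cylinders are compactly contained where $u$ is assumed to solve the equation are the places demanding the most care.
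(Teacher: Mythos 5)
Your overall architecture is the right one, and it is what the paper's one-line proof (``immediate consequence of Lemma \ref{boundaryholderkey}'') intends: iterate the fixed one-scale decay $\theta_0$ of Lemma \ref{boundaryholderkey} until $\theta_0^k$ beats the prescribed $\theta$, then convert from past cylinders to the two-sided cube. Two points in your write-up do not survive scrutiny, however. First, a small one: $Q^-_{K,2}(0,0)$ is \emph{not} contained in $Q_2(0,0)$ --- its spatial radius is $K$, not $2$ --- and ``enlarging $K$'' makes the inclusion worse, not better. The fix is only to start the iteration at the scale $\rho_0=2/K$ rather than $\rho_0=1$, which just shifts the constant.

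The substantive gap is in your Step 2. Your primary mechanism for controlling points $(Z,t)$ with $t>0$ --- local boundedness plus a Harnack chain reaching back into $\Omega\cap Q^-_{\rho_*}(0,0)$ --- cannot work, because Lemma \ref{harnack} gives $\sup_{\widetilde Q^-_r}u\le c\inf_{\widetilde Q^+_r}u$, i.e.\ it bounds a non-negative solution at \emph{earlier} times by its values at \emph{later} times. You need the reverse: an upper bound on $u$ at a time $t>0$ in terms of a supremum taken over times $\le 0$. No Harnack chain, however carefully built, produces that. Your ``more elementary'' alternative is the right idea but is misstated: the past cylinder of the nearby boundary point will not be engulfed by $\Omega\cap Q^-_{\rho_*}(0,0)$ when that boundary point has positive time coordinate. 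The correct (and genuinely easy) conversion is to recenter the \emph{entire} iteration: given $(Z,t)\in\Omega\cap Q_\sigma(0,0)$ with $\sigma$ small, set $(\tilde Z_0,\tilde t_0)=(x,\psi(x,y,y_m,t),y,y_m,t)\in\partial\Omega$, at the \emph{same} time level. By the group law \eqref{e70++}, $(\tilde Z_0,\tilde t_0)^{-1}\circ(Z,t)$ has only a nonzero $x_m$-entry (of size $\lesssim\sigma$) and time entry $0$, so $(Z,t)$ lies in $Q^-_{s}(\tilde Z_0,\tilde t_0)$ for $s\gtrsim\sigma$ (the cylinder is closed at its top time). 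Running your $k$-fold iteration of Lemma \ref{boundaryholderkey} at $(\tilde Z_0,\tilde t_0)$, whose outermost cylinder $Q^-_{K\rho_0,2\rho_0}(\tilde Z_0,\tilde t_0)$ sits inside $\Omega\cap Q_2(0,0)$ once $\sigma$ and $\rho_0$ are small enough, then bounds $u(Z,t)$ by $\theta_0^k\sup_{\Omega\cap Q_2(0,0)}u$ directly, with no Harnack chain, no local boundedness, and no case distinction on the distance to the boundary. With that replacement your Steps 1 and 3 go through and the constant $c$ depends only on $m,\kappa,M,\theta$ as required.
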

\begin{proof}
The lemma is an immediate consequence of Lemma \ref{boundaryholderkey}.
\end{proof}

\begin{lemma}\label{boundaryholder}
Let $(Z_0,t_0)\in\partial\Omega$ and $r>0$. Let $u$ be a non-negative weak solution of $\L u=0$ in $\Omega\cap Q_{2r}(Z_0,t_0) $, vanishing continuously on  $\partial \Omega\cap Q_{2r}(Z_0,t_0)$. Then,
there exist a constant $c=c(m,\kappa,M)$, $1\leq c<\infty $, and $\alpha
=\alpha (m,\kappa,M)\in (0,1)$, such that
\begin{equation*}
u(Z,t)\leq c\biggl (\frac{d((Z,t),(Z_{0},t_{0}))}{r}\biggr )^{\alpha
}\sup_{\Omega\cap Q_{2r}(Z_{0},t_{0})}u
\end{equation*}%
whenever $(Z,t)\in \Omega\cap Q_{r/c}(Z_{0},t_{0})$.
\end{lemma}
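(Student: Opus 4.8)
The plan is to derive the power-type decay estimate of Lemma \ref{boundaryholder} by iterating the oscillation-decay estimate of Lemma \ref{boundaryholder_giventheta} (equivalently, Lemma \ref{boundaryholderkey}), exactly in the spirit of the classical De Giorgi–Nash–Moser argument for boundary Hölder continuity. First I would fix the value $\theta = 1/2$ (any fixed $\theta\in(0,1)$ works) and let $c_0 = c_0(m,\kappa,M)\geq 2$ be the constant furnished by Lemma \ref{boundaryholder_giventheta} for this choice of $\theta$; enlarging $c_0$ if necessary we may assume $c_0$ is such that $Q_{2s}(Z_0,t_0)$ is an admissible scale for every $s\le r$ in the sense needed below. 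Set $\lambda := 1/c_0 \in (0,1)$.

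The key step is the dyadic iteration. For each integer $j\ge 0$ put $r_j := \lambda^{j} r$, and note that $u$ is a non-negative weak solution of $\L u = 0$ in $\Omega\cap Q_{2r_j}(Z_0,t_0)\subset \Omega\cap Q_{2r}(Z_0,t_0)$, vanishing continuously on $\partial\Omega\cap Q_{2r_j}(Z_0,t_0)$, since $r_j\le r$. Applying Lemma \ref{boundaryholder_giventheta} at scale $r_j$ (with the same constant $c_0$) gives
\begin{equation*}
\sup_{\Omega\cap Q_{r_{j+1}}(Z_0,t_0)} u \;=\; \sup_{\Omega\cap Q_{r_j/c_0}(Z_0,t_0)} u \;\le\; \tfrac12 \sup_{\Omega\cap Q_{2r_j}(Z_0,t_0)} u.
\end{equation*}
Iterating this inequality from $j=0$ and using that $\sup_{\Omega\cap Q_{2r_{j}}(Z_0,t_0)} u \le \sup_{\Omega\cap Q_{2 r_{j-1}}(Z_0,t_0)} u$ (monotonicity of the supremum over nested sets), one obtains, for every $j\ge 1$,
\begin{equation*}
\sup_{\Omega\cap Q_{r_j}(Z_0,t_0)} u \;\le\; 2^{-j} \sup_{\Omega\cap Q_{2r}(Z_0,t_0)} u.
\end{equation*}

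Finally I would convert this geometric-decay statement into the claimed power estimate in the quasi-distance $d$. Given $(Z,t)\in\Omega\cap Q_{r/c}(Z_0,t_0)$ with $c := c_0$ chosen large enough that $Q_{r/c}(Z_0,t_0)\subset Q_{\lambda r}(Z_0,t_0)$, let $d := d((Z,t),(Z_0,t_0))$; by \eqref{intextball} the point $(Z,t)$ lies in $Q_{cd}(Z_0,t_0)$, and choosing $j$ to be the largest integer with $r_j = \lambda^j r \ge cd$ (so that $\lambda^{j+1} r < cd$, giving $j \gtrsim \log(r/d)$) one finds $(Z,t)\in\Omega\cap Q_{r_j}(Z_0,t_0)$, hence
\begin{equation*}
|u(Z,t)| \le \sup_{\Omega\cap Q_{r_j}(Z_0,t_0)} u \le 2^{-j}\sup_{\Omega\cap Q_{2r}(Z_0,t_0)} u \le c\Bigl(\frac{d}{r}\Bigr)^{\alpha}\sup_{\Omega\cap Q_{2r}(Z_0,t_0)} u,
\end{equation*}
with $\alpha := \log 2/\log(1/\lambda)\in(0,1)$ and $c=c(m,\kappa,M)$ absorbing the finitely many initial scales and the passage between the ball $\mathcal B$ and cube $Q$ normalizations. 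I do not expect any serious obstacle here: the only mildly delicate points are bookkeeping the comparison between the $Q_r$, $Q_r^-$, and $\mathcal B_r$ families (already handled by \eqref{intextball} and \eqref{def.Omega.Delta.frinc}) and verifying that applying Lemma \ref{boundaryholder_giventheta} at every dyadic subscale $r_j\le r$ is legitimate — which it is, since the hypotheses at scale $2r$ imply those at every smaller scale. Theorem \ref{lem4.5-Kyoto1} then follows from Lemma \ref{boundaryholder} by translating between the cube and ball notations via \eqref{intextball}, applying it to $u$ and $-u$ to pass from non-negative solutions to general $u$ (replacing $\sup|u|$ for the bound on $|u(Z,t)|$), exactly as asserted in the text.
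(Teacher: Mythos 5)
Your argument is correct and takes essentially the same route as the paper, which obtains Lemma \ref{boundaryholder} precisely by iterating the fixed-$\theta$ decay estimate of Lemma \ref{boundaryholder_giventheta} at a geometric sequence of scales and converting the resulting geometric decay into a power of $d((Z,t),(Z_0,t_0))/r$. One bookkeeping correction: with $r_j=\lambda^j r$, $\lambda=1/c_0$, the chaining must use the inclusion $Q_{2r_j}(Z_0,t_0)\subset Q_{r_{j-1}}(Z_0,t_0)$ (valid since $c_0\ge 2$), or equivalently take $r_{j+1}=r_j/(2c_0)$, rather than the mere monotonicity $\sup_{\Omega\cap Q_{2r_j}}u\le\sup_{\Omega\cap Q_{2r_{j-1}}}u$, which by itself gives no geometric decay; this only affects the value of $\alpha$, not the conclusion.
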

\begin{proof} The lemma follows immediately from Lemma \ref{boundaryholder_giventheta}.
\end{proof}

\begin{lemma}\label{boundaryholder+}
Let $(Z_0,t_0)\in\partial\Omega$ and $r>0$. Then there exists a constant $K=K(m,\kappa,M)\gg 1$, such that the following is true.
Let $u$ be a non-negative weak solution of $\L u=0$ in $\Omega\cap Q_{Kr,2r}^-(Z_0,t_0) $, vanishing continuously on  $\partial\Omega\cap Q_{Kr,2r}^-(Z_0,t_0) $. Then there exists $\theta=\theta(m,\kappa,M,K)$, $0<\theta<1$, such that
\begin{align*}
\sup_{\Omega\cap Q_{r/K}^-(Z_0,t_0)}u^\pm \le \theta \sup_{\Omega\cap Q_{Kr,2r}^-(Z_0,t_0)}u^\pm,
\end{align*}
where $u^{+}(Z,t)=\max \{0,u(Z,t)\}$, $u^{-}(Z,t)=-\min \{0,u(Z,t)\}$.
\end{lemma}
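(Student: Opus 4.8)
The plan is to deduce the statement from Lemma~\ref{boundaryholderkey} by passing to the positive and negative parts of $u$. First I would record that, since $u$ and $-u$ are weak solutions of $\L w=0$ in $\Omega\cap Q_{Kr,2r}^{-}(Z_0,t_0)$, both $u^{+}=\max\{u,0\}$ and $u^{-}=\max\{-u,0\}$ are \emph{non-negative weak subsolutions} there --- the maximum of two weak subsolutions (here a solution and the constant $0$) being again a weak subsolution; the membership $u^{\pm}\in L^2_{Y,t}(\,\cdot\,,H^{1}_{X}(\,\cdot\,))$ together with $-X\cdot\nabla_{Y}u^{\pm}+\partial_{t}u^{\pm}\in L^2_{Y,t}(\,\cdot\,,H^{-1}_{X}(\,\cdot\,))$ is inherited from $u$ via the chain rule for the Lipschitz map $s\mapsto\max\{\pm s,0\}$. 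Moreover, since $u$ vanishes continuously on $\partial\Omega\cap Q_{Kr,2r}^{-}(Z_0,t_0)$ and this map is continuous and vanishes at $0$, the functions $u^{\pm}$ also vanish continuously on $\partial\Omega\cap Q_{Kr,2r}^{-}(Z_0,t_0)$. For genuinely signed $u$ the part $u^{-}$ need not vanish identically, so this reduction carries real content.

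Next I would observe that the proof of Lemma~\ref{boundaryholderkey} goes through \emph{verbatim, with the very same constants $K$ and $\theta$,} when ``non-negative weak solution'' is weakened to ``non-negative weak subsolution''. Scanning that proof, the equation $\L u=0$ is invoked exactly once: to upgrade the boundary inequality --- valid on the parabolic boundary of $\Omega\cap Q_{K,2}^{-}$ by Claim~1, by the vanishing of $u$ on $\partial\Omega$, and by $u\le\Psi$ --- to the pointwise bound $u\le c\Psi\Phi_{1}+\Psi\Phi_{2}$ throughout $\Omega\cap Q_{K,2}^{-}$, via the weak maximum principle. Since $c\Psi\Phi_{1}+\Psi\Phi_{2}$ is itself a solution, hence a supersolution, the comparison requires only that $u$ be a subsolution. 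All the remaining ingredients --- Claims~1--4 and the pointwise estimates on $\Phi_{1}$, $\Phi_{2}$, $\hat\Phi_{2}$ --- concern the fixed auxiliary functions and do not involve $u$. This yields: for every non-negative weak subsolution $w$ of $\L w=0$ in $\Omega\cap Q_{Kr,2r}^{-}(Z_0,t_0)$ vanishing continuously on $\partial\Omega\cap Q_{Kr,2r}^{-}(Z_0,t_0)$,
\[
\sup_{\Omega\cap Q_{r/K}^{-}(Z_0,t_0)}w\le\theta\,\sup_{\Omega\cap Q_{Kr,2r}^{-}(Z_0,t_0)}w .
\]
Applying this with $w=u^{+}$ and then with $w=u^{-}$ produces the two inequalities asserted in the lemma.

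The one point I would write out with care, and which I regard as the main obstacle, is the justification that the weak maximum principle used in the single step above is valid for weak \emph{subsolutions} of $\L$ that are continuous only up to the relevant lateral-plus-initial-time portion of $\partial(\Omega\cap Q_{K,2}^{-})$ --- a mixed-boundary comparison statement of the same flavour already needed for $\L$ in Section~\ref{sec2+} (cf.\ Lemma~\ref{maxprinciple}) and in Section~\ref{sec6}. Once that is in place, the accompanying bookkeeping --- that $u^{\pm}$ indeed lie in the admissible function class so that ``weak subsolution'' makes sense on $\Omega\cap Q_{K,2}^{-}$, and that the admissible test functions are unchanged --- is entirely routine.
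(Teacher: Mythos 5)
Your reduction is not the paper's, and at its crucial step it is not complete. You route everything through the claim that $u^{\pm}$ are non-negative weak subsolutions and that Lemma \ref{boundaryholderkey} holds verbatim for subsolutions. Two things are then left unproved. First, the assertion that $u^{\pm}$ lie in the function class of the paper — in particular that $-X\cdot\nabla_{Y}u^{\pm}+\partial_{t}u^{\pm}$ belongs to $L^2_{Y,t}(H^{-1}_X)$ and that the distributional inequality for subsolutions holds — is not a routine chain-rule statement here: the transport derivative of $u$ is only of negative order in $X$, so composing with $s\mapsto\max\{\pm s,0\}$ is a renormalization-type issue in the kinetic setting, not something inherited for free. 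Second, and more seriously, the comparison step you flag as "the main obstacle" is exactly the point at which your argument has no support in the paper: the only maximum principle available, Lemma \ref{maxprinciple}, is a classical one for $C^2$ functions. In the proof of Lemma \ref{boundaryholderkey} it can be applied to $u$ only because, under the qualitative assumption \eqref{eq2+}, weak solutions are smooth by hypoellipticity; $u^{+}$ and $u^{-}$ are not smooth, so no maximum principle proved in the paper applies to them, and establishing a weak-subsolution comparison principle up to the Kolmogorov boundary would be genuine additional work. As written, your proposal therefore rests on an unproved comparison statement.

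The detour is also unnecessary, and this is where the paper's proof differs. One never needs $u^{\pm}$ to be subsolutions: set $\Psi^{+}:=\sup_{\Omega\cap Q^{-}_{Kr,2r}(Z_0,t_0)}u^{+}$ and observe that the boundary inequality behind \eqref{compky4} holds with $\Psi$ replaced by $\Psi^{+}$, simply because $u\leq u^{+}\leq\Psi^{+}$ on the lateral and initial portions of the boundary and $u=0$ on the $\partial\Omega$ portion. The maximum principle is then applied to the signed solution $u$ itself (smooth, as above) against the majorant $c\Psi^{+}\Phi_1+\Psi^{+}\Phi_2$, and repeating the argument of Lemma \ref{boundaryholderkey} gives $u\leq\theta\Psi^{+}$ on $\Omega\cap Q^{-}_{r/K}(Z_0,t_0)$; since $\theta\Psi^{+}\geq 0$ this yields $u^{+}\leq\theta\Psi^{+}$ there. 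Applying the same reasoning to $-u$, with $\Psi^{-}:=\sup u^{-}$, gives the estimate for $u^{-}$. If you want to salvage your route, you must actually prove both that $u^{\pm}$ are weak subsolutions in the precise sense of the paper and a maximum principle valid for such non-smooth subsolutions; otherwise the direct argument above is both shorter and fully justified by results already in the paper.
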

\begin{proof}
We first prove the lemma for $u^{+}$. In this case the argument is essentially the same as that in the proof of
Lemma \ref{boundaryholderkey}. In particular, if we let
\begin{equation*}
\Psi^{+}=\sup_{\Omega\cap Q_{{K}r,2r}^{-}(Z_{0},t_{0})}u^{+},
\end{equation*}%
then we see that (\ref{compky4}) still holds but with $\Psi$ replaced by $\Psi^{+}$. Furthermore, simply repeating the argument in Lemma \ref{boundaryholderkey}
we deduce
\begin{equation*}
u({Z},{t})\leq \theta \Psi^{+},
\end{equation*}%
whenever $({Z},{t})\in \Omega\cap Q_{r/K}^{-}(Z_{0},t_{0})$. Obviously this completes the proof for $u^{+}$.
Concerning the same estimate for $u^{-}$ we see, by analogy, that
\begin{equation}
-u(\hat{Z},{t})\leq \theta \Psi^{-},\label{compky10jajb}
\end{equation}%
where
\[
\Psi^{-}=\sup_{\Omega\cap Q_{{K}%
r,2r}^{-}(Z_{0},t_{0})}(-u)=\sup_{\Omega\cap Q_{{K}%
r,2r}^{-}(Z_{0},t_{0})}u^{-},
\]
whenever $({Z},{t})\in \Omega\cap Q_{r/K}^{-}(Z_{0},t_{0})$ and
from (\ref{compky10jajb}) we deduce Lemma \ref{boundaryholder+} for $u^{-}$. This
completes the proof of the lemma.
\end{proof}

\section{The Dirichlet problem: proof of Theorem \ref{thm:dp}}\label{sec6}
We here consider the well-posedness of the Dirichlet problem with continuous boundary data for the operator $\L$ in domains $\Omega_r(Z_0,t_0)$ introduced in \eqref{def.Omega.Delta.fr}. We recall that
in Definition 3 in \cite{NP} it was introduced what we here, as in \cite{NP}, refer to as the Kolmogorov boundary of $\Omega_r(Z_0,t_0)$, denoted
$\partial_K \Omega_r(Z_0,t_0)$. The notion of the Kolmogorov boundary replaces the notion of the parabolic boundary used
in the context of uniformly parabolic equations and by definition $\partial_K \Omega_r(Z_0,t_0)\subset \partial\Omega_r(Z_0,t_0)$ is the set of all points on the topological
boundary of $\Omega_r(Z_0,t_0)$ which is {contained} in the closure of the propagation set of at least one interior point in
$\Omega_r$. The Kolmogorov boundary $\partial_K \Omega_r(Z_0,t_0)$ is the largest subset of the topological boundary of
$\Omega_r(Z_0,t_0)$ on which we can attempt to impose boundary data if we want to construct non trivial solutions to the Dirichlet problem in $\Omega_r(Z_0,t_0)$ for the operator $\L$.  The notion
of regular points on $\partial\Omega_r(Z_0,t_0)$ for
the Dirichlet problem only makes sense for points on the Kolmogorov boundary. Based on this we consider the well-posedness of the boundary value problem
\begin{equation} \label{e-bvplocal}
\begin{cases}
	\L u = 0  &\text{in} \ \Omega_r(Z_0,t_0), \\
      u = \varphi & \text{on} \ \partial \Omega_r(Z_0,t_0).
\end{cases}
\end{equation}
where $\varphi\in C(\mathbb R^{N+1})$. The boundary data should be understood as only imposed on the Kolmogorov boundary. Indeed, we define solutions to \eqref{e-bvplocal} as follows.
\begin{definition}\label{defDP} Let $\varphi\in C(\mathbb R^{N+1})$. We say that $u$ is a solution to the Dirichlet problem in \eqref{e-bvplocal} if $u$ is a weak solution to
$\L u=0$ in $\Omega_r(Z_0,t_0)$, if $u$ is continuous on the closure of $\Omega_r(Z_0,t_0)$ and if  $u=\varphi$ on $\partial_K \Omega_r(Z_0,t_0)$.
\end{definition}

   To prove solvability of the  Dirichlet problem in  \eqref{e-bvplocal}, as defined in Definition \ref{defDP}, we will make use of our qualitative assumption in \eqref{eq2+}. We remark that the  assumption in \eqref{eq2+} can be removed once uniqueness of solutions to \eqref{e-bvplocal} can be established under the assumption
   that coefficients have no smoothness beyond being bounded and measurable. Indeed, in this case, and by an approximation argument, is suffices to consider the Dirichlet problem in \eqref{e-bvplocal} for the regularized operator
\begin{eqnarray}
   \L_{\epsilon}:=\nabla_X\cdot(A^\epsilon(X,Y,t)\nabla_X)+X\cdot\nabla_{Y}-\partial_t,
    \end{eqnarray}
    where $\epsilon>0$ is small and $A^\epsilon$ is a regularization of $A$ constructed by a group convolution of $A$ with respect to an approximation of the identity with parameter $\epsilon$.

    \begin{lemma}\label{compkol+} Assume \eqref{eq2} and \eqref{eq2+}. Let $\varphi\in C(\mathbb R^{N+1})$. Then the Dirichlet problem in \eqref{e-bvplocal} has a unique solution $u$ and
$$||u||_{L^\infty(\Omega_r(Z_0,t_0))}\leq ||\varphi||_{L^\infty(\partial_K\Omega_r(Z_0,t_0))}.$$
\end{lemma}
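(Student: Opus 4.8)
The plan is to establish uniqueness and existence separately, together with the maximum principle bound. For uniqueness, suppose $u_1,u_2$ are two solutions of \eqref{e-bvplocal}; then $w=u_1-u_2$ is a weak solution of $\L w=0$ in $\Omega_r(Z_0,t_0)$, continuous up to the closure, with $w=0$ on $\partial_K\Omega_r(Z_0,t_0)$. Since $\partial_K\Omega_r(Z_0,t_0)$ contains every boundary point lying in the closure of the propagation set of some interior point, the weak maximum principle of Lemma \ref{maxprinciple} (applied in the qualitative setting \eqref{eq2+}, which lets us pass to non-divergence form) forces $w\le 0$; applying it to $-w$ gives $w\ge 0$, hence $w\equiv 0$. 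The same argument applied to a single solution $u$ with data $\varphi$ gives $\|u\|_{L^\infty(\Omega_r(Z_0,t_0))}\le\|\varphi\|_{L^\infty(\partial_K\Omega_r(Z_0,t_0))}$: indeed $u-\|\varphi\|_{L^\infty(\partial_K\Omega_r)}$ is $\le 0$ on the Kolmogorov boundary and $\L$-harmonic, so it is $\le 0$ throughout, and symmetrically for $-u$.

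For existence, I would use the Perron method adapted to the sub-Riemannian geometry, exactly as in \cite{NP}. First, fix a regularization: by \eqref{eq2+} the coefficients are smooth, so the operator $\L$ has a fundamental solution $\Gamma$ with the Gaussian two-sided bounds of Lemma \ref{lem_fsolbounds}, and the associated barriers can be constructed explicitly. Then define the Perron solution $u$ as the supremum (resp.\ infimum) of the family of $\L$-subsolutions (resp.\ supersolutions) lying below (resp.\ above) $\varphi$ on $\partial_K\Omega_r(Z_0,t_0)$; the standard Perron arguments — using the interior Harnack inequality (Lemma \ref{harnack}) and Harnack chains to obtain local uniform boundedness and equicontinuity, together with the interior Hölder estimate (Lemma \ref{holder}) — show $u$ is a weak solution of $\L u=0$ in $\Omega_r(Z_0,t_0)$. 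Continuity up to $\partial_K\Omega_r(Z_0,t_0)$ with boundary value $\varphi$ follows by producing, at each point $(Z_0',t_0')\in\partial_K\Omega_r(Z_0,t_0)$, a local barrier: since $\Omega_r(Z_0,t_0)$ is built from the Lipschitz-type domain $\Omega$ intersected with cylinders $Q_{M,r}$, the exterior cone conditions of Lemma \ref{coneconditions} and Lemma \ref{coneconditions-} provide exterior cones $\tilde C_{\rho,2\eta,\Lambda}^\pm$ at every boundary point, which can be used exactly as in \cite{CNP3} and \cite{NP} to build upper and lower barriers, yielding boundary regularity of every point of the Kolmogorov boundary.

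The main obstacle is the boundary continuity / barrier construction, and it has two faces. First, the notion of "boundary" here is the Kolmogorov boundary, not the topological one: one must check carefully that the parts of $\partial\Omega_r(Z_0,t_0)$ not in $\partial_K\Omega_r(Z_0,t_0)$ genuinely receive no data and that the Perron solution is insensitive to them — this is where the characterization of $\partial_K$ via propagation sets (and Lemma \ref{t-1}, Lemma \ref{adda}) does the work. Second, because the Harnack inequality of Lemma \ref{harnack} is rigid — the constants $\alpha,\beta,\gamma,\theta$ cannot be chosen freely, in contrast to the uniformly parabolic case — the barrier at a boundary point must be built along an admissible path that both stays outside $\overline{\Omega_r(Z_0,t_0)}$ and respects the dilation structure; the cones $\tilde C_{\rho,2\eta,\Lambda}^\pm$ from Lemma \ref{coneconditions} are precisely engineered for this, so the construction reduces to invoking those lemmas plus the Gaussian bounds \eqref{fsolbounds} on $\Gamma$. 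The remaining steps — equicontinuity of the Perron family, the comparison showing the sub- and super-solution Perron functions coincide, and the $L^\infty$ bound — are then routine given the interior estimates already recorded in Section \ref{sec2+}.
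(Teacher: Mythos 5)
Your overall strategy (barriers plus a Perron--Wiener--Brelot/Manfredini-type existence argument, with uniqueness and the $L^\infty$ bound from the maximum principle) is the same as the paper's, but there is a concrete gap in the existence part: you assert that ``the exterior cone conditions of Lemma \ref{coneconditions} and Lemma \ref{coneconditions-} provide exterior cones at every boundary point'' of $\Omega_r(Z_0,t_0)$. That is false. Those lemmas give exterior cones only at points of $\Delta_r$, i.e.\ on the graph $\{x_m=\psi\}$. But $\Omega_r(Z_0,t_0)$ is the Lipschitz graph domain intersected with the box $Q_{M,r}(Z_0,t_0)$, so its Kolmogorov boundary also contains the lateral faces $S^\pm_{1,i}=\{x_i=\pm r\}$, the faces $S^\pm_{2,i}=\{y_i=\pm r^3,\ \pm x_i>0\}$, the bottom $S_3=\{t=-r^2\}$ and the cap $S_4=\{x_m=4Mr\}$, and barriers there must be produced separately. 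This is where the actual content of the paper's proof lies: for the non-characteristic faces $S^\pm_{1,i}$ and $S_4$ one takes $w(Z,t)=e^{-K|\nu|^2}-e^{-K|(Z,t)-(\hat Z,\hat t)-\nu|^2}$ with $\nu$ an exterior normal and $K$ large; for $S_3$ one takes $w=t+r^2$; and for the characteristic faces $S^\pm_{2,i}$ one takes the linear barrier $w=\pm((\hat y)_i-y_i)$, whose validity depends precisely on the sign restriction $\pm x_i>0$ that determines which portion of the $y_i$-face belongs to $\partial_K\Omega_r(Z_0,t_0)$ in the first place. Identifying $\partial_K\Omega_r(Z_0,t_0)$ as $\Delta_r\cup_iS^\pm_{1,i}\cup_iS^\pm_{2,i}\cup S_3\cup S_4$ and checking regularity of each piece is the step your proposal skips.

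Two smaller remarks. First, for the boundary points on $\Delta_r$ the paper does not build barriers from scratch either: it verifies the exterior cone condition via Lemma \ref{coneconditions} and then invokes Theorem 6.3 of \cite{Manfredini}, so your instinct there is right. Second, in your uniqueness argument you apply Lemma \ref{maxprinciple} knowing $w=0$ only on $\partial_K\Omega_r(Z_0,t_0)$, whereas that lemma as stated assumes a sign on the full topological boundary; one needs the refinement (available in the non-divergence framework of \cite{Manfredini}, using that no admissible path from the interior reaches $\partial\Omega_r\setminus\partial_K\Omega_r$) that the maximum is attained on the Kolmogorov boundary. The paper leans on the same machinery, so this is a point to make explicit rather than an error specific to your write-up.
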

\begin{proof} As $A$ is smooth we can freely switch between considering $\L$ as an operator in divergence form and as an operator in non-divergence form. In non-divergence form we have
\begin{eqnarray}\label{non-div}
   \L:=\sum_{i,j=1}^m a_{i,j}\partial_{x_ix_j}+\sum_{i,j=1}^m \partial_{x_i}a_{i,j}\partial_{x_j}+X\cdot\nabla_{Y}-\partial_t,
    \end{eqnarray}
    with $\partial_{x_i}a_{i,j}$ locally bounded. The lemma now follows from the methods employed in \cite{Manfredini}. In particular, it is enough to prove existence of barrier functions at each $(Z,t)\in\partial_K \Omega_r(Z_0,t_0)$. In the following we can, without loss of generality, assume that  $(Z_0,t_0)=(0,0)$. We introduce the sets
\begin{eqnarray*}
S^\pm_{1,i} &=& \Omega_r \cap \lbrace (X,Y,t)\mid x_i=\pm r \rbrace,\quad i=1,\cdots,m-1,  \\
S^\pm_{2,i} &=& \Omega_r \cap \lbrace (X,Y,t)\mid \pm x_i>0,\, y_i=\pm r^3 \rbrace,\quad i=1,\cdots,m,\\
S_3 &=& \Omega_r \cap \lbrace (X,Y,t)\mid t=-r^2 \rbrace\\
S_4 &=& \Omega_r \cap	\lbrace (X,Y,t) \mid x_m=4Mr \rbrace,
\end{eqnarray*}
and we note that
$$ \partial_K \Omega_r(Z_0,t_0)=\Delta_r\cup_iS^\pm_{1,i}\cup_iS^\pm_{2,i}\cup S_3\cup S_4.$$
As points of the sets $S^\pm_{1,i}$ and $S_4$ are non-characteristic for the operators these points are regular for the Dirichlet problem. Indeed for a point $(\hat Z,\hat t)\in S^\pm_{1,i}\cup S_4$, the function
	\[
	w(Z,t) = e^{-K |\nu|^2}-e^{-K|(Z,t)-(\hat Z,\hat t)-\nu|^2},
	\]
	where $\nu$ is an exterior normal and $K\gg 1$ is large enough, is a barrier at $(\hat Z,\hat t)$.
	For points $(\hat Z,\hat t)\in S^\pm_{2,i}$, the function $$w(Z,t) = \pm ((\hat y)_i-y_i)$$ is a barrier. For the set $S_3$ a barrier is constructed analogously, in particular the function
	\[
	w(Z,t) = t + r^2
	\]
	is a barrier at $(\hat Z, -r^2)\in S_3$. Finally, consider  $(\hat Z,\hat t)\in \Delta_r$. It follows from Lemma \ref{coneconditions} that there exist $\eta=\eta(m,M)$, $\Lambda=\Lambda(m,M)$, and $0<\rho<r$ such that the cone $\tilde C_{\rho,\eta,\Lambda}^-(\hat Z,\hat t)$, as defined in \eqref{conesha}, satisfies
	\[
	    \tilde C_{\rho,\eta,\Lambda}^-(\hat Z,\hat t) \in \R^{N+1}\setminus\Omega_r.
	\]
	This implies that $(\hat Z,\hat t)$ satisfies the assumptions of Theorem 6.3 in \cite{Manfredini} and thus is a regular point. In particular, the set of regular points coincides with $\partial_K \Omega_r(Z_0,t_0)$. The inequality
$$||u||_{L^\infty(\Omega_r(Z_0,t_0))}\leq ||\varphi||_{L^\infty(\partial_K\Omega_r(Z_0,t_0))}$$
is a consequence of the maximum principle of Lemma \ref{maxprinciple}.
\end{proof}

\subsection{Proof of Theorem \ref{thm:dp}}
Let $\Omega\subset\mathbb R^{N+1}$ be a Lipschitz domain with defining function $\psi$ and  constant $M$. Let $\varphi\in
C(\partial\Omega)\cap L^\infty(\partial\Omega)$ be such that $\varphi(Z,t)\to 0$  as $||(Z,t)||\to \infty$. To prove Theorem \ref{thm:dp} we first need to prove that there exists
a unique  solution  $u=u_\varphi$, $u\in C(\bar \Omega)$, to the Dirichlet problem
\begin{equation}
\begin{cases}
	\L u = 0  &\text{in} \ \Omega, \\
      u = \varphi  & \text{on} \ \partial \Omega.
\end{cases}
\end{equation}
The uniqueness part of this statement is a consequence of Lemma \ref{maxprinciple_unbounded} stated and proved below. To prove existence we note that we can without loss of generality assume that $(0,0)\in\partial\Omega$, and that $\varphi\geq 0$. Given $\varphi$, let $u_k$, for $k\geq 1$, be the unique weak solution to $\mathcal{L}u=0$ in $\Omega_k=\Omega\cap Q_k(0,0)$ with boundary values
\[
    u_k(Z,t)=\varphi(Z,t)\phi\left(\frac{||(Z,t)||}{k}\right),\quad \text{when}\, (Z,t)\in\Delta_k,
\]
and $u=0$ on $\partial\Omega\setminus\Delta_k$. Here, $\phi$ is a continuous decreasing function on $[0,\infty)$ such that $0\leq\phi\leq 1$, $\phi(s) = 1$ for $0\leq s\leq 1/2$, and $\phi(s) = 0$ for $s > 3/4$. Existence and uniqueness of $u_k$ follows from Lemma \ref{compkol+}. By construction $0\leq u_k\leq u_{k+1}\leq ||\varphi||_{L^\infty(\partial\Omega_k\cap\partial\Omega)}$ in $\Omega_k$ and we deduce, using the maximum principle and the Harnack inequality that
$$\sup_{\Omega_l}|u_k-u_j|\lesssim(u_k-u_j)(A^+_{cl,\Lambda}), \quad \mbox{if $k>j\gg l$},$$
for some constant $c=c(m,\kappa,M)$.
In particular, $u$ can be constructed as the monotone and uniform limit of $\{u_k\}$ as $k\to \infty$  on the closure of $\Omega_l$ for each $l\geq 1$.
Furthermore and similarly, by the maximum principle and the Riesz representation theorem we deduce that
\[
u(Z,t)=\iint_{\partial\Omega}\varphi(\tilde Z,\tilde t)\, \d\omega(Z,t,\tilde Z,\tilde t),
\]
for all $(Z,t)\in\Omega$, where $\{\omega(Z,t,\cdot)\mid (Z,t)\in \Omega\}$ is a  family of Borel regular probability measures on $\partial\Omega$. This finishes the proof of Theorem \ref{thm:dp}. \qed

We next prove the following version of the weak maximum principle in unbounded Lipschitz domains used in the proof of the uniqueness part of Theorem \ref{thm:dp}.
\begin{lemma}\label{maxprinciple_unbounded}
 Let $\Omega$ be a Lipschitz domain. Let $u\in C^2(\Omega)\cap C(\overline{\Omega})$ be such that
\begin{equation}\label{maxpr_unbd_assmp}
    \begin{cases}
    \L u = 0 \quad \text{in } \Omega,\\
    u \leq 0 \quad \text{on } \partial \Omega.
    \end{cases}
\end{equation}
Then $u\leq 0$ in $\Omega$.
\end{lemma}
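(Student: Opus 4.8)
The plan is to deduce the statement from the boundary H\"older decay of Section~\ref{sec5}, used as a ``quantitative vanishing at spatial infinity''. I will use that $u$ is bounded from above, which holds in all our applications and cannot be dispensed with (for $\psi\equiv 0$ the classical solution $u(Z,t)=x_m$ vanishes on $\partial\Omega$ but is positive in $\Omega$). So put $u^{+}:=\max\{u,0\}$, set $M:=\sup_{\Omega}u^{+}<\infty$, and note it suffices to show $u^{+}\equiv 0$ in $\Omega$.

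First I would observe that $u^{+}$ is a non-negative weak subsolution of $\L$ in $\Omega$ (a classical solution is a weak solution, and $u^{+}$ of a weak solution is a weak subsolution), and that, because $u\in C(\bar\Omega)$ and $u\le 0$ on $\partial\Omega$, the function $u^{+}$ lies in $C(\bar\Omega)$ and vanishes continuously on all of $\partial\Omega$. The boundary H\"older estimate is available for such $u^{+}$: tracing the proof of Lemma~\ref{boundaryholderkey} (hence of Lemma~\ref{boundaryholder}), the only ingredients are the maximum principle of Lemma~\ref{maxprinciple} — which holds for subsolutions — and the fundamental-solution comparison functions $\Phi_{1},\Phi_{2}$, neither of which uses that $u$ is a solution rather than a non-negative subsolution. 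Should one wish to avoid this, one may instead compare $u^{+}$, via Lemma~\ref{maxprinciple} and Lemma~\ref{compkol+}, with the honest solution of the Dirichlet problem in $\Omega_{r}(Z_{0},t_{0})$ with boundary data $0$ near $\Delta_{r}(Z_{0},t_{0})$ and $M$ on the rest of $\partial_{K}\Omega_{r}(Z_{0},t_{0})$, and then invoke Lemma~\ref{boundaryholder} for that solution.

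Then I would fix an arbitrary point $(Z^{*},t^{*})\in\Omega$ and an arbitrary point $(Z_{0},t_{0})\in\partial\Omega$. Since $(Z_{0},t_{0})^{-1}\circ(Z^{*},t^{*})$ is a fixed element of $\R^{N+1}$, we have $(Z^{*},t^{*})\in\Omega\cap Q_{r/c}(Z_{0},t_{0})$ once $r$ is large, and the (sub)solution boundary H\"older estimate applied in $\Omega\cap Q_{2r}(Z_{0},t_{0})$ gives
\[
u^{+}(Z^{*},t^{*})\le c\Bigl(\tfrac{d((Z^{*},t^{*}),(Z_{0},t_{0}))}{r}\Bigr)^{\alpha}\sup_{\Omega\cap Q_{2r}(Z_{0},t_{0})}u^{+}\le c\,d((Z^{*},t^{*}),(Z_{0},t_{0}))^{\alpha}\,M\,r^{-\alpha}.
\]
Letting $r\to\infty$ with $(Z^{*},t^{*})$ and $(Z_{0},t_{0})$ fixed forces $u^{+}(Z^{*},t^{*})=0$, and since $(Z^{*},t^{*})$ is arbitrary this yields $u\le 0$ in $\Omega$. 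The step that needs care is the first one — legitimizing the boundary H\"older decay for the subsolution $u^{+}$ (or, equivalently, setting up the Dirichlet-problem comparison cleanly) — together with being explicit that the decay $r^{-\alpha}$ beats the fixed constant $c\,d^{\alpha}M$ precisely because $\sup_{\Omega}u^{+}$ is finite; everything else is immediate from the results already established.
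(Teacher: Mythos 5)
Your argument is correct in substance, but it proves a (necessarily) modified statement and follows a genuinely different route from the paper. The paper's proof leans on the qualitative assumption \eqref{comp}: it fixes $R$ so large that $\L$ coincides with $\K$ outside $Q_{2R}$, invokes the maximum principle for $\K$ in unbounded Lipschitz domains (from \cite{NP}) together with Theorem \ref{lem4.5-Kyoto1} and the fundamental solution estimates to confine any hypothetical positivity of $u$ to a compact region $U_{R,\delta}$ strictly inside $\Omega$, and then applies the bounded-domain maximum principle, Lemma \ref{maxprinciple}, on $U_{R,\delta}$ to reach a contradiction. You instead exploit the translation- and scale-invariance of the boundary decay: the one-scale estimate of Lemma \ref{boundaryholder+} holds verbatim when ``$u$ vanishes continuously on the boundary portion'' is relaxed to ``$u\le 0$ there'', since in the comparison \eqref{compky4} the $\partial\Omega$-part of the Kolmogorov boundary only requires $u\le 0\le c\Psi^+\Phi_1+\Psi^+\Phi_2$; iterating and letting $r\to\infty$ with $\sup_\Omega u^+=M<\infty$ kills $u^+$ at every fixed interior point. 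This avoids \eqref{comp} and the detour through $\K$ altogether, and is quantitative where the paper's argument is a compactness/contradiction argument.

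Two caveats. First, your added hypothesis $\sup_\Omega u^+<\infty$ is not in the paper's statement, but your example $u=x_m$ (with $A\equiv I_m$, $\psi\equiv 0$) does show that some such hypothesis is indispensable; it is exactly what is available in the paper's application (uniqueness in Theorem \ref{thm:dp}, where competitors are bounded), and the paper's own confinement step ``$u<\epsilon$ in $\Omega\setminus U_{R,\delta}$'', as well as the appeal to the maximum principle for $\K$ on an unbounded domain, tacitly require control of $u$ at infinity of this kind. Second, be careful with the phrase ``Lemma \ref{maxprinciple} holds for subsolutions'': it does for $C^2$ subsolutions, but $u^+$ is not $C^2$. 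The clean fix is the one the paper itself uses in Lemma \ref{boundaryholder+}: run the comparison on the smooth solution $u$ with the constant $\Psi^+=\sup u^+$ rather than on $u^+$. Your alternative route via the Dirichlet solution with data $0$ near $\Delta_r$ and $M$ elsewhere also works, but note that there the comparison is made with data on the Kolmogorov boundary only, i.e., the form of the maximum principle implicitly used in deriving \eqref{compky4} and in Lemma \ref{compkol+}, not the literal statement of Lemma \ref{maxprinciple}, which prescribes data on the full topological boundary.
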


\begin{proof}
 We can without loss of generality assume that $(0,0)\in\partial\Omega$. Consider $Q_\rho=Q_\rho(0,0)$, $\Omega_\rho=\Omega\cap Q_\rho$, for $\rho>0$. Let $R>0$ and
  $$M:=\max_{\overline{\Omega_{2R}}} u.$$
  We let $\tilde A$ be a  smooth matrix-valued function such that
\[
\tilde A (Z,t) =
\begin{cases}
    A(Z,t),\quad &\mbox{if}\: (Z,t)\in Q_{R}\\
    I_m,\quad &\mbox{if}\:(Z,t)\in \R^{N+1}\backslash Q_{2R}.
\end{cases}
\]
Then, using \eqref{comp} we see that if $R$ is large enough then  $\tilde A\equiv A$ on $\mathbb R^{N+1}$. We fix $R$ so large that this holds. Then $\tilde{A}$ is constant and equal to the $m\times m$ identity matrix outside the cube $Q_{2R}$.  $\tilde A$ defines the operator $\tilde \L$ which coincides with $\L$ on $\mathbb R^{N+1}$ and in particular with $\K$ outside $Q_{2R}$. Let
$\epsilon>0$  be given and assume that $u\geq 2\epsilon$ at some point in $\Omega$. Note that the maximum principle that we are to prove is known to hold for $\K$. Therefore, using Theorem \ref{lem4.5-Kyoto1} and estimates for the fundamental solution, see Lemma \ref{lem_fsolbounds}, we see that there exists $0<\delta=\delta(\epsilon,M)\ll 1$
such that $ u<\epsilon$ in $\Omega \backslash  U_{R,\delta}$, where
\[
U_{R,\delta} := \Omega_{R/\delta}\cap \lbrace (Z,t)=(x,x_m,y,y_m,t)\in\R^{N+1}\mid x_m >\psi(x,y,y_m,t)+\delta\rbrace.
\]
Hence $u\geq 2\epsilon$ at some point in $U_{R,\delta}$. However, applying  Lemma \ref{maxprinciple} we see
$u\leq \epsilon$ in $U_{R,\delta}$, which yields a contradiction. Hence $u \leq 0$ in $\Omega$ and the proof is complete.
\end{proof}

Recall that the operator adjoint to $\L$ is
\begin{equation}
    \L^\ast=\nabla_X\cdot(A(X,Y,t)\nabla_X)-X\cdot\nabla_Y+\partial_t.
\end{equation}
In the case of the adjoint operator $\L^\ast$ we denote the associated Kolmogorov boundary of $\Omega_{r}$
by $\partial_K^\ast\Omega_{r}$.  Lemma \ref{compkol+} and Theorem \ref{thm:dp} then apply to $\L^\ast$ subject to the natural
modifications. In particular, the following is true.
\begin{theorem}\label{thm:dp+} Let $\Omega\subset\mathbb R^{N+1}$ be a Lipschitz domain with constant $M$. Given $\varphi\in
C_0(\partial\Omega)$, there exists
a  unique  weak solution  $u=u_\varphi$, $u\in C(\bar \Omega)$, to the Dirichlet problem
\begin{equation*}
\begin{cases}
	\L^\ast u = 0  &\text{in} \ \Omega, \\
      u = \varphi  & \text{on} \ \partial \Omega.
\end{cases}
\end{equation*}
Furthermore, there exists, for every $(Z, t)=(X,Y,t)\in \Omega$, a unique probability
measure  $\omega^\ast(Z,t,\cdot)$ on $\partial\Omega$ such that
\begin{eqnarray*}
u(Z,t)=\iint_{\partial\Omega}\varphi(\tilde Z,\tilde t)\d \omega^\ast(Z,t,\tilde Z,\tilde t).
\end{eqnarray*}
The measure $\omega^\ast(Z,t,E)$ is referred to as the parabolic measure associated to $\L^\ast$ in $\Omega$ and at $(Z, t)\in \Omega$ and of $E\subset\partial\Omega$.
\end{theorem}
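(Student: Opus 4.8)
The plan is to obtain Theorem \ref{thm:dp+} from Theorem \ref{thm:dp} by a change of variables that conjugates $\L^\ast$ to an operator of the form \eqref{e-kolm-nd}. Consider the affine involution $\Psi:\R^{N+1}\to\R^{N+1}$, $\Psi(X,Y,t)=(X,-Y,-t)$. The first step is to record two elementary facts, each verified by a direct computation. (a) If one sets $\bar A(X,Y,t):=A(\Psi(X,Y,t))$ and $\bar\L:=\nabla_X\cdot(\bar A(X,Y,t)\nabla_X)+X\cdot\nabla_Y-\partial_t$, then $(\L^\ast w)\circ\Psi=\bar\L(w\circ\Psi)$ for every $w$; consequently $w$ is a weak solution of $\L^\ast w=0$ in $\Omega$ if and only if $v:=w\circ\Psi$ is a weak solution of $\bar\L v=0$ in $\Psi(\Omega)$, which one sees by rewriting the weak formulation \eqref{weak3} under the substitution and noting that $\Psi$ has Jacobian of absolute value $1$ and preserves the Sobolev classes in \eqref{weak1}--\eqref{weak2}. (b) $\bar A$ is smooth, symmetric, satisfies \eqref{eq2} with the \emph{same} constant $\kappa$, and equals $I_m$ outside a compact set, namely the image under $\Psi$ of the set in \eqref{comp}; hence Theorem \ref{thm:dp} applies to $\bar\L$.

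The second step is to check that $\Psi$ respects the geometry. Writing $\Omega=\{x_m>\psi(x,y,y_m,t)\}$, one has $\Psi(\Omega)=\{x_m>\bar\psi(x,y,y_m,t)\}$ with $\bar\psi(x,y,y_m,t):=\psi(x,-y,-y_m,-t)$, so $\Psi(\Omega)$ is again an ``upper'' domain of the form \eqref{def:Omega}. A short calculation with the formula \eqref{e70++} shows that $\Psi$ is an isometry both for the homogeneous norm \eqref{kolnormint} and for the quasi-distance \eqref{e-ps.distint}: indeed $\|\Psi(Z,t)\|=\|(Z,t)\|$, and $(\Psi(\tilde Z,\tilde t))^{-1}\circ\Psi(Z,t)$ has the same norm as $(\tilde Z,\tilde t)^{-1}\circ(Z,t)$. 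Substituting this into the Lipschitz condition \eqref{def:LipK} for $\psi$ then gives that $\bar\psi$ satisfies \eqref{def:LipK} with the \emph{same} constant $M$, so that $\Psi(\Omega)$ is a Lipschitz domain with constant $M$. Finally, as $\Psi$ is a $\|\cdot\|$-preserving homeomorphism, $\varphi\in C_0(\partial\Omega)$ corresponds to $\bar\varphi:=\varphi\circ\Psi\in C_0(\partial(\Psi(\Omega)))$.

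Granting (a), (b), and these geometric facts, I would finish by applying Theorem \ref{thm:dp} to $\bar\L$ on $\Psi(\Omega)$ with data $\bar\varphi$, obtaining a unique $\bar u\in C(\overline{\Psi(\Omega)})$ solving the corresponding Dirichlet problem together with the associated family of probability measures; then $u:=\bar u\circ\Psi\in C(\bar\Omega)$ is the required unique weak solution of $\L^\ast u=0$ with $u=\varphi$ on $\partial\Omega$ (uniqueness on the unbounded domain coming from the $\L^\ast$-analogue of Lemma \ref{maxprinciple_unbounded}, which is just Lemma \ref{maxprinciple_unbounded} applied to $\bar\L$), and $\omega^\ast(Z,t,E):=\bar\omega(\Psi(Z,t),\Psi(E))$ is the desired representing measure. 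The main point requiring genuine care is the claim in the second step that $\Psi$ preserves the class of Lipschitz domains with the same constant $M$, i.e. controlling the interplay of the reflection $\Psi$ with the non-Euclidean group law \eqref{e70}; the remaining verifications are routine bookkeeping. Equivalently --- and this is presumably the intended meaning of ``the natural modifications'' --- one may instead repeat Lemma \ref{compkol+} and the proof of Theorem \ref{thm:dp} verbatim for $\L^\ast$, the only difference being that the Kolmogorov boundary $\partial_K^\ast\Omega_r$, the Harnack inequality, and the barrier constructions are all taken relative to admissible paths for $\L^\ast$, which propagate forward rather than backward in time.
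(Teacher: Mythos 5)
Your proposal is correct, and your primary route is genuinely different from the paper's. The paper gives essentially no argument for Theorem \ref{thm:dp+}: it simply asserts that Lemma \ref{compkol+} and Theorem \ref{thm:dp} ``apply to $\L^\ast$ subject to the natural modifications,'' which is precisely your alternative second route (re-run the barrier construction, the exhaustion $\Omega_k=\Omega\cap Q_k$, and the maximum principle with the adjoint Kolmogorov boundary $\partial_K^\ast$ and with admissible paths propagating forward in time). Your main route instead reduces the adjoint statement to Theorem \ref{thm:dp} itself via the involution $\Psi(X,Y,t)=(X,-Y,-t)$, and the computations you flag as the crux do check out: $\bar\L(w\circ\Psi)=(\L^\ast w)\circ\Psi$ with $\bar A=A\circ\Psi$ (the sign flips in $\nabla_Y$ and $\partial_t$ exactly convert $X\cdot\nabla_Y-\partial_t$ into $-X\cdot\nabla_Y+\partial_t$), $\Psi$ preserves $\|\cdot\|$ and, by \eqref{e70++}, also $d$ (since $-Y+\tilde Y+(\tilde t-t)\tilde X=-(Y-\tilde Y+(t-\tilde t)\tilde X)$), and consequently $\bar\psi(x,y,y_m,t)=\psi(x,-y,-y_m,-t)$ satisfies \eqref{def:LipK} with the same $M$. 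What your reduction buys is that Theorem \ref{thm:dp+} becomes a formal corollary, with no need to re-examine regular boundary points or the direction of propagation; what it costs is that $\Psi$ reverses time, hence swaps $A^+_{\rho,\Lambda}$ with (essentially) $A^-_{\rho,\Lambda}$ and interchanges forward and backward cylinders, so if one wants the adjoint versions of the quantitative results used later (the adjoint Carleson estimate, backward Harnack, etc., which the paper repeatedly invokes) one must track this relabelling, whereas the paper's ``natural modifications'' convention handles all of these uniformly. Either way the theorem as stated follows.
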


\begin{definition}\label{koldef} Let $(Z,t)\in \Omega$. Then $\omega(Z,t,\cdot)$ is referred to as the
parabolic or Kolmogorov measure
associated to $\L$ relative to $(Z,t)$ and $\Omega$,  and  $\omega^\ast(Z,t,\cdot)$ is referred to as the (adjoint) parabolic or Kolmogorov measure
associated to $\L^\ast$ relative to $(Z,t)$ and $\Omega$.
\end{definition}

\section{Proof of theorem \ref{thm:carleson}}\label{sec7}
To prove Theorem \ref{thm:carleson} it suffices, by a simple argument as in the proof of Theorem 1.1 in \cite{CNP3}, to prove the following proposition.

\begin{proposition}\label{gensalsa-0} $\Omega\subset\mathbb R^{N+1}$ is a Lipschitz domain with
Lipschitz constant $M$. Let $(Z_0,t_0)\in\partial\Omega$, $r>0$, and let  $\Lambda$, $c_0$, $\eta$, $\rho_0$, $\rho_1$, be chosen in accordance with Remark \ref{remnot}.  Assume that $u$ is a non-negative weak solution to $\L u=0$ in  $\Omega_{2r}(Z_0,t_0)$  vanishing continuously on $\Delta_{2r}(Z_0,t_0)$.  Then
\begin{equation*}
\sup_{\Omega_{2\rho/c}(Z_0,t_0)}u\lesssim u(A^+_{\rho,\Lambda}(Z_0,t_0)),
\end{equation*}
for all $0<\rho<\rho_1$.
\end{proposition}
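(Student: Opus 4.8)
The plan is to run the standard Carleson-estimate argument, adapted to the Kolmogorov geometry, using the cone-Harnack machinery (Lemma \ref{lem4.7bol+}, Lemma \ref{lem4.7rho}, Lemma \ref{lem4.7}), the boundary Hölder continuity (Lemma \ref{boundaryholder}), and the maximum principle (Lemma \ref{maxprinciple}). First I would fix $(Z_0,t_0)\in\partial\Omega$ and, by translation and dilation invariance, reduce to $(Z_0,t_0)=(0,0)$ and $\rho=1$ (with $r$ correspondingly large, $1<\rho_1$). Normalize so that $u(A^+_{1,\Lambda})=1$; the goal is then $\sup_{\Omega_{2/c}}u\lesssim 1$. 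The heart of the matter is a contradiction/iteration scheme: suppose there is a point $(Z^*,t^*)\in\Omega_{2/c}$ with $u(Z^*,t^*)=\mathcal M$ large. Let $d^*=d((Z^*,t^*),\Delta_{2r})$. Using Lemma \ref{l-coord} write $(Z^*,t^*)=A^+_{\rho^*,\Lambda}(Z_0^*,t_0^*)$ for some $(Z_0^*,t_0^*)\in\Delta_{c_0\rho}$, $\rho^*\approx d^*$. By Lemma \ref{lem4.7rho} applied at $(Z_0^*,t_0^*)$, moving from scale $\rho^*$ up to a fixed scale comparable to $1$ costs at most a power of $(1/d^*)$, so $1=u(A^+_{\text{fixed}})\gtrsim (d^*)^{\gamma}u(Z^*,t^*)=(d^*)^{\gamma}\mathcal M$; thus if $\mathcal M$ is large, $(Z^*,t^*)$ must be very close to the boundary, $d^*\lesssim \mathcal M^{-1/\gamma}$.

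Next I would iterate: the key geometric step is that from $(Z^*,t^*)$ one can step "outward toward the corkscrew point" — i.e.\ there is a point $(Z^{**},t^{**})$ at roughly twice the distance to the boundary, still inside the relevant sawtooth region, with comparable horizontal location, and with $u(Z^{**},t^{**})$ still large, in fact $\ge \mathcal M/c_1$ for a dimensional constant $c_1$ — obtained by a Harnack chain along the admissible path of reference points $A^+_{s\rho,\Lambda}$ (Lemma \ref{lem4.7bol+}, using the cone inclusions of Lemma \ref{coneconditions}). Crucially, because at each step the distance to the boundary only roughly doubles, the Harnack constant per step is uniform, so after $k\approx \log(1/d^*)$ steps — i.e.\ until the distance to the boundary is comparable to the scale $\rho$ — one reaches a point where $u\ge \mathcal M/c_1^k$. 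One now has to balance: near the boundary one also has the decay estimate from Lemma \ref{boundaryholder}, $u(Z,t)\lesssim (d((Z,t),(Z_0^*,t_0^*))/r)^{\alpha}\sup_{\Omega_{2r}}u$, which is inconsistent with $u$ being large at a point deep in the interior only if the supremum itself were controlled — so instead the contradiction must be closed the other way: by comparing, via the maximum principle on a suitable subdomain $\Omega_{2\rho}\setminus$(a cone), $u$ against a multiple of $u(A^+_{\rho,\Lambda})$ plus an error controlled by $\mathcal M$ times the $\L$-harmonic measure of a far-away set, which by the Hölder/fundamental-solution estimates is exponentially small. Choosing constants, the large value $\mathcal M$ at $(Z^*,t^*)$ forces $\mathcal M\le \tfrac12\mathcal M + C$, hence $\mathcal M\le 2C$, the desired bound.

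Concretely the steps in order: (1) normalization and setup; (2) record the cone inclusions and admissible reference paths from Remark \ref{remnot}, Lemma \ref{coneconditions}, Lemma \ref{coneconditions-}; (3) the "propagation toward the corkscrew point" lemma: if $u$ is large at $(Z,t)$ near $\Delta$, it is comparably large at a point twice as far from $\Delta$ — via a finite Harnack chain with uniformly bounded constant along $\gamma^+(\tau)=A^+_{(1-\tau)\rho,\Lambda}$; (4) iterate (3) $\approx\log(1/d^*)$ times to transfer largeness out to scale $\rho$; (5) the barrier/maximum-principle comparison $u\le c\,u(A^+_{\rho,\Lambda})\Phi_1+\mathcal M\Phi_2$ on $\Omega_{2\rho}$ with $\Phi_1$ a solution that dominates $1$ on the relevant portion of $\partial\Omega_{2\rho}$ and $\Phi_2$ the solution supported near the "bad" far part of the boundary, using Lemma \ref{lem_fsolbounds} and the argument of Claims 1–2 in the proof of Lemma \ref{boundaryholderkey} to see $\Phi_2$ is small on $\Omega_{\rho}$; (6) combine (4) and (5) to get $\mathcal M\le \tfrac12\mathcal M+C$ and conclude.

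The main obstacle I expect is step (3)–(4): the rigidity of the Kolmogorov Harnack inequality (the constants $\alpha,\beta,\gamma,\theta$ in Lemma \ref{harnack} are not free, as emphasized after that lemma) means one cannot build Harnack chains connecting arbitrary points freely — one must stay along genuinely admissible paths and respect the $\delta_r$-homogeneous geometry, so the doubling-of-distance step and the count $k\approx\log(1/d^*)$ must be arranged so that each individual Harnack step fits inside $\Omega_{2r}$ and uses only scale-comparable cylinders; this is exactly where Lemma \ref{coneconditions}, Lemma \ref{l-coord}, and Lemma \ref{lem4.7bol+} do the work, and the bookkeeping of which cones sit inside $\Omega_r$ is the delicate part. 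The second, milder difficulty is making the $\Phi_2$-smallness quantitative enough (exponential in the number of scales) to beat the $c_1^k$ growth accumulated in the Harnack chain; this should follow as in Claim 2 of Lemma \ref{boundaryholderkey} from the Gaussian bounds of Lemma \ref{lem_fsolbounds}, but the exponents must be tracked.
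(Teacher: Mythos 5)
Your opening reduction and the first use of the growth estimate are fine, but the mechanism you propose to close the contradiction has a genuine gap, and it is not the route the paper takes. First, steps (3)--(4) gain nothing: marching outward along $A^+_{s\rho,\Lambda}(Z_0^*,t_0^*)$ costs a uniform Harnack constant $c_1$ per doubling of the distance to the boundary, so after $k\approx\log(\rho/d^*)$ steps the transported lower bound is $\mathcal{M}/c_1^{k}\approx \mathcal{M}(d^*/\rho)^{\gamma'}$, which by the normalization is $\lesssim 1$; this is exactly the content of Lemma \ref{lem4.7rho}/Lemma \ref{lem4.7} and merely re-derives your bound $d^*\lesssim\mathcal{M}^{-1/\gamma}$ --- largeness is \emph{not} transferred to scale $\rho$. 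Second, the absorption step (5)--(6) does not work as stated: to write $u\le c\,u(A^+_{\rho,\Lambda})\Phi_1+\mathcal{M}\Phi_2$ via the maximum principle you must bound $u$ by $c\,u(A^+_{\rho,\Lambda})$ on the interior lateral part of the boundary of your subdomain, which is precisely the Carleson estimate being proved; it cannot be obtained by Harnack chains, since those lateral points sit at times later than the time of $A^+_{\rho,\Lambda}$ (Harnack only propagates bounds forward in time) and can be arbitrarily close to $\partial\Omega$. If instead you take $\mathcal{M}$ to be a supremum over a larger region so that the $\Phi_2$-part is legitimate, the argument only yields $\sup_{\text{small}}u\le\theta\,\sup_{\text{large}}u$, i.e.\ Lemma \ref{boundaryholder_giventheta} itself, and no inequality of the form $\mathcal{M}\le\tfrac12\mathcal{M}+C$ follows, because the point realizing $\mathcal{M}$ need not lie where $\Phi_2$ is small.

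The paper closes the argument the way you explicitly set aside. One normalizes $\sup_K u\le 1$ on a compact neighborhood $K$ of $A^+_{(1-\tau)\rho,\Lambda}$ (Lemma \ref{t-1}), assumes $u(Z_1,t_1)>\lambda$ at some point very near $(Z_0,t_0)$, and iterates the following two facts: by Lemma \ref{l-coord} and Lemma \ref{lem4.7} together with the normalization, any point where $u>\lambda\theta^{1-k}$ must lie within distance $\rho_k\lesssim\lambda^{-1/\gamma}\theta^{(k-1)/\gamma}$ of $\partial\Omega$; and by the oscillation-decay Lemma \ref{boundaryholder_giventheta} applied at the nearby boundary point, there is then a new point in $\Omega_{2c\rho_k}(\hat Z_k,\hat t_k)$ where $u>\lambda\theta^{-k}$. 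The quasi-triangle inequality and the geometric decay of $\rho_k$ (with the careful choice of $\epsilon_0,\sigma,\lambda$ in \eqref{eps0-lambda}) keep the whole sequence inside a fixed neighborhood of $(Z_0,t_0)$, so one obtains points converging to $\partial\Omega$ along which $u\to\infty$, contradicting the assumption that $u$ vanishes continuously there; hence $\sup u\le\lambda c_K u(A^+_{\rho,\Lambda})$ on the small region. So the decay lemma is used to make $u$ \emph{grow} by the factor $\theta^{-1}$ at each step while the points are forced toward the boundary --- the "supremum control" you worried about is supplied by the normalization through $K$ at every step, not assumed. If you want to salvage your write-up, replace steps (3)--(6) by this iteration; your barrier construction is the proof of Lemma \ref{boundaryholderkey}, which enters here only through Lemma \ref{boundaryholder_giventheta}.
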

\begin{proof}
Without loss of generality we may assume that $(Z_0,t_0) = (0,0)$, $r=1$. Fix $0<\rho<\rho_1$. By Lemma \ref{coneconditions-}, we have for, any $0<\tau<1$, that $A^+_{(1-\tau)\rho,\Lambda}=A^+_{(1-\tau)\rho,\Lambda}(0,0)$ is a point on an admissible path starting at $A^+_{\rho,\Lambda}=A^+_{\rho,\Lambda}(0,0)$. Moreover, $A^+_{(1-\tau)\rho,\Lambda}$ is an interior point of the propagation set $\mathcal{A}_{A^+_{\rho,\Lambda}}\big(C^+_{\rho,\eta,\Lambda}(0,0)\big)$ as defined in Definition \ref{def:admissible}. Hence, for $\tau\in (0,1/4]$ fixed, there exists $\epsilon=\epsilon(\tau)>0$ such that
\[
    K:=\overline{Q_\epsilon\left( A^+_{(1-\tau)\rho,\Lambda} \right)} \subset\text{Int}\left(\mathcal{A}_{A^+_{\rho,\Lambda}}\big(C^+_{\rho,\eta,\Lambda}(0,0)\big) \right).
\]
By Lemma \ref{t-1} there then exists a constant $c_K=c_K(m,\kappa,M,\tau)$ such that
\[
\sup_K u \leq c_K u(A^+_{\rho,\Lambda}).
\]
Note that we can, due to linearity of $\L$, without loss of generality assume that $c_K u(A^+_{\rho,\Lambda})=1$ and hence
\begin{equation}\label{carlesonpf_supu}
    \sup_K u \leq 1.
\end{equation}
Furthermore, using the continuity of the function $(Z,t)\mapsto A^+_{(1-\tau)\rho,\Lambda}(Z,t)$ we can conclude there exists $\epsilon_1=\epsilon_1(m,M,\tau)$, $0<\epsilon_1<1$ such that
\begin{equation}\label{rho1}
    A^+_{(1-\tau)\rho,\Lambda}(Z,t)\in K,\quad \text{whenever }(Z,t)\in\Delta_{\epsilon_1}.
\end{equation}

 To proceed we fix $0<\theta <1$ to satisfy $0 < \theta < {\bf c}^{-\gamma}$, where $\bf c$ is the constant in \eqref{e-triangular},
and $\gamma$ is as in Lemma \ref{lem4.7}. We then choose $(\epsilon_0,\sigma,\lambda)$ subject to the restriction
\begin{equation} \label{eps0-lambda}
\begin{split}
	\epsilon_0 & < \min\bigg\{ \rho_1,\, \epsilon_1,\, (1-\tau)\rho,\, \frac{1}{c_M^2\mathbf{c}} ,\, \frac{1}{\tilde c(1-\tau)\rho c_0} \bigg\},\\
    \sigma &< \min\bigg\{1,\, \frac{1}{2 c_0 c_M {\bf c}},\, \frac{c_M}{c_0} \bigg\}, \\
	\lambda & > \max\bigg\{ 1,\, c_1(2c_M \mathbf{c} c)^\gamma ,\, c_1 \bigg(\frac{{c_M \mathbf{c}^3}\big(1+ 2c\big) 2 c_0}{\epsilon_0(1-\bf c \theta^{\frac{1}{\gamma}})}\bigg)^{\!\!\gamma} \bigg\},
\end{split}
\end{equation}
where $\tilde c$ is the constant appearing in Lemma \ref{l-coord}, $c_M$ is the constant appearing in \eqref{def.Omega.Delta.frinc}, $\epsilon_1$ is as in \eqref{rho1}, $c_1$ is the constant $c$ in Lemma \ref{lem4.7}, and $c$ is the constant appearing in Lemma \ref{boundaryholder_giventheta}.

Suppose now that there exists a point $(Z_1,t_1) \in \Omega_{\frac{\sigma \epsilon_0}{c_M}}=\Omega_{\frac{\sigma \epsilon_0}{c_M}}(0,0)$ such that
\begin{equation}\label{carlesonpf_assumption}
    u(Z_1,t_1) > \lambda.
\end{equation}
The idea of the argument is to, based on the assumption in \eqref{carlesonpf_assumption}, derive contradiction to the assumption that $u$ is continuous up to the boundary.  Note that \eqref{def.Omega.Delta.frinc} and the choice of $\sigma$ in \eqref{eps0-lambda} imply that
\begin{equation}\label{e.z.1}
    (Z_1,t_1) \in \Omega_{ \frac{\epsilon_0}{c_0}}\cap Q_{\sigma\epsilon_0}.
\end{equation}
To complete the argument by contradiction it suffices to prove the following claim.

\noindent
\textit{Claim: If \eqref{carlesonpf_assumption} holds, then there exists a sequence of points $\lbrace Z_j,t_j \rbrace_{j=1}^\infty \subset\Omega_{\frac{\epsilon_0}{c_0}}$, such that
\[
u(Z_j,t_j)>\lambda\theta^{1-j},
\]
and such that
\begin{equation}\label{carlesonpf_tothebdry}
    d((Z_j,t_j),\Delta_{\epsilon_0})\rightarrow 0,\text{ as }j\rightarrow\infty.
\end{equation}}

\noindent
\textit{Proof of the claim.} We are going to use induction to prove that there exists a sequence $\lbrace Z_j,t_j \rbrace_{j=1}^\infty$ such that for every $j\in\mathbb{N}$, we have
\begin{equation}\label{carlesonpf_induction}
  (Z_j,t_j)\in \Omega_{\frac{\epsilon_0}{c_0}} \text{ and } u(Z_j,t_j)>\lambda\theta^{1-j}.
\end{equation}
To proceed by induction, we first note that by our choice of $(Z_1,t_1)$, \eqref{carlesonpf_induction} holds for $j=1$. Next, assume that \eqref{carlesonpf_induction} holds for $j=k$. Using Lemma \ref{l-coord} and that $\epsilon_0 c_0^{-1}<\rho_1$, we deduce that there exist $(\hat Z_k,\hat t_k)\in\Delta_{\epsilon_0}$ and $\hat\rho_k < \frac{c\epsilon_0}{c_0(1-\tau)\rho}<1$ such that $(Z_k,t_k)=A^+_{\hat\rho_k(1-\tau)\rho,\Lambda}(\hat Z_k,\hat t_k)$. Note that $A^+_{(1-\tau)\rho}(\hat Z_k,\hat t_k)\in K$, by \eqref{rho1} since $\epsilon_0<\epsilon_1$. Using Lemma \ref{lem4.7} we see that
\begin{equation}\label{carlesonpf_ineq1}
    \lambda\theta^{1-k}<u(Z_k,t_k)=u(A^+_{\hat\rho_k(1-\tau)\rho,\Lambda}(\hat Z_k,\hat t_k)) \leq c_1\left( \frac{(1-\tau)\rho}{d} \right)^\gamma u(A^+_{(1-\tau)\rho,\Lambda}(\hat Z_k,\hat t_k)),
\end{equation}
where $d=d((Z_k,t_k),\partial\Omega)$. In particular, using that $d\leq d((Z_k,t_k),(\hat Z_k,\hat t_k))$, \eqref{carlesonpf_ineq1}, \eqref{carlesonpf_supu}, and that $(1-\tau)\rho<1$, we see that
\begin{equation}\label{e.rhok}
    \rho_k := d((Z_k,t_k),(\hat Z_k,\hat t_k)) < c_1^\frac{1}{\gamma}\lambda^{-\frac{1}{\gamma}}\theta^{\frac{k-1}{\gamma}}.
\end{equation}
We now want to apply Lemma \ref{boundaryholder_giventheta}, but to do that we first have to show that
\begin{equation}\label{carlesonpf_ballinbox}
    \partial\Omega \cap Q_{2c\rho_k}(\hat Z_k,\hat t_k) \subset \Delta_{2}.
\end{equation}
By using \eqref{e-triangular}, \eqref{e.rhok}, and \eqref{def.Omega.Delta.frinc}, we see that for any $(Z,t)\in Q_{2c\rho_k}(\hat Z_k,\hat t_k)$ we have
\begin{equation*}
    \begin{split}
        d((Z,t),(0,0)) &\leq {\bf c}(d((Z,t),(\hat Z_k,\hat t_k)) + d((\hat Z_k,\hat t_k),(0,0)))\\
        &\leq {\bf c}(2c\rho_k +c_M\epsilon_0) \leq \frac{2}{c_M},
    \end{split}
\end{equation*}
because of the bounds on $\epsilon_0$ and $\lambda$ in \eqref{eps0-lambda}. This together with \eqref{def.Omega.Delta.frinc} proves \eqref{carlesonpf_ballinbox}. Hence, we can use Lemma \ref{boundaryholder_giventheta} to deduce that
\[
\lambda\theta^{1-k} < u(Z_k,t_k) \leq \sup_{\Omega_{\rho_k}(\hat Z_k,\hat t_k)}u\leq \theta \sup_{\Omega_{2c\rho_k}(\hat Z_k,\hat t_k)}u.
\]
In particular, we see that there exists $(Z_{k+1},t_{k+1})\in \Omega_{2c\rho_k}(\hat Z_k,\hat t_k)$ such that
\[
\lambda \theta^{-k}<u(Z_{k+1},t_{k+1}),
\]
which is the second statement of \eqref{carlesonpf_induction}. We need to check that $(Z_{k+1},t_{k+1})\in\Omega_{\frac{\epsilon_0}{c_0}}$.
By repeatedly using the pseudo-triangular inequality \eqref{e-triangular}, we see that
\begin{equation}\label{e-sum.part.}
  d((Z_{k+1},t_{k+1}),(0,0)) \le {\bf c}\bigg( d((Z_1,t_1),(0,0)) + \sum_{j=1}^k{\bf c}^j d((Z_{j+1},t_{j+1}),(Z_j,t_j)) \bigg).
\end{equation}
Now, notice that
\begin{equation*}
 \begin{split}
    d((Z_{j+1},t_{j+1}),(Z_j,t_j)) &\le {\bf c}(d((\hat Z_j,\hat t_j),(Z_j,t_j))+d((Z_{j+1},t_{j+1}),(\hat Z_j,\hat t_j)))\\
    &\le {\bf c}(\rho_j + 2c\rho_j)\\
    &\le {\bf c}(1+2c)c_1^{\frac{1}{\gamma}}\lambda^{-\frac{1}{\gamma}}\theta^{\frac{j-1}{\gamma}},
 \end{split}
\end{equation*}
where the last inequality follows from \eqref{e.rhok}. Plugging this into \eqref{e-sum.part.}, and recalling \eqref{e.z.1} yields
\begin{equation*}
    d((Z_{k+1},t_{k+1}),(0,0))
     \le {\bf c}  \bigg(\sigma \epsilon_0 + {\bf c} (1 +2c)c_1^\frac{1}{\gamma}\!(\lambda \, \theta)^{-\frac{1}{\gamma}}\sum_{j=1}^{\infty} \big( \theta^\frac{1}{\gamma}{\bf c} \big)^{j}  \bigg) < \frac{\epsilon_0}{c_0c_M},
\end{equation*}
due to the choice of $\sigma$ and $\lambda$ in \eqref{eps0-lambda}. Using the above and \eqref{def.Omega.Delta.frinc}, we see that
\begin{equation*}
    (Z_{k+1},t_{k+1}) \in Q_{\frac{\epsilon_0}{c_0c_M}}\cap \Omega_{2} \subseteq \Omega_{ \frac{\epsilon_0}{c_0}}.
\end{equation*}
Hence \eqref{e.rhok}  holds for $j=k+1$ and hence, by induction, for all $j$. Note that \eqref{e.rhok} implies \eqref{carlesonpf_tothebdry}. Thus proof of the claim is complete. \qed

To complete the proof of Proposition \ref{gensalsa-0} we now see that the claim implies a contradiction as, by continuity, $\lim_{j\rightarrow\infty}u(Z_j,t_j)=0$. Hence,
\eqref{carlesonpf_assumption} can not be true and therefore
\[
\sup_{\Omega_{\frac{\sigma \epsilon_0}{c_M}}} u\leq \lambda c_K u(A^+_{\rho,\Lambda}).
\]
This finishes the proof of the proposition.
\end{proof}

\section{Relations for the Kolmogorov measure and the Green function}\label{sec8}
We define the  Green function for $\L$ associated to $\Omega$, with pole at $(\hat Z,\hat t)\in \Omega$, as
\begin{eqnarray}\label{ghh1-}
G (Z,t,\hat Z,\hat t)=\Gamma(Z,t, \hat Z,\hat t)-\iint_{\partial\Omega}
\Gamma(\tilde Z,\tilde t, \hat Z,\hat t)\d\omega(Z,t,\tilde Z,\tilde t),
\end{eqnarray}
where $\Gamma$ is the fundamental solution to the operator $\L$, see Lemma \ref{lem_fsolbounds}. If we instead consider $(Z,t)\in \Omega$ as fixed, then, for $(\hat Z,\hat t)\in
\Omega$,
\begin{eqnarray}\label{ghh1---}
G (Z,t,\hat Z,\hat t)=\Gamma(Z,t, \hat Z,\hat t)-\iint_{\partial\Omega}
\Gamma(Z,t, \tilde Z,\tilde t)\d\omega^\ast(\hat Z,\hat t,\tilde Z,\tilde t),
\end{eqnarray}
where
$\omega^\ast(\hat Z,\hat t,\cdot)$ is the (adjoint) parabolic measure associated to $\L^\ast$ and defined
relative to $(\hat Z,\hat t)$ and $\Omega$.

\begin{lemma}\label{gensalsa-a}
Let $\Omega\subset\mathbb R^{N+1}$ be a Lipschitz domain with  constant $M$. Let
    $\Lambda$, $c_0$, $\eta$,  be in accordance with Remark \ref{remnot}. Let $(Z_0,t_0)\in\partial\Omega$ and $r>0$.  Then
\begin{eqnarray*}
r^{{\bf q}-2}G (Z,t, A_{r,\Lambda}^+(Z_0,t_0))\lesssim\om (Z,t, \De_{r}(Z_0,t_0)),
\end{eqnarray*}
whenever $(Z,t)\in \Omega$, $t\geq 8 r^2+t_0$.
\end{lemma}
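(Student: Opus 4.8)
The plan is to compare the Green function with the parabolic measure via the maximum principle applied to a suitable fundamental-solution barrier. First I would fix $(Z_0,t_0)=(0,0)$ and $r=1$ by translation and dilation invariance, so that we must show $G(Z,t,A_{1,\Lambda}^+(0,0))\lesssim\omega(Z,t,\Delta_1(0,0))$ for $(Z,t)\in\Omega$ with $t\geq 8+t_0$. Let $(\hat Z,\hat t):=A_{1,\Lambda}^+(0,0)$; by Lemma \ref{coneconditions-} this reference point lies in $\Omega_1$ and is at $d$-distance $\approx 1$ from both $(0,0)$ and $\Delta_2(0,0)$. Now define the function
\begin{eqnarray*}
w(Z,t):=\omega(Z,t,\Delta_1(0,0)),
\end{eqnarray*}
which is a non-negative weak solution of $\L w=0$ in $\Omega$ with $w\equiv 1$ on $\Delta_1(0,0)$ (in the sense of the continuous Dirichlet problem, Theorem \ref{thm:dp}). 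The idea is that $v(Z,t):=G(Z,t,\hat Z,\hat t)$ is also a non-negative weak solution of $\L v=0$ in $\Omega\setminus\{(\hat Z,\hat t)\}$ vanishing continuously on $\partial\Omega$, and we want to dominate it by a constant multiple of $w$ on the region $\{t\geq 8+\hat t\}$ (note $\hat t=t_0+1$, so $t\geq 8+t_0$ is essentially $t\geq 7+\hat t$, giving room).

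The key step is a boundary/maximum-principle comparison on the set $\Omega_{c}(0,0)$ for an appropriate large $c$ (or on a truncation $\Omega\cap Q_{c}(0,0)$). On the lateral boundary $\Delta_c(0,0)$ both $v$ and $w$ vanish; on the part of $\partial Q_c(0,0)$ interior to $\Omega$ one uses the fundamental-solution upper bound \eqref{fundsolbd}, namely $\Gamma(Z,t,\hat Z,\hat t)\lesssim d((Z,t),(\hat Z,\hat t))^{-(\mathbf q-2)}$, together with $G\leq \Gamma$, to see that $v(Z,t)\lesssim 1$ there — more precisely, for $(Z,t)$ with $d((Z,t),(\hat Z,\hat t))\gtrsim 1$ one has $v(Z,t)\lesssim 1$, while near $(\hat Z,\hat t)$ one avoids the singularity because we restrict to $t\geq 8+t_0$ which puts $(Z,t)$ well into the future of $\hat t$ but the singularity is a genuine obstruction only at the pole itself. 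Meanwhile $w$ is bounded below on the relevant compact piece of $\{t\geq 8+t_0\}\cap \Delta_{1/2}$-neighborhood by the Carleson estimate (Theorem \ref{thm:carleson}) or rather its companion: since $w\equiv 1$ on $\Delta_1(0,0)$, the Harnack inequality and boundary Hölder estimates give $w(A_{\rho_0,\Lambda}^+(0,0))\gtrsim 1$, and hence, by the Harnack chain argument (Lemma \ref{akkaa}), $w\gtrsim 1$ at every point of $\Omega$ lying in the interior of the propagation set of $A_{\rho_0,\Lambda}^+(0,0)$, in particular at the `corner' points of $\Omega\cap Q_c(0,0)$ with $t\geq 8+t_0$. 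Combining, we get $v\leq C w$ on all of $\partial(\Omega\cap Q_c(0,0))$ intersected with $\{t>8+t_0\}$, whence $v\leq Cw$ inside by Lemma \ref{maxprinciple_unbounded} (applied to $v-Cw$). Scaling back via \eqref{dil.alpha.i} and the homogeneity $\Gamma(\delta_s\cdot)=s^{-(\mathbf q-2)}\Gamma(\cdot)$ — which gives $G(Z,t,A_{r,\Lambda}^+(Z_0,t_0))\approx r^{-(\mathbf q-2)}$ times the rescaled Green function — produces the factor $r^{\mathbf q-2}$ in the statement.

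The main obstacle I expect is handling the pole $(\hat Z,\hat t)=A_{r,\Lambda}^+(Z_0,t_0)$ correctly: since $\hat t=t_0+r^2$, the condition $t\geq 8r^2+t_0$ keeps $(Z,t)$ strictly in the future of $\hat t$ by at least $7r^2$, so on the region under consideration $G$ is a genuine (smooth, non-negative) solution with no singularity, and the fundamental-solution bound \eqref{fundsolbd} gives a clean pointwise upper bound $G\lesssim r^{-(\mathbf q-2)}$ there because $d((Z,t),(\hat Z,\hat t))\gtrsim r$ whenever $t-\hat t\gtrsim r^2$. The slightly delicate point is choosing the truncation scale $c$ and verifying that the `far away' pieces of $\partial(\Omega\cap Q_{cr})$ contribute nothing — but for this one invokes that $w(Z,t)=\omega(Z,t,\Delta_r(Z_0,t_0))\to 0$ and $G(Z,t,\hat Z,\hat t)\to 0$ as $d((Z,t),(Z_0,t_0))\to\infty$ (from \eqref{fundsolbd} and the decay of parabolic measure of a bounded set), so the comparison on $\Omega\cap\{t>8r^2+t_0\}$ can be carried out directly via Lemma \ref{maxprinciple_unbounded} without truncation, applied to $Cw-G$. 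The remaining work is then purely the lower bound $w\gtrsim 1$ at the comparison points, which is exactly a Carleson-type / Harnack-chain estimate already available from Theorem \ref{thm:carleson} and Lemma \ref{lem4.7}.
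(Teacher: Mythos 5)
Your overall strategy (bound $G$ above by the fundamental solution, bound $\omega(\cdot,\Delta_r)$ below by Harnack/boundary-H\"older arguments, and transfer the comparison by the maximum principle) is the same as the paper's, but your choice of comparison region creates a genuine gap. For a Kolmogorov-type operator the maximum principle requires the inequality $G\leq C r^{2-\mathbf q}\,\omega(\cdot,\Delta_r)$ on the whole Kolmogorov boundary of the region, and in particular on its bottom time slice. You take the bottom at $t=8r^2+t_0$, and there your claimed lower bound $\omega(\cdot,\Delta_r)\gtrsim 1$ is false: on $\{t=8r^2+t_0\}\cap\Omega$ near the lateral boundary both $G$ and $\omega$ vanish (and no boundary comparison principle is available yet --- Lemma \ref{lem:compprinciple} is proved \emph{later} and uses this very lemma), while far from $(Z_0,t_0)$ both quantities decay and you have no a priori control of the ratio. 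The pointwise bound $G\lesssim r^{2-\mathbf q}$ coming from \eqref{fundsolbd} does not rescue this, since $\omega(\cdot,\Delta_r)$ can be arbitrarily small on that slice. So the step ``$v\leq Cw$ on the boundary, hence inside'' does not close.

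The paper's proof avoids this by placing the bottom of the comparison region at the pole's own time level $t=t_0+r^2$ and excising a small cube $Q_{\delta r/2}(A^+_{r,\Lambda})$ around the pole. On the slice $S_1=\{t=t_0+r^2\}\setminus Q_{\delta r/2}(A^+_{r,\Lambda})$ one has $G(\cdot,\cdot,A^+_{r,\Lambda})\equiv 0$ (the fundamental solution vanishes for $t\leq \tilde t$), so no lower bound on $\omega$ is needed there; on the lateral boundary $G$ vanishes as well. The only nontrivial piece is $S_2=\partial Q_{\delta r/2}(A^+_{r,\Lambda})\cap\{t>t_0+r^2\}$, which for $\delta$ small lies inside the interior cone, hence inside $\mathcal{B}_{r/c}(A^+_{r,\Lambda})$; there $G\lesssim r^{2-\mathbf q}$ by \eqref{fundsolbd}, and $\omega(\cdot,\Delta_r)\gtrsim 1$ follows from the boundary H\"older estimate (Lemma \ref{boundaryholder_giventheta} applied to $1-\omega(\cdot,\Delta_r)$, giving $\omega(A^+_{r/c,\Lambda},\Delta_r)\gtrsim 1$) combined with the Harnack inequality in cones, Lemma \ref{lem4.7bol+}. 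If you restructure your argument with this choice of region --- so that the lower bound on $\omega$ is only ever needed on a small set deep inside $\Omega$ --- the rest of your outline goes through.
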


\begin{proof}
We can without loss of generality assume that $(Z_0,t_0)=(0,0)$. Hence we want to prove that
\begin{eqnarray*}
r^{{\bf q}-2}G (Z,t, A_{r,\Lambda}^+)\lesssim\om (Z,t, \De_{r} ),
\end{eqnarray*}
whenever $(Z,t)\in \Omega$, $t\geq 8 r^2$.

Let in the following  $(Z,t)\in \Omega$.  By Definition \ref{ghh1-} we have
\begin{equation}\label{ghh1}
    G (Z,t, A_{r,\Lambda}^+)=\Gamma(Z,t, A_{r,\Lambda}^+)
-\iint_{\partial\Omega}
\Gamma(\tilde Z,\tilde t, A_{r,\Lambda}^+)\d\omega (Z,t,\tilde Z,\tilde t).
\end{equation}
 Obviously, we have that 
\begin{eqnarray}\label{ghh2}
G (Z,t, A_{r,\Lambda}^+)\leq \Gamma(Z,t, A_{r,\Lambda}^+),
\end{eqnarray}
whenever $(Z,t)\in \Omega$.
 Let $0<\delta\ll 1$, be a degree of freedom such that $Q_{\delta r}(A_{r,\Lambda}^+)\subset
\Omega$. We introduce the sets
\begin{equation}\label{ghh3}
\begin{split}
S_1&=\{(Z,t)\in \Omega \mid t=r^2\}\setminus Q_{\delta r/2}(A_{r,\Lambda}^+),\\
S_2&=\{(Z,t)\in \Omega\mid t>r^2\}\cap \partial (Q_{\delta r/2}(A_{r,\Lambda}^+)).
\end{split}
\end{equation}
Using \eqref{fundsolbd} and \eqref{ghh2}  we see that there exists $c=c(m,\kappa,\delta)$ such that
\begin{eqnarray}\label{ghh4}
G (Z,t, A_{r,\Lambda}^+)\leq cr^{-({{\bf q}-2)}},
\end{eqnarray}
when $(Z,t)\in S_2$.

Next, let $v(Z,t) = \om ( Z,t, \De_{r} )$ for $(Z,t) \in \Omega$. Then $\L v =
0$ in $\Omega$, $0 \le v(z,t) \le 1$ in $\Omega$ and $v(Z,t) = 1$ in $\De_{r}$. Hence the
function $u(Z,t) = 1 - v(Z,t)$ satisfies the assumptions of Lemma \ref{boundaryholder_giventheta} and it follows that
\begin{eqnarray}\label{ghh6}
\om ( A_{r/c,\Lambda}^+, \De_{r} )\gtrsim 1.
\end{eqnarray}

We now note that if
we choose $\delta=\delta(m,\kappa,M)$ sufficiently small, then $S_2 \subset \mathcal{B}_{r/c}(A_{r,\Lambda}^+)$
where the constant $c=c(m,\kappa,M,\delta)$ is the one appearing in \eqref{coneset} of Lemma \ref{lem4.7bol+}. Then using  \eqref{ghh6}, and apply inequality \emph{(i)} of
\eqref{coneset} to the function $v(Z,t) = \om ( Z,t, \De_{r} )$, to can conclude that
\begin{eqnarray}\label{ghh8}
\om ( Z,t, \De_{r}  )\gtrsim 1\mbox{ whenever }(Z,t)\in S_2.
\end{eqnarray}
Note that $G (Z,t, A_{r,\Lambda}^+)=0$
if $(Z,t)\in S_1$. Hence, from \eqref{ghh4}, \eqref{ghh8}, and from the maximum principle, it follows that
\begin{eqnarray}\label{ghh5}
r^{{\bf q}-2}G (Z,t, A_{r,\Lambda}^+)\lesssim \om ( Z,t, \De_{r}),
\end{eqnarray}
whenever $(Z,t)\in \Omega\cap\{(Z,t)\mid\ t\geq 8r^2\}$. This completes the proof of the lemma.
\end{proof}

\begin{lemma}\label{gensalsa-aEN}
Let $\Omega\subset\mathbb R^{N+1}$ be a Lipschitz domain with  constant $M$. Let $(Z_0,t_0)\in\partial\Omega$ and $r>0$. Then
\[
\iiint_{\Omega_{r}(Z_0,t_0)}|\nabla_{\tilde X} G(Z,t,\tilde Z,\tilde t)|^2 \d\tilde Z \d\tilde t \lesssim \frac{1}{r^2}\iiint_{\Omega_{2r}(Z_0,t_0)}|G(Z,t,\tilde Z,\tilde t)|^2\d\tilde Z \d\tilde t,
\]
whenever $(Z,t)\in \Omega$, $t\geq 8 r^2+t_0$.
\end{lemma}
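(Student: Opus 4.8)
The function $v(\tilde Z,\tilde t):=G(Z,t,\tilde Z,\tilde t)$ is, by the representation \eqref{ghh1---}, Definition \ref{fund} and the fact that parabolic measure reproduces continuous boundary data at (Lipschitz, hence regular) boundary points, a non-negative weak solution of $\L^\ast v=0$ in $\Omega\cap\{\tilde t<t\}$ away from the pole $(Z,t)$, and it vanishes continuously on $\partial\Omega$. Since $t\geq 8r^2+t_0$, every point of $\Omega_{2r}(Z_0,t_0)\subset Q_{M,2r}(Z_0,t_0)$ has time coordinate strictly smaller than $t_0+4r^2<t$, so $(Z,t)\notin\overline{\Omega_{2r}(Z_0,t_0)}$ and $v$ is a genuine non-negative weak solution of $\L^\ast v=0$ in all of $\Omega_{2r}(Z_0,t_0)$, vanishing continuously on $\Delta_{2r}(Z_0,t_0)$. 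Thus the asserted inequality is precisely the Caccioppoli (energy) estimate of Lemma \ref{lem:energy}, now for the adjoint operator $\L^\ast$, in the domain $\Omega_{2r}(Z_0,t_0)$ in place of a cube, and making use of the boundary vanishing of $v$.

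We may assume $(Z_0,t_0)=(0,0)$. Fix a cut-off $\phi\in C^\infty_0(\R^{N+1})$ with $\phi\equiv 1$ on $Q_{M,r}$, $\mathrm{spt}\,\phi\subset Q_{M,3r/2}$, and $r|\nabla_{\tilde X}\phi|+r^2|(\tilde X\cdot\nabla_{\tilde Y}-\partial_{\tilde t})\phi|\lesssim 1$. By \eqref{def.Omega.Delta.fr} the set $\mathrm{spt}(\phi^2v)\cap\Omega$ is contained in $\Omega_{2r}$ and, away from $\Delta_{2r}$, compactly so; since $v$ vanishes continuously on $\Delta_{2r}$, this makes $\phi^2v$ an admissible test function in the weak formulation of $\L^\ast v=0$, which — written as in \eqref{weak3} but with $\tilde X\cdot\nabla_{\tilde Y}$ and $\partial_{\tilde t}$ replaced by $-\tilde X\cdot\nabla_{\tilde Y}$ and $+\partial_{\tilde t}$ — reads
\[
\iiint\ \bigl(A\nabla_{\tilde X}v\cdot\nabla_{\tilde X}(\phi^2v)-v\,(\tilde X\cdot\nabla_{\tilde Y}(\phi^2v))+v\,\partial_{\tilde t}(\phi^2v)\bigr)\,\d\tilde Z\d\tilde t=0 .
\]
Expanding $\nabla_{\tilde X}(\phi^2v)=\phi^2\nabla_{\tilde X}v+2\phi v\nabla_{\tilde X}\phi$ and using the lower ellipticity bound in \eqref{eq2} isolates the main term $\kappa^{-1}\iiint\phi^2|\nabla_{\tilde X}v|^2$; the cross term is absorbed into half of it by Cauchy–Schwarz and Young's inequality, at the cost of $c\iiint|\nabla_{\tilde X}\phi|^2v^2$. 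For the transport–time contribution, set $D:=\tilde X\cdot\nabla_{\tilde Y}-\partial_{\tilde t}$; since $D$ has vanishing divergence and $v$ vanishes on $\Delta_{2r}$ (so no boundary terms arise), $\iiint\phi^2 v\,Dv=\tfrac12\iiint\phi^2D(v^2)=-\iiint\phi(D\phi)v^2$, whence $-\iiint v\,D(\phi^2v)=-\iiint\phi(D\phi)v^2$, which is bounded by $c\,r^{-2}\iiint_{Q_{M,3r/2}\cap\Omega}v^2$. Collecting and absorbing gives $\iiint\phi^2|\nabla_{\tilde X}v|^2\lesssim r^{-2}\iiint_{\Omega_{2r}}v^2$, and since $\phi\equiv1$ on $\Omega_r$ (by \eqref{def.Omega.Delta.fr}–\eqref{def.Omega.Delta.frinc}) this is the claimed estimate.

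The only genuinely delicate point is the admissibility of $\phi^2v$ as a test function up to the lateral Lipschitz boundary $\Delta_{2r}(Z_0,t_0)$, i.e.\ that the continuous vanishing of $v$ there together with the equation places $v$ in the correct local Sobolev class with zero lateral trace, so that $\phi^2v$ may be approximated in the relevant norm by elements of $C^\infty_0(\Omega_{2r}(Z_0,t_0))$. This is handled exactly as the analogous boundary energy estimates in \cite{NP} and within the De Giorgi–Nash–Moser framework of \cite{GIMV}; everything else is the routine Caccioppoli computation reproduced above.
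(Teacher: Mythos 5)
Your reduction of the lemma to a boundary Caccioppoli estimate for $\L^\ast$ is the right starting point, and your observation that the pole lies strictly in the future of $\overline{\Omega_{2r}(Z_0,t_0)}$ is correct. But the proof has a genuine gap exactly at the point you flag and then defer: the admissibility of $\phi^2 v$ as a test function up to the lateral boundary. Continuous vanishing of $v=G(Z,t,\cdot,\cdot)$ on $\Delta_{2r}(Z_0,t_0)$ does not place $v$ in a Sobolev class with zero lateral trace, and no such global (up-to-the-boundary) regularity of the Green function is established in this paper or available off the shelf from \cite{NP} or \cite{GIMV}; the weak-solution framework of Section 2 only gives $H^1_X$ regularity on subdomains compactly contained in $\Omega$. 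Your integration by parts $\iiint\phi^2 v\,Dv=-\iiint\phi(D\phi)v^2$ with ``no boundary terms'' is precisely the formal step that needs justification: $D=\tilde X\cdot\nabla_{\tilde Y}-\partial_{\tilde t}$ is transversal to the Lipschitz graph $\{x_m=\psi(x,Y,t)\}$, so discarding the boundary contribution requires quantitative, not merely qualitative, decay of $v$ at $\partial\Omega$.

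The paper's proof shows that this is where essentially all the work lies. It inserts an interior cut-off $\eta_\epsilon$ supported at distance $\gtrsim\epsilon$ from $\partial\Omega$, so that $u\eta_\epsilon^2\phi_r^2$ is a legitimate test function, and is then left with the collar term $J_\epsilon=\epsilon^{-2}\iiint_{S_\epsilon}u^2\phi_r^2$, which must be shown to vanish as $\epsilon\to0$ despite the factor $\epsilon^{-2}$. This is done by combining the boundary H{\"o}lder estimate (Theorem \ref{lem4.5-Kyoto1}) to get $\sup_{S_\epsilon}u\lesssim(\epsilon/r)^\alpha r^{2-{\bf q}}$, with a Whitney decomposition of the collar $S_\epsilon$, the adjoint Carleson estimate, and Lemma \ref{gensalsa-a} (Green function bounded by parabolic measure) to prove $\iiint_{S_\epsilon}G\,\d\tilde Z\,\d\tilde t\lesssim\epsilon^2$; together these give $J_\epsilon\lesssim(\epsilon/r)^\alpha r^{2-{\bf q}}\to0$. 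None of this is a routine density argument, and it cannot be replaced by an appeal to the continuous vanishing of $G$ alone. To repair your proof you would need to either carry out this collar estimate or independently establish zero-trace Sobolev regularity of $G$ up to the Lipschitz boundary, neither of which is done in the proposal.
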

\begin{proof} We can without loss of generality assume that $(Z_0,t_0)=(0,0)$ and we simply write $\Omega_r=\Omega_r(Z_0,t_0)$. Let $u(\tilde Z,\tilde t):=G(Z,t,\tilde Z,\tilde t)$. We wish to estimate
\[
\iiint_{\Omega_r}{|\nabla_{\tilde X} u(\tilde Z,\tilde t)|^2}\d\tilde Z \d\tilde t.
\]
Given $\epsilon>0$ we let
\[
\Omega_r^\epsilon := \Omega_r\cap \lbrace (\tilde Z,\tilde t)\mid d((\tilde Z,\tilde t),\partial \Omega)>\epsilon\rbrace
\]
and
\[
S_\epsilon := \Omega_{2r}^{\epsilon/2} \setminus \Omega_{2r}^\epsilon.
\]
Let $\phi_r \in C^\infty_0( Q_{2r})$ be such that $\phi_1 \equiv 1$ in $Q_r$ and let $\eta_\epsilon\in C^\infty_0( \Omega_{2r}^{\epsilon/2})$ be such that $\eta_\epsilon \equiv 1$ in $\Omega_{2r}^\epsilon$. We furthermore choose $\phi_r$ and $\eta_\epsilon$ so that
$$r|\nabla_X\phi_r|+r^2|(X\cdot\nabla_Y-\partial_t)\phi_r|\lesssim 1,$$ and
$$\epsilon|\nabla_X\eta_\epsilon|+\epsilon^2|(X\cdot\nabla_Y-\partial_t)\eta_\epsilon|\lesssim 1.$$
Note $\eta_\epsilon$ can easily be constructed by considering $(\R^{N+1},d,\d Z \d t)$ as a homogeneous space and proceeding through a partition of unity associated to Whitney decompositions of $\Omega$, see below. Using this notation we note that
\begin{align*}
\iiint_{\Omega_r}{|\nabla_{\tilde X} u|^2}\d\tilde Z \d\tilde t & \leq \lim_{\epsilon\rightarrow 0} \iiint_{\Omega_{2r}}{|\nabla_{\tilde X} u(\tilde Z,\tilde t)|^2}(\eta_\epsilon^2\phi_r^2)\d\tilde Z \d\tilde t,
\end{align*}
and we compute
\begin{align*}
 \iiint_{\Omega_{2r}}{|\nabla_{\tilde X} u(\tilde Z,\tilde t)|^2}(\eta_\epsilon^2\phi_r^2)\d\tilde Z \d\tilde t&\lsim \iiint_{\Omega_{2r}}{(A\nabla_{\tilde X} u\cdot \nabla_{\tilde X} u) \eta_\epsilon^2\phi_r^2}\d\tilde Z \d\tilde t\\
&=\iiint_{\Omega_{2r}}{A\nabla_{\tilde X} u\cdot \nabla_{\tilde X} (u \eta_\epsilon^2\phi_r^2)}\d\tilde Z \d\tilde t\notag\\
& - \iiint_{\Omega_{2r}}{A\nabla_{\tilde X} u\cdot u\nabla_{\tilde X}  (\eta_\epsilon^2\phi_r^2)}\d\tilde Z \d\tilde t=:I_{1,\epsilon} + I_{2,\epsilon}.
\end{align*}
We first consider $I_{1,\epsilon}$ and we note that $u \eta_\epsilon^2\phi_r^2$ is a valid test function in the weak formulation for  $\L^\ast u=0$ in $\Omega_r$, as
$A$ is assumed to be smooth. Therefore, using that $\L^\ast u=0$ in $\Omega_r$, and by the properties of $\eta_\epsilon$ and $\phi_r$, we have
\begin{equation}\label{I2e}
\begin{split}
    |I_{1,\epsilon}| &\lesssim \iiint_{\Omega_{2r}}{u^2|({\tilde X}\cdot\nabla_{\tilde Y}-\partial_{\tilde t}) (\eta_\epsilon^2\phi_r^2)|\d\tilde Z \d\tilde t}\\
    &\lsim \frac{1}{\epsilon^2}\iiint_{S_\epsilon}{u^2\phi_r^2\d\tilde Z \d\tilde t} + \frac{1}{r^2}\iiint_{\Omega_{2r}}{u^2\d\tilde Z \d\tilde t}.
\end{split}
\end{equation}
Let
\[
J_\epsilon:= \frac{1}{\epsilon^2}\iiint_{S_\epsilon}{u^2\phi_r^2 \d\tilde Z \d\tilde t}.
\]
Now, applying Theorem \ref{lem4.5-Kyoto1}, the adjoint version of Theorem \ref{thm:carleson}, and using \eqref{fundsolbd} and \eqref{ghh2},  we see that
\begin{align*}
    J_\epsilon&\lsim \frac{1}{\epsilon^2}\iiint_{\Omega_{2r}}|G(Z,t,\tilde Z,\tilde t)|^2 \chi_{S_\epsilon}\d\tilde Z \d\tilde t\\
    &\lsim \frac{1}{\epsilon^2}\left(\frac{\epsilon}{r}\right)^\alpha \sup_{(Z_0,t_0)\in\Delta_{2r}} G(Z,t,A^-_{r,\Lambda}(Z_0,t_0))\iiint_{S_\epsilon}G(Z,t,\tilde Z,\tilde t)\d\tilde Z \d\tilde t\\
    &\lsim \frac{1}{\epsilon^2}\left(\frac{\epsilon}{r}\right)^\alpha r^{-{\bf q}+2}\iiint_{S_\epsilon}G(Z,t,\tilde Z,\tilde t)\d\tilde Z \d\tilde t.
\end{align*}
Now, using that $(\R^{N+1},d,\d Z \d t)$ is a homogeneous space, let $\mathcal{W}_\epsilon = \lbrace I_j \rbrace_j$ be a covering of $S_\epsilon$ with Whitney cubes $\{I_j\}$ such that $|I_j| \lsim \epsilon^{{\bf q}}$. Let
$(Z_j,t_j)\in\partial\Omega$ be a point on $\partial\Omega$ closest to $I_j$ as measured by $d$.  For $(\tilde Z,\tilde t)\in I_j$ we have, by Theorem \ref{thm:carleson} and Lemma \ref{gensalsa-a}, that
\[
G(Z,t,\tilde Z,\tilde t) \lsim G(Z,t,A^+_{c\epsilon,\Lambda}(Z_j,t_j))\lsim \epsilon^{-{\bf q}+2}\omega (Z,t,\Delta_{c\epsilon}(Z_j,t_j)),
\]
where $c=c(m,\kappa,M)$, $1\leq c<\infty$. Hence
\begin{align*}
\iiint_{S_\epsilon}G(Z,t,\tilde Z,\tilde t)\d\tilde Z \d\tilde t &\lsim \sum_{I_j\in\mathcal{W}_\epsilon}|I_j|\epsilon^{-{\bf q}+2}\omega (Z,t,\Delta_{c\epsilon} (Z_j,t_j))\\
&\lsim \sum_{I_j\in\mathcal{W}_\epsilon}\epsilon^2\omega (Z,t,\Delta_{c\epsilon} (Z_j,t_j))\lsim \epsilon^2.
\end{align*}
We can conclude that
\[
J_\epsilon \lsim \left(\frac{\epsilon}{r}\right)^\alpha r^{-{\bf q}+2} \rightarrow 0, \text{ as }\epsilon\rightarrow 0,
\]
and that the estimate of $I_{1,\epsilon}$ is complete.

Now we turn our attention to the term $I_{2,\epsilon}$ and in this case we immediately see using Cauchy Schwarz that
\begin{align*}
I_{2,\epsilon}&\lesssim \tilde\epsilon\iiint_{\Omega_{2r}}{|\nabla_{\tilde X} u(\tilde Z,\tilde t)|^2}(\eta_\epsilon^2\phi_r^2)\d\tilde Z \d\tilde t\\
&+\tilde\epsilon^{-1}\biggl (\frac{1}{\epsilon^2}\iiint_{S_\epsilon}{u^2\phi_r^2\d\tilde Z \d\tilde t} + \frac{1}{r^2}\iiint_{\Omega_{2r}}{u^2\d\tilde Z \d\tilde t}\biggr ),
\end{align*}
where $\tilde\epsilon$ is a degree of freedom. Hence we can reuse the estimates starting from \eqref{I2e} to complete the estimate of $I_{2,\epsilon}$ and hence to complete the proof of the lemma.
\end{proof}

\begin{lemma}\label{Representa} Given $\theta\in C_0^\infty(\mathbb R^{N+1})$ we have the representation
formulas
  \begin{equation*}
  \begin{split}
      \theta(Z,t)&=\iint_{\partial\Omega}\theta(\tilde Z,\tilde t)\d\omega(Z,t,\tilde Z,\tilde t)-\iiint_{\Omega} A\nabla_{\tilde X}G (Z,t,\tilde Z,\tilde t )\nabla_{\tilde X}\theta(\tilde Z,\tilde t)
 \d\tilde Z \d\tilde t\notag\\
 &+\iiint_{\Omega} G (Z,t,\tilde Z,\tilde t )(\tilde X\cdot\nabla_{\tilde Y}-\partial_{\tilde t})\theta(\tilde Z,\tilde t)\d\tilde Z \d\tilde t,\notag\\
  \theta(\hat Z,\hat t)&=\iint_{\partial\Omega}\theta(\tilde Z,\tilde t)\d\omega^\ast(\hat Z,\hat t,\tilde Z,\tilde t)-\iiint_{\Omega} A\nabla_{X} G(Z,t,\hat Z,\hat t )\nabla_{X} \theta(Z,t) \d Z \d t\notag\\
  &+\iiint_{\Omega} G(Z,t,\hat Z,\hat t )(-X\cdot\nabla_{Y}+\partial_{t})\theta(Z,t) \d Z \d t,
  \end{split}
\end{equation*}
whenever $(Z,t),\ (\hat Z,\hat t)\in\Omega$.
\end{lemma}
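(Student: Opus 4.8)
The plan is to derive the two representation formulas from the defining identity for the Green function together with the weak formulations of $\L$ and $\L^\ast$. I will treat the first formula in detail; the second follows by the same argument applied to $\L^\ast$ in place of $\L$, using the alternative expression \eqref{ghh1---} for $G$ in terms of $\omega^\ast$.

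First I would fix $(Z,t)\in\Omega$ and set $u(\tilde Z,\tilde t):=G(Z,t,\tilde Z,\tilde t)$, viewed as a function of $(\tilde Z,\tilde t)$. By \eqref{ghh1-} and the remark following Definition \ref{fund}, $u$ satisfies $\L^\ast u = -\delta_{(Z,t)}$ in the distributional sense in $\Omega$, while $u$ vanishes on $\partial\Omega$ in the appropriate sense and its boundary trace against a test function reproduces the parabolic measure term. Concretely, pairing the first identity in the Remark after Definition \ref{fund} (the distributional equation for $\Gamma(\cdot,\cdot,\tilde Z,\tilde t)$ in the dual variables) with $\theta\in C_0^\infty(\mathbb R^{N+1})$ and then subtracting the contribution of the integral over $\partial\Omega$ in \eqref{ghh1-}, one obtains
\begin{equation*}
\theta(Z,t)=\iint_{\partial\Omega}\theta(\tilde Z,\tilde t)\,\d\omega(Z,t,\tilde Z,\tilde t) - \iiint_{\Omega}\bigl(A\nabla_{\tilde X}u\cdot\nabla_{\tilde X}\theta - u(\tilde X\cdot\nabla_{\tilde Y}-\partial_{\tilde t})\theta\bigr)\,\d\tilde Z\,\d\tilde t ,
\end{equation*}
which is exactly the first asserted formula once the signs are matched against the definition \eqref{PDEagg} of $\L^\ast$. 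The second formula is obtained symmetrically: fix $(\hat Z,\hat t)\in\Omega$, use \eqref{ghh1---} to write $G(\cdot,\cdot,\hat Z,\hat t)$ as $\Gamma(\cdot,\cdot,\hat Z,\hat t)$ minus an $\omega^\ast$-average, pair the second distributional identity from the same Remark with $\theta$, and subtract.

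The one point requiring genuine care — and the main obstacle — is justifying the integration by parts on the unbounded domain $\Omega$ when $G$ is only a weak (super)solution with a singularity at the pole and merely an $L^\infty$-bounded, non-smooth trace on $\partial\Omega$. To handle this I would proceed by approximation: exhaust $\Omega$ by the domains $\Omega_R(Z_0,t_0)$ of Section \ref{sec2+}, use the qualitative smoothness assumption \eqref{eq2+} so that $G$ is a classical solution away from the pole, and cut off near the pole with a radial bump in the quasi-distance $d$, observing via the bound \eqref{fundsolbd} $G\lesssim d^{-(\mathbf q -2)}$ and the energy estimate of Lemma \ref{gensalsa-aEN} that the boundary terms produced by these cutoffs vanish in the limit (the volume of a $d$-ball of radius $\varrho$ is $\approx\varrho^{\mathbf q}$, which beats the singularity). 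The decay of $G$ at spatial infinity, guaranteed by \eqref{comp}, the Gaussian bounds of Lemma \ref{lem_fsolbounds}, and the boundary Hölder estimate Theorem \ref{lem4.5-Kyoto1}, together with $\theta$ having compact support, makes the outer boundary contributions disappear as $R\to\infty$. Assembling these limits and using that $\omega(Z,t,\cdot)$ is the weak-$\ast$ limit of the harmonic measures of $\Omega_R$ (as established in the proof of Theorem \ref{thm:dp}) yields the stated identities.
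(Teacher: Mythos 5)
Your proposal is correct in substance and rests on the same ingredients as the paper's argument: the decomposition $G(Z,t,\cdot,\cdot)=\Gamma(Z,t,\cdot,\cdot)-\iint_{\partial\Omega}\Gamma(\hat Z,\hat t,\cdot,\cdot)\,\d\omega(Z,t,\hat Z,\hat t)$, the global distributional identities for $\Gamma$ from the Remark after Definition \ref{fund}, and Lemma \ref{gensalsa-aEN} to control $\nabla_{\tilde X}G$ up to $\partial\Omega$. The implementation, however, is genuinely different. The paper needs no exhaustion of $\Omega$, no cutoff at the pole, and no decay at infinity: since $\theta$ is compactly supported and the identities for $\Gamma$ already hold on all of $\R^{N+1}$, everything reduces to the single observation that $\hat G:=\Gamma-V$, where $V$ is the $\omega$-average of $\Gamma$ over $\partial\Omega$, coincides with $G$ on $\Omega$ and vanishes identically off $\Omega$ --- the latter because the maximum principle gives $\Gamma(Z,t,\tilde Z,\tilde t)=\iint_{\partial\Omega}\Gamma(\hat Z,\hat t,\tilde Z,\tilde t)\,\d\omega(Z,t,\hat Z,\hat t)$ for $(\tilde Z,\tilde t)\notin\Omega$. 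Consequently the integral over $\Omega$ equals the integral over $\R^{N+1}$ of the corresponding $\hat G$-expression, the only analytic point being that the zero extension produces no singular part of $\nabla_{\tilde X}\hat G$ on $\partial\Omega$, which is exactly what Lemma \ref{gensalsa-aEN} together with the continuous vanishing of $G$ on $\partial\Omega$ delivers; one then applies the Remark's identity once to the $\Gamma$-part and, via Fubini, once for each pole $(\hat Z,\hat t)\in\partial\Omega$ in the $V$-part, and it is this last step that produces $\iint_{\partial\Omega}\theta\,\d\omega$. Two cautions about your write-up: first, the phrase \emph{its boundary trace against a test function reproduces the parabolic measure term} is misleading --- the $\omega$-term does not come from any surface integration by parts or normal derivative of $G$ on the (merely Lipschitz) boundary, which is not available at this level of regularity, but purely from the $V$-part of the decomposition as just described; second, your exhaustion-and-cutoff scheme near the pole and at infinity, while workable, is strictly heavier than needed, since all of those limiting boundary terms are avoided at once by the extension-by-zero device, and the weak-$\ast$ approximation of $\omega$ by measures on $\Omega_R$ plays no role in the paper's proof.
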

\begin{proof}
We introduce $\hat G(Z,t,\tilde Z,\tilde t) := \Gamma(Z,t,\tilde Z,\tilde t) - V(Z,t,\tilde Z,\tilde t)$, where
\[
V(Z,t,\tilde Z,\tilde t) =
\begin{cases}
\iint_{\partial \Omega}\Gamma(\hat Z,\hat t,\tilde Z,\tilde t) \d \omega(Z,t,\hat Z,\hat t),\quad&\mbox{if $(\tilde Z,\tilde t)\in\Omega$}\\
\Gamma(Z,t,\tilde Z,\tilde t),\quad&\mbox{if $(\tilde Z,\tilde t)\in \R^{N+1}\backslash\Omega$}.
\end{cases}
\]
Using the maximum principle we see that
\[
\Gamma(Z,t,\tilde Z,\tilde t) =
\iint_{\partial\Omega} \Gamma(\hat Z,\hat t,\tilde Z,\tilde t)\d\omega(Z,t,\hat Z,\hat t),
\]
whenever $(Z,t)\in\Omega$ and $(\tilde Z,\tilde t)\in \R^{N+1}\backslash\Omega$.
Then, using Lemma \ref{gensalsa-aEN} we see that
\begin{equation*}
\begin{split}
    &\iiint_{\Omega} A\nabla_{\tilde X}G (Z,t,\tilde Z,\tilde t )\nabla_{\tilde X}\theta(\tilde Z,\tilde t)
        -G (Z,t,\tilde Z,\tilde t )(\tilde X\cdot\nabla_{\tilde Y}-\partial_{\tilde t})\theta(\tilde Z,\tilde t)\d\tilde Z \d\tilde t\\
    &=\iiint_{\R^{N+1}} A\nabla_{\tilde X}\hat G(Z,t,\tilde Z,\tilde t)\nabla_{\tilde X}\theta(\tilde Z,\tilde t)
        -\hat G(Z,t,\tilde Z,\tilde t)(\tilde X\cdot\nabla_{\tilde Y}-\partial_{\tilde t})\theta(\tilde Z,\tilde t)\d\tilde Z \d\tilde t.
\end{split}
\end{equation*}
Using the definition of $\hat G$, properties of $\Gamma$, and Fubini's theorem we see that the integral in the last display equals
\begin{equation*}
    \begin{split}
    -\theta(Z,t) + \iint_{\partial\Omega}\bigg( \iiint_{\R^{N+1}} &A\nabla_{\tilde X}\Gamma(\hat Z,\hat t,\tilde Z,\tilde t)\nabla_{\tilde X}\theta(\tilde Z,\tilde t)\\
    &- \Gamma(\hat Z,\hat t,\tilde Z,\tilde t)(\tilde X\cdot\nabla_{\tilde Y}-\partial_{\tilde t})\theta(\tilde Z,\tilde t)\d\tilde Z \d\tilde t \bigg)\d \omega(Z,t,\hat Z,\hat t)
    \end{split}
\end{equation*}
which, again by properties of $\Gamma$, equals
\begin{equation*}
    -\theta(Z,t) + \iint_{\partial\Omega}\theta(\hat Z,\hat t)\d\omega(Z,t,\hat Z,\hat t).
\end{equation*}
This finishes the proof of the first formula. The proof of the second formula is analogous.
\end{proof}

Using Lemma \ref{Representa} we see, in particular, that
\begin{equation}\label{1.5}
\begin{split}
\iint_{\partial\Omega}\theta(\tilde Z,\tilde t)\d\omega(Z,t,\tilde Z,\tilde t)&=\iiint_{\Omega} A\nabla_{\tilde X}G (Z,t,\tilde Z,\tilde t )\nabla_{\tilde X}\theta(\tilde Z,\tilde t)
 \d\tilde Z \d\tilde t\notag\\
 &-\iiint_{\Omega} G (Z,t,\tilde Z,\tilde t )(\tilde X\cdot\nabla_{\tilde Y}-\partial_{\tilde t})\theta(\tilde Z,\tilde t)\d\tilde Z \d\tilde t,\notag\\
 \iint_{\partial\Omega}\theta(\tilde Z,\tilde t)\d\omega^\ast(\hat Z,\hat t,\tilde Z,\tilde t)&=\iiint_{\Omega} A\nabla_{X} G(Z,t,\hat Z,\hat t )\nabla_{X} \theta(Z,t) \d Z \d t\notag\\
 &-\iiint_{\Omega} G(Z,t,\hat Z,\hat t )(-X\cdot\nabla_{Y}+\partial_{t})\theta(Z,t) \d Z \d t,
\end{split}
 \end{equation}
 whenever $ \theta \in
C_0^\infty ( \mathbb R^{N + 1 } \sem \{ ( Z,t ) \} )$ and $ \theta \in
C_0^\infty ( \mathbb R^{N + 1 } \sem \{ ( \hat Z,\hat t ) \} ) $, respectively.

\begin{lemma}\label{gensalsa-asecondpart}
Let $\Omega\subset\mathbb R^{N+1}$ be a Lipschitz domain with  constant $M$. Let
    $\Lambda$, $c_0$, $\eta$,  be in accordance with Remark \ref{remnot}. Let $(Z_0,t_0)\in\partial\Omega$ and $r>0$.  There exists
$c=c(m,\kappa,M)$, $1\leq c<\infty$, such that
\begin{eqnarray*}
\om ( Z,t, \De_{r/c}(Z_0,t_0))\lesssim r^{{\bf q}-2}G (Z,t,
A_{r,\Lambda}^-(Z_0,t_0)),
\end{eqnarray*}
whenever $(Z,t)\in \Omega$, $t\geq 8 r^2+t_0$.
\end{lemma}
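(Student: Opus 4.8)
The plan is to prove the reverse inequality to Lemma \ref{gensalsa-a}, so I expect to mirror its proof but now using the lower reference point $A^-_{r,\Lambda}(Z_0,t_0)$ instead of $A^+_{r,\Lambda}(Z_0,t_0)$; as before I may assume $(Z_0,t_0)=(0,0)$. The first step is to obtain a \emph{pointwise lower bound} on the Green function $G(Z,t,A^-_{r,\Lambda})$ when $(Z,t)$ sits on a suitable inner surface. Concretely, fix a small degree of freedom $\delta=\delta(m,\kappa,M)$ with $Q_{\delta r}(A^-_{r,\Lambda})\subset\Omega$ and compare $G(\cdot,\cdot,A^-_{r,\Lambda})$ on $\partial Q_{\delta r/2}(A^-_{r,\Lambda})$ with the fundamental solution: using the two-sided bounds of Lemma \ref{lem_fsolbounds} (the Gaussian lower bound $\Gamma^{\lambda^-}\lesssim\Gamma$) together with the fact that on that inner sphere the Green function stays bounded below by a fixed multiple of $r^{-({\bf q}-2)}$ (the harmonic-measure correction term in \eqref{ghh1-} is of strictly smaller order there, which follows from the maximum principle and \eqref{fundsolbd}), one gets $G(Z,t,A^-_{r,\Lambda})\gtrsim r^{-({\bf q}-2)}$ for $(Z,t)$ on that surface.

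Next I would run a boundary-comparison/maximum-principle argument in the region between $\Delta_r$ and that inner surface. Set $v(Z,t)=\omega(Z,t,\Delta_{r/c})$ and recall $0\le v\le 1$, $\L v=0$ in $\Omega$, and $v\equiv 1$ on $\Delta_{r/c}$. The key inputs are: (a) the Carleson estimate, Theorem \ref{thm:carleson} (equivalently Proposition \ref{gensalsa-0}), which bounds a nonnegative solution vanishing on $\Delta_{2r}$ above by its value at the \emph{upper} reference point — but here we need control by the \emph{lower} point, so the relevant tool is actually the ``backward'' Harnack-type inequality in cones, Lemma \ref{lem4.7} (second inequality) and Lemma \ref{lem4.7bol+}(ii), which give $u(Z,t)\gtrsim (d/\rho)^\gamma u(A^-_{\rho,\Lambda})$; and (b) the boundary Hölder estimate, Theorem \ref{lem4.5-Kyoto1}/Lemma \ref{boundaryholder}, controlling $v=\omega(\cdot,\Delta_{r/c})$ near $\Delta_r$ from above by a power of $d((Z,t),(0,0))/r$ — this forces $v$ to be small near the ``wrong'' part of the boundary and in particular lets one sandwich $\omega(Z,t,\Delta_{r/c})$ below by a constant on the inner surface where $G(Z,t,A^-_{r,\Lambda})\lesssim r^{-({\bf q}-2)}$. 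Since both $G(\cdot,\cdot,A^-_{r,\Lambda})$ and $\omega(\cdot,\Delta_{r/c})$ vanish continuously on the part of $\partial\Omega$ lying in the relevant box (using that $A^-_{r,\Lambda}$ is in the interior and the Green function vanishes on $\partial\Omega$), the maximum principle applied to $C\, r^{{\bf q}-2} G(\cdot,\cdot,A^-_{r,\Lambda}) - \omega(\cdot,\Delta_{r/c})$ on the truncated domain yields $\omega(Z,t,\Delta_{r/c})\lesssim r^{{\bf q}-2} G(Z,t,A^-_{r,\Lambda})$ for $(Z,t)$ far enough in the future, i.e. $t\ge 8r^2$.

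The main obstacle, and the place where this lemma genuinely differs from Lemma \ref{gensalsa-a}, is the use of the \emph{lower} reference point $A^-_{r,\Lambda}$: the rigidity of the Harnack inequality (Lemma \ref{harnack}) means one cannot freely move between $A^+$ and $A^-$, so the cone/propagation-set machinery of Section \ref{sec2+} (Lemmas \ref{coneconditions}, \ref{coneconditions-}, \ref{lem4.7bol+}, \ref{lem4.7}) must be invoked with the past-pointing cones $C^-_{\rho,2\eta,\Lambda}$ and the points $A^-_{\rho,\Lambda}$, and one must check that the surface $S_2=\partial Q_{\delta r/2}(A^-_{r,\Lambda})$, for $\delta$ chosen small depending on $m,\kappa,M$, is contained in a ball $\mathcal B_{r/c}(A^-_{r,\Lambda})$ to which the inf–sup inequality \eqref{coneset}(ii) of Lemma \ref{lem4.7bol+} applies — this is exactly the reversed analogue of the step in Lemma \ref{gensalsa-a} and is where the time-ordering constraint $t\ge 8r^2+t_0$ originates. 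Everything else is a routine repetition of the maximum-principle bookkeeping already carried out in the proof of Lemma \ref{gensalsa-a}.
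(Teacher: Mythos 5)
There is a genuine gap at the heart of your argument: the maximum-principle comparison between $\omega(\cdot,\Delta_{r/c})$ and $r^{{\bf q}-2}G(\cdot,\cdot,A^-_{r,\Lambda})$ cannot be run the way you describe. You assert that both $G(\cdot,\cdot,A^-_{r,\Lambda})$ and $\omega(\cdot,\Delta_{r/c})$ vanish continuously on the relevant part of $\partial\Omega$, but $\omega(\cdot,\Delta_{r/c})$ equals $1$ on $\Delta_{r/c}$ itself, exactly where $G$ vanishes; the comparison function $C r^{{\bf q}-2}G-\omega$ is therefore $-1$ there. Any truncated domain whose Kolmogorov boundary contains both the inner surface $\partial Q_{\delta r/2}(A^-_{r,\Lambda})$ (which sits at times near $t=-r^2$) and the target points with $t\geq 8r^2$ necessarily carries a large piece of $\Delta_{r/c}$ on its lateral boundary, and the desired pointwise inequality fails there. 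If instead you truncate at a time slice lying entirely after $\Delta_{r/c}$, the pole surface is no longer on the boundary and on the initial slice you need the conclusion itself, for all $Z$ including $|Z|\to\infty$ --- circular. This asymmetry is precisely why the barrier argument of Lemma \ref{gensalsa-a} only yields the direction $G\lesssim\omega$ and cannot simply be mirrored.

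The paper's proof uses a mechanism your proposal never invokes: the representation formula of Lemma \ref{Representa}. One takes a cutoff $\theta$ with $\theta\equiv1$ on $Q_{\delta r/2}$, supported in $Q_{3\delta r/4}$, with $(\delta r)|\nabla_X\theta|+(\delta r)^2|(X\cdot\nabla_Y-\partial_t)\theta|\lesssim 1$, so that $\omega(Z,t,\Delta_{\delta r/2})\leq\iint_{\partial\Omega}\theta\,d\omega(Z,t,\cdot)$ while $\theta(Z,t)=0$ for $t\geq 8r^2$; the representation formula then converts this into volume integrals of $A\nabla_{\tilde X}G\cdot\nabla_{\tilde X}\theta$ and of $G\,(\tilde X\cdot\nabla_{\tilde Y}-\partial_{\tilde t})\theta$ over $\Omega_{\delta r}$. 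Cauchy--Schwarz, the Caccioppoli-type estimate for the Green function (Lemma \ref{gensalsa-aEN}), and the adjoint version of Theorem \ref{thm:carleson} then bound everything by $(\delta r)^{{\bf q}-2}G(Z,t,A^-_{c\delta r,\Lambda})$. Your opening estimate (the lower bound $G\gtrsim r^{-({\bf q}-2)}$ on an inner surface near the pole) is correct and is used elsewhere in the paper, but it plays no role in this direction of the inequality; to repair the proposal you must bring in Lemma \ref{Representa} and Lemma \ref{gensalsa-aEN}.
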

\begin{proof} We can without loss of generality assume that $(Z_0,t_0)=(0,0)$. Hence we want to prove that there exists
$c=c(m,\kappa,M)$, $1\leq c<\infty$, such that
\begin{eqnarray*}
\om ( Z,t, \De_{r/c})\lesssim r^{{\bf q}-2}G (Z,t,
A_{r,\Lambda}^-),
\end{eqnarray*}
whenever $(Z,t)\in \Omega$, $t\geq 8 r^2$.

Let $(Z,t)\in \Omega\cap\{(Z,t)\mid\ t\geq 8r^2\}$ and let $\delta$, $0<\delta\ll 1$, be a degree of freedom to be chosen.
Given $\delta$, we let
 $\theta\in C^\infty(\mathbb R^{N+1})$ be such that $\theta\equiv 1$ on the set  $Q_{\delta r/2}$, $\theta\equiv 0$ on the complement of $Q_{3\delta r/4}$, and such that $$(\delta r)|\nabla_X\theta|+(\delta r)^2|(X\cdot\nabla_Y-\partial_t)\theta|\lesssim 1.$$  By the definition of $\theta$ we have
 \begin{eqnarray}\label{ghh9}
\om ( Z,t, \De_{\delta r/2} )\leq\iint_{\partial\Omega}\theta(\tilde Z,\tilde t)\d\omega (Z,t,\tilde
Z,\tilde t).
\end{eqnarray}
By  the representation formula of Lemma \ref{Representa},
\begin{align*}\label{ghh10}
\theta(Z,t)&=\iint_{\partial\Omega}\theta(\tilde Z,\tilde t)d\omega(Z,t,\tilde Z,\tilde t)-\iiint A\nabla_{\tilde X}G (Z,t,\tilde Z,\tilde t )\nabla_{\tilde X}\theta(\tilde Z,\tilde t)
 \d\tilde Z \d\tilde t\notag\\
 &+\iiint_\Omega G (Z,t,\tilde Z,\tilde t )(\tilde X\cdot\nabla_{\tilde Y}-\partial_{\tilde t})\theta(\tilde Z,\tilde t)\d\tilde Z \d\tilde t.
\end{align*}
By construction $\theta(Z,t)=0$ whenever $(Z,t)\in \Omega\cap\{(Z,t)\mid  t\geq 8 r^2\}$, and hence through the last two displays we deduce that
\begin{equation}\label{ghh13}
\begin{split}
\om ( Z,t, \De_{\delta r/2} )&\leq \bigg{|}\iiint A\nabla_{\tilde X}G (Z,t,\tilde Z,\tilde t )\nabla_{\tilde X}\theta(\tilde Z,\tilde t)
 \d\tilde Z \d\tilde t\bigg{|}\\
 &+\bigg{|}\iiint_\Omega G (Z,t,\tilde Z,\tilde t )(\tilde X\cdot\nabla_{\tilde Y}-\partial_{\tilde t})\theta(\tilde Z,\tilde t)\d\tilde Z \d\tilde t\bigg{|}.
 \end{split}
\end{equation}
In particular, Cauchy-Schwarz, and  \eqref{ghh13} yields
\begin{equation}\label{est14}
\begin{split}
    \om ( Z,t, \De_{\delta r/2} )&\lsim (\delta r)^{\frac{{\bf q}-2}{2}}\left(\iiint_{\Omega_{\delta  r}}{|\nabla_{\tilde X} G(Z,t,\tilde Z,\tilde t)|^2\d\tilde Z \d\tilde t}\right)^{\frac 1 2}\\
    &+(\delta r)^{\frac{{\bf q}-4}{2}}\left(\iiint_{\Omega_{\delta  r}}{|G(Z,t,\tilde Z,\tilde t)|^2\d\tilde Z \d\tilde t}\right)^{\frac 1 2}.
\end{split}
\end{equation}
    Using Lemma \ref{gensalsa-aEN} with $r$ replaced by $\delta r$ we obtain
\[
\iiint_{\Omega_{\delta r}}|\nabla_{\tilde X} G(Z,t,\tilde Z,\tilde t)|^2 \d\tilde Z \d\tilde t \lesssim \frac{1}{(\delta r)^2}\iiint_{\Omega_{2\delta r}}|G(Z,t,\tilde Z,\tilde t)|^2\d\tilde Z \d\tilde t.
\]
Using this inequality, and (\ref{est14}), we have
\begin{align}
    \om ( Z,t, \De_{\delta r/2} )&\lsim (\delta r)^{\frac{{\bf q}-4}{2}}\left(\iiint_{\Omega_{\delta  r}}{|G(Z,t,\tilde Z,\tilde t)|^2\d\tilde Z \d\tilde t}\right)^{\frac 1 2}.
    \end{align}
    To complete the proof  we now use  the adjoint version of Theorem \ref{thm:carleson} and deduce
\[
 \om ( Z,t, \De_{\delta r/2} ) \lesssim (\delta  r)^{{\bf q}-2}G(Z,t,A^-_{c\delta  r,\Lambda}),
\]
and this completes the proof of the lemma if we choose $\delta=\delta(m,\kappa,M)$ small.
\end{proof}

\section{A Weak comparison principle and its consequences}\label{sec9}

In this section we prove the following lemma.
\begin{lemma}\label{lem:compprinciple} Let $\Omega\subset\mathbb R^{N+1}$ be a Lipschitz domain with  constant $M$. Let
$(Z_0,t_0)\in\partial\Omega$, $r>0$ and let $\Lambda$, $c_0$, $\eta$, $\rho_0=r/c_0$, $\rho_1=\rho_0/c_0$ be in accordance with Remark \ref{remnot}.
Assume that $u$ and $v$ are non-negative weak solutions to $\L u=0$ in $\Omega_{2r}(Z_0,t_0)$ and that $u$ and $v$ vanish continuously on $\Delta_{2r}(Z_0,t_0)$. Then
there exists $c=c(m,\kappa,M)$, $1\leq c<\infty$, such that
\begin{equation}\label{compprinciple}
    \frac{v(A^-_{\rho,\Lambda}(Z_0,t_0))}{u(A^+_{\rho,\Lambda}(Z_0,t_0))} \lesssim \frac{v(Z,t)}{u(Z,t)} \lesssim\frac{v(A^+_{\rho,\Lambda}(Z_0,t_0))}{u(A^-_{\rho,\Lambda}(Z_0,t_0))},
\end{equation}
whenever $(Z,t)\in \Omega_{\rho/c}(Z_0,t_0)$ and $0<\rho\leq\rho_1$.
\end{lemma}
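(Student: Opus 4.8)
The plan is to deduce both inequalities in \eqref{compprinciple} from the single one-sided estimate
\[
v(Z,t)\le c^\ast\,\frac{v(A^+_{\rho,\Lambda}(Z_0,t_0))}{u(A^-_{\rho,\Lambda}(Z_0,t_0))}\,u(Z,t),\qquad (Z,t)\in\Omega_{\rho/c}(Z_0,t_0),
\]
valid, for suitable $c,c^\ast$ depending only on $m,\kappa,M$, for every pair of non-negative weak solutions $u,v$ of $\L u=0$ in $\Omega_{2r}(Z_0,t_0)$ vanishing continuously on $\Delta_{2r}(Z_0,t_0)$ and every $0<\rho\le\rho_1$; applying it with $u$ and $v$ interchanged and taking reciprocals gives the left-hand inequality in \eqref{compprinciple}, and the displayed estimate itself is the right-hand one. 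To prove it, normalize $(Z_0,t_0)=(0,0)$; by the Harnack inequality (Lemma \ref{harnack}) we may assume $u(A^-_{\rho,\Lambda}),v(A^+_{\rho,\Lambda})>0$. Set $K:=c^\ast v(A^+_{\rho,\Lambda})/u(A^-_{\rho,\Lambda})$ and $w:=v-Ku$. Since $A$ is smooth, $u,v$ and hence $w$ are smooth by hypoellipticity and continuous up to $\overline{\Omega_{\rho/c}}\subset\Omega_{2r}\cup\Delta_{2r}$, so $w$ is admissible for the maximum principle; the goal is $w\le0$ throughout $\Omega_{\rho/c}$.

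Suppose instead $\mu:=\sup_{\Omega_{\rho/c}}w>0$. By the maximum principle (Lemma \ref{maxprinciple}) together with $w\equiv0$ on the part of $\partial\Omega_{\rho/c}$ lying in $\partial\Omega$, the value $\mu$ is attained at a point $X^\ast$ on one of the faces of $Q_{M,\rho/c}$ that lie inside $\Omega$; put $d^\ast:=d(X^\ast,\Delta_{2r})$, so $d^\ast\lesssim\rho/c$. I would fix $c=c(m,\kappa,M)$ large enough (using \eqref{intextball}, \eqref{def.Omega.Delta.frinc}) that $\overline{\Omega_{\rho/c}}$ lies in the scale-$\rho$ Carleson region based at $(0,0)$; then the Carleson estimate (Theorem \ref{thm:carleson}) gives $v(X^\ast)\lesssim v(A^+_{\rho,\Lambda})$, while the interior lower bound of Lemma \ref{lem4.7} gives $u(X^\ast)\gtrsim(d^\ast/\rho)^{\gamma}u(A^-_{\rho,\Lambda})$. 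Hence $w(X^\ast)>0$ forces $c^\ast\lesssim(\rho/d^\ast)^{\gamma}$, i.e. $d^\ast\le\rho(c^\ast)^{-1/\gamma}$, so a positive maximizer must be very close to $\partial\Omega$. On the other hand, since $w$ solves $\L w=0$ and $w=0$ on $\Delta_{2r}$, the boundary Hölder estimate for $w^+$ (Lemma \ref{boundaryholder+}, applied at a boundary point lying in the past of $X^\ast$ at distance $\approx d^\ast$, which exists by the geometry in Lemma \ref{l-coord}), combined with the boundary Hölder decay $v(X^\ast)\lesssim(d^\ast/\rho)^{\alpha}v(A^+_{\rho,\Lambda})$ (Theorem \ref{lem4.5-Kyoto1} together with Carleson), shows that such a shallow maximizer cannot occur once $c^\ast$ is large. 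Either way we contradict $\mu>0$, so $w\le0$ in $\Omega_{\rho/c}$, which is the one-sided estimate; the lemma then follows by the symmetry noted above.

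I expect the genuine obstacle to be exactly this last, boundary, step: one cannot obtain the statement by merely chaining the two one-sided bounds of Lemma \ref{lem4.7} for $v$ and for $u$, since that produces an inflating factor $(\rho/d^\ast)^{2\gamma}$ as $X^\ast\to\partial\Omega$. It is essential that the maximum principle confines the difficulty to the single point $X^\ast$ and that the Carleson bound, the boundary Hölder decay, and the interior Harnack lower bound be balanced carefully — with all scales and constants depending only on $m,\kappa,M$ — so as to force $X^\ast$ a definite distance into $\Omega$. A closely related subtlety is the time-asymmetry built into the Kolmogorov Harnack inequality and into Lemma \ref{boundaryholder+}, whose cylinders open into the past: reconciling that geometry with the position of $X^\ast$ relative to the boundary is precisely what yields the asymmetric form of \eqref{compprinciple}, with $A^+_{\rho,\Lambda}$ in the numerator and $A^-_{\rho,\Lambda}$ in the denominator.
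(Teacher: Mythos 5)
Your reduction to a single one-sided bound and the ``deep maximizer'' half of the dichotomy are sound, but the argument does not close at precisely the point you yourself flag as the genuine obstacle: the shallow case. Suppose $\mu=\sup_{\Omega_{\rho/c}}w>0$ with $w=v-Ku$, attained at $X^\ast$ with $d^\ast=d(X^\ast,\Delta_{2r})$ small. Everything you invoke there --- the boundary H{\"o}lder decay of $v$ and Lemma \ref{boundaryholder+} applied to $w^+$ --- only yields an upper bound of the form $\mu\lesssim (d^\ast/\rho)^{\alpha}\,v(A^+_{\rho,\Lambda})$, i.e.\ that $\mu$ is \emph{small}. A small positive maximum attained close to $\partial\Omega$ is not a contradiction. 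To convert smallness into non-positivity you would need an oscillation-decay inequality of the type $\mu\le\theta\mu$ with $\theta<1$, and Lemma \ref{boundaryholder+} cannot supply it: it bounds $\sup w^+$ over a small cylinder by $\theta$ times $\sup w^+$ over the much larger cylinder $Q^-_{Kr,2r}$, which is not contained in $\Omega_{\rho/c}$, so its right-hand side is not controlled by $\mu$. The underlying difficulty is that on the part of the lateral boundary of the comparison region that approaches $\partial\Omega$ you have no lower bound on $u$ whatsoever (only $u\ge0$), so no choice of $c^\ast$ makes $v-Ku\le 0$ there by pointwise comparison; iterating over collar widths only produces a constant that blows up as $d^\ast\to0$.

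The paper closes exactly this gap by a different mechanism (Section \ref{sec9}). It builds a barrier
$\Psi=\sum_i\omega(\cdot,\Delta_{\delta\varepsilon\rho}(Z_i,t_i))+(\varepsilon\rho)^{\mathbf{q}-2}G(\cdot,A^-_{K\varepsilon\rho,\Lambda})$
from a Vitali covering of the boundary annulus $\Delta_{6\varepsilon\rho}\setminus\Delta_{4\varepsilon\rho}$, and shows $\Psi\gtrsim1$ on \emph{all} of $\partial\Omega_{5\varepsilon\rho}$: on the shallow part $\Sigma_1$ via the covering and the argument behind \eqref{ghh6}, on the deep part $\Sigma_2$ via a Green function lower bound. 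It then obtains $v\lesssim v(A^+_{K\varepsilon\rho,\Lambda})\Psi$ by the maximum principle (legitimate near $\partial\Omega$ because $v$ vanishes on the adjacent surface balls) and $u\gtrsim u(A^-_{K\varepsilon\rho,\Lambda})\Psi$ by dominating each $\omega(\cdot,\Delta_{\delta\varepsilon\rho}(Z_i,t_i))$ by the Green function with pole at $A^-_{K\varepsilon\rho,\Lambda}$ (Lemma \ref{gensalsa-asecondpart}). The barrier supplies exactly the boundary-degenerate lower bound on $u$ that your maximum-principle argument lacks; some such harmonic-measure/Green-function comparison appears unavoidable here, just as in the uniformly parabolic case.
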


\begin{proof} Again we can  without loss of generality assume that $(Z_0,t_0)=(0,0)$ and we let $\Omega_{2r}=\Omega_{2r}(Z_0,t_0)$, $\De_{r}=\De_{r}(Z_0,t_0)$, and $A_{\rho,\Lambda}^\pm=A_{\rho,\Lambda}^\pm(Z_0,t_0)$.

Let $0<\varepsilon\ll1$ be a  degree of freedom only depending on $m$ and $M$ and consider the set
\[
\Delta_{6\varepsilon\rho}\backslash\Delta_{4\varepsilon\rho}.
\]

\noindent
\textit{Claim 1: There exist  $0<\delta\ll1$, only depending on $m$ and $M$, and a set of points $\lbrace (Z_i,t_i) \rbrace_{i=1}^L$ with $(Z_i,t_i)\in \Delta_{6\varepsilon\rho}\backslash\Delta_{4\varepsilon\rho}$ such that
\begin{equation}
    \lbrace \Delta_{\delta\varepsilon\rho}(Z_i,t_i) \rbrace_{i=1}^L\:\text{is a covering of}\:\Delta_{6\varepsilon\rho}\backslash\Delta_{4\varepsilon\rho},
\end{equation}
and such that
\begin{equation}
    \Delta_{\delta\varepsilon\rho/\lambda}(Z_i,t_i)\cap\Delta_{\delta\varepsilon\rho/\lambda}(Z_j,t_j) = \emptyset \: \text{whenever}\: i\neq j,
\end{equation}
for some $\lambda(m,M,{\bf c})\geq 1$, where ${\bf c}$ is the constant from (\ref{e-triangular}). Furthermore, this covering can be constructed so that
\begin{equation}\label{covermeasbound}
    \sum_{i=1}^L{\omega (Z,t,\Delta_{\delta\varepsilon\rho}(Z_i,t_i))} \gtrsim 1,
\end{equation}
whenever
\begin{equation}
    (Z,t) \in \partial \Omega_{5\varepsilon\rho} \cap \overline{\lbrace (Z,t)\in\Omega_{2r}\mid d((Z,t),\Delta_{2r})\leq \delta^2\varepsilon\rho \rbrace}.
\end{equation}}

\noindent
\textit{Proof of the claim. } The claim follows immediately from a standard Vitali covering argument, Lemma \ref{boundaryholder_giventheta} and the same argument as in the proof of (\ref{ghh6}).
\qed

Based on  $\lbrace (Z_i,t_i) \rbrace_{i=1}^L$ we introduce the function
\begin{equation}
    \Psi(Z,t) := \sum_{i=1}^L\omega (Z,t,\Delta_{\delta\varepsilon\rho}(Z_i,t_i))+(\varepsilon\rho)^{{\bf q}-2}G(Z,t,A^-_{K\varepsilon\rho,\Lambda}),
\end{equation}
where $K\gg 1$ is an additional degree of freedom to be chosen. We now partition the boundary of $\Omega_{5\varepsilon\rho}$ into the sets
\begin{equation}
    \begin{split}
        \Sigma_1 &:= \partial \Omega_{5\varepsilon\rho} \cap \overline{\lbrace (Z,t)\in\Omega_{2r}\mid d((Z,t),\Delta_{2r})\leq \delta^2\varepsilon\rho \rbrace},\\
        \Sigma_2 &:= \partial \Omega_{5\varepsilon\rho} \cap \overline{\lbrace (Z,t)\in\Omega_{2r}\mid d((Z,t),\Delta_{2r}) > \delta^2\varepsilon\rho \rbrace}.
    \end{split}
\end{equation}
Note that $\Sigma_1$ is the part of the boundary of $\Omega_{5\varepsilon\rho}$ which is close to $\partial \Omega_{5\varepsilon\rho}\cap \partial\Omega$ and $\Sigma_2$ is the remaining part.

By the construction of the covering above, in particular using (\ref{covermeasbound}), we  see that
\begin{equation}
    \Psi(Z,t) \gtrsim 1,
\end{equation}
whenever $(Z,t)\in \Sigma_1$. To estimate $\Psi$ on $\Sigma_2$ we prove the following claim.

\noindent
\textit{Claim 2: There exist $K\gg 1$, depending at most on $m$, $\kappa$ and $M$, such that
\begin{equation}\label{compprinpf_claim2}
    (\varepsilon\rho)^{{\bf q}-2}G(Z,t,A^-_{K\varepsilon\rho,\Lambda})\geq c^{-1}\:\text{for all}\:(Z,t)\in \Sigma_2.
\end{equation}}

\noindent
\textit{Proof of the claim.} Suppose $(Z,t)\in\Sigma_2$. It follows from Lemma \ref{lem4.7} that there exists $K\gg 1$, depending at most on $m$, $\kappa$ and $M$, such that
\begin{equation}
    (\varepsilon\rho)^{{\bf q}-2}G(Z,t,A^-_{K\varepsilon\rho,\Lambda}) \gtrsim(\varepsilon\rho)^{{\bf q}-2}G(A^-_{K\varepsilon\rho/\lambda,\Lambda},A^-_{K\varepsilon\rho,\Lambda}).
\end{equation}
Therefore the claim follows if we can show that
\begin{equation}\label{greenrefestimate}
    (\varepsilon\rho)^{{\bf q}-2}G(A^-_{K\varepsilon\rho/\lambda,\Lambda},A^-_{K\varepsilon\rho,\Lambda})\gtrsim 1.
\end{equation}
To prove this, let $\tilde G$ be the Green function for the set $Q_{4\varepsilon\rho}(A^-_{k\varepsilon\rho,\Lambda})$. Then by scaling, and the continuity of $G$ close to $A^-_{K\epsilon\rho,\Lambda}$, we can conclude that there exists $\tilde\eta=\tilde\eta(m)$, $0<\tilde \eta\ll 1$, such that
\[
(\varepsilon\rho)^{{\bf q}-2}\tilde G(A^-_{K(1-\tilde\eta)\varepsilon\rho,\Lambda},A^-_{K\varepsilon\rho,\Lambda})\gtrsim 1.
\]
Therefore, by the maximum principle,
\[
(\varepsilon\rho)^{{\bf q}-2} G(A^-_{K(1-\tilde\eta)\varepsilon\rho,\Lambda},A^-_{K\varepsilon\rho,\Lambda})\gtrsim 1.
\]
Finally, by the Harnack inequality, see Lemma \ref{lem4.7rho}, we obtain
\begin{equation}
    (\varepsilon\rho)^{{\bf q}-2}G(A^-_{K\varepsilon\rho/\lambda,\Lambda},A^-_{K\varepsilon\rho,\Lambda})\gtrsim
    (\varepsilon\rho)^{{\bf q}-2} G(A^-_{K(1-\tilde\eta)\varepsilon\rho,\Lambda},A^-_{K\varepsilon\rho,\Lambda})
    \gtrsim 1,
\end{equation}
and the claim is proved.\qed

Using  (\ref{compprinpf_claim2}) we can conclude $\Psi(Z,t)\geq c^{-1}$, when $(Z,t)\in\partial \Omega_{5\varepsilon\rho}$. Furthermore, applying Theorem \ref{thm:carleson} we also see that we can without loss of generality assume that $K$ is such that
\begin{equation}\label{compprinpf_vest1}
    v(Z,t) \lesssim v(A^+_{K\varepsilon\rho,\Lambda}),
\end{equation}
when $(Z,t)\in\Omega_{6\varepsilon\rho}$. Hence, $v$ is  assumed to vanish  on $\Delta_{2r}$ it follows from the maximum principle and (\ref{compprinpf_vest1}) that
\begin{equation}\label{compprinpf_vest}
    v(Z,t) \lesssim v(A^+_{K\varepsilon\rho,\Lambda})\Psi(Z,t),
\end{equation}
whenever $(Z,t)\in\Omega_{5\varepsilon\rho}$.

Having established an estimate on $v$ from above, we now need to establish an estimate on $u$ from below. We proceed by arguing similarly to the proof of Lemma \ref{gensalsa-a}. We define the two sets
\begin{equation}
    \begin{split}
        S_1 &:= \lbrace (Z,t)\in\Omega_{2r}\mid t=-(k\varepsilon\rho)^2 \rbrace \backslash Q_{\delta\rho/2}(A^-_{K\varepsilon\rho,\Lambda}),\\
        S_2 &:= \lbrace (Z,t)\in\Omega_{2r}\mid t>-(k\varepsilon\rho)^2 \rbrace \cap \partial Q_{\delta\rho/2}(A^-_{K\varepsilon\rho,\Lambda}),
    \end{split}
\end{equation}
and as in the proof of Lemma \ref{gensalsa-a}, see \eqref{ghh4}, we obtain
\begin{equation}\label{greenbound}
    c^{-1}(\varepsilon\rho)^{{\bf q}-2}G(Z,t,A^-_{K\varepsilon\rho,\Lambda})\leq 1,\:\text{ for all}\:(Z,t)\in \Omega_{5\varepsilon\rho}.
\end{equation}
Note that we can choose $\delta\ll 1$ so that the closure of $Q_{\delta\rho/2}(A^-_{K\epsilon\rho,\Lambda})$ is contained in the cone $C^-_{K\epsilon\rho,\eta,\Lambda}(0,0)$. Using this,  continuity of $u$, and the maximum principle, we have
\begin{equation}\label{compprinpf_uest1}
    u(Z,t) \gtrsim(\varepsilon\rho)^{{\bf q}-2}G(Z,t,A^-_{K\varepsilon\rho,\Lambda})u(A^-_{K\varepsilon\rho,\Lambda}),
\end{equation}
when $(Z,t)\in\Omega_{5\varepsilon\rho}$.

\noindent
\textit{Claim 3: \begin{equation}\label{compprinpf_claim3}
    (\varepsilon\rho)^{{\bf q}-2}G(Z,t,A^-_{K\varepsilon\rho,\Lambda})\gtrsim \Psi(Z,t),
\end{equation}
whenever $(Z,t)\in\partial \Omega_{\varepsilon\rho}$.}

\noindent
\textit{Proof of the claim.}  By arguing as in the proof of Lemma \ref{gensalsa-asecondpart}  we deduce that
\begin{equation}\label{compprinpf_claim3pf1}
    \omega (Z,t,\Delta_{\delta\varepsilon\rho}(Z_i,t_i))\lesssim (\varepsilon\rho)^{{\bf q}-2}G(Z,t,A^-_{K\varepsilon\rho,\Lambda}),
\end{equation}
whenever $(Z,t)\in\partial \Omega_{\varepsilon\rho}$ and for all $i=1,....,L$. This yields (\ref{compprinpf_claim3}) by construction. \qed

Combining Claim 3 and (\ref{compprinpf_uest1}) we first obtain the estimate
\begin{equation}
    u(Z,t)\gtrsim u(A^-_{K\varepsilon\rho,\Lambda}) \Psi(Z,t),
\end{equation}
when $(Z,t)\in\Omega_{\varepsilon\rho}$. Using this in combination with \eqref{compprinpf_vest} we can conclude that
\begin{equation}
    u(Z,t)\gtrsim \frac{u(A^-_{K\varepsilon\rho,\Lambda})}{v(A^+_{K\varepsilon\rho,\Lambda})} v(Z,t),
\end{equation}
Finally, putting $\varepsilon=\frac{1}{K}$ completes the proof of the lemma in one direction. The proof on the other direction follows by simply interchanging $u$ and $v$.
\end{proof}

\begin{lemma}\label{T:backprel} Let $\Omega\subset\mathbb R^{N+1}$ be a Lipschitz domain with  constant $M$ and assume in addition \eqref{struct}. Let
$(Z_0,t_0)\in\partial\Omega$, $r>0$ and let $\Lambda$, $c_0$, $\eta$, $\rho_0=r/c_0$, $\rho_1=\rho_0/c_0$ be in accordance with Remark \ref{remnot}.
Assume that $u$ is a non-negative weak solutions to $\L u=0$ in $\Omega_{2r}(Z_0,t_0)$ and that $u$  vanishes continuously on $\Delta_{2r}(Z_0,t_0)$. Define
$y_m^0$  through $(Z_0,t_0)=(x^0,x_m^0,y^0,y_m^0,t^0)$. Then
there exists $c=c(m,\kappa,M)$, $1\leq c<\infty$, such that
 \begin{eqnarray}\label{weakcompuu2}
 \frac {u(A_{\rho_0,\Lambda}^-(Z_0,t_0))}{u(A_{\rho_0,\Lambda}^+(Z_0,t_0))}\lesssim \frac
{u(x,x_m,y,y_m^0,t)}{u(x,x_m,y,y_m,t)}\lesssim\frac {u(A_{\rho_0,\Lambda}^+(Z_0,t_0))}{u(A_{\rho_0,\Lambda}^-(Z_0,t_0))},
 \end{eqnarray}
 whenever $(x,x_m,y,y_m,t)\in \Omega_{\rho_1/c}(Z_0,t_0)$.
\end{lemma}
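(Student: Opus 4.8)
The statement \eqref{weakcompuu2} is, morally speaking, the weak comparison principle of Lemma \ref{lem:compprinciple} applied with $v$ taken to be the $y_m$-translate of $u$. So the plan is the following. First, fix $(Z_0,t_0)\in\partial\Omega$ and write $(Z_0,t_0)=(x^0,x_m^0,y^0,y_m^0,t^0)$. Given a point $(x,x_m,y,y_m,t)\in\Omega_{\rho_1/c}(Z_0,t_0)$, set
\[
v(X,Y,t):=u(x',x_m',y',y_m+(y_m^0-y_m),t')=u\big((X,Y,t)\circ(0,0,0,y_m^0-y_m,0)\big),
\]
i.e. $v$ is $u$ precomposed with the translation in the $y_m$-direction that sends $y_m$ to $y_m^0$. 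The key observation is that, under the structural assumption \eqref{struct}, translation in $y_m$ is a symmetry of both the operator $\L$ and the domain $\Omega$: since $A=A(x,x_m,y,t)$ does not depend on $y_m$, the operator $\L$ commutes with $\partial_{y_m}$, hence $v$ is again a non-negative weak solution of $\L v=0$; and since $\psi=\psi(x,y,t)$ does not depend on $y_m$, the domain $\Omega$ (and likewise $\Delta_{2r}(Z_0,t_0)$, which is a coordinate graph in the remaining variables) is invariant under this $y_m$-translation, so $v$ vanishes continuously on $\Delta_{2r}(Z_0,t_0)$. This is exactly the ``additional degree of freedom'' in the $y_m$-variable that the introduction and \cite{NP} advertise, and it is where \eqref{struct} enters.

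Second, I would apply Lemma \ref{lem:compprinciple} to the pair $(u,v)$ at a suitable reference scale. One has to be a little careful: the translation amount $|y_m^0-y_m|$ is comparable to (at most) $\rho_1^3$ when $(X,Y,t)\in\Omega_{\rho_1/c}(Z_0,t_0)$, so $v$ is a solution in, and vanishes on the boundary portion of, a slightly shrunken box $\Omega_{2r'}(Z_0,t_0)$ with $r'\approx r$; after adjusting constants this is harmless. Lemma \ref{lem:compprinciple} then gives, for $(X,Y,t)\in\Omega_{\rho/c}(Z_0,t_0)$ and $0<\rho\le\rho_1$,
\[
\frac{v(A^-_{\rho,\Lambda}(Z_0,t_0))}{u(A^+_{\rho,\Lambda}(Z_0,t_0))}\lesssim\frac{v(X,Y,t)}{u(X,Y,t)}\lesssim\frac{v(A^+_{\rho,\Lambda}(Z_0,t_0))}{u(A^-_{\rho,\Lambda}(Z_0,t_0))}.
\]
Now evaluate the middle ratio at the specific point $(x,x_m,y,y_m,t)$: by construction $v(x,x_m,y,y_m,t)=u(x,x_m,y,y_m^0,t)$, which is precisely the numerator in \eqref{weakcompuu2}.

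Third, I need to replace $v(A^\pm_{\rho,\Lambda}(Z_0,t_0))$ by $u(A^\pm_{\rho_0,\Lambda}(Z_0,t_0))$. The point $A^\pm_{\rho,\Lambda}(Z_0,t_0)=(Z_0,t_0)\circ A^\pm_{\rho,\Lambda}$ has $y_m$-coordinate $y_m^0\mp\tfrac23\Lambda\rho^3$, so its $y_m$-translate used in defining $v$ lands it at $y_m$-coordinate $y_m^0$ up to an error of size $O(\rho^3)$; in any case $v(A^\pm_{\rho,\Lambda}(Z_0,t_0))$ is the value of $u$ at some point which, by Lemma \ref{coneconditions-} and Lemma \ref{lem4.7bol+} (Harnack in cones), is comparable — with constants depending only on $m,\kappa,M$ — to $u(A^\pm_{\rho_0,\Lambda}(Z_0,t_0))$, after possibly first passing through $\rho=\rho_0$ or applying the scale-change estimate Lemma \ref{lem4.7rho}. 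Putting these comparisons together, and absorbing the $\rho$-dependent but bounded factors into the constant $c$, yields exactly \eqref{weakcompuu2} with $\rho$ taken of size $\rho_0$ (equivalently $\rho_1$) on the right-hand side. The proof of the reverse inequality is obtained by interchanging the roles, i.e. by running the same argument with $u$ and $v$ exchanged, exactly as at the end of the proof of Lemma \ref{lem:compprinciple}.

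\textbf{Main obstacle.} The conceptual content is entirely in the first step — recognizing that \eqref{struct} makes $y_m$-translation a genuine symmetry of the problem, so that the translate $v$ is admissible input for Lemma \ref{lem:compprinciple}. The only real technical care needed afterward is bookkeeping: making sure that, after translating, $v$ still solves the equation on (and vanishes on the boundary portion of) a box of comparable size to the original — this forces one to shrink from $\Omega_{2r}$ by a factor controlled by $M$ and $\Lambda$ — and then tracking that the reference-point values $v(A^\pm_{\rho,\Lambda}(Z_0,t_0))$ and $u(A^\pm_{\rho_0,\Lambda}(Z_0,t_0))$ differ only by Harnack-type factors depending on $m,\kappa,M$. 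None of this is deep, but it is where constants must be chosen in the right order (first $\Lambda,c_0,\eta$ per Remark \ref{remnot}, then the shrinkage factor, then $c$).
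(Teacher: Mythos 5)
Your proposal is correct and follows essentially the same route as the paper: the paper sets $\tilde u(Z,t)=u(x,x_m,y,y_m+\varepsilon^3r^3,t)$, observes that under \eqref{struct} this translate is again a non-negative solution vanishing on the boundary portion of a slightly shrunken box $\Omega_{2\tilde r}$ with $\tilde r=r(1-\varepsilon)/4$, applies Lemma \ref{lem:compprinciple} to the pair $(u,\tilde u)$, and then uses Lemma \ref{lem4.7bol+} (after constraining $\varepsilon_0\le\delta/c_0^2$ so the translated reference points land in $Q_{\delta\tilde\rho_1}(A^\pm_{\tilde\rho_1,\Lambda})$) to replace $\tilde u(A^\pm_{\tilde\rho_1,\Lambda})$ and $u(A^\pm_{\tilde\rho_1,\Lambda})$ by $u(A^\pm_{\tilde\rho_0,\Lambda})$. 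Your identification of the symmetry under \eqref{struct}, the shrinkage bookkeeping, and the Harnack-in-cones comparison of reference values all match the paper's argument.
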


\begin{proof} Again we can  without loss of generality assume that $(Z_0,t_0)=(0,0)$ and we let $\Omega_{2r}=\Omega_{2r}(Z_0,t_0)$, $\De_{r}=\De_{r}(Z_0,t_0)$, and $A_{\rho,\Lambda}^\pm=A_{\rho,\Lambda}^\pm(Z_0,t_0)$.

We fix $0<\varepsilon_0\ll 1$ and consider $\tilde u(Z,t)= u(x,x_m,y,y_m+\varepsilon^3 r^3,t)$, with $\varepsilon\in(-\varepsilon_0,\varepsilon_0)$. Put $$\tilde r=\frac{r(1-\varepsilon)}{4}.$$
Then $\tilde u$ is a solution to $\L u=0$ in $\Omega_{2\tilde r}$, and since $\Omega_{2r}$ is $y_m$-independent $\tilde u$ vanishes continuously on $\Delta_{2\tilde r}$. Using this and that {$\Omega_{2\tilde r}\subset \Omega_{2r}$} we can apply Lemma \ref{lem:compprinciple} to the functions $u$ and $v=\tilde u$ in $\Omega_{2\tilde r}$ to obtain
\begin{equation}\label{cor1}
     \frac {\tilde u(A_{\tilde \rho_1,\Lambda}^-)}{u(A_{\tilde \rho_1,\Lambda}^+)}\lesssim \frac
{\tilde u(Z,t)}{u(Z,t)}\lesssim\frac {\tilde u(A_{\tilde \rho_1,\Lambda}^+)}{u(A_{\tilde \rho_1,\Lambda}^-)},
\end{equation}
for all $(Z,t)\in \Omega_{\tilde \rho_1/c}$, $\tilde \rho_1=\tilde \rho_0/c_0=\tilde r/c_0^2$. As, by definition, $\tilde \rho_1 \leq \tilde \rho_0$, it follows immediately from Lemma \ref{lem4.7bol+} that
\begin{equation}
\begin{split}
    u(A^+_{\tilde \rho_1,\Lambda}) \lesssim c u(A^+_{\tilde \rho_0,\Lambda}),\ u(A^-_{\tilde \rho_1,\Lambda}) \gtrsim u(A^-_{\tilde \rho_0,\Lambda}),
\end{split}
\end{equation}
as the constant appearing in Lemma \ref{lem4.7bol+} depends on $m$ $\kappa$, $M$ and $\delta$, and with $\delta$ in this case fixed as $\delta = 1/c_0$, where $1\leq c_0 <\infty$ is the constant associated to $\Omega_{2\tilde r}$ in accordance with Remark \ref{remnot}.

Next, note that  given a small degree of freedom $0<\delta\ll 1$, if $\varepsilon\leq\varepsilon_0\leq \delta/c_0^2$, then we obviously have
\begin{equation}
    \begin{split}
        A^+_{\tilde \rho_1,\Lambda} + (0,0,0,\varepsilon^3 r^3,0) \in Q_{\delta\tilde \rho_1}(A^+_{\tilde \rho_1,\Lambda}),\\
        A^-_{\tilde \rho_1,\Lambda} + (0,0,0,\varepsilon^3 r^3,0) \in Q_{\delta\tilde \rho_1}(A^-_{\tilde \rho_1,\Lambda}).
    \end{split}
\end{equation}
We can now choose $\delta$, depending on $m$, $\kappa$, $M$ and $c_0$, so that we again can apply  Lemma \ref{lem4.7bol+} to conclude that
\begin{equation}
\begin{split}
\tilde u(A^+_{\tilde \rho_1,\Lambda}) &\leq \sup_{Q_{\delta\tilde \rho_1}(A^+_{\tilde \rho_1,\Lambda})} u \lesssim u(A^+_{\tilde \rho_0,\Lambda}),\\
\tilde u(A^-_{\tilde \rho_1,\Lambda}) &\geq \inf_{Q_{\delta\tilde \rho_1}(A^-_{\tilde \rho_1,\Lambda})} u \gtrsim u(A^-_{\tilde \rho_0,\Lambda}).
\end{split}
\end{equation}
By combining this with \eqref{cor1} the lemma follows.
\end{proof}

\begin{lemma}\label{lem4.6+}  Let $\Omega\subset\mathbb R^{N+1}$ be a Lipschitz domain with  constant $M$ and assume in addition \eqref{struct}. Let
$(\hat Z_0,\hat t_0)\in\partial\Omega$, $r>0$ and let $\Lambda$, $c_0$, $\eta$, $\rho_0=r/c_0$, $\rho_1=\rho_0/c_0$ be in accordance with Remark \ref{remnot}.
Assume that $u$ is a non-negative weak solutions to $\L u=0$ in $\Omega_{2r}(\hat Z_0,\hat t_0)$ and that $u$  vanishes continuously on $\Delta_{2r}(\hat Z_0,\hat t_0)$.  Let
  \begin{eqnarray*}
 m^+=u(A_{\rho_0,\Lambda}^+(\hat Z_0,\hat t_0)),\qquad m^-=u(A_{\rho_0,\Lambda}^-(\hat Z_0,\hat t_0)),
 \end{eqnarray*}
and assume that $m^->0$. Then there exist constants $1\leq c_1(m,\kappa,M)<\infty$ and $1\leq
c_2(m,\kappa,M,m^+/m^-)<\infty$, such that
\begin{equation}\label{lem4.6+statement}
u(A^-_{\rho,\Lambda}(Z_0,t_0))\leq c_2u(A_{\rho,\Lambda}(Z_0,t_0))\leq c_2^2u(A^+_{\rho,\Lambda}(Z_0,t_0)),
\end{equation}
whenever $0<\rho<\rho_1/c_1$ and  $(Z_0,t_0)\in \Delta_{\rho_1}(\hat Z_0,\hat t_0)$.
\end{lemma}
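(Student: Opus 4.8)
The plan is to prove the two inequalities in \eqref{lem4.6+statement} by one and the same device. For each of them one connects the two reference points involved --- $A_{\rho,\Lambda}(Z_0,t_0)$ and $A^-_{\rho,\Lambda}(Z_0,t_0)$ for the first, $A^+_{\rho,\Lambda}(Z_0,t_0)$ and $A_{\rho,\Lambda}(Z_0,t_0)$ for the second --- by an admissible path that freezes the $x_m$-coordinate, so that the two endpoints \emph{agree in every coordinate except for a translation of size $\tfrac13\Lambda\rho^3$ in the $y_m$-variable}; runs a bounded number of applications of the interior Harnack inequality along the path; and then removes the residual $y_m$-translation by Lemma \ref{T:backprel}, which is the only point at which \eqref{struct} enters and is the source of the dependence of $c_2$ on $m^+/m^-$. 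As usual one reduces to $(\hat Z_0,\hat t_0)=(0,0)$ by left translation (which preserves the Lipschitz character, assumption \eqref{struct}, and all the reference points), fixes $(Z_0,t_0)=(x^0,x_m^0,y^0,y_m^0,t^0)\in\Delta_{\rho_1}$ and $0<\rho<\rho_1/c_1$ with $c_1=c_1(m,\kappa,M)$ to be taken large, and passes to the left-translated frame in which $(Z_0,t_0)$ is the origin on the boundary of the translated --- still $y_m$-independent --- Lipschitz domain $\Omega'$, with defining function $\psi'$, $\psi'(0,0,0)=0$, of Lipschitz constant comparable to $M$.

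In this frame, let $\gamma_0:[0,\rho^2]\to\mathbb R^{N+1}$ solve \eqref{e-control.pb} with $\omega\equiv 0$, $\lambda\equiv 1$ and $\gamma_0(0)=A_{\rho,\Lambda}=(0,\Lambda\rho,0,0,0)$. Since $\omega\equiv 0$ and the $x$-coordinate is $0$ along the curve, $\dot x=\dot x_m=\dot y=0$, $\dot y_m=\Lambda\rho$, $\dot t=-1$, whence $\gamma_0(\tau)=(0,\Lambda\rho,0,\Lambda\rho\,\tau,-\tau)$ and in particular $\gamma_0(\rho^2)=A^-_{\rho,\Lambda}+(0,0,0,\tfrac13\Lambda\rho^3,0)$; the analogous curve $\gamma_1$ with $\gamma_1(0)=A^+_{\rho,\Lambda}$ satisfies $\gamma_1(\rho^2)=A_{\rho,\Lambda}+(0,0,0,\tfrac13\Lambda\rho^3,0)$. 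Because $\Lambda=\Lambda(m,M)$ is large and $\psi'$ does not depend on $y_m$ (by \eqref{struct}$(ii)$), both curves run at height $\approx\Lambda\rho$ well inside $\Omega'$, and for $\rho<\rho_1/c_1$ the past cylinders $Q^-_{c\rho}(A_{\rho,\Lambda}(Z_0,t_0))$ and $Q^-_{c\rho}(A^+_{\rho,\Lambda}(Z_0,t_0))$ are contained in $\Omega_{2r}(\hat Z_0,\hat t_0)$, where $u$ solves $\L u=0$ --- this is exactly the cone geometry recorded in Lemma \ref{coneconditions}--Lemma \ref{coneconditions-}. Since $\int\|\omega\|^2=0$, a bounded number of applications of the interior Harnack inequality (Lemma \ref{harnack}, via Lemma \ref{adda}) then gives
\[
u(\gamma_0(\rho^2))\le c\,u(A_{\rho,\Lambda}(Z_0,t_0)),\qquad u(\gamma_1(\rho^2))\le c\,u(A^+_{\rho,\Lambda}(Z_0,t_0)),
\]
with $c=c(m,\kappa,M)$.

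To eliminate the $y_m$-shift $(0,0,0,\tfrac13\Lambda\rho^3,0)$, apply Lemma \ref{T:backprel} to $u$ at the boundary point $(Z_0,t_0)$ at a scale $r'$ comparable to $\rho_1$, chosen smaller than $\rho_1$ by a fixed factor so that $\Omega_{2r'}(Z_0,t_0)\subset\Omega_{2r}(\hat Z_0,\hat t_0)$ and $\Delta_{2r'}(Z_0,t_0)\subset\Delta_{2r}(\hat Z_0,\hat t_0)$ (possible by \eqref{def.Omega.Delta.frinc} since $c_0$ is large, so that the hypotheses of Lemma \ref{T:backprel} are inherited), and with $c_1$ taken large enough that the points $A^\pm_{\rho,\Lambda}(Z_0,t_0)$, $A_{\rho,\Lambda}(Z_0,t_0)$, $\gamma_0(\rho^2)$, $\gamma_1(\rho^2)$ all lie in the region on which the conclusion of Lemma \ref{T:backprel} holds. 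Writing $\nu^\pm:=u(A^\pm_{r'/c_0,\Lambda}(Z_0,t_0))$, that conclusion yields that the values of $u$ at any two points which agree in all coordinates but $y_m$ differ by at most a factor $c(\nu^+/\nu^-)^2$; in particular $u(A^-_{\rho,\Lambda}(Z_0,t_0))\lesssim(\nu^+/\nu^-)^2u(\gamma_0(\rho^2))$ and $u(A_{\rho,\Lambda}(Z_0,t_0))\lesssim(\nu^+/\nu^-)^2u(\gamma_1(\rho^2))$. Combining this with the previous display reduces the lemma to the inequality $\nu^+/\nu^-\lesssim m^+/m^-$. For the latter, $\nu^+\lesssim m^+$ follows from the Carleson estimate (Theorem \ref{thm:carleson}) together with a bounded Harnack chain at $(\hat Z_0,\hat t_0)$ (Lemma \ref{lem4.7rho}, Lemma \ref{lem4.7bol+}), and $\nu^-\gtrsim m^->0$ follows by a Harnack chain of bounded length connecting $A^-_{r'/c_0,\Lambda}(Z_0,t_0)$ to $A^-_{\rho_0,\Lambda}(\hat Z_0,\hat t_0)$ (Lemma \ref{akkaa-}) --- both points being at scales comparable to $\rho_1$ with base points at distance $\lesssim\rho_1$, so that such a chain can be realized inside $\Omega_{2r}(\hat Z_0,\hat t_0)$ once $\Lambda$ and $c_0$ are large. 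Hence \eqref{lem4.6+statement} holds with $c_2=c_2(m,\kappa,M,m^+/m^-)$.

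The computation of $\gamma_0,\gamma_1$ and the verification that the curves and Harnack cylinders remain inside $\Omega_{2r}(\hat Z_0,\hat t_0)$ are routine once $\psi'$ is known to be $y_m$-independent. The main obstacle is the last step: Lemma \ref{T:backprel} has to be invoked at the shifted base point $(Z_0,t_0)$ --- not at $(\hat Z_0,\hat t_0)$ --- and one must then check that the reference quantities $\nu^\pm$ it produces are controlled, below as well as above, by $m^\pm$; the lower bound $\nu^-\gtrsim m^-$ is the delicate part, it is what keeps $c_2$ finite (and what forces its dependence on $m^+/m^-$), and it is precisely here that the scale separation $\rho<\rho_1/c_1\ll\rho_1\ll\rho_0\ll r$ and the freedom to build bounded-length Harnack chains between macroscopic-scale reference points are used. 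Throughout, \eqref{struct} is indispensable: \eqref{struct}$(i)$ is what makes $y_m$-translates of solutions again solutions and hence underlies Lemma \ref{T:backprel}, while \eqref{struct}$(ii)$ is what allows the paths $\gamma_0,\gamma_1$ to move in $y_m$ without leaving $\Omega$.
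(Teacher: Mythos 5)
Your argument is correct and follows essentially the same route as the paper: the same zero-control admissible path $\tau\mapsto(Z_0,t_0)\circ(0,\Lambda\rho,0,\tau\Lambda\rho,-\tau)$ (and its analogue started at $A^+_{\rho,\Lambda}(Z_0,t_0)$), a bounded Harnack chain along it via Lemma \ref{akkaa-}, and Lemma \ref{T:backprel} to remove the residual $y_m$-shift of size $\tfrac13\Lambda\rho^3$. The only (harmless) deviation is one of bookkeeping: you re-base Lemma \ref{T:backprel} at $(Z_0,t_0)$ and then convert its reference values $\nu^\pm$ back to $m^\pm$ using Theorem \ref{thm:carleson} and Lemma \ref{lem4.7bol+}, whereas the paper invokes Lemma \ref{T:backprel} directly at $(\hat Z_0,\hat t_0)$ so that the factor $m^+/m^-$ appears immediately.
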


\begin{proof}
We start by noticing that the path
\[
\gamma(\tau) := (Z_0,t_0) \circ (0,\Lambda\rho,0,\tau\Lambda\rho,-\tau),\quad \tau\in\left[0,\rho^2\right]
\]
is admissible and that
\begin{equation*}
    \begin{split}
        \gamma(0) &= A_{\rho,\Lambda}(Z_0,t_0),\\
        \gamma(\rho^2) &= (Z_0,t_0)\circ (0,\Lambda\rho,0,\Lambda\rho^3,-\rho^2).
    \end{split}
\end{equation*}
Furthermore, by (\ref{pointsref2}) and as $\Omega$ is $y_m$-independent we have by construction that
\[
\gamma\left(\left[0,\rho^2\right]\right)\subset \Omega_{2r}(\hat Z_0,\hat t_0).
\]
By constructing a Harnack chain we can deduce, using Lemma \ref{akkaa-}, that
\begin{equation}\label{lem4.6+pf_est1}
    u(\gamma(\rho^2))\lesssim u(A_{\rho,\Lambda}(Z_0,t_0)).
\end{equation}
Noting that $A^-_{\rho,\Lambda}(Z_0,t_0)$ and $\gamma(\rho^2)$ only differ in the $y_m$-coordinate and applying Lemma \ref{T:backprel} we obtain
\begin{equation}
    \frac{m^-}{m^+} \lesssim \frac{u(\gamma(\rho^2))}{u(A^-_{\rho,\Lambda}(Z_0,t_0))} \lesssim\frac{m^+}{m^-},
\end{equation}
when $0<\rho<\rho_1/c$. The above inequality together with (\ref{lem4.6+pf_est1}) yields that
\[
u(A^-_{\rho,\Lambda}(Z_0,t_0))\lesssim\frac{m^+}{m^-} u(A_{\rho,\Lambda}(Z_0,t_0)),
\]
when $0<\rho<\rho_1/c$. This proves the first part of (\ref{lem4.6+statement}), and the second part is proven in analogy. We omit further details.
\end{proof}

\section{Proof of theorem \ref{thm:back}}\label{sec10}
To prove  Theorem \ref{thm:back} it suffices to prove the following lemma.

\begin{lemma}\label{thm:backagain} Let $\Omega\subset\mathbb R^{N+1}$ be a Lipschitz domain with constant $M$ and assume in addition \eqref{struct}.  Let
$(Z_0,t_0)\in\partial\Omega$, $r>0$ and let $\Lambda$, $c_0$, $\eta$, $\rho_0=r/c_0$ be in accordance with Remark \ref{remnot}. Assume that  $u$ is a non-negative solution to $\L u=0$ in $\Omega_{2r}(Z_0,t_0)$,
vanishing continuously on $\Delta_{2r}(Z_0,t_0)$. Let
 \begin{eqnarray}\label{singa1}
 m^+=u(A_{\rho_0,\Lambda}^+(Z_0,t_0)),\ m^-=u(A_{\rho_0,\Lambda}^-(Z_0,t_0)),
 \end{eqnarray}
and assume that $m^->0$. Then there exist constants
$c_1=c_1(m,\kappa,M)$,
$1\leq c_1<\infty$, $c_2=c_2(m,\kappa,M, m^+/m^-)$,
$1\leq c_2<\infty$,  such that if we let $\rho_1=\rho_0/c_1$, then
\begin{equation*}
u(Z,t)\leq c_2u(A_{\rho,\Lambda}(Z_0,t_0)),
\end{equation*}
whenever $(Z,t)\in \Omega_{\rho/c_1}(Z_0,t_0)$, and  $0<\rho<\rho_1$.
\end{lemma}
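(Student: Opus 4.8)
The plan is to combine the boundary Carleson estimate (Theorem~\ref{thm:carleson}), the interior/boundary Harnack machinery in cones from Section~\ref{sec2+}, and the two structural consequences of \eqref{struct}, namely Lemma~\ref{T:backprel} and Lemma~\ref{lem4.6+}, in much the same spirit as the proof of the corresponding result in \cite{NP}. As always we reduce to $(Z_0,t_0)=(0,0)$ and write $\Omega_{2r}=\Omega_{2r}(0,0)$, $\Delta_{2r}=\Delta_{2r}(0,0)$, $A^\pm_{\rho,\Lambda}=A^\pm_{\rho,\Lambda}(0,0)$, $A_{\rho,\Lambda}=A_{\rho,\Lambda}(0,0)$. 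Normalizing, we may assume $u(A^-_{\rho_0,\Lambda})=m^-=1$, so that $m^+/m^-=m^+$ is the only extra parameter on which $c_2$ is allowed to depend. Fix $(Z,t)\in\Omega_{\rho/c_1}$ with $0<\rho<\rho_1$; the goal is the pointwise bound $u(Z,t)\le c_2\,u(A_{\rho,\Lambda})$.

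First I would use Lemma~\ref{l-coord}: since $(Z,t)\in\Omega_{\rho/c_1}\subset\Omega_{\rho_1}$, there are boundary points $(Z_0^\pm,t_0^\pm)\in\Delta_{c_0\rho}$ and scales $\rho^\pm\approx d:=d((Z,t),\Delta_{2r})$ with $(Z,t)=A^\pm_{\rho^\pm,\Lambda}(Z_0^\pm,t_0^\pm)$. Applying Lemma~\ref{lem4.7} (the cone Harnack estimate) at the boundary point $(0,0)$ gives $u(Z,t)\lesssim(\rho/d)^\gamma u(A^+_{\rho,\Lambda})$, so the only danger is that $d$ is small, i.e.\ $(Z,t)$ is close to $\Delta_{2r}$; when $d\approx\rho$ the claim is already immediate from Lemma~\ref{lem4.7bol+} (comparing $A^+_{\rho,\Lambda}$ and $A_{\rho,\Lambda}$, which lie in a common ball on an admissible path) together with Lemma~\ref{lem4.6+}. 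So the substance is the regime $d\ll\rho$. There, using the representation $(Z,t)=A^+_{\rho^+,\Lambda}(Z_0^+,t_0^+)$ with $\rho^+\approx d$ and Lemma~\ref{lem4.6+} applied at the interior-scale boundary point $(Z_0^+,t_0^+)\in\Delta_{c_0\rho}$ — which is legitimate because $u$ is a nonnegative solution vanishing on $\Delta_{2r}\supset\Delta_{2\cdot(\text{something})}(Z_0^+,t_0^+)$ and because the ratio $m^+/m^-$ controls the analogous ratio at every interior scale via Lemma~\ref{lem4.7rho} and Lemma~\ref{lem:compprinciple} — we get
\begin{equation*}
u(Z,t)=u(A^+_{\rho^+,\Lambda}(Z_0^+,t_0^+))\lesssim_{m^+} u(A_{\rho^+,\Lambda}(Z_0^+,t_0^+)).
\end{equation*}

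Next I would run a Harnack chain argument along the admissible path $\tau\mapsto A_{(1-\tau)\rho,\Lambda}(\tilde Z_0,\tilde t_0)$-type curves connecting the small-scale reference point $A_{\rho^+,\Lambda}(Z_0^+,t_0^+)$ up to the large-scale reference point $A_{\rho,\Lambda}(Z_0,t_0)=A_{\rho,\Lambda}$, exactly as in Lemma~\ref{lem4.7rho} but now for the family $A_{\rho,\Lambda}$ of center points rather than $A^+_{\rho,\Lambda}$. This requires a lower bound $u(A_{\rho,\Lambda}(Z_0^+,t_0^+))\gtrsim (\rho^+/\rho)^{\gamma'} u(A_{\rho^+,\Lambda}(Z_0^+,t_0^+))$ combined with a horizontal Harnack step moving the base point from $(Z_0^+,t_0^+)\in\Delta_{c_0\rho}$ back to $(0,0)$ at scale comparable to $\rho$, via Lemma~\ref{lem4.7bol+}$(i')$ and Lemma~\ref{lem4.6+} (whose conclusion sandwiches $u(A_{\rho,\Lambda})$ between $u(A^-_{\rho,\Lambda})$ and $u(A^+_{\rho,\Lambda})$ up to $c_2$). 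The crucial input making this chain legitimate in the $y_m$-direction is Lemma~\ref{T:backprel}: it lets us replace any point by the point with the "correct" $y_m$-coordinate at the cost of the factor $m^+/m^-$, which is precisely how \eqref{struct} enters and how the path $\gamma(\tau)$ in Lemma~\ref{lem4.6+} is allowed to drift in $y_m$ without harm. Putting the estimates together yields $u(Z,t)\lesssim_{m^+} u(A_{\rho,\Lambda})$, i.e.\ the assertion with $c_2=c_2(m,\kappa,M,m^+/m^-)$ and $c_1=c_1(m,\kappa,M)$ absorbing all the geometric constants.

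The main obstacle I anticipate is the bookkeeping of scales and base points when $d\ll\rho$: one must interpolate between the "small" scale $\rho^+\approx d$ at the base point $(Z_0^+,t_0^+)$ and the "large" scale $\rho$ at $(0,0)$, controlling each transition either by a cone Harnack inequality (Lemma~\ref{lem4.7rho}, Lemma~\ref{lem4.7bol+}) or by the backward Harnack / comparison lemmas (Lemma~\ref{lem4.6+}, Lemma~\ref{T:backprel}), and verifying at each step that the relevant reference points and cones actually sit inside $\Omega_{2r}$ so the hypotheses apply — this is where the constant $c_1$ gets its value, and where the rigidity of the constants $\alpha,\beta,\gamma,\theta$ in Lemma~\ref{harnack} (noted in the remark after it) forces some care. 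The delicate point specific to the Kolmogorov setting, as opposed to the uniformly parabolic case, is that vertical ($x_m$) Harnack chains cannot be built freely without moving $y_m$, so every comparison of $u$ at two points differing in $x_m$ must be routed through Lemma~\ref{T:backprel} to correct the $y_m$-coordinate; tracking that the accumulated correction factor remains a single power of $m^+/m^-$ (and not something that blows up with the number of chain links) is the heart of the matter, and it is exactly what Lemma~\ref{lem4.6+} packages for us.
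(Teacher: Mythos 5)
There is a genuine gap, and it sits at the very center of the statement. After the Carleson estimate (or Lemma \ref{lem4.7}) you have $u(Z,t)\lesssim u(A^+_{\rho,\Lambda})$; the remaining content of Theorem \ref{thm:back} is then the \emph{backward} comparison $u(A^+_{\rho,\Lambda})\lesssim u(A_{\rho,\Lambda})$, i.e.\ control of the solution at a later time by its value at an earlier time. You claim this is immediate from Lemma \ref{lem4.7bol+} together with Lemma \ref{lem4.6+}, but Lemma \ref{lem4.6+} gives exactly the opposite chain $u(A^-_{\rho,\Lambda})\le c_2 u(A_{\rho,\Lambda})\le c_2^2 u(A^+_{\rho,\Lambda})$ --- the direction compatible with the Harnack inequality, which always bounds earlier values by later ones --- and Lemma \ref{lem4.7bol+} only transports information along admissible, backward-in-time paths, again in the Harnack-compatible direction. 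The same problem reappears in your claimed lower bound $u(A_{\rho,\Lambda}(Z_0^+,t_0^+))\gtrsim(\rho^+/\rho)^{\gamma'}u(A_{\rho^+,\Lambda}(Z_0^+,t_0^+))$: these two reference points have the same time coordinate, so no Harnack chain connects them, and the inequality is again of backward type. In effect your argument assumes the conclusion it is meant to prove.

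The paper's proof supplies the missing mechanism. One introduces the extremal scale $\rho^*$ maximizing $\tilde\rho^{-\gamma}u(A^+_{\tilde\rho,\Lambda})$ over $[\rho,\rho_0]$, so that in the nontrivial case $\rho^*\le\rho_0/(8K)$ the definition of $\rho^*$ combined with Theorem \ref{thm:carleson} normalizes $\sup u=1$ on a thin-in-time cylinder $U_{2\tilde K}$ while keeping $u(A^+_{1,\Lambda})\ge\tilde c^{-1}(2\tilde K)^{-\gamma}$ after rescaling $\rho^*=1$. The representation formula \eqref{thm3.4pf_repr} then splits $u(A^+_{1,\Lambda})$ into a contribution from the bottom $\Sigma^-_{\tilde K}$ and one from the lateral part $\Sigma^{\mathrm{int}}_{\tilde K}$; Lemma \ref{important1} shows the lateral contribution is exponentially small in $\tilde K$ and can be absorbed, so $u(A^+_{1,\Lambda})\le 2\sup_{\Sigma^-_{\tilde K}}u$, and Lemma \ref{important2} (which is where Lemma \ref{T:backprel} and hence \eqref{struct} enter) carries the bottom values over to $u(A^-_{1,\Lambda})$ by Harnack chains. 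This yields the backward Harnack inequality $u(A^+_{\rho^*,\Lambda})\le{\bf c}\,u(A^-_{\rho^*,\Lambda})$ at the scale $\rho^*$, which is transferred to the scale $\rho$ by Lemma \ref{lem4.7rho} and \eqref{backpf_est1}--\eqref{backpf_est2}, and finally converted into the stated bound via Lemma \ref{lem4.6+}. None of these steps --- the choice of $\rho^*$, the representation-formula decomposition, or the smallness of the lateral Kolmogorov measure --- appears in your proposal, and without them the time direction cannot be reversed.
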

\begin{proof} We will assume that $(Z_0,t_0)=(0,0)$. To prove the lemma we have to show that there exist constants $1\leq c_1 < \infty$ depending only on $m$, $\kappa$ and $M$, and $1\leq c_2<\infty$ depending only on $m$, $\kappa$, $M$ and $m^+/m^-$, such that
\[
u(Z,t)\leq c_2 u(A_{\rho,\Lambda}),
\]
when $(Z,t)\in \Omega_{\rho/c_1}$ and $0<\rho<\rho_1$. Let $\rho_0$ be defined as in the statement of the lemma, and let $\rho$, $0<\rho<\rho_1$, be fixed. Consider the number
\begin{equation}
    \rho^\ast := \max\lbrace \tilde \rho \mid \rho\leq\tilde \rho \leq \rho_0,\: \phi(\tilde \rho)\geq \phi(\rho) \rbrace,
\end{equation}
where
\begin{equation}
    \phi(\tilde \rho) := \tilde{\rho}^{-\gamma}u(A^+_{\tilde \rho,\Lambda}),
\end{equation}
and where $\gamma$ is the constant in Lemma \ref{lem4.7rho}. Then
\begin{equation}\label{backpf_est1}
    u(A^+_{\rho,\Lambda})\leq \left(\frac{\rho}{\rho^\ast}\right)^{\gamma}u(A^+_{\rho^\ast,\Lambda}).
\end{equation}
From Lemma \ref{lem4.7rho} we obtain
\begin{equation}\label{backpf_est2}
    u(A^-_{\rho^\ast,\Lambda})\leq c\left(\frac{\rho^\ast}{\rho}\right)^\gamma u(A^-_{\rho,\Lambda}).
\end{equation}
We prove the following claim.

\noindent
\textit{Claim: There exists a constant $\mathbf{c} = \mathbf{c} (m,\kappa,M,m^+/m^-)$, $1\leq \bf c <\infty$, such that
\begin{equation}\label{thmbackpf_claim}
    u(A^+_{\rho^\ast,\Lambda})\leq {\bf c}u(A^-_{\rho^\ast,\Lambda}).
\end{equation}}

\noindent
\textit{Proof of the claim. } Let $K\gg1$ be a large degree of freedom. We consider two cases.

\noindent
\textit{Case 1: } $\frac{\rho_0}{8K}<\rho^\ast$. In this case the claim is trivial in the sense that Lemma \ref{lem4.7rho} yields
\[
\frac{u(A^+_{\rho^\ast,\Lambda})}{u(A^-_{\rho^\ast,\Lambda})}\lesssim\left( \frac{\rho_0}{\rho^\ast}\right)^{2\gamma} \frac{u(A^+_{\rho_0,\Lambda})}{u(A^-_{\rho_0,\Lambda})},
\]
so that
\[
u(A^+_{\rho^\ast,\Lambda}) \lesssim \frac{m^+}{m^-} \left( \frac{\rho_0}{\rho^\ast}\right)^{2\gamma}u(A^-_{\rho^\ast,\Lambda}).
\]

\noindent
\textit{Case 2: } $\frac{\rho_0}{8K} \geq \rho^\ast$.
In this case we first note, by the definition of $\rho^\ast$, that  $\rho<\rho^\ast<\rho_0$ and as $\rho^\ast<2K\rho^\ast<\rho_0$ it follows from the definition of $\rho^\ast$ that
\[
u(A^+_{\rho^\ast,\Lambda}) > (2K)^{-\gamma}u(A^+_{2K\rho^\ast,\Lambda}).
\]
Using this  inequality and applying Theorem \ref{thm:carleson} we see that
\begin{equation}\label{backpfclaimest1}
    c^{-1}(2K)^{-\gamma} \sup_{\Omega_{2K\rho^\ast/c}}u \leq u(A^+_{\rho^\ast,\Lambda}),
\end{equation}
for some  $1\leq c < \infty$ depending on $m$, $\kappa$ and $M$. By dilation we can from now on and  without loss of generality assume that $\rho^\ast=1$ and we put $\tilde K:=K/c$. Then \eqref{backpfclaimest1} states that
\begin{equation}\label{backpfclaimest2}
    \tilde{c}^{-1}(2\tilde{K})^{-\gamma} \sup_{U_{2\tilde{K}}}u \leq u(A^+_{1,\Lambda}),
\end{equation}
for some $1\leq \tilde c < \infty$ depending on $m$, $\kappa$ and $M$ and where $U_{R}$ for $0<R<\infty$ is the thin in time  cylinder
\begin{equation}\label{thincylinder}
    U_{R} := \Omega_{R}\cap\lbrace (Z,t)\in\R^{N+1}\mid -4<t<1 \rbrace.
\end{equation}
We may also assume, without loss of generality, that
\[
\sup_{U_{2\tilde K}}u = 1,
\]
which transforms (\ref{backpfclaimest2}) into
\begin{equation}\label{esta}
    u(A^+_{1,\Lambda}) \geq \tilde{c}^{-1}(2\tilde{K})^{-\gamma}.
\end{equation}
We define \begin{equation}\label{thincylsurfs}
    \begin{split}
        \Sigma^-_{\tilde K} &:= (\partial_K U_{2\tilde K}) \cap \lbrace (Z,t)\in\R^{N+1}\mid t=-4 \rbrace,\\
        \Sigma^{\text{int}}_{\tilde K} &:= ((\partial_K U_{2\tilde K})\backslash \Sigma^-_{\tilde K}) \backslash \Delta_{2\tilde K}.
    \end{split}
\end{equation}
Then, given a weak solution $u$ to $\L u=0$ in $\Omega_{2\tilde K}$, vanishing continuously on $\Delta_{2\tilde K}$, we have by Theorem \ref{thm:dp} that if $(Z,t)\in U_{2\tilde K}$, then
\begin{equation}\label{thm3.4pf_repr}
    u(Z,t) = \iint_{\Sigma^-_{\tilde K}}u(\tilde Z,\tilde t)\d\omega(Z,t,\tilde Z,\tilde t) + \iint_{\Sigma^{\text{int}}_{\tilde K}}u(\tilde Z,\tilde t)\d\omega(Z,t,\tilde Z,\tilde t).
\end{equation}
Using \eqref{thm3.4pf_repr}, applying Lemma \ref{important1} stated and proved below, and \eqref{esta} we see that
\begin{equation}
\begin{split}
    u(A^+_{1,\Lambda}) &\leq \iint_{\Sigma^-_{\tilde K}}u(Z,t)\d\omega(A^+_{1,\Lambda},Z,t) + \frac{1}{2}\tilde{c}^{-1}(2\tilde{K})^{-\gamma}\\
    &\leq \iint_{\Sigma^-_{\tilde K}}u(Z,t)\d\omega(A^+_{1,\Lambda},Z,t) + \frac{1}{2}u(A^+_{1,\Lambda}),
\end{split}
\end{equation}
where $\Sigma^-_{\tilde K}$ is as in (\ref{thincylsurfs}). Thus,
\begin{equation}\label{backpfclaimest3}
    u(A^+_{1,\Lambda}) \leq 2\sup_{\Sigma^-_{\tilde K}} u.
\end{equation}
To proceed we use again Theorem \ref{thm:carleson} to see that for each choice of $(\tilde Z,\tilde t)\in \Sigma^-_{\tilde K} \cap \partial\Omega$ there exists a small $\varepsilon>0$, depending only on $m$, $\kappa$ and $M$, such that
\[
\sup_{\Sigma^-_{\tilde K}\cap\Omega_{\varepsilon}(\tilde Z,\tilde t)}u \leq c u(A^+_{c\varepsilon,\Lambda}(\tilde Z,\tilde t)).
\]
This inequality, together with (\ref{backpfclaimest3}) and Lemma \ref{coneconditions-}, yield
\[
u(A^+_{1,\Lambda}) \leq 2c u(\hat Z,\hat t),
\]
for some $(\hat Z,\hat t)\in \tilde{U}_{2\tilde K,\varepsilon}$ where
\begin{equation}\label{thincylthicsurf}
\begin{split}
      \tilde{U}_{2\tilde K,\varepsilon} :=&\: U_{2\tilde K} \cap \lbrace (Z,t)\in\R^{N+1}\mid -4\leq t \leq -4+(c\varepsilon)^2 \rbrace\\ &\cap \lbrace (Z,t)\in\R^{N+1} \mid d((Z,t),\partial\Omega)\geq \varepsilon/c \rbrace.
\end{split}
  \end{equation}
The claim now follows from an application of Lemma \ref{important2} which is stated and proved below. \qed

Using Theorem \ref{thm:carleson}, (\ref{backpf_est1}), (\ref{backpf_est2}) and the claim we see that
\begin{equation}
    \begin{split}
        \sup_{\Omega_{\rho/c}}u &\leq cu(A^+_{\rho,\Lambda}) \leq c\left(\frac{\rho}{\rho^\ast}\right)^{\gamma}u(A^+_{\rho^\ast,\Lambda})\\
        &\leq c{\bf c}\left(\frac{\rho}{\rho^\ast}\right)^{\gamma} u(A^-_{\rho^\ast,\Lambda}) \leq
         c^2 {\bf c} u(A^-_{\rho^\ast,\Lambda}).
    \end{split}
\end{equation}
Existence of the sought constants $c_1$ and $c_2$ now follow from Lemma \ref{lem4.6+} and the proof of the lemma, and therefore Theorem \ref{thm:back}, is complete.
\end{proof}

\subsection{Technical lemmas: Lemma \ref{important1} and Lemma \ref{important2}}

\begin{lemma}\label{important1} Let $\tilde c$ and $\gamma$ be as in \eqref{esta}. Then there exists $\tilde K=\tilde K(m,\kappa,M)$, $1<K<\infty$, such that
$$\iint_{\Sigma^{\text{int}}_{\tilde K}}u(Z,t)\d\omega(A^+_{1,\Lambda}, Z, t) \leq \frac 12  \tilde c^{-1}(2 \tilde K)^{-\gamma}.$$
\end{lemma}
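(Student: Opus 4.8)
The statement quantifies how little parabolic mass, measured from the reference point $A^+_{1,\Lambda}$, is carried by the ``interior'' part of the Kolmogorov boundary of the thin cylinder $U_{2\tilde K}$, which sits at distance roughly $\tilde K$ from $A^+_{1,\Lambda}$ in the space variables while living in the same narrow time slab $-4<t<1$. The idea is that a solution that sees $A^+_{1,\Lambda}$ from $\Sigma^{\mathrm{int}}_{\tilde K}$ must propagate a long way (a distance comparable to $\tilde K$) through the domain before reaching $A^+_{1,\Lambda}$, and the cost of such propagation is controlled by the Gaussian decay of the fundamental solution together with the estimate $\Gamma \lesssim \Gamma^{\lambda^+}$ from Lemma~\ref{lem_fsolbounds}. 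Since $u$ is normalized by $\sup_{U_{2\tilde K}} u = 1$ (as arranged just before \eqref{esta}), it suffices to bound $\omega\bigl(A^+_{1,\Lambda}, \Sigma^{\mathrm{int}}_{\tilde K}\bigr)$ itself by $\tfrac12 \tilde c^{-1}(2\tilde K)^{-\gamma}$ for $\tilde K$ large.

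\textbf{Key steps.} First I would set up a comparison: build an auxiliary supersolution $\Phi$ (a Cauchy solution with data on $\{t=-4\}$, as in the proof of Lemma~\ref{boundaryholderkey}, Claim~2) whose initial data is $\ge 1$ on the spatial projection of $\Sigma^{\mathrm{int}}_{\tilde K}$; by the maximum principle on $U_{2\tilde K}$ one gets $\omega(A^+_{1,\Lambda}, \Sigma^{\mathrm{int}}_{\tilde K}) \le c\,\Phi(A^+_{1,\Lambda})$. Second, represent $\Phi$ via $\Gamma$, use $\Gamma \lesssim \Gamma^{\lambda^+}$, and carry out exactly the Gaussian estimate done in Claim~2 of Lemma~\ref{boundaryholderkey}: the support of the data lies outside the cube $\hat Q_{2\tilde K - 2}$ (in suitable coordinates), so on that region one has, for $A^+_{1,\Lambda}$ which is $O(1)$ in space and at time $1$ relative to the data plane $t=-4$,
\begin{equation*}
\Gamma^{\lambda^+}(A^+_{1,\Lambda}, \tilde Z, -4) \lesssim c\, e^{-c^{-1}\tilde K^2},
\end{equation*}
and the volume of the shell being polynomial in $\tilde K$ gives $\Phi(A^+_{1,\Lambda}) \lesssim c\, e^{-c^{-1}\tilde K^2}\tilde K^{\eta}$ for some fixed $\eta$. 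Third, since $e^{-c^{-1}\tilde K^2}\tilde K^{\eta}$ decays faster than any power $(2\tilde K)^{-\gamma}$, one can choose $\tilde K = \tilde K(m,\kappa,M)$ — and here I must check the dependence is only on $m,\kappa,M$ and on the fixed $\gamma$ and $\tilde c$ from \eqref{esta}, which are themselves $(m,\kappa,M)$-quantities — so large that
\begin{equation*}
c\, e^{-c^{-1}\tilde K^2}\tilde K^{\eta} \le \tfrac12\, \tilde c^{-1}(2\tilde K)^{-\gamma}.
\end{equation*}
Combining with the normalization $\sup u \le 1$ finishes the proof.

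\textbf{Main obstacle.} The delicate point is the geometry: $\Sigma^{\mathrm{int}}_{\tilde K}$ is the part of $\partial_K U_{2\tilde K}$ other than the bottom slice $\{t=-4\}$ and other than $\Delta_{2\tilde K}$, so it consists of the lateral faces of the box $Q_{M,2\tilde K}$ — faces where some $|x_i|$ or $|y_i|$ is maximal. One has to verify that on all of these faces the point $A^+_{1,\Lambda}$ is ``far'' in the sense that makes the Gaussian factor in $\Gamma^{\lambda^+}$ small, using the explicit form of $\mathcal{C}(t)^{-1}$ in \eqref{def:matrixCinv} and the two-case split ($|X-\tilde X|$ large versus $|Y-\tilde Y|$ large) exactly as in Claim~2 of Lemma~\ref{boundaryholderkey}; the face $x_m = 4M(2\tilde K)$ and the faces $y_i = \pm (2\tilde K)^3$ force either the $X$-Gaussian or the $Y$-Gaussian to be of size $e^{-c^{-1}\tilde K^2}$. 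The second, more bookkeeping-type obstacle is to ensure the normalization chain (that $\sup_{U_{2\tilde K}}u = 1$ and that $\gamma,\tilde c$ depend only on $m,\kappa,M$) is consistent with the claim that $\tilde K$ depends only on $m,\kappa,M$; this is automatic once one recalls that $\gamma$ comes from Lemma~\ref{lem4.7rho} and $\tilde c$ from Theorem~\ref{thm:carleson}, both $(m,\kappa,M)$-constants.
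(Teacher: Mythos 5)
Your proposal is correct and follows essentially the same route as the paper: a Cauchy-data barrier $\Phi$ supported on a shell of $\{t=-4\}$ near the spatial projection of $\Sigma^{\text{int}}_{\tilde K}$, the maximum principle comparison $\omega(A^+_{1,\Lambda},\Sigma^{\text{int}}_{\tilde K})\le c\,\Phi(A^+_{1,\Lambda})$, the Gaussian upper bound $\Phi(A^+_{1,\Lambda})\lesssim e^{-c^{-1}\tilde K^2}\tilde K^{\eta}$ via $\Gamma\lesssim\Gamma^{\lambda^+}$ and the two-case split of Claim~2 of Lemma~\ref{boundaryholderkey}, and finally choosing $\tilde K$ large against the fixed $(m,\kappa,M)$-constants $\tilde c,\gamma$. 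The only step you leave implicit is that the comparison requires $\Phi\gtrsim 1$ on all of $\Sigma^{\text{int}}_{\tilde K}$ (i.e.\ on the lateral faces for every $t\in(-4,1)$, not just at the data plane), which is the Claim~1-style lower Gaussian bound that the paper invokes explicitly.
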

\begin{proof} We have
\begin{equation}
    \iint_{\Sigma^{\text{int}}_{\tilde K}}u(Z,t)\d\omega(A^+_{1,\Lambda}, Z, t) \leq \omega(A^+_{1,\Lambda},\Sigma^{\text{int}}_{K}).
\end{equation}
Let $1\leq\lambda<\tilde K$ be a degree of freedom. We construct similarly to in the proof of Lemma \ref{boundaryholder} a test function $\phi\in C^\infty(\R^{N+1})$, $0\leq\phi\leq 1$, such that
\begin{equation}
    \begin{cases}
        \phi(Z,t) &= 1, \: \text{for }(Z,t)\in Q_{\tilde K+\lambda}\backslash Q_{\tilde K-\lambda} \cap \lbrace (Z,t)\in\R^{N+1}\mid t=-4 \rbrace,\\
        \phi(Z,t) &= 0, \: \text{for }(Z,t)\in Q_{\tilde K-\lambda-1}\cap  \lbrace (Z,t)\in\R^{N+1}\mid t=-4 \rbrace,\\
        \phi(Z,t) &= 0, \: \text{for }(Z,t)\in (\R^{N+1}\backslash Q_{\tilde K+\lambda+1}) \cap \lbrace (Z,t)\in\R^{N+1}\mid t=-4 \rbrace,
    \end{cases}
\end{equation}
and let $\Phi$ be the solution to
\[
\begin{cases}
\L \Phi = 0,\quad \text{in } \lbrace (Z,t)\in\R^{N+1}\mid t>-4  \rbrace,\\
\Phi(Z,-4) = \phi(Z,-4).
\end{cases}
\]
Arguing as in the proof of Claim 1 in the proof of Lemma \ref{boundaryholder}, we see that there exist $1\leq c < \infty$ and $1\leq\lambda <\infty$ both only depending on $m$, $\kappa$ and $M$, such that
\[
\Phi(Z,t) \geq c^{-1},
\]
whenever $(Z,t)\in \Sigma^{\text{int}}_{\tilde K}$.
It follows then by the maximum principle that
\begin{equation}\label{importantpf_est1}
    \iint_{\Sigma^{\text{int}}_{\tilde K}}u(Z,t)\d\omega(A^+_{1,\Lambda}, Z, t) \leq c\Phi(A^+_{1,\Lambda}).
\end{equation}
Furthermore, by noting that
\[
\Phi(A^+_{1,\Lambda}) = \iint_{\R^{N+1}}\Gamma(A^+_{1,\Lambda},\hat Z,-4)\phi(\hat Z,-4)\d\hat Z,
\]
and arguing as in the proof of Claim 2 in the proof of Lemma \ref{boundaryholder}, we may deduce that
\begin{equation}
    \Phi(A^+_{1,\Lambda}) \leq c e^{-c^{-1}{\tilde K}^{2}}{\tilde K}^{\eta},
\end{equation}
with both $1\leq c<\infty$ and $1<\eta <\infty$ independent of $\tilde K$. Combining the above with (\ref{importantpf_est1}) yields
\begin{equation}
    \iint_{\Sigma^{\text{int}}_{\tilde K}}u(Z,t)\d\omega(A^+_{1,\Lambda}, Z, t) \leq ce^{-c^{-1}{\tilde K}^{2}}{\tilde K}^{\eta}.
\end{equation}
Thus the lemma follows by picking $\tilde K$ large enough.
\end{proof}

\begin{lemma}\label{important2}  Let $(Z,t)\in \tilde{U}_{\tilde K,\varepsilon}$ be arbitrary. Then there exists a constant $$c=c(m,\kappa,M,\varepsilon,\tilde K,m^+/m^-),$$ such that
\[
u(Z,t) \leq cu(A^-_{1,\Lambda}).
\]
\end{lemma}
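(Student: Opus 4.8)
The plan is to connect an arbitrary point $(Z,t)\in\tilde U_{\tilde K,\varepsilon}$ to the reference point $A^-_{1,\Lambda}=A^-_{1,\Lambda}(0,0)$ by a Harnack chain whose length is controlled in terms of $m$, $\kappa$, $M$, $\varepsilon$, $\tilde K$ and the ratio $m^+/m^-$. Recall that $\tilde U_{\tilde K,\varepsilon}$ consists of points in $U_{2\tilde K}=\Omega_{2\tilde K}\cap\{-4<t<1\}$ that lie in the thin slab $-4\le t\le -4+(c\varepsilon)^2$ and are at distance at least $\varepsilon/c$ from $\partial\Omega$. Since $(Z,t)$ is quantitatively interior and sits near the bottom time level $t=-4$, while $A^-_{1,\Lambda}=(0,\Lambda,0,\tfrac23\Lambda,-1)$ is a fixed interior point sitting in the past relative to the origin, the two points are separated by a bounded $d$-distance and a bounded number of time units, with both located a definite distance from the lateral boundary of $\Omega_{2\tilde K}$. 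First I would fix a cylinder $Q^-_{r_0}(A^-_{1,\Lambda})\subset\Omega_{2\tilde K}$ with $r_0=r_0(m,M)$, then build an explicit admissible path from $A^-_{1,\Lambda}$ backwards and downwards in time into the interior region containing $(Z,t)$; for this I invoke Lemma \ref{akkaa}, which produces the admissible path $\tilde\gamma(\tau)$ of \eqref{e-gamma-t} joining any two points with $\tilde t<t$, together with the quantitative cost estimate
\[
u(\tilde Z,\tilde t)\le c^{\,\bigl(1+\frac1h\langle\mathcal{C}^{-1}(t-\tilde t)(Z-E(t-\tilde t)\tilde Z),\,Z-E(t-\tilde t)\tilde Z\rangle\bigr)}u(Z,t).
\]

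The key point is to verify the hypothesis \eqref{e-incl---} of Lemma \ref{akkaa}, namely that the path and the Whitney-type cylinders $Q^-_{r_j}(Z_j,t_j)$ along it stay inside $\Omega$. This is where the assumption $(Z,t)\in\tilde U_{\tilde K,\varepsilon}$ is used: the lower bound $d((Z,t),\partial\Omega)\ge\varepsilon/c$ and the confinement to $\Omega_{2\tilde K}$ give a positive, quantitative tube around the admissible path inside which it is free to travel, so the radii $r_j$ can be taken bounded below by a constant depending only on $m$, $\kappa$, $M$, $\varepsilon$ and $\tilde K$, and the number of chain links $k$ is then bounded by the same parameters. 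Since the cost exponent in Lemma \ref{akkaa} is $1+\tfrac1h\|{\cdot}\|^2$ with the quadratic form evaluated on $(Z,t)$ and $A^-_{1,\Lambda}$, which is itself bounded by a constant of the stated dependence, we obtain $u(Z,t)\le C\,u(A^-_{1,\Lambda})$ with $C=C(m,\kappa,M,\varepsilon,\tilde K)$. The explicit dependence on $m^+/m^-$ enters only if one needs to first reposition within $\Omega_{2\tilde K}$ using Lemma \ref{T:backprel} (or Lemma \ref{lem4.6+}) to pass between a point differing from $A^-_{1,\Lambda}$ in the $y_m$-coordinate — precisely the situation governed by the structural assumption \eqref{struct} — and that is why $m^+/m^-$ appears in the constant.

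The main obstacle I anticipate is the geometric bookkeeping needed to check \eqref{e-incl---}: one must ensure that the admissible path from $A^-_{1,\Lambda}$ down to the level $t\approx-4$ and over to an arbitrary quantitatively-interior $(Z,t)$ never exits $\Omega_{2\tilde K}$, which requires controlling both the spatial excursion of $\tilde\gamma$ (via the formula \eqref{e-gamma-t} and the bounds \eqref{def:matrixC}, \eqref{def:matrixCinv} on $\mathcal{C}(t)$ and $E(s)$) and the sizes of the cylinders $Q^-_{r_j}$ attached at the partition points $\{\tau_j\}$ of Lemma \ref{akkaa-}. Once the containment is established, the estimate itself is immediate from the cost bound. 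An alternative, perhaps cleaner, route is to use instead the propagation-set formulation: observe that $\tilde U_{\tilde K,\varepsilon}$ is a compact set contained in the interior of the propagation set $\mathcal{A}_{A^-_{1,\Lambda}}(\Omega_{2\tilde K})$ — since one can reach every such interior point by an admissible path running backward in time from $A^-_{1,\Lambda}$ — and then apply Lemma \ref{t-1} directly, absorbing all the geometry into the constant $c_K$. I would present the argument this way to avoid grinding through the chain-length estimate explicitly.
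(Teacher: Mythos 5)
Your primary plan coincides with what the paper actually does: the paper's proof of Lemma \ref{important2} is a citation to Lemma 6.2 of \cite{NP}, which builds a Harnack chain along an admissible path from $A^-_{1,\Lambda}$ down to the slab $t\approx -4$ using Lemmas \ref{akkaa-} and \ref{akkaa}, and then corrects the $y_m$-coordinate of the endpoint via Lemma \ref{T:backprel}. You correctly identified both ingredients, and in particular you correctly located the source of the $m^+/m^-$ dependence in the $y_m$-repositioning step. On that route the only work is the containment check \eqref{e-incl---}, which the quantitative interiority built into $\tilde U_{\tilde K,\varepsilon}$ is designed to make possible.

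However, the ``cleaner'' presentation you say you would actually write up --- declaring $\tilde U_{\tilde K,\varepsilon}$ to be a compact subset of $\mathrm{Int}\bigl(\mathcal{A}_{A^-_{1,\Lambda}}(\Omega_{2\tilde K})\bigr)$ and invoking Lemma \ref{t-1} to ``absorb all the geometry into $c_K$'' --- has a genuine gap. The inclusion is precisely the non-trivial point: along an admissible path $\dot y_m=\lambda\, x_m$ is slaved to $x_m$, and $x_m$ is constrained by $x_m>\psi$, so one cannot in general steer to an arbitrary prescribed $y_m$ while remaining in the domain; this rigidity is exactly why the paper imposes \eqref{struct} and routes the argument through Lemma \ref{T:backprel} rather than through reachability alone. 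Moreover, if the inclusion did hold, Lemma \ref{t-1} would deliver a constant depending only on $m,\kappa,M,\varepsilon,\tilde K$, with no $m^+/m^-$; the appearance of $m^+/m^-$ in the statement is the tell that the propagation set of $A^-_{1,\Lambda}$ does not cover $\tilde U_{\tilde K,\varepsilon}$ and that a boundary comparison step is unavoidable. Keep the first route.
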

\begin{proof}
This is a consequence of Lemma 6.2 in \cite{NP}. Indeed, in \cite{NP} it is proved, using Lemma \ref{akkaa-} and Lemma \ref{akkaa}, that it is possible to construct an admissible path,
connecting an arbitrary point $(Z,t)\in \tilde{U}_{\tilde K,\varepsilon}$ to $A^-_{1,\Lambda}$, and an associated Harnack chain so that one, in combination with
 Lemma \ref{T:backprel} can prove that
 $$u(Z,t) \leq c(m,\kappa,M,\varepsilon,\tilde K,m^+/m^-)u(A^-_{1,\Lambda}).$$
 We omit further details and refer the reader to Lemma 6.2 in \cite{NP}.
\end{proof}

\section{Proof of theorem \ref{thm:quotients}}\label{sec11}
In this section we give the proof of Theorem \ref{thm:quotients}. To set up, let $\Omega\subset\R^{N+1}$ be a Lipschitz domain with constant $M$ and assume \eqref{struct}. Without loss of generality we assume that $(0,0)\in\partial\Omega$. Let $r>0$ and consider $\Omega_{2r}=\Omega_{2r}(0,0)$. Let $\Lambda$, $c_0$, $\rho_0$ and $\rho_1$ be as in Remark \ref{remnot}.  Assume that $u$ and $v$ are non-negative weak solutions to $\L u=0$ in $\Omega$, vanishing continuously on $\Delta_{2r}=\Delta_{2r}(0,0)$. Let $m^\pm_1$ and $m^\pm_2$ be as in (\ref{singa1u1}) and recall that by Lemma \ref{lem4.7}, we have
\[
\min\lbrace m^-_1,m^-_2 \rbrace>0 \implies \min\lbrace m^+_1,m^+_2 \rbrace > 0.
\]
For any $\tilde r>0$ and $(Z,t)\in\Omega_{2r}\cup \Delta_{2r}$ we will in the following use the notation
\[
\osc_{\tilde r,(Z,t)}(u) := \sup_{\Omega_{2r}\cap Q_{\tilde r}(Z,t)} u - \inf_{\Omega_{2r}\cap Q_{\tilde r}(Z,t)} u
\]
for the oscillation of a function $u$ in the set ${\Omega_{2r}\cap Q_{\tilde r}(Z,t)}$.
Our proof will be based on a decrease of the oscillation
\[
\osc_{\tilde r,(Z,t)}\left( \frac{v}{u} \right).
\]
Note first that by Lemma \ref{lem:compprinciple} and the assumptions on $m^\pm_1$ and $m^\pm_2$, there exists a constant $c=c(m,\kappa,M)$ such that
\[
\osc_{\rho_1/c,(0,0)}\left(\frac{v}{u}\right) \leq c\frac{m^-_1}{m^+_2} - c^{-1}\frac{m^+_1}{m^-_2} < \infty.
\]
Let now $\tilde{r}>0$ be fixed and let $\varepsilon=\varepsilon(m,\kappa,M)$, $0<\varepsilon<1$ be a small degree of freedom to be chosen later. Fix $0<\rho<\varepsilon \tilde{r}$ and $(Z,t)\in\Omega_{\varepsilon \tilde{r}}$. We consider two cases.

\noindent
\textit{Case 1: $Q_{\rho}(Z,t)\subset \Omega_{2r}$}.

In this case we assume that $\rho \leq d((Z,t),\Delta_{2r})$. Consider the function
\begin{equation}\label{quotpfvdef}
\tilde v(\tilde Z,\tilde t) := \left(\osc_{{\rho},(Z,t)}\left(\frac{v}{u}\right)\right)^{-1}\left[ v(\tilde Z,\tilde t) -  \inf_{\Omega_{2r}\cap Q_{\rho}(Z,t)} \left( \frac{v}{u} \right) u(\tilde Z,\tilde t) \right].
\end{equation}
Two immediate consequences of the definition of the above function are
\begin{equation}\label{quotpf_1}
    0\leq \frac{\tilde v(\tilde Z, \tilde t)}{u(\tilde Z,\tilde t)}\leq 1,
\end{equation}
when $(\tilde Z,\tilde t)\in\Omega_{2r}\cap Q_{\rho}(Z,t)$, and
\begin{equation}\label{quotpf_2}
    \osc_{{\rho},(Z,t)}\left(\frac{\tilde v}{u}\right)=1.
\end{equation}
We introduce an additional small degree of freedom $0<\delta<1$ and assume
\begin{equation}\label{quotpf_3}
    \frac{\tilde v((Z,t)\circ (0,-\delta\rho^2))}{u((Z,t)\circ (0,-\delta\rho^2))} \geq \frac{1}{2}.
\end{equation}
By Lemma \ref{harnack}, we see there exists $\tilde \delta =\tilde \delta(m,\delta)$, $0<\tilde \delta<1$, such that
\begin{equation}
    \tilde v((Z,t)\circ (0,-\delta\rho^2)) \leq c\tilde v(\tilde Z,\tilde t),
\end{equation}
and
\begin{equation}
    u(\tilde Z,\tilde t) \leq cu((Z,t)\circ (0,\delta\rho^2)),
\end{equation}
when $(\tilde Z,\tilde t)\in Q_{\tilde \delta\rho}(Z,t)$. Furthermore, using  Theorem \ref{thm:back} we see that
\begin{equation}\label{thm34pfbackward}
    u((Z,t)\circ (0,\delta\rho^2)) \leq c u((Z,t)\circ (0,-\delta\rho^2)).
\end{equation}
We arrive then by using \eqref{quotpf_1}-\eqref{thm34pfbackward} at
\begin{equation}
    \frac{1}{2}\leq \frac{\tilde v((Z,t)\circ (0,-\delta\rho^2))}{u((Z,t)\circ (0,-\delta\rho^2))} \leq c\frac{\tilde v(\tilde Z,\tilde t)}{u(\tilde Z,\tilde t)}
    \leq c,
\end{equation}
for a constant $c=c(m,\kappa,M,m_1^+/m_1^-, m_2^+/m_2^-)$, when $(\tilde Z,\tilde t)\in Q_{\tilde \delta\rho}(Z,t)$ and thus,
\begin{equation}\label{quotpf_4}
    \osc_{{\tilde \delta\rho},(Z,t)}\left(\frac{\tilde v}{u}\right)\leq \theta,
\end{equation}
with $\theta=1-1/(2c)$, in particular $0<\theta<1$. Plugging in the definition of $\tilde v$ into (\ref{quotpf_4}) yields
\[
\left(\osc_{{\rho},(Z,t)}\left(\frac{v}{u}\right)\right)^{-1}\osc_{{\tilde \delta\rho},(Z,t)}\left(\frac{v}{u}\right) \leq \theta
\]
and consequently
\begin{equation}\label{quotpf_5}
    \osc_{{\tilde \delta\rho},(Z,t)}\left(\frac{v}{u}\right) \leq \theta \osc_{{\rho},(Z,t)}\left(\frac{v}{u}\right).
\end{equation}
Note that if (\ref{quotpf_3}) does not hold then we  replace $\tilde v$ by $u-\tilde v$ and note that (\ref{quotpf_1}), (\ref{quotpf_2}) and (\ref{quotpf_3}) holds for $u-\tilde v$, so that we may deduce that
\[
\osc_{{\tilde \delta\rho},(Z,t)}\left(\frac{u-\tilde v}{u}\right)\leq \theta,
\]
and hence that (\ref{quotpf_5}) holds. By iterating (\ref{quotpf_5}) we obtain
\begin{equation}\label{quotpf_decrease1}
    \osc_{{\rho},(Z,t)}\left(\frac{v}{u}\right) \leq \left(\frac{\rho}{\tilde \delta d((Z,t),\Delta_{2r})}\right)^{\sigma_1} \osc_{{d((Z,t),\Delta_{2r})},(Z,t)}\left(\frac{v}{u}\right),
\end{equation}
for some $\sigma_1=\sigma_1(m,\kappa,M,m_1^+/m_1^-, m_2^+/m_2^-)$, $0<\sigma_1<1$.

\vspace{10pt}

\noindent
\textit{Case 2: $Q_{\rho}(Z,t)\cap (\R^{N+1}\backslash\Omega_{2r})\neq\emptyset$}.

In this case we assume that $\rho > d((Z,t),\Delta_{2r})$.
Fix a point $(Z_0,t_0)\in\Delta_{2r}$ such that $$d((Z,t),(Z_0,t_0))=d((Z,t),\Delta_{2r}).$$
Then there exists a constant ${\bf c}={\bf c}(m,\kappa,M)$, $1\leq{\bf c}<\infty$, such that
\begin{equation}\label{quotpf_lessosc}
    Q_{\rho}(Z,t) \subset Q_{2{\bf c}\rho}(Z_0,t_0),
\end{equation}
and thus
\[
\osc_{{\rho},(Z,t)}\left(\frac{v}{u}\right) \leq \osc_{{2{\bf c}\rho},(Z,t)}\left(\frac{v}{u}\right).
\]
Let $\hat c$ be the constant appearing in Lemma \ref{lem:compprinciple} and put $\tilde \rho := 8{\hat c}{\bf c}\rho$. Define now $\tilde v$ as in (\ref{quotpfvdef}) but with $\rho$ replaced by $\tilde \rho$ and $(Z,t)$ replaced by $(Z_0,t_0)$. Consequently, (\ref{quotpf_1}) and (\ref{quotpf_2}) holds with $\rho$ replaced by $\tilde \rho$ and $(Z,t)$ replaced by $(Z_0,t_0)$. Assume now that
\begin{equation}\label{quotpf_halfcond2}
    \frac{\tilde v(A^-_{\tilde \rho/2,\Lambda}(Z_0,t_0))}{u(A^-_{\tilde \rho/2,\Lambda}(Z_0,t_0))} \geq \frac{1}{2}.
\end{equation}
Since $\tilde v$ and $u$ both are solutions to $\L u=0$ in $\Omega_{2r}$ that are non-negative in $\Omega_{2r}\cap Q_{\tilde \rho}(Z_0,t_0)$ and vanish continuously on $\Delta_{2r}$, we may apply Lemma \ref{lem:compprinciple} and \eqref{quotpf_1} to obtain
\begin{equation}
    \frac{\tilde v(A^-_{\tilde \rho/2,\Lambda}(Z_0,t_0))}{u(A^+_{\tilde \rho/2,\Lambda}(Z_0,t_0))} \leq \hat c \frac{\tilde v(\tilde Z,\tilde t)}{u(\tilde Z,\tilde t)} \leq \hat c,
\end{equation}
when $(\tilde Z,\tilde t)\in \Omega_{2r}\cap Q_{2{\bf c}\rho}(Z_0,t_0)$. Arguing as in the proof of Theorem \ref{thm:back}, recall in particular \eqref{thmbackpf_claim}, we conclude that
\begin{equation}
    \frac{\tilde v(A^-_{\tilde \rho/2,\Lambda}(Z_0,t_0))}{u(A^-_{\tilde \rho/2,\Lambda}(Z_0,t_0))} \leq c \frac{\tilde v(A^-_{\tilde \rho/2,\Lambda}(Z_0,t_0))}{u(A^+_{\tilde \rho/2,\Lambda}(Z_0,t_0))},
\end{equation}
where $c=c(m,\kappa,M,m_2^+/m_2^-)$, and hence
\[
\frac{1}{2}\leq c{\hat c}\frac{\tilde v(\tilde Z,\tilde t)}{u(\tilde Z,\tilde t)} \leq c{\hat c},
\]
whenever $(\tilde Z,\tilde t)\in \Omega_{2r}\cap Q_{2{\bf c}\rho}(Z_0,t_0)$. It follows then that
\begin{equation}
    \osc_{{2{\bf c}\rho},(Z_0,t_0)}\left(\frac{\tilde v}{u}\right)\leq \theta,
\end{equation}
where $\theta = 1-1/(2c{\hat c})$, in particular we have again $0<\theta<1$. Rearranging the above expression similarly as in Case 1 and using (\ref{quotpf_lessosc}), we obtain
\begin{equation}\label{quotpf_6}
    \osc_{{\rho},(Z,t)}\left(\frac{v}{u}\right)\leq \theta \osc_{{\tilde \rho},(Z_0,t_0)}\left(\frac{ v}{u}\right).
\end{equation}
Also similarly to Case 1, if (\ref{quotpf_halfcond2}) does not hold we replace $\tilde v$ by $u-\tilde v$ and conclude again that (\ref{quotpf_6}) holds. Iterating (\ref{quotpf_6}) yields
\begin{equation}\label{quotpf_decrease2}
    \osc_{{\rho},(Z,t)}\left(\frac{v}{u}\right) \leq
    \left(\frac{\tilde \rho}{\tilde{r}}\right)^{\sigma_2} \osc_{{\tilde{r}},(Z_0,t_0)}\left(\frac{v}{u}\right),
\end{equation}
for some $\sigma_2=\sigma_2(m,\kappa,M,m_1^+/m_1^-, m_2^+/m_2^-)$, $0<\sigma_2<1$.

Now we combine (\ref{quotpf_decrease1}) and (\ref{quotpf_decrease2}), and put $\sigma = \min\lbrace \sigma_1,\sigma_2 \rbrace$ to arrive at
\begin{equation}
    \osc_{{\rho},(Z,t)}\left(\frac{v}{u}\right) \leq c\left( \frac{\rho}{\tilde{r}} \right)^\sigma \osc_{{\tilde{r}},(Z_0,t_0)}\left(\frac{v}{u}\right)
\end{equation}
whenever $(Z,t)\in \Omega_{\rho}$, $\rho \leq \epsilon \tilde{r}$. Let now $(\tilde Z,\tilde t)\in\Omega_{\rho}$.
From the above inequality it follows by choosing $\varepsilon$ small enough, and applying Lemma \ref{lem:compprinciple} and Theorem \ref{thm:back}, that
\begin{equation}
\begin{split}
    \left| \frac{v(Z,t)}{u(Z,t)} - \frac{v(\tilde Z,\tilde t)}{u(\tilde Z,\tilde t)} \right| &\leq
    \osc_{{d((Z,t),(\tilde Z,\tilde t))},(Z,t)}\left(\frac{v}{u}\right)\\
    &\leq c\left(\frac{d((Z,t),(\tilde Z,\tilde t))}{\tilde{r}}\right)^\sigma \osc_{{\tilde{r}},(0,0)}\left(\frac{v}{u}\right)\\
    &\leq c\left(\frac{d((Z,t),(\tilde Z,\tilde t))}{\tilde{r}}\right)^\sigma \frac{v(A_{\tilde{r},\Lambda})}{u(A_{\tilde{r},\Lambda})}.
\end{split}
\end{equation}
This concludes the proof.

\section{Proof of Theorem \ref{thm:doub}}\label{sec12}
The purpose of this section is to prove Theorem \ref{thm:doub}. The content of the theorem is that if $\Omega$ is a Lipschitz domain and \eqref{struct} holds, then the associated Kolmogorov measure is a doubling measure, with doubling constant depending only on $m$, $\kappa$, and $M$. Theorem \ref{thm:doub} follows immediately from the following lemma as we without loss of generality can assume that $(Z_0,t_0)=(0,0)$.

\begin{lemma}\label{lem.doub}
Let $\Omega$ be a Lipschitz domain with constant $M$ and assume \eqref{struct}. Let $r>0$ and let $\Lambda$ be as in Remark \ref{remnot}. Then there exists a constant $c=c(m,\kappa,M)$, $1\leq c<\infty$, such that
\begin{equation}
    \omega(A^+_{r,\Lambda},\Delta_{2\tilde r}(\tilde Z_0,\tilde t_0)) \lsim \omega(A^+_{r,\Lambda},\Delta_{\tilde r}(\tilde Z_0,\tilde t_0)),
\end{equation}
when $(\tilde Z_0,\tilde t_0)\in\partial\Omega$ and $\Delta_{\tilde r}(\tilde Z_0,\tilde t_0)\subset \Delta_{r/c}$.
\end{lemma}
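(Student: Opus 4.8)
The plan is to follow the now-standard scheme that derives the doubling property from the comparison principle (Lemma \ref{lem:compprinciple}), the Carleson estimate (Theorem \ref{thm:carleson} / Proposition \ref{gensalsa-0}), and the two-sided Green function estimates of Section \ref{sec8} (Lemma \ref{gensalsa-a} and Lemma \ref{gensalsa-asecondpart}), together with the backward Harnack inequality (Theorem \ref{thm:back}) which is where the structural assumption \eqref{struct} enters. Throughout we may assume $(Z_0,t_0)=(0,0)$ and, after a dilation, normalize so that the base scale is comparable to $1$; write $\omega=\omega(A^+_{r,\Lambda},\cdot)$. The key point is to estimate both $\omega(\Delta_{2\tilde r}(\tilde Z_0,\tilde t_0))$ from above and $\omega(\Delta_{\tilde r}(\tilde Z_0,\tilde t_0))$ from below in terms of the same Green function quantity $\tilde r^{\mathbf q-2}G(A^+_{r,\Lambda}, P)$ for a suitably chosen reference point $P=A^\pm_{c\tilde r,\Lambda}(\tilde Z_0,\tilde t_0)$.

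First I would record the upper bound: by Lemma \ref{gensalsa-asecondpart}, applied at $(\tilde Z_0,\tilde t_0)$ with radius $2\tilde r$ (here one needs $\Delta_{2\tilde r}(\tilde Z_0,\tilde t_0)$ to sit inside $\Delta_{r/c}$, which holds by hypothesis after adjusting $c$, so the solution $(Z,t)\mapsto\omega(Z,t,\Delta_{2\tilde r}(\tilde Z_0,\tilde t_0))$ is defined and vanishes continuously off the relevant boundary piece),
\[
\omega\big(A^+_{r,\Lambda},\Delta_{2\tilde r/c}(\tilde Z_0,\tilde t_0)\big)\lesssim \tilde r^{\mathbf q-2} G\big(A^+_{r,\Lambda},A^-_{c\tilde r,\Lambda}(\tilde Z_0,\tilde t_0)\big),
\]
valid because the time coordinate of $A^+_{r,\Lambda}$ is much later than that of $(\tilde Z_0,\tilde t_0)$ (this uses that $A^+_{r,\Lambda}$ lies "in the far future" relative to the small ball, which is the content of the condition $t\geq 8\tilde r^2+\tilde t_0$ in that lemma, and here it is automatic by the geometry of the reference points). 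Next I would record the lower bound: by Lemma \ref{gensalsa-a} applied at the same point with radius $\tilde r$,
\[
\tilde r^{\mathbf q-2} G\big(A^+_{r,\Lambda}, A^+_{\tilde r,\Lambda}(\tilde Z_0,\tilde t_0)\big)\lesssim \omega\big(A^+_{r,\Lambda},\Delta_{\tilde r}(\tilde Z_0,\tilde t_0)\big).
\]
It then remains to compare $G(A^+_{r,\Lambda},A^-_{c\tilde r,\Lambda}(\tilde Z_0,\tilde t_0))$ with $G(A^+_{r,\Lambda},A^+_{\tilde r,\Lambda}(\tilde Z_0,\tilde t_0))$; by the adjoint backward Harnack inequality (Theorem \ref{thm:back}, applied to $(\tilde Z,\tilde t)\mapsto G(A^+_{r,\Lambda},\tilde Z,\tilde t)$, which is a nonnegative $\L^\ast$-solution vanishing on the appropriate boundary piece), these are comparable up to a constant depending only on $m,\kappa,M$ — crucially the ratio $m^+/m^-$ in Theorem \ref{thm:back} is, for the Green function with a fixed far pole, itself controlled by a dimensional constant via Lemma \ref{gensalsa-a}, Lemma \ref{gensalsa-asecondpart} and Lemma \ref{lem4.7}, so no extra parameter dependence appears. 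Combining the three displays, and absorbing the harmless change of radii $2\tilde r\mapsto 2\tilde r/c$ using the interior Harnack inequality (Lemma \ref{lem4.7bol+}) to pass between $\Delta_{2\tilde r}$ and $\Delta_{2\tilde r/c}$ at the level of the reference points, yields the claim.

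The main obstacle I anticipate is verifying the comparability of the two Green function values, i.e. the backward Harnack step: one must check that Theorem \ref{thm:back} is applicable to $G(A^+_{r,\Lambda},\cdot)$ — that is, that this function really does vanish continuously on $\Delta_{2\tilde r}(\tilde Z_0,\tilde t_0)$ (true since the pole $A^+_{r,\Lambda}$ is an interior point, away from that boundary piece, so $G$ inherits the boundary vanishing from $\omega^\ast$ via representation \eqref{ghh1---}) — and, more delicately, that the constant $c_2=c_2(m,\kappa,M,m^+/m^-)$ from Theorem \ref{thm:back} collapses to a purely $(m,\kappa,M)$-dependent constant in this situation. The latter is exactly where one invokes Lemma \ref{gensalsa-a} and Lemma \ref{gensalsa-asecondpart} to see that $m^+=G(A^+_{r,\Lambda},A^+_{\tilde r,\Lambda}(\tilde Z_0,\tilde t_0))\approx \tilde r^{-(\mathbf q-2)}\omega(A^+_{r,\Lambda},\Delta_{\tilde r}(\tilde Z_0,\tilde t_0))$ and $m^-=G(A^+_{r,\Lambda},A^-_{\tilde r,\Lambda}(\tilde Z_0,\tilde t_0))\gtrsim \tilde r^{-(\mathbf q-2)}\omega(A^+_{r,\Lambda},\Delta_{c\tilde r}(\tilde Z_0,\tilde t_0))$, and then uses a preliminary weaker (one-sided, non-translation-invariant) doubling-type bound — or simply Lemma \ref{lem4.7} and the interior Harnack chain from $A^+_{r,\Lambda}$ down toward the boundary — to see $m^+/m^-$ is bounded by a dimensional constant. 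Once this bootstrapping is in place the rest is routine bookkeeping with the quasi-metric inequality \eqref{e-triangular} and the equivalences in \eqref{intextball}, \eqref{def.Omega.Delta.frinc}.
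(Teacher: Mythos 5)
Your plan is correct and is essentially the paper's proof: upper bound $\omega(A^+_{r,\Lambda},\Delta_{2\tilde r}(\tilde Z_0,\tilde t_0))$ by $\tilde r^{\mathbf{q}-2}G(A^+_{r,\Lambda},A^-_{c\tilde r,\Lambda}(\tilde Z_0,\tilde t_0))$ via Lemma \ref{gensalsa-asecondpart}, swap $A^-$ for $A^+$ by the adjoint Theorem \ref{thm:back} applied to $G(A^+_{r,\Lambda},\cdot)$, and close with the lower bound of Lemma \ref{gensalsa-a}. The only point to be careful about is the one you flag: the ratio $m^+/m^-$ must be shown to be dimensional \emph{at the top scale} $(Z_0,t_0),\ \rho_0\approx r$ (the paper does this as a separate claim, $G(A^+_{r,\Lambda},A^+_{r/1000,\Lambda})\approx G(A^+_{r,\Lambda},A^-_{r/1000,\Lambda})$, using the adjoint Carleson estimate together with a barrier/maximum-principle lower bound for $G$ at an interior reference point), after which Theorem \ref{thm:back} itself propagates the comparison to $(\tilde Z_0,\tilde t_0)$ and scale $\tilde r$ — attempting to bound the ratio directly at the small scale via the $\omega$--$G$ comparison lemmas would be circular, since it reduces to the doubling inequality being proved.
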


\begin{proof}
Let $(\tilde Z_0,\tilde t_0)\in\partial\Omega$ and $\tilde r>0$ be such that $\Delta_{\tilde r}(\tilde Z_0,\tilde t_0)\subset \Delta_{ r/{\bf c}},$ where ${\bf c}={\bf c}(m,\kappa,M)$, ${\bf c}>1$ is a large degree of freedom to be chosen.
Then by choosing ${\bf c}$ large enough and applying Lemma \ref{gensalsa-asecondpart}, we obtain
\begin{equation}\label{doubpf_1}
    \omega(A^+_{r,\Lambda},\Delta_{2\tilde r}(\tilde Z_0,\tilde t_0)) \lsim r^{{\bf q}-2}G(A^+_{r,\Lambda},A^-_{2{\tilde c}\tilde r,\Lambda}(\tilde Z_0,\tilde t_0)),
\end{equation}
for some $\tilde c = \tilde c(m,\kappa,M)$, $1\leq\tilde c<\infty$, where $G(A^+_{r,\Lambda},\cdot)$ is the adjoint Green function for $\Omega$ with pole at $A^+_{r,\Lambda}$.

\noindent
\textit{Claim: }It holds that
\begin{equation}\label{lemdoub_claim}
    \frac{G(A^+_{r,\Lambda},A^+_{r/1000,\Lambda})}{G(A^+_{r,\Lambda},A^-_{r/1000,\Lambda})} \approx 1.
\end{equation}

\noindent
To prove the claim we first deduce using the definition of the Green function, the maximum principle and Lemma \ref{lem4.7rho}, and  by a similar argument as in the proof of Claim 2 in the proof of Lemma \ref{lem:compprinciple}, that
\begin{equation}
    \begin{split}
        \hat c^{-1} \leq r^{{\bf q}-2}G(A^+_{r,\Lambda},A^+_{r/1000,\Lambda}) \leq \hat c,\\
        r^{{\bf q}-2}G(A^+_{r,\Lambda},A^-_{r/1000,\Lambda}) \leq \hat c,
    \end{split}
\end{equation}
for some $\hat c=\hat c(m,\kappa,M)$, $1\leq\hat c<\infty$.
We need to establish a lower bound for $G(A^+_{r,\Lambda},A^-_{r/1000,\Lambda})$. Using the adjoint version of Theorem \ref{thm:carleson} we see that
\begin{equation}
    \sup_{\Omega_{\frac{r}{{\bf c}}}} G(A^+_{r,\Lambda},(Z,t))\lsim  G(A^+_{r,\Lambda},A^-_{r/1000,\Lambda}).
\end{equation}
On the other hand, arguing again similarly as in the proof of Claim 2 in the proof of Lemma \ref{lem:compprinciple}, we see that
\begin{equation}
     \sup_{\Omega_{\frac{r}{{\bf c}}}} G(A^+_{r,\Lambda},(Z,t)) \geq G(A^+_{r,\Lambda},A^+_{r/(100{\bf c}),\Lambda}) \gtrsim r^{2-{\bf q}}.
\end{equation}
The claim follows.

Using the claim, the adjoint version of Theorem \ref{thm:back}, and the scale-invariance of Theorem \ref{thm:back} we find that
\begin{equation}\label{doubpf_2}
    G(A^+_{r,\Lambda},A^-_{2\tilde c\tilde r}(\tilde Z_0,\tilde t_0)) \lsim G(A^+_{r,\Lambda},A^+_{2\tilde c\tilde r}(\tilde Z_0,\tilde t_0)),
\end{equation}
whenever $(\tilde Z_0,\tilde t_0)\in\partial\Omega$ and $Q_{\tilde r}(\tilde Z_0,\tilde t_0)\subset Q_{r/{\bf c}}$.
Now an application of the adjoint version of Lemma \ref{lem4.7rho} and Lemma \ref{gensalsa-a} yields
\begin{equation}\label{doubpf_3}
\tilde r^{{\bf q}-2} G(A^+_{r,\Lambda},A^+_{2\tilde c\tilde r}(\tilde Z_0,\tilde t_0)) \lsim \tilde r^{{\bf q}-2} G(A^+_{r,\Lambda},A^+_{\tilde r}(\tilde Z_0,\tilde t_0)) \lsim \omega(A^+_{r,\Lambda},\Delta_{\tilde r}(\tilde Z_0,\tilde t_0)).
\end{equation}
Combining (\ref{doubpf_1}), (\ref{doubpf_2}), and (\ref{doubpf_3}) finishes the proof.
\end{proof}

\subsection{Estimates of the kernel function}
We end this section by proving two further estimates. Lemma \ref{lem4.5-Kyoto1ha} and Lemma \ref{lemmacruc-} are analogues in our setting to Lemma 4.13 and Lemma 4.14 in \cite{N2}, where they are proven for the operator $\K$.

\begin{lemma}\label{lem4.5-Kyoto1ha}Let $\Omega$ be a Lipschitz domain with constant $M$ and assume \eqref{struct}. Let $(Z_0,t_0)\in\partial\Omega$ and $r>0$. Let $\Lambda$ be in accordance with Remark \ref{remnot}. Let $(\tilde Z_0,\tilde t_0)\in\partial\Omega$ and $\tilde r>0$ be such that $Q_{\tilde r}(\tilde Z_0,\tilde t_0)\subset  Q_{r}(Z_0,t_0)$.  Then there exists  $c=c(m,\kappa,M)$,  $1\leq c<\infty$, such that
 \begin{eqnarray}\label{ad}
K(A_{c\tilde r,\Lambda}^+(\tilde Z_0,\tilde t_0),\bar Z,\bar t):=\lim_{\bar r\to 0}\frac{\omega(A_{c\tilde r,\Lambda}^+(\tilde Z_0,\tilde t_0),\Delta_{\bar r}(\bar Z,\bar t))}{\omega(A_{cr,\Lambda}^+(Z_0,t_0),\Delta_{\bar r}(\bar Z,\bar t))}
\end{eqnarray}
exists for a.e. $(\bar Z,\bar t)\in \Delta_{\tilde r}(\tilde Z_0,\tilde t_0)$, and
\begin{eqnarray}\label{ad1}
c^{-1}\leq {\omega(A_{cr,\Lambda}^+(Z_0,t_0), \Delta_{\tilde r}(\tilde Z_0,\tilde t_0))}K(A_{c\tilde r,\Lambda}^+(\tilde Z_0,\tilde t_0),\bar Z,\bar t)\leq c
\end{eqnarray}
whenever $(\bar Z,\bar t)\in \Delta_{\tilde r}(\tilde Z_0,\tilde t_0)$.
\end{lemma}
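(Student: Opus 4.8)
The plan is to establish Lemma \ref{lem4.5-Kyoto1ha} by following the classical comparison-of-kernel-functions scheme, now adapted to the Kolmogorov geometry. First I would reduce everything, via translation and dilation invariance, to the case $(Z_0,t_0)=(0,0)$ and $r=1$, and I would choose the constant $c=c(m,\kappa,M)$ large enough that the doubling property (Lemma \ref{lem.doub}), the backward Harnack inequality (Theorem \ref{thm:back}), and the Carleson estimate (Theorem \ref{thm:carleson}) are all applicable at scale $\tilde r$ for surface balls contained in $\Delta_{r/c}$. The key quantitative input is the following two-sided estimate, valid for all surface balls $\Delta_{\bar r}(\bar Z,\bar t)\subset\Delta_{\tilde r}(\tilde Z_0,\tilde t_0)$:
\begin{equation*}
c^{-1}\,\omega(A^+_{c r,\Lambda},\Delta_{\tilde r}(\tilde Z_0,\tilde t_0))\,\omega(A^+_{c\tilde r,\Lambda}(\tilde Z_0,\tilde t_0),\Delta_{\bar r}(\bar Z,\bar t))\leq \omega(A^+_{c r,\Lambda},\Delta_{\bar r}(\bar Z,\bar t))\leq c\,\omega(A^+_{c r,\Lambda},\Delta_{\tilde r}(\tilde Z_0,\tilde t_0))\,\omega(A^+_{c\tilde r,\Lambda}(\tilde Z_0,\tilde t_0),\Delta_{\bar r}(\bar Z,\bar t)).
\end{equation*}

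To prove this, I would use the relations between the Kolmogorov measure and the adjoint Green function from Section \ref{sec8}. Concretely, Lemma \ref{gensalsa-a} and Lemma \ref{gensalsa-asecondpart} give, for the pole $P:=A^+_{cr,\Lambda}$ and for the pole $P':=A^+_{c\tilde r,\Lambda}(\tilde Z_0,\tilde t_0)$,
$$\omega(P,\Delta_{\bar r}(\bar Z,\bar t))\approx \bar r^{\,{\bf q}-2}G(P,A^+_{\bar r,\Lambda}(\bar Z,\bar t)),\qquad \omega(P',\Delta_{\bar r}(\bar Z,\bar t))\approx \bar r^{\,{\bf q}-2}G(P',A^+_{\bar r,\Lambda}(\bar Z,\bar t)),$$
up to replacing $A^+$ by $A^-$ on one side and absorbing harmless constants via Theorem \ref{thm:back}; here $G(P,\cdot)$ denotes the adjoint Green function with pole $P$. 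Hence the ratio in \eqref{ad} is, up to bounded multiplicative error, the ratio $G(P,A^+_{\bar r,\Lambda}(\bar Z,\bar t))/G(P',A^+_{\bar r,\Lambda}(\bar Z,\bar t))$. Now I would apply Lemma \ref{lem:compprinciple} (the weak comparison principle) to the two non-negative adjoint solutions $(Z,t)\mapsto G(P,Z,t)$ and $(Z,t)\mapsto G(P',Z,t)$ in $\Omega_{2\tilde r}(\tilde Z_0,\tilde t_0)$ — both vanish continuously on $\Delta_{2\tilde r}(\tilde Z_0,\tilde t_0)$ since $P,P'$ lie in $\Omega$ outside this region and the poles are well separated — to get that this ratio is comparable, uniformly in $(\bar Z,\bar t)\in\Delta_{\tilde r}(\tilde Z_0,\tilde t_0)$, to its value at the fixed reference point $A^+_{\tilde r,\Lambda}(\tilde Z_0,\tilde t_0)$. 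Evaluating there and using $G(P',A^+_{\tilde r,\Lambda}(\tilde Z_0,\tilde t_0))\approx \tilde r^{2-{\bf q}}$ (as in the Claim in the proof of Lemma \ref{lem.doub}) together with Lemma \ref{gensalsa-a}, that reference value is comparable to $\tilde r^{\,{\bf q}-2}G(P,A^+_{\tilde r,\Lambda}(\tilde Z_0,\tilde t_0))\approx \omega(P,\Delta_{\tilde r}(\tilde Z_0,\tilde t_0))$. This yields the displayed two-sided estimate.

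Given the two-sided estimate, the existence of the limit in \eqref{ad} for a.e.\ $(\bar Z,\bar t)$ follows from the standard Radon–Nikodym/Lebesgue-differentiation argument: the doubling property of $\omega(P,\cdot)$ on $\partial\Omega$ (Lemma \ref{lem.doub}) makes $(\partial\Omega,d,\omega(P,\cdot))$ a space of homogeneous type, and the displayed estimate shows $\omega(P',\cdot)$ is mutually absolutely continuous with $\omega(P,\cdot)$ on $\Delta_{\tilde r}(\tilde Z_0,\tilde t_0)$ with density bounded above and below; the limit of the ratio of masses of shrinking balls is then a.e.\ equal to that density. The bound \eqref{ad1} is precisely the statement that this density, multiplied by $\omega(P,\Delta_{\tilde r}(\tilde Z_0,\tilde t_0))$, is bounded between $c^{-1}$ and $c$, which is exactly what the two-sided estimate gives once one divides through. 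The main obstacle I anticipate is the bookkeeping needed to verify that Lemma \ref{lem:compprinciple} is genuinely applicable to $G(P,\cdot)$ and $G(P',\cdot)$ as adjoint solutions — one must check that both poles are at controlled distance $\gtrsim\tilde r$ from $\Delta_{2\tilde r}(\tilde Z_0,\tilde t_0)$ and in the correct time-order, which requires the geometric placement lemmas (Lemma \ref{coneconditions-}, Lemma \ref{l-coord}) and the freedom to adjust the constant $c$; the rest is a faithful transcription of the argument in \cite{N2} for $\K$, now powered by the rough-coefficient estimates of the preceding sections.
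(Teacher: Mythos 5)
Your proposal is correct and follows essentially the same route as the paper: convert the ratio of parabolic measures to a ratio of adjoint Green functions via Lemma \ref{gensalsa-a} and Lemma \ref{gensalsa-asecondpart}, compare that quotient to its value at a fixed reference point associated to $(\tilde Z_0,\tilde t_0)$, normalize using the interior lower bound on the Green function (as in the Claim in the proof of Lemma \ref{lem.doub}), and deduce a.e.\ existence of the limit from doubling and differentiation of measures. The only cosmetic difference is that you invoke Lemma \ref{lem:compprinciple} together with Theorem \ref{thm:back} to freeze the quotient at a reference point, whereas the paper packages the same content as the adjoint version of Theorem \ref{thm:quotients}, removing the $m_i^+/m_i^-$ dependence of its constants by exactly the Green-function estimates you cite.
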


\begin{proof}
First of all, we notice that \eqref{ad} exists for a.e. $(\bar Z,\bar t)\in \Delta_{\tilde r}(\tilde Z_0,\tilde t_0)$ by Lemma \ref{lem4.7rho}. We have to show that \eqref{ad1} holds. Let $(\bar Z,\bar t)\in \Delta_{\tilde r}(\tilde Z_0,\tilde t_0)$ and let $\bar r < \tilde r$.
We see that as a consequence of Lemma \ref{gensalsa-a} and Lemma \ref{gensalsa-asecondpart} there exists  $c=c(m,\kappa,M)$,  $1\leq c<\infty$ so that
\begin{equation}
    \begin{split}
        \frac{G(A^+_{c\tilde r,\Lambda}(\tilde Z_0,\tilde t_0),A^+_{\bar r,\Lambda}(\bar Z,\bar t))}{G(A^+_{c r,\Lambda}(Z_0,t_0),A^-_{\bar r,\Lambda}(\bar Z,\bar t))}
        &\lesssim \frac{\omega(A^+_{c\tilde r,\Lambda}(\tilde Z_0,\tilde t_0),\Delta_{\bar r}(\bar Z,\bar t))}{\omega(A^+_{c r,\Lambda}(Z_0,t_0),\Delta_{\bar r}(\bar Z,\bar t))}  \\
        &\lesssim \frac{G(A^+_{c\tilde r,\Lambda}(\tilde Z_0,\tilde t_0),A^-_{\bar r,\Lambda}(\bar Z,\bar t))}{G(A^+_{c r,\Lambda}(Z_0,t_0),A^+_{\bar r,\Lambda}(\bar Z,\bar t))}.
    \end{split}
\end{equation}
Next, using the adjoint versions of Theorem \ref{thm:back} and Lemma \ref{lem4.6+}, and arguing as in the proof of Lemma \ref{lem.doub} (see in particular \eqref{doubpf_2}), we deduce that
\begin{equation}
        \frac{G(A^+_{c\tilde r,\Lambda}(\tilde Z_0,\tilde t_0),A_{\bar r,\Lambda}(\bar Z,\bar t))}{G(A^+_{c r,\Lambda}(Z_0,t_0),A_{\bar r,\Lambda}(\bar Z,\bar t))}
        \approx
        \frac{G(A^+_{c\tilde r,\Lambda}(\tilde Z_0,\tilde t_0),A^+_{\bar r,\Lambda}(\bar Z,\bar t))}{G(A^+_{c r,\Lambda}(Z_0,t_0),A^-_{\bar r,\Lambda}(\bar Z,\bar t))}.
\end{equation}
We can then conclude that
\begin{equation}
\frac{\omega(A^+_{c\tilde r,\Lambda}(\tilde Z_0,\tilde t_0),\Delta_{\bar r}(\bar Z,\bar t))}{\omega(A^+_{c r,\Lambda}(Z_0,t_0),\Delta_{\bar r}(\bar Z,\bar t))}
\approx
\frac{G(A^+_{c\tilde r,\Lambda}(\tilde Z_0,\tilde t_0),A_{\bar r,\Lambda}(\bar Z,\bar t))}{G(A^+_{c r,\Lambda}(Z_0,t_0),A_{\bar r,\Lambda}(\bar Z,\bar t))}
\end{equation}
Note that a consequence of the adjoint version of Theorem \ref{thm:quotients} is a boundary Harnack inequality for solutions to the equation $\L^*u=0$. Furthermore, when $u$ and $v$ in the formulation of (the adjoint version of) Theorem \ref{thm:quotients} are the functions $G(A^+_{c\tilde r,\Lambda}(\tilde Z_0,\tilde t_0),\cdot)$ and $G(A^+_{c r,\Lambda}(\tilde Z_0,\tilde t_0),\cdot)$ respectively, we can use Lemma \ref{gensalsa-a} and Lemma \ref{gensalsa-asecondpart} to estimate the quotients $m_1^+/m_1^-$ and $m_2^+/m_2^-$ so that in this particular case we have
\[
c_2(m,\kappa,M,m_1^+/m_1^-,m_2^+/m_2^-) = c_2(m,\kappa,M),
\]
where $c_2$ is as in the formulation of Theorem \ref{thm:quotients}. We hence deduce that there exists some \mbox{$\tilde c=\tilde c(m,\kappa,M)>1$} such that
\begin{equation}
\frac{G(A^+_{c\tilde r,\Lambda}(\tilde Z_0,\tilde t_0),A_{\bar r,\Lambda}(\bar Z,\bar t))}{G(A^+_{c r,\Lambda}(Z_0,t_0),A_{\bar r,\Lambda}(\bar Z,\bar t))}
\approx
\frac{G(A^+_{c\tilde r,\Lambda}(\tilde Z_0,\tilde t_0),A_{\tilde r/\tilde c,\Lambda}(\tilde Z_0,\tilde t_0))}{G(A^+_{c r,\Lambda}(Z_0,t_0),A_{\tilde r/\tilde c,\Lambda}(\tilde Z_0,\tilde t_0))}
\end{equation}
Thus, we see that
\begin{equation}
\frac{\omega(A^+_{c\tilde r,\Lambda}(\tilde Z_0,\tilde t_0),\Delta_{\bar r}(\bar Z,\bar t))}{\omega(A^+_{c r,\Lambda}(Z_0,t_0),\Delta_{\bar r}(\bar Z,\bar t))}
\approx
\frac{G(A^+_{c\tilde r,\Lambda}(\tilde Z_0,\tilde t_0),A_{\tilde r/\tilde c,\Lambda}(\tilde Z_0,\tilde t_0))}{G(A^+_{c r,\Lambda}(Z_0,t_0),A_{\tilde r/\tilde c,\Lambda}(\tilde Z_0,\tilde t_0))}.
\end{equation}
Using arguments similar to those used in Claim 2 in the proof of Lemma \ref{lem:compprinciple}, see in particular \eqref{greenrefestimate}, the (adjoint) Harnack inequality, and arguing as in Lemma \ref{lem.doub}, see \eqref{lemdoub_claim}, and then using Lemma \ref{gensalsa-a}, we see that
\begin{equation}\label{kernelpf_estbelow}
1\lesssim \frac{G(A^+_{c\tilde r,\Lambda}(\tilde Z_0,\tilde t_0),A_{\tilde r/\tilde c,\Lambda}(\tilde Z_0,\tilde t_0))}{G(A^+_{c r,\Lambda}(Z_0,t_0),A_{\tilde r/\tilde c,\Lambda}(\tilde Z_0,\tilde t_0))} \omega(A^+_{c r,\Lambda}(Z_0,t_0),\Delta_{\tilde r/\tilde c}(\tilde Z_0,\tilde t_0)).
\end{equation}
Using \eqref{kernelpf_estbelow} and arguing similarly, wee obtain
\begin{equation}
\frac{G(A^+_{c\tilde r,\Lambda}(\tilde Z_0,\tilde t_0),A_{\tilde r/\tilde c,\Lambda}(\tilde Z_0,\tilde t_0))}{G(A^+_{c r,\Lambda}(Z_0,t_0),A_{\tilde r/\tilde c,\Lambda}(\tilde Z_0,\tilde t_0))}
\approx
\frac{1}{\omega(A^+_{c r,\Lambda}(Z_0,t_0),\Delta_{\tilde r/\tilde c}(\tilde Z_0,\tilde t_0))}.
\end{equation}
Finally, using the obvious relation
\[
\omega(A^+_{c r,\Lambda}(Z_0,t_0),\Delta_{\tilde r/\tilde c}(\tilde Z_0,\tilde t_0)) \leq
\omega(A^+_{c r,\Lambda}(Z_0,t_0),\Delta_{\tilde r}(\tilde Z_0,\tilde t_0))
\]
together with Lemma \ref{lem.doub}, we have
\[
\frac{\omega(A^+_{c\tilde r,\Lambda}(\tilde Z_0,\tilde t_0),\Delta_{\bar r}(\bar Z,\bar t))}{\omega(A^+_{c r,\Lambda}(Z_0,t_0),\Delta_{\bar r}(\bar Z,\bar t))}
\approx
\frac{1}{\omega(A^+_{c r,\Lambda}(Z_0,t_0),\Delta_{\tilde r}(\tilde Z_0,\tilde t_0))}.
\]
Letting $\bar r\rightarrow 0$ concludes the proof.
\end{proof}

\begin{lemma}\label{lemmacruc-} Let $\Omega$ be a Lipschitz domain with constant $M$ and assume \eqref{struct}. Let $(Z_0,t_0)\in\partial\Omega$ and $r>0$. Let $\Lambda$ be in accordance with Remark \ref{remnot}. Let $(\tilde Z_0,\tilde t_0)\in\partial\Omega$ and $\tilde r>0$ be such that $Q_{\tilde r}(\tilde Z_0,\tilde t_0)\subset  Q_{r}(Z_0,t_0)$.  Then there exists  $c=c(m,\kappa,M)$,  $1\leq c<\infty$, such that
 \begin{eqnarray*}
 \frac {\omega(A_{cr,\Lambda}^+(Z_0,t_0),E)}{\omega(A_{cr,\Lambda}^+(Z_0,t_0),\Delta_{\tilde r}(\tilde Z_0,\tilde t_0))}\approx
 \omega(A_{c\tilde r,\Lambda}^+(\tilde Z_0,\tilde t_0),E),
\end{eqnarray*}
whenever $E\subset \Delta_{\tilde r}(\tilde Z_0,\tilde t_0)$.
\end{lemma}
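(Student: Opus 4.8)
The statement asserts a comparison between the parabolic measure $\omega(A_{cr,\Lambda}^+(Z_0,t_0),\cdot)$ restricted and renormalized to $\Delta_{\tilde r}(\tilde Z_0,\tilde t_0)$, and the parabolic measure $\omega(A_{c\tilde r,\Lambda}^+(\tilde Z_0,\tilde t_0),\cdot)$ with pole at the local corkscrew point for the smaller surface ball. The plan is to prove this as a consequence of Lemma \ref{lem4.5-Kyoto1ha}, which already establishes the key two-sided bound for the kernel function $K(A_{c\tilde r,\Lambda}^+(\tilde Z_0,\tilde t_0),\bar Z,\bar t)$. First I would recall from Lemma \ref{lem4.5-Kyoto1ha} that for a.e. $(\bar Z,\bar t)\in\Delta_{\tilde r}(\tilde Z_0,\tilde t_0)$ the Radon-Nikodym derivative
\[
K(A_{c\tilde r,\Lambda}^+(\tilde Z_0,\tilde t_0),\bar Z,\bar t)=\lim_{\bar r\to 0}\frac{\omega(A_{c\tilde r,\Lambda}^+(\tilde Z_0,\tilde t_0),\Delta_{\bar r}(\bar Z,\bar t))}{\omega(A_{cr,\Lambda}^+(Z_0,t_0),\Delta_{\bar r}(\bar Z,\bar t))}
\]
exists and satisfies, by \eqref{ad1},
\[
c^{-1}\leq \omega(A_{cr,\Lambda}^+(Z_0,t_0),\Delta_{\tilde r}(\tilde Z_0,\tilde t_0))\, K(A_{c\tilde r,\Lambda}^+(\tilde Z_0,\tilde t_0),\bar Z,\bar t)\leq c.
\]
The interpretation of $K$ as a Radon-Nikodym derivative is the substance of the argument: one must show that $\omega(A_{c\tilde r,\Lambda}^+(\tilde Z_0,\tilde t_0),\cdot)$ is absolutely continuous with respect to $\omega(A_{cr,\Lambda}^+(Z_0,t_0),\cdot)$ on $\Delta_{\tilde r}(\tilde Z_0,\tilde t_0)$ with density precisely $K$.

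The second step is to establish this absolute continuity. Both $\omega(A_{c\tilde r,\Lambda}^+(\tilde Z_0,\tilde t_0),\cdot)$ and $\omega(A_{cr,\Lambda}^+(Z_0,t_0),\cdot)$ are doubling on $\Delta_{\tilde r}(\tilde Z_0,\tilde t_0)$ by Lemma \ref{lem.doub} (and its consequences, since the pole $A_{c\tilde r,\Lambda}^+(\tilde Z_0,\tilde t_0)$ lies at the correct scale relative to $\Delta_{\tilde r}(\tilde Z_0,\tilde t_0)$); consequently the Lebesgue differentiation theorem is available on the homogeneous space $(\partial\Omega,d,\omega(A_{cr,\Lambda}^+(Z_0,t_0),\cdot))$. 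For a Borel set $E\subset\Delta_{\tilde r}(\tilde Z_0,\tilde t_0)$ one covers $E$ (up to a null set) by a Vitali-type family of surface balls $\Delta_{\bar r_j}(\bar Z_j,\bar t_j)$ with bounded overlap, applies the kernel estimate on each, and passes to the limit; the doubling property guarantees the errors are controlled. This yields
\[
\omega(A_{c\tilde r,\Lambda}^+(\tilde Z_0,\tilde t_0),E)=\int_E K(A_{c\tilde r,\Lambda}^+(\tilde Z_0,\tilde t_0),\bar Z,\bar t)\,\d\omega(A_{cr,\Lambda}^+(Z_0,t_0),\bar Z,\bar t).
\]
Then integrating the two-sided bound from \eqref{ad1} over $E$ gives
\[
\frac{c^{-1}\,\omega(A_{cr,\Lambda}^+(Z_0,t_0),E)}{\omega(A_{cr,\Lambda}^+(Z_0,t_0),\Delta_{\tilde r}(\tilde Z_0,\tilde t_0))}\leq \omega(A_{c\tilde r,\Lambda}^+(\tilde Z_0,\tilde t_0),E)\leq \frac{c\,\omega(A_{cr,\Lambda}^+(Z_0,t_0),E)}{\omega(A_{cr,\Lambda}^+(Z_0,t_0),\Delta_{\tilde r}(\tilde Z_0,\tilde t_0))},
\]
which is exactly the claimed comparison.

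I expect the main obstacle to be the rigorous justification of the differentiation/covering argument in the quasi-metric space $(\R^{N+1},d)$, namely verifying that $d$ supports a Vitali covering lemma and the Lebesgue differentiation theorem for doubling measures — but this is standard for quasi-metric spaces of homogeneous type (the balls $\mathcal B_r$ have the doubling volume property $|\mathcal B_r|\approx r^{\mathbf q}$ already recorded in the Preliminaries, and $d$ is a symmetric quasi-distance by \eqref{e-triangular}), so it amounts to invoking the appropriate general theorem rather than proving something new. A minor secondary point is to confirm that $\omega(A_{c\tilde r,\Lambda}^+(\tilde Z_0,\tilde t_0),\cdot)$ is indeed absolutely continuous with respect to $\omega(A_{cr,\Lambda}^+(Z_0,t_0),\cdot)$ — this follows from the finiteness of the upper bound in \eqref{ad1} combined with the fact that, by Lemma \ref{lem4.7rho} and the definition of the kernel, $\omega(A_{cr,\Lambda}^+(Z_0,t_0),\Delta_{\bar r}(\bar Z,\bar t))=0$ forces $\omega(A_{c\tilde r,\Lambda}^+(\tilde Z_0,\tilde t_0),\Delta_{\bar r}(\bar Z,\bar t))=0$ at every scale. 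Since essentially all the hard estimates have already been extracted in Lemma \ref{lem4.5-Kyoto1ha}, the proof of Lemma \ref{lemmacruc-} will be short: it is a measure-theoretic repackaging, and I would present it in a few lines referring back to \eqref{ad1}, Lemma \ref{lem.doub}, and the Lebesgue differentiation theorem on spaces of homogeneous type.
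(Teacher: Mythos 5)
Your proposal is correct and follows essentially the same route as the paper: the paper's proof writes $\omega(A^+_{c\tilde r,\Lambda}(\tilde Z_0,\tilde t_0),E)=\iint_E K(A^+_{c\tilde r,\Lambda}(\tilde Z_0,\tilde t_0),\cdot)\,\d\omega(A^+_{cr,\Lambda}(Z_0,t_0),\cdot)$ and then integrates the two-sided bound \eqref{ad1} over $E$, exactly as you do. The only difference is that the paper treats the Radon--Nikodym representation as immediate (``by definition''), whereas you supply the justification via doubling and differentiation on spaces of homogeneous type -- a reasonable elaboration, not a different argument.
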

\begin{proof}
Fix some subset $E\subset \Delta_{\tilde r}(\tilde Z,\tilde t)$. By definition we then have
\[
    \omega(A^+_{c\tilde r,\Lambda}(\tilde Z,\tilde t),E) =
    \iint_{E}{ K(A^+_{c\tilde r,\Lambda}(\tilde Z,\tilde t),(\bar Z,\bar t)) \d\omega(A^+_{cr,\Lambda}(Z_0,t_0),(\bar Z,\bar t))}.
\]
Using Lemma \ref{lem4.5-Kyoto1ha}, \eqref{ad1}, we see that
\begin{equation*}
    \begin{split}
        \iint_{E}{ \omega(A_{cr,\Lambda}^+(Z_0,t_0),\Delta_{\tilde r}(\tilde Z_0,\tilde t_0)) K(A^+_{c\tilde r,\Lambda}(\tilde Z,\tilde t),(\bar Z,\bar t)) \d\omega(A^+_{cr,\Lambda}(Z_0,t_0),(\bar Z,\bar t))}\\
        \lesssim \iint_{E}{ \d\omega(A^+_{cr,\Lambda}(Z_0,t_0),(\bar Z,\bar t))}
    \end{split}
\end{equation*}
and hence that
\[
\omega(A_{cr,\Lambda}^+(Z_0,t_0),\Delta_{\tilde r}(\tilde Z_0,\tilde t_0))\omega(A_{c\tilde r,\Lambda}^+(\tilde Z_0,\tilde t_0),E)
\lesssim \omega(A^+_{cr,\Lambda}(Z_0,t_0),E).
\]
Analogously, we see that
\[
\omega(A^+_{cr,\Lambda}(Z_0,t_0),E)\lesssim \omega(A_{cr,\Lambda}^+(Z_0,t_0),\Delta_{\tilde r}(\tilde Z_0,\tilde t_0))\omega(A_{c\tilde r,\Lambda}^+(\tilde Z_0,\tilde t_0),E),
\]
which finishes the proof.
\end{proof}

\end{document}